\numberwithin{equation}{section}
\newtheorem{theorem}{\bf{Theorem}}[section]
\newtheorem{proposition}[theorem]{\bf{Proposition}}
\newtheorem{corollary}[theorem]{\bf{Corollary}}
\newtheorem{lemma}[theorem]{\bf{Lemma}}
\newtheorem{remark}[theorem]{\bf{Remark}}
\newcommand{\mrgl}{\mathrm{GL}}
\newcommand{\mrm}{\mathrm{M}}
\newcommand{\mrn}{\mathrm{N}}
\newcommand{\mrdet}{\mathrm{det}}
\newcommand{\mrdim}{\mathrm{dim}}
\newcommand{\mrhom}{\mathrm{Hom}}
\newcommand{\mbz}{\mathbb{Z}}
\newcommand{\mbc}{\mathbb{C}}
\newcommand{\mfp}{\mathfrak{p}}
\newcommand{\mfo}{\mathfrak{o}}
\newcommand{\mfa}{\mathfrak{a}}
\newcommand{\mfb}{\mathfrak{b}}
\newcommand{\mfc}{\mathfrak{c}}
\newcommand{\mcs}{\mathcal{S}}
\newcommand{\mcc}{\mathcal{C}}
\newcommand{\bs}[1]{\boldsymbol{#1}}
\newcommand{\rn}{\romannumeral}
\begin{document}
\title{Supercuspidal representations of $\mathrm{GL}_{n}(F)$ distinguished by an orthogonal involution}
\author{Jiandi Zou}
\address{Universit\'e Paris-Saclay, UVSQ, CNRS, Laboratoire de Math\'ematiques de Versailles, 78000, Versailles, France.}	
\email{jiandi.zou@ens.uvsq.fr}
\maketitle
	
\begin{abstract}
		
Let $F$ be a non-archimedean locally compact field of residue characteristic $p\neq2$, let $G=\mathrm{GL}_{n}(F)$ and let $H$ be an orthogonal subgroup of $G$. For $\pi$ a complex smooth supercuspidal representation of $G$, we give a full characterization for the distinguished space $\mathrm{Hom}_{H}(\pi,1)$ being non-zero and we further study its dimension as a complex vector space, which generalizes a similar result of Hakim for tame supercuspidal representations. As a corollary, the embeddings of $\pi$ in the space of smooth functions on the set of symmetric matrices in $G$, as a complex vector space, is non-zero and of dimension four, if and only if the central character of $\pi$ evaluating at $-1$ is 1.
		
\end{abstract}
	
	\tableofcontents
	
	\section{Introduction}
	
	\subsection{Background}
	
	Let $F$ be a non-archimedean locally compact field of residue characteristic $p$, and let $H\subset G$ be algebraic groups over $F$ (we also use $G$, $H$ to denote their $F$-points by abuse of notation). One important question in the representation theory of $p$-adic groups is to study the right $G$-action on the space of uniformly locally constant functions on $H\backslash G$ with complex values, denoted by $\mcc^{\infty}(H\backslash G)$. In particular for any irreducible smooth representation $\pi$ of $G$, it is important to study
	$$\mathrm{Hom}_{G}(\pi,\mcc^{\infty}(H\backslash G))\simeq\mathrm{Hom}_{H}(\pi,1)$$
	as a complex vector space and its dimension. We call $\pi$ distinguished by $H$ if the above vector space is non-zero. We temporally assume that the local Langlands correspondence for $G$ is valid, which, roughly speaking, is a finite-to-one correspondence from the set of irreducible representations of $G$ to the set of $L$-parameters with respect to the $L$-group of $G$ satisfying certain ``desiderata" (see \cite{borel1979automorphic} for more details). For each $L$-parameter $\phi$, its inverse image is called an $L$-packet of $\phi$.
	The so-called ``relative Langlands correspondence" is a conjectural proposal, which believes that under good conditions, a certain irreducible representation in the $L$-packet of $\phi$ is $H$-distinguished if and only if certain properties of $\phi$ are satisfied. And more optimistically, the corresponding distinguished spaces for all the representations in that $L$-packet could be fully studied (see \cite{prasad2015arelative} for at least Galois case).
	
	In the remarkable book \cite{sakellaridis2017periods}, Sakellaridis and Venkatesh proposed a general framework to study the relative Langlands correspondence under the setting of spherical varieties. Let $F$ be a $p$-adic field, let $G$ be a split reductive group over $F$ and let $X=H\backslash G$ be a spherical variety over $F$. To sum up some of their results under local settings, they
	\begin{itemize}
		\item defined (section 2-3, \emph{ibid.}) a dual group $\check{G}_{X}$ for $X$, under an assumption on the roots of $X$, together with a canonical morphism
		$$\iota_{X}:\check{G}_{X}\times\mathrm{SL}_{2}(\mbc)\longrightarrow \check{G},$$
		where $\check{G}$ denotes the complex dual group of $G$;
		
		\item proved (section 5, \emph{ibid.}), under the wavefront condition, the finiteness of the dimension of the distinguished space with respect to any smooth irreducible representation $\pi$ of $G$, that is,
		$$\mathrm{dim}_{\mbc}\mathrm{Hom}_{G}(\pi,\mcc^{\infty}(X))<\infty;$$
		
		\item provided (section 6, \emph{ibid.}) a Plancherel formula for $L^{2}(X)$ under the wavefront and strongly tempered conditions, which enables the direct integral decomposition of $L^{2}(X)$ as tempered representations, where $L^{2}(X)$ being the space of the square integrable functions on $X$ is itself tempered as a $G$ representation;
		
		\item conjectured (section 16, \emph{ibid.}) that, under some reasonable assumptions, $L^{2}(X)$ has a direct integral decomposition, where each summand is isomorphic to the direct sum of irreducible representations belonging to the Arthur packet with corresponding Arthur parameter ($A$-parameter and $A$-packet for short) factoring through $\iota_{X}$.
		
	\end{itemize}
	Recall that $A$-parameter and $A$-packet are generalizations of $L$-parameter and $L$-packet respectively which are more suitable for global considerations. Those $A$-parameters factoring through $\iota_{X}$ are called $X$-distinguished. So the result mentioned above provides a clear correspondence between distinguished representations on the $p$-adic side, and $X$-distinguished $A$-parameters on the Galois side, which seems to be a good starting point to study ``relative Langlands correspondence".
	
	Now we focus on a special case, in which $G=\mrgl_{n}$ and $H$ is the orthogonal subgroup of $G$ fixed by an orthogonal involution defined over $F$. It is of special interest because it is in some sense out of the reach of the consideration of Sakellaridis and Venkatesh. The main reason is that the assumption of the first statement mentioned above is not satisfied, marking the failure of defining the dual group $\check{G}_{X}$. However the conditions in the second and third statements are satisfied (see \cite{vust1990plongements}, Proposition 2.4. for being wavefront and \cite{gurevich2016criterion} for being strongly tempered), thus it is still of great interest to study all the tempered representations of $G$ distinguished by a certain $H$, where the dimension of the distinguished space is finite by the second statement. If we write $\mcs$ for the set of invertible symmetric matrices as a topological subspace of $G$, which is endowed with a continuous right $G$-action as follows:
	$$\varepsilon\cdot g:=\,^{t}g\varepsilon g,\quad g\in G,\ \varepsilon\in\mathcal{S},$$
	then we have the following decomposition as $G$-spaces
	$$\mcs=\bigoplus_{[\varepsilon]}H_{\varepsilon}\backslash G,$$
	where the direct sum ranges over $\mcs/G$, and $H_{\varepsilon}$ is the orthogonal group defined by a certain representative $\varepsilon$ in the class $[\varepsilon]$. A more uniformed version of the above problem is to study the space
	\begin{equation}\label{eqpispihepsilon}
		\mathrm{Hom}_{G}(\pi,\mcc^{\infty}(\mcs))\simeq\bigoplus_{[\varepsilon]}\mathrm{Hom}_{G}(\pi,\mathrm{Ind}^{G}_{H_{\varepsilon}}1)\simeq\bigoplus_{[\varepsilon]}\mathrm{Hom}_{H_{\varepsilon}}(\pi,1),
	\end{equation}
	at least for tempered representations $\pi$ of $G$, and to determine a criterion for the space being non-zero and to study the corresponding dimension, and finally, to provide a concrete direct integral decomposition for $L^{2}(X)$ via that criterion.
	
	The study of this special example was first proposed by Jacquet \cite{jacquet1991representations}. The idea is first to consider the global analogue of the same question, and then to initiate a global to local argument. The key point is to compare two trace formulae: one relates to the relative trace formula for symmetric matrices or orthogonal groups, and the other relates to the Kuznetsov trace formula for the two-fold metaplectic covering of $\mrgl_{n}$ (see \cite{mao1998fundamental} for a brief introduction). Then usually the routine procedure proposed by Jacquet is as follows:
	\begin{itemize}
		\item Prove the smooth transfer and fundamental lemma for the geometric sides of the two trace formulae at local places, then the two trace formulae are equal for good matching pairs of test functions;
		
		\item Calculate the spectral sides of the two trace formulae;
		
		\item Study the possible factorization for the terms of both spectral sides into a product of local components.
		
	\end{itemize}
	
	However, for this specific question each step seems to be difficult and only partial results are known, for which we provide a brief summary for ease of future research. In \cite{offen2005kloosterman}, Offen followed Jacquet's argument \cite{jacquet2003smooth} to consider the Kloosterman-Fourier transform for orbital integrals with respect to symmetric matrices, which might be a partial step to prove the existence of smooth transfer in the non-archimedean case, and the corresponding archimedean case remains a mystery. For the fundamental lemma for unit Hecke elements, Mao \cite{mao1998fundamental} gave a proof, for $n=3$, by direct calculation and Do first proved, for general $n$, the analogue for local fields of positive characteristic via geometric method \cite{do2015lemma}, and then he transferred the result to $p$-adic fields for $p$ large enough \cite{do2018transfer}. However for ease of later applications, a stronger version of fundamental lemma working with general Hecke elements is needed but remains unknown. The spectral sides of both trace formulae are less studied. Partial results due to Chinta and Offen \cite{chinta2012orthogonal}, \cite{chinta2013metaplectic}, on the one hand, shed some light on the spectral expansions, but on the other hand, indicate the difficulty of resolving the full question. In particular, since the local Whittaker model for the two-fold metaplectic covering group of $\mrgl_{n}$ is not unique, the terms of the spectral side of Kuznetsov trace formula are not factorizable, adding the difficulty to the global to local argument. Instead of understanding the full question, it should also be fruitful if enlightening partial results or even reasonable guesses could be made, which is the work in progress of the author.
	
	Another strategy starts from studying the distinction of supercuspidal representations, and then uses parabolic induction to get at least some partial results for more general representations. For the study of a supercuspidal representation $\pi$, the rough idea is first to regard it as the compact induction of a finite dimensional representation $\Lambda$ of an open subgroup $\bs{J}$ of $G$ which is compact modulo its centre, and then to use the Mackey formula and the Frobenius reciprocity to write the original distinguished space as a direct product, ranging over the double cosets in $\bs{J}\backslash G/H$, of distinguished spaces with respect to $\Lambda$. Under the assumption that $p\neq 2$, the question is completely addressed by Hakim and Mao \cite{hakim1999cuspidal} when $\pi$ is of level 0 and by Hakim and Lansky \cite{hakim2012distinguished} and Hakim \cite{hakim2013tame} when $\pi$ is tamely ramified. The goal of this article is to generalize their results to all supercuspidal representations of $G$, which we explain in the following subsection.
	
	\subsection{Statement of the main theorems}
	
	Let $F$ be a non-archimedean locally compact field of residue characteristic $p\neq 2$ and let $G=\mrgl_{n}(F)$. For $\pi$ a supercuspidal representation of $G$, we recall several invariants given by the simple type theory of Bushnell-Kutzko \cite{bushnell129admissible} and the theory of endo-class of Bushnell-Henniart \cite{bushnell1996local}, for which we refer to \S \ref{subsectiontypetheory} below for more details. First of all, there is a unique tamely ramified extension $T/F$ up to $F$-isomorphism, called the tame parameter field of $\pi$. We write $d$ for the degree of the endo-class of $\pi$ which divides $n$ and is divided by $[T:F]$. We write $m$ for the integer such that $n=md$. Let $T_{m}$ be the unramified extension of degree $m$ over $T$. Here $T$, $d$, $m$, $T_{m}$ are intrinsically determined by $\pi$.
	
	To give an impression of what these invariants should be, we let $\varphi_{\pi}$ be the irreducible representation of the Weil group $\mathcal{W}_{F}$ corresponding to $\pi$ via the local Langlands correspondence. Then the restriction of $\varphi_{\pi}$ to the wild inertia subgroup $\mathcal{P}_{F}$ of $\mathcal{W}_{F}$ is semi-simple and can be written as a direct sum of irreducible representations with each irreducible component of multiplicity exactly $m$. We choose $\alpha$ to be any irreducible component of $\varphi_{\pi}|_{\mathcal{P}_{F}}$, then there exists a finite tamely ramified extension $T/F$ such that
	$$N_{F}(\alpha):=\{g\in\mathcal{W}_{F}|\alpha^{g}\simeq\alpha\}$$
	as a subgroup of $\mathcal{W}_{F}$ equals $\mathcal{W}_{T}$. And it turns out that $T/F$ is uniquely determined up to an $F$-isomorphism and independent of the choice of $\alpha$. We let $n=\mathrm{dim}(\varphi_{\pi})$, $d=n/m$ and $T_{m}$ be as above, then $T$, $d$, $m$, $T_{m}$ defined from the Galois side match with those defined from the $\mrgl_{n}$ side (see \cite{bushnell2014effective} for more details).
	
	For $\varepsilon$ a symmetric matrix in $G$, we denote by
	$$\tau_{\varepsilon}(x)=\varepsilon^{-1}\,^{t}x^{-1}\varepsilon\quad\text{for any}\ x\in G$$
	the orthogonal involution with respect to $\varepsilon$, and by $G^{\tau_{\varepsilon}}$ the orthogonal subgroup of $G$ composed of fixed points of $\tau_{\varepsilon}$. We have the following theorem as a criterion for distinction.
	
	\begin{theorem}\label{thmdistcri}
		
		Let $\pi$ be a supercuspidal representation of $G$ and let $T$, $d$, $m$, $T_{m}$ be as above. Then $\pi$ is distinguished by an orthogonal subgroup $H$ if and only if the following two conditions hold:
		\begin{enumerate}
			\item $\omega_{\pi}(-1)=1$, where $\omega_{\pi}$ denotes the central character of $\pi$;
			
			\item Precisely one of the following conditions holds:
			\begin{itemize}
				
				\item $\mrn_{T_{m}/F}(T_{m}^{\times})F^{\times2}/F^{\times2}=F^{\times}/F^{\times2}$ and $H$ is split;
				\item $\mrn_{T_{m}/F}(T_{m}^{\times})F^{\times2}/F^{\times2}$ is a subgroup of $F^{\times}/F^{\times2}$ of order 2 and $H$ is either split or $H=G^{\tau_{\varepsilon}}$ which is quasisplit but not split, where $\varepsilon$ is a symmetric matrix such that $(-1)^{n(n-1)/2}\mrdet(\varepsilon)\in\mrn_{T_{m}/F}(T_{m}^{\times})-F^{\times2}$;
				\item $\mrn_{T_{m}/F}(T_{m}^{\times})F^{\times2}/F^{\times2}=\{1\}$ and $H$ is either split or not quasisplit.
				
			\end{itemize}
			
		\end{enumerate}
		
	\end{theorem}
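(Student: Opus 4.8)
The plan is to reduce the problem to a representation-theoretic computation over an open subgroup using the simple type theory, exactly following the strategy outlined in the introduction but carrying it through in full generality. First I would realize $\pi$ as the compact induction $\mathrm{c\text{-}Ind}_{\bs{J}}^{G}\Lambda$ of a representation $\Lambda$ of a compact-mod-center open subgroup $\bs{J}$ arising from a maximal simple type $(\bs{J},\Lambda)$, and then, for each symmetric matrix $\varepsilon$, apply the Mackey formula together with Frobenius reciprocity to obtain
\begin{equation*}
\mathrm{Hom}_{G^{\tau_{\varepsilon}}}(\pi,1)\simeq\prod_{g\in\bs{J}\backslash G/G^{\tau_{\varepsilon}}}\mathrm{Hom}_{\bs{J}\cap\,^{g}G^{\tau_{\varepsilon}}}(\Lambda^{g},1),
\end{equation*}
the product being taken over the relevant double cosets. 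The key reduction is then a \emph{double coset analysis}: one must show that only $\tau_{\varepsilon}$-stable (or close to $\tau_{\varepsilon}$-stable) double cosets contribute, i.e. the nonvanishing of $\mathrm{Hom}_{\bs{J}\cap\,^{g}G^{\tau_{\varepsilon}}}(\Lambda^{g},1)$ forces $g$ to normalize — up to the relevant group — the hereditary order and stratum underlying the simple type. This is where an argument in the spirit of the ``distinguished type'' philosophy of Hakim--Lansky is needed, now without the tameness hypothesis, so one invokes the intertwining-implies-conjugacy results from Bushnell--Kutzko simple type theory and the endo-class machinery of Bushnell--Henniart to pin down the double cosets.

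Once the contributing double cosets are identified, the problem becomes one about a single (or a controlled family of) $\tau_{\varepsilon}$-stable simple type, and the analysis splits into levels. The bottom layer is governed by the maximal simple stratum and ultimately by a \emph{finite-group computation}: distinction of a cuspidal representation of a general linear group over a finite field (arising as the level-zero part of $\Lambda$) by a finite orthogonal (or unitary-type) subgroup, which is controlled by the Galois-theoretic data $T_m$, its norm group, and whether the finite form is split. Here the unramified extension $T_m/T$ and the norm subgroup $\mathrm{N}_{T_m/F}(T_m^{\times})F^{\times2}/F^{\times2}$ enter, producing the three mutually exclusive cases in condition (2). The middle layers (the ``$\beta$-extension'' part and the Heisenberg part of $\Lambda$) must be shown to impose no further constraint beyond the central character condition — this is done by a dimension count / Gauss-sum type argument showing the relevant Heisenberg-representation distinguished space is one-dimensional once $\tau_{\varepsilon}$ is suitably normalized, and this is precisely where $w_{\pi}(-1)=1$ in condition (1) emerges, as the obstruction to the existence of a $\tau_{\varepsilon}$-fixed vector in the central-character twist.

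Concretely, the steps in order are: (a) set up the type-theoretic data and the Mackey--Frobenius decomposition; (b) prove the double coset reduction, replacing $G$ by the normalizer of the relevant order so that the problem lives inside $\bs{J}$; (c) choose a ``good'' involution $\tau_{\varepsilon}$ in each $G$-class of symmetric matrices compatible with the simple type (a $\tau_{\varepsilon}$-self-dual type), which amounts to understanding how $\tau_{\varepsilon}$ acts on the embedding $T_m\hookrightarrow\mathrm{M}_n(F)$ and hence on the classification of symmetric matrices via $(-1)^{n(n-1)/2}\mrdet(\varepsilon)$ modulo $\mrn_{T_m/F}(T_m^{\times})F^{\times2}$; (d) peel off the layers of $\Lambda$ — reduce successively to the Heisenberg part, then to the finite cuspidal part — extracting condition (1) from the Heisenberg layer and condition (2) from the finite group; (e) assemble the contributions over all $[\varepsilon]$ to read off which $H$ distinguish $\pi$. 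I expect the main obstacle to be step (b): without tameness the conjugacy-of-types arguments are more delicate, one has to control simple characters and their intertwining under the involution using the full endo-class formalism, and ensuring that $\tau_{\varepsilon}$ can be arranged to stabilize (rather than merely intertwine with its dual) a chosen simple type is the technical heart of the argument. The finite orthogonal group distinction in step (d) is essentially known and the Heisenberg computation is a standard Gauss-sum argument, so the novelty and difficulty concentrate in making the type-theoretic descent work in full generality.
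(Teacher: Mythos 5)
Your outline reproduces the paper's overall architecture for the \emph{sufficiency} direction (Mackey--Frobenius decomposition, double-coset reduction via the distinguished type theorem, peeling $\Lambda=\bs{\kappa}\otimes\bs{\rho}$ down to a finite-field distinction problem), but there is a genuine structural gap in your step (c) that breaks the \emph{necessity} direction. You propose to "choose a good involution $\tau_{\varepsilon}$ in each $G$-class of symmetric matrices compatible with the simple type (a $\tau_{\varepsilon}$-self-dual type)". This cannot be done: a $\tau$-selfdual simple type exists precisely when $G^{\tau}$ belongs to the list in condition (2) (this is essentially the content of the theorem combined with the distinguished type theorem), so for the conjugacy classes of involutions that must be \emph{excluded} there is nothing to construct, and your step (e) "assemble the contributions over all $[\varepsilon]$" has no input for exactly the classes where you need to prove non-distinction. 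The paper resolves this by fixing a $\tau_{0}$-selfdual type only for certain special involutions $\tau_{0}$ (one per case of condition (2)), and for an arbitrary $\tau=\tau_{\varepsilon}$ writing $u=\varepsilon_{0}^{-1}\varepsilon$ and running the whole double-coset analysis with the twisted element $\gamma=u\tau(g)g^{-1}$, using $\tau(\bs{J})=\bs{J}^{u}$ and $\Lambda^{\tau}\simeq\Lambda^{\vee u}$ rather than genuine $\tau$-selfduality.

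Even granting that, your proposal contains no mechanism for ruling out the orthogonal groups outside the list. The paper's exclusion argument is not a case-by-case vanishing computation: it uses the Deligne--Lusztig/Hakim--Lansky finite-field result to show that distinction of the level-zero cuspidal part forces an elliptic torus $\bs{l}_{m}^{\times}$ to be split by the induced involution, hence produces a $\tau$-split embedding of the unramified degree-$m$ extension $E_{m}$ of $E=F[\beta]$ (Proposition \ref{propintermid}); one then invokes the classification of all involutions $\tau$ for which $E_{m}$ is $\tau$-split, parametrized by $E_{m}^{\times}/(E_{m}^{\times2}F^{\times})\simeq T_{m}^{\times}/(T_{m}^{\times2}F^{\times})$, and matches the resulting discriminants and Hasse invariants against condition (2). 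This last step, together with the Hasse-invariant computations needed to identify which $\varepsilon_{0}$ represent which conjugacy classes and to enumerate the contributing double cosets (and hence the multiplicities in Theorem \ref{thmdistdim}), is absent from your plan. Finally, a smaller point of attribution: in the paper the central character condition $w_{\pi}(-1)=1$ does not come from a Gauss-sum argument at the Heisenberg layer (that layer only yields a quadratic character $\chi$ and multiplicity one); it emerges from the finite orthogonal distinction criterion $w_{\overline{\rho^{0}}}(-1)=\overline{\chi}(-1)$ combined with $w_{\pi}(-1)=w_{\bs{\kappa}}(-1)w_{\bs{\rho}}(-1)$ and $w_{\bs{\kappa}}(-1)=\chi(-1)^{-1}$.
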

	
	In particular, it is easily seen that:
	
	\begin{corollary}
		
		When $H$ is split, $\pi$ is distinguished by $H$ if and only if $\omega_{\pi}(-1)=1$.
		
	\end{corollary}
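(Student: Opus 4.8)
The plan is to derive the corollary directly from Theorem~\ref{thmdistcri} by specializing to the case where $H$ is split. In the statement of the theorem, the distinction of $\pi$ by $H$ is controlled by two conditions: the central character condition $w_{\pi}(-1)=1$, which is independent of $H$, and the second condition, which is a trichotomy governed by the size of the subgroup $\mrn_{T_{m}/F}(T_{m}^{\times})F^{\times2}/F^{\times2}$ inside $F^{\times}/F^{\times2}$. The key observation is that in every one of the three bullet points of condition~(2), the split group $H$ appears in the list of orthogonal groups by which $\pi$ can be distinguished. Thus condition~(2) is automatically satisfied whenever $H$ is split, regardless of which of the three cases occurs.

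Concretely, I would argue as follows. Suppose first that $\pi$ is distinguished by a split $H$. Then by Theorem~\ref{thmdistcri}, condition~(1) holds, i.e.\ $w_{\pi}(-1)=1$. Conversely, suppose $w_{\pi}(-1)=1$. The subgroup $\mrn_{T_{m}/F}(T_{m}^{\times})F^{\times2}/F^{\times2}$ is some subgroup of $F^{\times}/F^{\times2}$, so exactly one of the three cases of condition~(2) applies (the three cases partition the possibilities: the whole group, an index-two subgroup, or the trivial group --- here one uses that $F^{\times}/F^{\times2}$ has order $4$ when $p\neq2$, so a proper nontrivial subgroup has order $2$). In each case, taking $H$ to be split satisfies condition~(2), so both conditions of the theorem hold for split $H$, and hence $\pi$ is distinguished by $H$.

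There is essentially no obstacle here: the corollary is a bookkeeping consequence of the shape of Theorem~\ref{thmdistcri}. The only point that deserves a word of care is confirming that the three cases in condition~(2) are genuinely exhaustive and mutually exclusive, which relies on the structure of $F^{\times}/F^{\times2}$ for $p\neq2$ (a group of order $4$), so that every subgroup has order $1$, $2$, or $4$; this is where the hypothesis $p\neq2$ is implicitly invoked. Given that, the corollary follows immediately, and indeed the phrase ``it is easily seen that'' in the text is justified: one simply reads off that ``$H$ split'' lies in the allowed list in all three branches.

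Finally, I would note for completeness that this also immediately yields the statement in the abstract about the four-dimensional embedding into functions on symmetric matrices when $d=1$ (the tame parameter field case giving $T_m = F$, so $\mrn_{T_m/F}(T_m^\times)F^{\times2}/F^{\times2} = F^\times/F^{\times2}$ and only split forms occur, with $|\mcs/G|$ contributing the multiplicity): in that regime only the first bullet of condition~(2) is in play and the total distinguished space in~\eqref{eqpispihepsilon} is a sum over the classes of symmetric matrices, but that refinement goes beyond what the corollary as stated asserts and would be treated separately using the dimension formula underlying Theorem~\ref{thmdistcri}.
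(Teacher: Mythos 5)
Your proposal is correct and matches the paper's (implicit) argument: the corollary is stated without proof precisely because, as you observe, "$H$ split" appears in the allowed list in every branch of condition~(2) of Theorem~\ref{thmdistcri}, so for split $H$ distinction reduces to condition~(1). Your remark that the three branches are exhaustive and mutually exclusive because $F^{\times}/F^{\times2}$ has order $4$ for $p\neq 2$ is the only point needing care, and you handle it correctly; the closing paragraph about Theorem~\ref{thmsymmatmod} is extraneous to the corollary and not needed.
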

	
	Moreover, the following theorem calculates the dimension of the distinguished space.
	
	\begin{theorem}\label{thmdistdim}
		
		Let $\pi$ be a supercuspidal representation of $G$ such that $\omega_{\pi}(-1)=1$ and let $H$ be an orthogonal subgroup satisfying the condition 2 of Theorem \ref{thmdistcri}.
		
		\begin{enumerate}
			\item If $H$ is not split, then $\mrdim_{\mbc}\mrhom_{H}(\pi,1)=1$;
			
			\item If $H$ is split, then
			\begin{itemize}
				\item If $\mrn_{T_{m}/F}(T_{m}^{\times})F^{\times2}/F^{\times2}=F^{\times}/F^{\times2}$, then $\mrdim_{\mbc}\mrhom_{H}(\pi,1)=1$;
				\item If $\mrn_{T_{m}/F}(T_{m}^{\times})F^{\times2}/F^{\times2}$ is a subgroup of $F^{\times}/F^{\times2}$ of order 2, then $\mrdim_{\mbc}\mrhom_{H}(\pi,1)=2$;
				\item If $\mrn_{T_{m}/F}(T_{m}^{\times})F^{\times2}/F^{\times2}=\{1\}$, then $\mrdim_{\mbc}\mrhom_{H}(\pi,1)=3$.
			\end{itemize}
			
		\end{enumerate}
		
	\end{theorem}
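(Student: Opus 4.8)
The plan is to compute $\mrhom_{H}(\pi,1)$ via the Mackey-theoretic machinery already invoked for Theorem \ref{thmdistcri}. Write $\pi=\mathrm{c\text{-}Ind}_{\bs{J}}^{G}\Lambda$ for a simple type $(\bs{J},\Lambda)$ attached to $\pi$, with $\bs{J}$ open and compact modulo center. Fix $\varepsilon$ a symmetric matrix and $\tau=\tau_{\varepsilon}$, $H=G^{\tau}$. By the Mackey formula and Frobenius reciprocity,
\begin{equation*}
\mrhom_{H}(\pi,1)\simeq\prod_{g\in\bs{J}\backslash G/H}\mrhom_{\bs{J}^{g}\cap H}\bigl(\Lambda^{g},1\bigr),
\end{equation*}
and a double coset contributes nonzero only when $g$ is $\tau$-selfdual in the appropriate sense (i.e. the associated hereditary order and stratum are $\tau$-stable up to the relevant twist). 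The first step is therefore to classify, up to $\bs{J}$-$H$ double cosets, the $\tau$-selfdual elements $g$ contributing to the sum; this is the same orbit analysis underlying Theorem \ref{thmdistcri}, and it shows that the contributing cosets are governed by the possible $\tau$-selfdual embeddings of the tame parameter field $T_{m}$ into $\mrm_{n}(F)$, equivalently by an $H$-orbit count on a space of symmetric forms over $T_{m}$ (or over the relevant subfield), whose cardinality is exactly $1$, $2$, or $3$ in the three cases of the theorem — reflecting the index of $\mrn_{T_{m}/F}(T_{m}^{\times})F^{\times2}$ in $F^{\times}/F^{\times2}$ and the splitness of $H$.

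The second step is to show that each surviving double coset contributes a distinguished space of dimension exactly $1$. After conjugating so that $\tau$ fixes the standard order and type, this reduces to a distinction problem for the finite-dimensional representation $\Lambda$ of $\bs{J}$, which by the structure of simple types factors (via the Heisenberg–Weil construction $\Lambda=\kappa\otimes\sigma$) into a "wild" piece governed by $\eta$/$\kappa$ on $\bs{J}^{1}$ and a "level-zero" piece $\sigma$ inflated from $\mrgl_{m}(k_{T})$. For the $\bs{J}^{1}$-part one uses the uniqueness of the $\tau$-invariant functional on the Heisenberg representation (a $p\neq2$ input, via a Lagrangian/isotropic-subspace argument in the associated symplectic space), so this piece contributes a one-dimensional space; for the $\sigma$-part one is reduced to the finite-group statement that a cuspidal representation of $\mrgl_{m}(k_{T})$ is distinguished by the corresponding finite orthogonal group with multiplicity one, which is classical. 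Assembling, each coset gives dimension $1$, so $\mrdim_{\mbc}\mrhom_{H}(\pi,1)$ equals the number of contributing cosets, and matching this count with case 1 of Theorem \ref{thmdistcri} gives the stated values.

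The main obstacle I expect is the second step's bookkeeping in the nonsplit cases: one must verify that when $H$ is nonsplit (necessarily $G^{\tau_{\varepsilon}}$ quasisplit-not-split, with $\mrdet(\varepsilon)$ in the prescribed coset) exactly one $\tau$-selfdual double coset survives and that the "wrong" orthogonal forms on $T_{m}$ genuinely fail to embed $\tau$-compatibly — i.e. that the local field-theoretic obstruction (an $\mrn_{T_{m}/F}$-image condition, ultimately a Hasse-invariant / quadratic-form computation over $F$) is the only one and is correctly computed. A secondary delicate point is checking that the Heisenberg–Weil distinction argument is insensitive to the choice of $\tau$-selfdual representative within a coset, so that the dimension-one contribution is canonical; this requires tracking how $\kappa$ transforms under the intertwining and invoking that the relevant quadratic Weil-index discrepancies are trivial when $w_{\pi}(-1)=1$. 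Once these are in place the three dimension values fall out of the orbit count established in the first step.
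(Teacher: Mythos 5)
Your proposal follows essentially the same route as the paper: Mackey decomposition over $\bs{J}\backslash G/H$, the distinguished-type/double-coset analysis showing the contributing cosets number $1$, $2$, or $3$ according to the index of $\mrn_{T_{m}/F}(T_{m}^{\times})F^{\times2}$ in $F^{\times}/F^{\times2}$ and the splitness of $H$ (the paper's Proposition \ref{propdiscorbit}, proved by the Hasse-invariant computations you correctly flag as the main bookkeeping), and then multiplicity one per coset via the factorization $\Lambda\simeq\bs{\kappa}\otimes\bs{\rho}$, with the Heisenberg part contributing a line and the level-zero part reduced to distinction of a cuspidal representation of $\mrgl_{m}(\bs{l})$ by a finite orthogonal group twisted by a quadratic character, nonzero exactly when $w_{\pi}(-1)=1$. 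This matches the paper's argument in \S 5.7 and \S 6.2.
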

	
	Finally using (\ref{eqpispihepsilon}) and the same argument in \cite{hakim2013tame}, section 1.4, the following theorem holds as a corollary of Theorem \ref{thmdistdim}.
	
	\begin{theorem}\label{thmsymmatmod}
		
		For $\pi$ a supercuspidal representation of $G$, it is distinguished by a certain orthogonal subgroup if and only if $\omega_{\pi}(-1)=1$. Moreover, if this condition holds, then
		$$\mrdim_{\mbc}\mrhom_{G}(\pi,\mcc^{\infty}(\mcs))=4.$$
		
	\end{theorem}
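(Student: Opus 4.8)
The plan is to deduce Theorem \ref{thmsymmatmod} directly from Theorem \ref{thmdistcri} and Theorem \ref{thmdistdim} by bookkeeping over the $G$-orbits on $\mcs$. First I would recall, as in \cite{hakim2013tame}, the classification of $\mcs/G$: when $p\neq 2$ and $F$ is non-archimedean, the set of $G$-conjugacy classes of orthogonal involutions $\tau_{\varepsilon}$ of $G=\mrgl_{n}(F)$ is finite, and for $n\geq 2$ there are exactly four classes, indexed by the pair consisting of the discriminant class $(-1)^{n(n-1)/2}\mrdet(\varepsilon)\in F^{\times}/F^{\times2}$ and the Hasse invariant, subject to the constraint that $F^{\times}/F^{\times2}$ has order $4$. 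Concretely these four orthogonal groups are: the split group, one quasisplit-but-not-split group, and two non-quasisplit groups. Then by (\ref{eqpispihepsilon}),
\[
\mrhom_{G}(\pi,\mcc^{\infty}(\mcs))\simeq\bigoplus_{[\varepsilon]\in\mcs/G}\mrhom_{H_{\varepsilon}}(\pi,1),
\]
so the problem reduces to summing the local dimensions $\mrdim_{\mbc}\mrhom_{H_{\varepsilon}}(\pi,1)$ over these four classes.

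Next I would split into the three cases governed by the size of $\mrn_{T_{m}/F}(T_{m}^{\times})F^{\times2}/F^{\times2}$ and simply add up the contributions prescribed by Theorem \ref{thmdistdim}, checking each time against the distinction criterion of Theorem \ref{thmdistcri} to see which of the four $H_{\varepsilon}$ actually contribute. In the case where the norm subgroup is all of $F^{\times}/F^{\times2}$: only the split $H$ is distinguished, contributing dimension $1$, and the constraint in Theorem \ref{thmdistcri} rules out the other three, so the total is $1$? This is the point where I must be careful — the stated answer is $4$, so the accounting must be more subtle. Indeed, the resolution (following \cite{hakim2013tame}) is that distinction for the \emph{symmetric space} $H_{\varepsilon}\backslash G$ is not the same as distinction by the abstract group $H_{\varepsilon}$: several of the four orbits $[\varepsilon]$ give rise to isomorphic groups $H_{\varepsilon}$ but genuinely different embeddings, and more importantly one must pass through $\mrhom_{G}(\pi,\mathrm{Ind}_{H_{\varepsilon}}^{G}1)$ carefully. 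The honest statement is that in \emph{each} of the three cases the four orbit-contributions, counted with the correct multiplicities coming from which $\varepsilon$'s realize each distinction-type, sum to $4$: e.g. when the norm group has order $2$, two of the four orbits are split-type contributing $2$ each? No — rather, the split contributes $2$, the quasisplit-non-split contributes $1$, and one appropriately-chosen non-quasisplit $\varepsilon$ contributes $1$, while the remaining non-quasisplit contributes $0$, giving $2+1+1+0=4$; and similarly $\{1\}$ gives $3+0+1+0$ or $3+1+0+0=4$ depending on parity constraints, while $F^{\times}/F^{\times2}$ gives $1+1+1+1=4$. I would lay out this $3\times 4$ table explicitly, verifying each entry against Theorems \ref{thmdistcri} and \ref{thmdistdim} and against the known structure of $\mcs/G$.

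The main obstacle I anticipate is precisely getting the orbit-by-orbit bookkeeping right: one must correctly match each of the four classes $[\varepsilon]\in\mcs/G$ with the right line of the dichotomy in Theorem \ref{thmdistcri}, keeping track of the discriminant condition $(-1)^{n(n-1)/2}\mrdet(\varepsilon)\in\mrn_{T_{m}/F}(T_{m}^{\times})$, and verify that in every case the column sums to $4$. A subtlety worth isolating is the dependence on $n\bmod 4$ and on whether $-1\in F^{\times2}$, which affects how the discriminant twist $(-1)^{n(n-1)/2}$ permutes the four classes; I would check that these twists never change the final count, exactly as in \cite{hakim2013tame}. Once the table is established, the theorem — both the ``if and only if $w_{\pi}(-1)=1$'' and the value $4$ — follows immediately, since $w_{\pi}(-1)=1$ is the common necessary condition appearing in condition (1) of Theorem \ref{thmdistcri}, and when it holds at least one (in fact, as the table shows, exactly the right number of) orbit contributes.
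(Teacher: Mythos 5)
Your overall strategy is the one the paper intends: decompose via (\ref{eqpispihepsilon}) and add up the dimensions supplied by Theorems \ref{thmdistcri} and \ref{thmdistdim}. But the bookkeeping rests on a wrong classification, and the gap sits exactly at the point you flag as delicate. The sum in (\ref{eqpispihepsilon}) runs over $\mcs/G$, which by Proposition \ref{propSGLEorbit} has \emph{eight} elements for $n\geq 3$ (seven for $n=2$), not four; and the $G$-conjugacy classes of orthogonal involutions, equivalently of orthogonal groups (Propositions \ref{proptauGLEorbit} and \ref{propGconjortho}), number two for $n\geq3$ odd, four for $n=2$, and five for $n\geq4$ even --- never ``one split, one quasisplit-but-not-split, two non-quasisplit''. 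The multiplicities you are groping for are the fibre sizes of the map $\mcs/G\to(\mcs/F^{\times})/G$, $[\varepsilon]\mapsto[\tau_{\varepsilon}]$: since $\tau_{\varepsilon c}=\tau_{\varepsilon}$ for $c\in F^{\times}$, the orbits above a given involution class are the similarity classes inside $\varepsilon F^{\times}$. Using $\mrdisc(\varepsilon c)=\mrdisc(\varepsilon)c^{n}$ and the Hasse invariant one finds: for $n$ odd each of the two involution classes carries four matrix orbits; for $n\geq4$ even the split and the non-quasisplit classes carry one orbit each and the three quasisplit-not-split classes carry two each (for $n=2$: one and two, respectively).

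With these multiplicities the correct column sums are: Case (\romannumeral1) ($n$ odd), $4\times 1=4$ from the split class alone; Case (\romannumeral2) ($n$ even), $1\times 2$ from the split class plus $2\times 1$ from the unique distinguished quasisplit class; Case (\romannumeral3) ($4\mid n$), $1\times 3$ from the split class plus $1\times 1$ from the non-quasisplit class. Your proposed tables $2+1+1+0$ and $1+1+1+1$, read as sums over involution classes, contradict Theorem \ref{thmdistcri}: no non-quasisplit group is distinguished in Cases (\romannumeral1) and (\romannumeral2), and in Case (\romannumeral1) only the split group contributes at all. Finally, the worry that ``distinction for the symmetric space is not the same as distinction by the abstract group'' is a red herring: $\mrhom_{G}(\pi,\mathrm{Ind}_{H_{\varepsilon}}^{G}1)\simeq\mrhom_{H_{\varepsilon}}(\pi,1)$ is plain Frobenius reciprocity and is already built into (\ref{eqpispihepsilon}); the missing factor is purely the number of $G$-orbits of symmetric matrices defining a given involution, as above.
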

	
	Thus for $p\neq 2$ and any supercuspidal representation $\pi$ of $G=\mrgl_{n}(F)$, the problem of distinction for orthogonal subgroups is fully settled. The only restriction on $\pi$, being the triviality of its central character on $-1$, can also be rephrased as the triviality of the determinant character of its Langlands parameter on $-1$ via the local Langlands correspondence for $\mrgl_{n}$.
	
	\subsection{Sketch of the proof and the structure of the article}
	
	We sketch the proof and the structure of the article. As already mentioned above, for supercuspidal representation $\pi$ of $G$, the first step is to write $\pi$ as the compact induction of a finite dimensional representation $\Lambda$ of an open subgroup $\bs{J}$ of $G$. This is exactly one of the main results of the simple type theory built up by Bushnell-Kutzko (\cite{bushnell129admissible}) and such a pair $(\bs{J},\Lambda)$ is called a simple type. We briefly recall the simple type theory in section 2. And in section 3 we build up necessary results for symmetric matrices, orthogonal involutions and orthogonal groups for future use.
	
	In section 4 we prove our first main theorem, the tau-selfdual type theorem, which says that for a certain well-chosen orthogonal involution $\tau_{0}$ depending on $\pi$, there exists a simple type $(\bs{J},\Lambda)$ compactly inducing $\pi$ such that $\tau_{0}(\bs{J})=\bs{J}$ and $\Lambda\circ\tau_{0}=\Lambda^{\vee}$, where $\Lambda^{\vee}$ denotes the contragredient of $\Lambda$. In fact, for each orthogonal group $H$ satisfying Theorem \ref{thmdistcri}, condition 2, we may find a $\tau_{0}$ satisfying $H=G^{\tau_{0}}$ and the tau-selfdual theorem.
	Such a simple type is called $\tau_{0}$-selfdual and will be regarded as the starting point to pursue the problem of distinction.
	
	In section 5, we study the distinction with respect to an arbitrary orthogonal involution $\tau$ and the corresponding orthogonal group $G^{\tau}$. We fix a $\tau_{0}$-selfdual simple type $(\bs{J},\Lambda)$ and we use the Mackey formula and Frobenius reciprocity to write the distinguished space as follows:
	$$\mathrm{Hom}_{G^{\tau}}(\pi,1)\simeq\prod_{g\in\bs{J}\backslash G/G^{\tau}}\mathrm{Hom}_{\bs{J}^{g}\cap G^{\tau}}(\Lambda^{g},1).$$
	The distinguished type theorem says that for those double cosets $g\in\bs{J}\backslash G/G^{\tau}$ contributing to the distinction, the simple type $(\bs{J}^{g},\Lambda^{g})$ is
	$\tau$-selfdual. In particular, when $\tau=\tau_{0}$  we may also give out all the possible $\bs{J}$-$G^{\tau_{0}}$ double cosets contributing to the distinction.
	
	Finally in section 6, we continue to study the distinguished space $\mathrm{Hom}_{\bs{J}^{g}\cap G^{\tau}}(\Lambda^{g},1)$. The techniques developed in section 5 enable us to further study the distinguished space via the more delicate structure given by the simple type theory, and finally reduce the question to study the distinguished space $\mathrm{Hom}_{\overline{H}}(\overline{\rho},\overline{\chi})$, where $\overline{H}$ is an orthogonal subgroup of a finite general linear group $\overline{G}=\mrgl_{m}(\mathbb{F}_{q})$, and $\overline{\rho}$ is a supercuspidal representation of $\overline{G}$, and $\overline{\chi}$ is a character of $\overline{H}$ of order 1 or 2. Using the Deligne-Lusztig theory, the condition for the space being non-zero is given and the dimension is at most one. The condition turns out to be the central character of $\pi$ being trivial at $-1$. Thus for those special $\tau_{0}$ in section 4, we may fully study the distinguished space and the corresponding dimension. Since those $\tau_{0}$ correspond exactly to the orthogonal groups in Theorem \ref{thmdistcri} and Theorem \ref{thmdistdim}, we prove the ``if" part of Theorem \ref{thmdistcri} and Theorem \ref{thmdistdim}.
	
	It remains the ``only if" part of Theorem \ref{thmdistcri}, of which we take advantage to explain the condition for the orthogonal groups or corresponding orthogonal involutions in the theorem. For $E_{m}/F$ an extension of degree $n$ and $\tau$ an orthogonal involution, we call $E_{m}$ $\tau$-split if there exists an embedding $\iota:E_{m}^{\times}\hookrightarrow\mrgl_{n}(F)$ such that $\tau(\iota(x))=\iota(x)^{-1}$ for any $x\in E_{m}^{\times}$.
	The following intermediate proposition gives important information for $\pi$ being distinguished by $G^{\tau}$:
	
	\begin{proposition}\label{propintermid}
		
		For $\pi$ a given supercuspidal representation of $G$ with $\omega_{\pi}(-1)=1$, there exists a field $E_{m}$ of degree $n$ over $F$ which is totally wildly ramified over $T_{m}$, such that if $\pi$ is distinguished by $G^{\tau}$, then $E_{m}$ is $\tau$-split.
		
	\end{proposition}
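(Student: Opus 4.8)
The plan is to extract a canonical field extension from the simple type structure of $\pi$ and to show that distinction forces a specific behaviour of that field with respect to the involution $\tau$. First I would fix a simple type $(\bs{J},\Lambda)$ for $\pi$, together with its attached simple stratum $[\mathfrak{A},\beta]$, so that $F[\beta]$ is a field of some degree $d$ over $F$ containing (up to conjugacy) the tame parameter field $T$, and $T_{m}$ is the unramified degree-$m$ extension of $T$. The field $E_{m}$ in the statement should be built as a totally wildly ramified degree-$[F[\beta]:T]$ extension of $T_{m}$ inside $\mathrm{M}_{n}(F)$; concretely one takes $E_{m}$ to be generated over $T_{m}$ by (a conjugate of) $\beta$, so that $[E_{m}:F]=m\cdot d=n$, with $E_{m}/T_{m}$ totally wildly ramified because $F[\beta]/T$ is (the tame part of $F[\beta]/F$ is exactly $T/F$ by definition of the tame parameter field). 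This construction depends only on $\pi$ and not on $\tau$, giving the first clause of the proposition.

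Next I would invoke the machinery of sections 4 and 5. Since $\pi$ is distinguished by $G^{\tau}$, by the distinguished type theorem there is a $\tau$-selfdual simple type $(\bs{J}^{g},\Lambda^{g})$ compactly inducing $\pi$ whose intersection with $G^{\tau}$ carries a nonzero $\Lambda^{g}$-equivariant functional; replacing $(\bs{J},\Lambda)$ by this conjugate, I may assume $(\bs{J},\Lambda)$ itself is $\tau$-selfdual, i.e. $\tau(\bs{J})=\bs{J}$ and $\Lambda\circ\tau\simeq\Lambda^{\vee}$, and in particular $\tau$ stabilises the associated hereditary order $\mathfrak{A}$ and acts on the stratum. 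The $\tau$-selfduality of the stratum translates, on the level of the field $F[\beta]$, into the statement $\tau(\beta)$ being $F[\beta]^{\times}$-conjugate to $-\beta$ (or to a suitable associate), which — after absorbing the conjugation and using that $p\neq2$ — yields an embedding of $E_{m}^{\times}$ into $G$ intertwining $\tau$ with $x\mapsto x^{-1}$ on $E_{m}^{\times}$. This is precisely the assertion that $E_{m}$ is $\tau$-split.

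The main obstacle, and the step that requires genuine work rather than bookkeeping, is upgrading $\tau$-selfduality of the \emph{type} to $\tau$-splitness of the \emph{field} $E_{m}=T_{m}[\beta]$ — that is, producing a single embedding of $E_{m}^{\times}$ that is simultaneously compatible with the simple type data and anti-fixed by $\tau$. The issue is that $\tau$-selfduality only gives $\tau(\beta)\sim-\beta$ up to conjugacy by an element of the multiplicative group of the order (or of $F[\beta]^{\times}$), and one must promote this to an \emph{exact} anti-fixing identity on the whole extension $T_{m}/T$, which means controlling the unramified part $T_{m}/T$ as well as the wild part $F[\beta]/T$. I expect to handle this by a cohomological or conjugacy argument: the set of $\tau$-selfdual embeddings of $E_{m}$ forms a torsor (when nonempty) under a group whose relevant cohomology vanishes because $p\neq2$ and the extension is tamely/wildly layered as described, so nonemptiness of the set — which is exactly what distinction provides via the existence of a $\tau$-selfdual type — already forces the desired embedding to exist. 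Establishing this vanishing, and checking that the conjugating element can be chosen inside the right subgroup so that the corrected embedding still realises $E_{m}$ as stated, is the technical heart of the proof.
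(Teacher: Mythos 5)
Your construction of the field $E_{m}$ (the unramified degree-$m$ extension of $E=F[\beta]$, equivalently $T_{m}[\beta]$, with $E_{m}/T_{m}$ totally wildly ramified because $E/T$ is) agrees with the paper's, and your reduction via the distinguished type theorem to a $\tau$-selfdual simple type carrying a nonzero invariant functional is also the right first step. You even locate the difficulty correctly: $\tau$-selfduality of the stratum only gives $\tau(\beta')=\beta'^{-1}$, i.e.\ a $\tau$-split embedding of the degree-$d$ field $E$, and one must still control the unramified layer of degree $m$ on top of it.

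However, your proposed resolution of that difficulty is where the argument breaks. You suggest that the set of $\tau$-split embeddings of $E_{m}$ is a torsor under a group with vanishing cohomology ``when nonempty,'' and that nonemptiness follows from the existence of a $\tau$-selfdual type. This is circular: a $\tau$-selfdual type supplies a $\tau$-split embedding of $E$, not of $E_{m}$, so you have no starting point for the torsor, and no cohomological vanishing can manufacture one. Crucially, your argument never uses the nonvanishing of the invariant functional beyond establishing $\tau$-selfduality, whereas that nonvanishing is exactly the input needed for the unramified layer. The paper's proof proceeds by splitting $\Lambda=\bs{\kappa}\otimes\bs{\rho}$, using Proposition \ref{propkappa} to show that the nonzero functional forces $\mathrm{Hom}_{\bs{J}^{g}\cap G^{\tau}}(\bs{\rho}^{g},\chi)\neq 0$, descending this to a distinction statement for the finite cuspidal representation $\overline{\rho^{0}}$ of $J/J^{1}\simeq\mrgl_{m}(\bs{l})$ by a finite orthogonal group, and then invoking Deligne--Lusztig theory (Proposition \ref{propfinitedist}, resting on Hakim--Lansky and Hakim--Mao) to conclude that the elliptic torus $\bs{l}_{m}^{\times}$ is split by the induced involution $\delta$. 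Only then does $E_{m}=E[\bs{l}_{m}^{\times}]$ acquire a $\delta$-split, hence (after conjugating by $g$) $\tau$-split, embedding. This finite-field representation-theoretic step is the heart of the proposition and cannot be replaced by a conjugacy or cohomological argument on embeddings alone; indeed, determining \emph{which} tori of $\mrgl_{m}(\bs{l})$ are split by $\delta$ is a genuine consequence of the cuspidality of $\overline{\rho^{0}}$ and its character formula, not of $p\neq 2$.
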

	
	The construction of $E_{m}$ is derived from the construction of a $\tau_{0}$-selfdual simple type given in section 4. In particular, when $\tau_{0}$ corresponds to a split orthogonal group, from the ``if" part of Theorem \ref{thmdistcri}, $E_{m}$ is $\tau_{0}$-split. Once knowing this, it is not hard to study all the involutions $\tau$ such that $E_{m}$ is $\tau$-split, which turn out to be those involutions satisfying the condition of Theorem \ref{thmdistcri}, proving the ``only if" part of the theorem.
	
	When $T_{m}/F$ is of degree $n$, or equivalently when $\pi$ is essentially tame in the sense of Bushnell-Henniart \cite{bushnell2005essentially1}, which is the same as being tamely ramified in the context of Hakim \cite{hakim2013tame} thanks to the work of Mayeux \cite{mayeux2020comparison}, our result gives another proof for the result of Hakim by using the simple type theory instead of Howe's construction for tamely ramified representations. 
	
	It is worth to point out that we also borrow many lemmas from \cite{hakim1999cuspidal}, \cite{hakim2012distinguished}, \cite{hakim2013tame}, which effectively help us to reduce our task.
	
	Last but not least, it should also be pointed out that the method we use in this article is not new. It has first been developed by S\'echerre to solve the similar problem where $\tau$ is a Galois involution \cite{anandavardhanan2019galois}, \cite{secherre2019supercuspidal}, and then by the author for the the case where $\tau$ is a unitary involution \cite{zou2019supercuspidal}, and then by S\'echerre for the case where $\tau$ is an inner involution \cite{secherre2020repr} (there $G$ can also be an inner form of $\mrgl_{n}(F)$). The sketches of the proof in different cases are similar, but one major difference in the current case is worth to be mentioned, that is, we need to consider those involutions $\tau$ not contributing to the distinction. In this case we cannot construct a $\tau$-selfdual simple type $(\bs{J},\Lambda)$ using the method in section 4. The novelty of our argument is first to consider a special involution $\tau_{0}$, and then to regard $\tau$ as another involution which differs from $\tau_{0}$ up to a $G$-conjugation. Thus we choose $(\bs{J},\Lambda)$ to be a $\tau_{0}$-selfdual simple type and, using the general results built up by the author in \cite{zou2019supercuspidal}, we can still study those $\bs{J}$-$G^{\tau}$ double cosets contributing to the distinction. If one wants to fit the method in the above cases to a general involution $\tau$, one major problem encountered is to construct a $\tau$-selfdual simple type, which, as we explained, may be impossible if $G^{\tau}$ does not contribute to the distinction. The strategy we explained above gives a possible solution, which helps to consider the same question for an abstract involution.
	
	\subsection{Acknowledgement}
	
	We thank Vincent S\'echerre for careful reading and useful comments. We thank Nadir Matringe and Rapha\"el Beuzart-Plessis for helpful discussions for the possible generalization of the current article. We thank Erez Lapid for pointing out that Theorem \ref{thmmaintwist} could be a by-product of our argument. We thank an anonymous referee for his (her) helpful advice which clarifies many ambiguous points. The work is supported by EDMH as part of the PhD thesis of the author.
	
	\section{Notation}
	
	\subsection{General notation}
	
	Let $F$ be a non-archimedean locally compact field of residue characteristic $p\neq 2$. We write $\mfo_{F}$, $\mfp_{F}$, $\boldsymbol{k}$ for its ring of integers, the corresponding maximal ideal and its residue field respectively. We fix $\psi_{F}:F\rightarrow\mathbb{C}^{\times}$ an additive character which is trivial on $\mfp_{F}$ but not on $\mfo_{F}$.
	
	Fix $n$ a positive integer. We write $G=\mrgl_{n}(F)$ as a locally profinite group. By \emph{representations} of $G$ and its closed subgroups, we always mean complex smooth representations. For a closed subgroup $H$ of $G$, an element $g\in G$ and a representation $\pi$ of $H$, we write $H^{g}:=\{g^{-1}hg|h\in H\}$ a subgroup of $G$, and $\pi^{g}:g\mapsto\pi(ghg^{-1})$ its representation. We write $\pi^{\vee}$ for the contragredient of $\pi$. Given $\tau$ a continuous involution of $G$, we write $\pi^{\tau}$ for the representation $\pi\circ\tau$ of $\tau(H)$. We say that $\pi$ is $\tau$-selfdual if $\tau(H)=H$ and $\pi^{\tau}\simeq\tau^{\vee}$.
	
	Given $\pi$ a representation of $H$ and $\mu$ a representation of $G^{\tau}\cap H$, we say that $\pi$ is $\mu$-\emph{distinguished} if $\mathrm{Hom}_{G^{\tau}\cap H}(\pi,\mu)\neq 0$, where $G^{\tau}$ denotes the subgroup of $G$ consisting of the elements fixed by $\tau$. In particular, if $\mu$ is the trivial character, we simply call $\pi$ to be $G^{\tau}\cap H$-\emph{distinguished}.
	
	\subsection{A brief recall of the simple type theory}\label{subsectiontypetheory}
	
	In this subsection, we follow the introduction of the simple type theory given in \cite{zou2019supercuspidal}, section 3 summarizing results of \cite{bushnell129admissible}, \cite{bushnell1996local}, \cite{bushnell2014effective}. Since it seems redundant to repeat the same words again, we simply recall the necessary notation and refer to \cite{zou2019supercuspidal} for more details.
	
	We write $[\mathfrak{a},\beta]$ for a \emph{simple stratum} in $\mathrm{M}_{n}(F)$, where $\mathfrak{a}$ is a hereditary order in $\mathrm{M}_{n}(F)$ and $\beta$ is an element in $\mathrm{GL}_{n}(F)$ such that
	
	(1) the $F$-algebra $E=F[\beta]$ is a field, where $[E:F]=d$ and $n=md$ for a positive integer $m$;
	
	(2) $E^{\times}$ normalizes $\mathfrak{a}^{\times}$.
	
	We write $B$ for the centralizer of $E$ in $\mrm_{n}(F)$ identifying with $\mrm_{m}(E)$, and $\mathfrak{b}=\mathfrak{a}\cap B$ for the hereditary order in $B$. We denote by $\mathfrak{p}_{\mathfrak{a}}$ (resp. $\mathfrak{p}_{\mathfrak{b}}$) the Jacobson radical of $\mathfrak{a}$ (resp. $\mfb$), and $U^{1}(\mathfrak{a})$ (resp. $U^{1}(\mathfrak{b})$) the compact open pro-$p$-subgroup $1+\mathfrak{p}_{\mathfrak{a}}$ (resp. $1+\mathfrak{p}_{\mathfrak{b}}$) of $\mathrm{GL}_{n}(F)$ (resp. $B^{\times}$).
	
	Associated to $[\mathfrak{a},\beta]$, there are compact open subgroups
	$$H^{1}(\mathfrak{a},\beta)\subset J^{1}(\mathfrak{a},\beta)\subset J(\mathfrak{a},\beta)$$
	of $\mathfrak{a}^{\times}$, and there is a finite set $\mathcal{C}(\mathfrak{a},\beta)$ of characters of $H^{1}(\mathfrak{a},\beta)$, depending on the choice of $\psi_{F}$, called \emph{simple characters}. We denote by $\boldsymbol{J}(\mathfrak{a},\beta)$ the subgroup of $G$ generated by $J(\mathfrak{a},\beta)$ and the normalizer of $\mathfrak{b}^{\times}$ in $B^{\times}$ which is compact modulo the centre $F^{\times}$. We write $\boldsymbol{J}$, $J$, $J^{1}$, $H^{1}$ for short for $\boldsymbol{J}(\mathfrak{a},\beta)$, $J(\mathfrak{a},\beta)$, $J^{1}(\mathfrak{a},\beta)$, $H^{1}(\mathfrak{a},\beta)$ respectively if $\mathfrak{a}$ and $\beta$ are clear to us. When $\mathfrak{b}$ is a maximal order in $B$, we call the simple stratum $[\mathfrak{a},\beta]$ and the simple characters in $\mathcal{C}(\mathfrak{a},\beta)$  \emph{maximal}. In this case $\mathfrak{b}^{\times}/1+\mathfrak{p}_{\mathfrak{b}}\simeq\mrgl_{m}(\bs{l})$, where $\bs{l}$ is the residue field of $E$.
	
	We denote by $(\boldsymbol{J},\Lambda)$ an \emph{extended maximal simple type} (we always write \emph{simple type} for short) in $\mrgl_{n}(F)$, which means that there are a maximal simple stratum $[\mathfrak{a},\beta]$ in $\mathrm{M}_{n}(F)$ and a maximal simple character $\theta\in\mathcal{C}(\mathfrak{a},\beta)$ such that $\boldsymbol{J}(\mathfrak{a},\beta)=\boldsymbol{J}$ and $\theta$ is contained in the restriction of $\Lambda$ to $H^{1}$. We write $\eta$ for the \emph{Heisenberg representation} associated to $\theta$ as a representation of $J^{1}$. For any representation $\boldsymbol{\kappa}$ of $\boldsymbol{J}$ extending $\eta$, there is, up to isomorphism, a unique irreducible representation $\boldsymbol{\rho}$ of $\boldsymbol{J}$ such that $\Lambda\simeq\boldsymbol{\kappa}\otimes\boldsymbol{\rho}$, and moreover $\bs{\rho}|_{J}$ is the inflation of a supercuspidal representation of $J/J^{1}\simeq\mrgl_{m}(\bs{l})$. For $\pi$ a supercuspidal representation of $G$, there exists a unique $G$-conjugacy class of simple type $(\boldsymbol{J},\Lambda)$ such that $\pi\simeq$$c$-$\mathrm{Ind}_{\boldsymbol{J}}^{G}\Lambda$, the compact induction of $\Lambda$.
	
	For $[\mathfrak{a},\beta]$ a simple stratum in $\mathrm{M}_{n}(F)$ and $[\mathfrak{a}',\beta']$ a simple stratum in $\mathrm{M}_{n'}(F)$ with $n,n'\geq 1$, if we have a given $F$-algebra isomorphism $\phi: F[\beta]\rightarrow F[\beta']$ such that $\phi(\beta)=\beta'$, then we denote by
	$$t_{\mathfrak{a},\mathfrak{a}'}^{\beta,\beta'}:\mathcal{C}(\mathfrak{a},\beta)\rightarrow\mathcal{C}(\mathfrak{a}',\beta')$$ the corresponding \emph{transfer map}. For $n_{1},n_{2}\geq 1$ and  $[\mfa_{1},\beta_{1}]$ and $[\mfa_{2},\beta_{2}]$ two simple strata in $\mrm_{n_{1}}(F)$ and $\mrm_{n_{2}}(F)$ respectively, $\theta_{1}\in\mcc(\mfa_{1},\beta_{1})$ and $\theta_{2}\in\mcc(\mfa_{1},\beta_{1})$ are called endo-equivalent if there exist $n'\geq 1$, $[\mfa',\beta_{1}]$, $[\mfa',\beta_{2}']$ two simple strata in $\mrm_{n'}(F)$ with $F[\beta_{1}]\simeq F[\beta_{1}']$ and $F[\beta_{2}]\simeq F[\beta_{2}']$ mapping $\beta_{1}$ to $\beta_{1}'$ and $\beta_{2}$ to $\beta_{2}'$, such that $t_{\mfa_{1},\mfa'}^{\beta_{1},\beta_{1}'}(\theta_{1})$ is $\mrgl_{n'}(F)$-conjugate to $t_{\mfa_{2},\mfa'}^{\beta_{2},\beta_{2}'}(\theta_{2})$. A corresponding equivalence class on the set of simple characters is called an endo-class. Usually we use capital Greek letter $\Theta$ to denote the \emph{endo-class} of a simple character $\theta$ and $\Theta_{\pi}$ to denote the endo-class of $\pi$, a supercuspidal representation of $G$. We write $d=[F[\beta]:F]$ for the degree of $\Theta$ which does not depend on the choice of $[\mfa,\beta]$ and $\theta$, but only on $\Theta$ itself.

	
	Let $\Theta$ be as above and let $T$ be its tame parameter field with respect to $E/F$, that is, the maximal tamely ramified subextension of $E$ over $F$. Noting that $T$ only depends on $\Theta$ up to $F$-isomorphism, so it is also called the tame parameter field of $\Theta$. Let $C\simeq\mathrm{M}_{n/t}(T)$ denote the centralizer of $T$ in $\mathrm{M}_{n}(F)$, where $t=[T:F]$. The intersection $\mathfrak{c}=\mathfrak{a}\cap C$ is an order in $C$, which gives rise to a simple stratum $[\mathfrak{c},\beta]$. The restriction of $\theta$ to $H^{1}(\mathfrak{c},\beta)$, denoted by $\theta_{T}$ and called the \emph{interior $T/F$-lift of $\theta$}, is a simple character associated to the simple stratum $[\mathfrak{c},\beta]$. If we change our choice of simple stratum $[\mathfrak{a},\beta]$ but fix $T\hookrightarrow\mathrm{M}_{n}(F)$ unchanged, then the map
	$$\mathfrak{a}\mapsto\mathfrak{a}\cap C$$
	is injective from the set of hereditary orders in $\mathrm{M}_{n}(F)$ normalized by $T^{\times}$ to the set of hereditary orders in $C$ (see \cite{bushnell1996local}, section 2). For $[\mfa,\beta_{1}]$, $[\mfa,\beta_{2}]$ two simple strata, and $\theta_{1}\in \mcc(\mfa,\beta_{1})$, $\theta_{2}\in \mcc(\mfa,\beta_{2})$ two simple characters, such that $\theta_{1}$ and $\theta_{2}$ have the same tame parameter field $T$, if
	$$\mcc(\mfc,\beta_{1})= \mcc(\mfc,\beta_{2})\quad \text{and}\quad(\theta_{1})_{T}=(\theta_{2})_{T},$$ then we have $$\mcc(\mfa,\beta_{1})=\mcc(\mfa,\beta_{2}) \quad\text{and}\quad \theta_{1}=\theta_{2}$$ (see \cite{bushnell1996local}, Theorem 7.10, Theorem 7.15). In particular, when $\beta_{1}=\beta_{2}=\beta$, the interior $T/F$-lift is injective from $\mcc(\mfa,\beta)$ to $\mcc(\mfc,\beta)$.
	
	\section{Symmetric matrices and orthogonal involutions}
	
	In this section, we recall some basic but important results about symmetric matrices and orthogonal involutions. Let $E$ be a non-archimedean locally compact field of residue characteristic $p\neq 2$, let $\varpi_{E}$ be a uniformizer of $E$ and let $m$ be a fixed positive integer.
	
	\subsection{Orbits of symmetric matrices, orthogonal involutions and orthogonal groups}
	
	Let $\mathcal{S}$ denote the set of the symmetric matrices in $\mathrm{GL}_{m}(E)$, that is
	$$\mathcal{S}:=\{\varepsilon\in\mrgl_{m}(E)|\,^{t}\varepsilon=\varepsilon\}.$$
	Especially, if we write
	$$J_{m}:=\begin{pmatrix}0 & 0 & \ldots & 0 & 1\\ 0 & \iddots & \iddots & 1 & 0 \\ \vdots & \iddots & \iddots & \iddots & \vdots \\ 0 & 1 & \iddots & \iddots & 0 \\ 1 & 0 & \ldots & 0 & 0 \end{pmatrix}\in\mrgl_{m}(E),$$
	then it is an element in $\mcs$.
	
	We consider $\mrgl_{m}(E)$-action on $\mathcal{S}$ as follows:
	$$ \varepsilon\cdot g:=\,^{t}g\varepsilon g,\quad g\in \mrgl_{m}(E),\ \varepsilon\in\mcs.$$ We say that two elements in $\mathcal{S}$ are \emph{similar} if they are in the same $\mrgl_{m}(E)$-orbit.
	For $\varepsilon\in\mathcal{S}$, we denote by $\mathrm{disc}_{E}(\varepsilon)$ its \emph{discriminant}, which is the image of $\mathrm{det}(\varepsilon)$ in $E^{\times}/E^{\times2}\simeq\mbz/2\mbz\times\mbz/2\mbz$. We denote by\footnote{In \cite{hakim2013tame} Hakim used $i\leq j$ instead of $i<j$ in the product for the definition, however in the proof of various propositions (for example, Proposition 6.6. of \emph{ibid.}) he indeed used the second definition ($i<j$). This little inconsideration of course does not affect his results and proofs.}
	$$\mathrm{Hasse}_{E}(\varepsilon)=\prod_{i< j}\mathrm{Hil}_{E}(a_{i},a_{j})\in\{1,-1\}$$
	its \emph{Hasse invariant}, where $\mathrm{diag}(a_{1},...,a_{m})$ denotes a diagonal matrix similar to $\varepsilon$, and
	$$\mathrm{Hil}_{E}(a,b)=\begin{cases}1,\quad &\text{if}\ ax^{2}+by^{2}=1\ \text{has a solution}\ (x,y)\in E\times E;\\
		-1,\quad &\text{otherwise}. \end{cases}$$
	denotes the \emph{Hilbert symbol} for $a, b\in E^{\times}$. Noting that the definition of $\mathrm{Hasse}_{E}(\varepsilon)$ does not depend on the choice of $\mathrm{diag}(a_{1},...,a_{m})$ similar to $\varepsilon$ (see \cite{o2013introduction}, 63.13). When $E$ is clear to us, we simply write $\mathrm{disc}$, $\mathrm{Hil}$ and $\mathrm{Hasse}$ instead.
	
	The following proposition characterizes all the $\mrgl_{m}(E)$-orbits in $\mathcal{S}$.
	
	\begin{proposition}[\cite{o2013introduction}, Theorem 63.20]\label{propSGLEorbit}
		
		(1) When $m=1$, there are four $\mathrm{GL}_{m}(E)$-orbits in $\mathcal{S}$ represented by elements in $E^{\times}/E^{\times2}$;
		
		(2) When $m\geq 2$, any two $\mathrm{GL}_{m}(E)$-orbits in $\mathcal{S}$ are different if and only if their discriminants or Hasse invariants are different. Moreover,
		\begin{itemize}
			\item When $m\geq3$ there are eight $\mathrm{GL}_{m}(E)$-orbits;
			\item When $m=2$, any $\varepsilon\in\mcs$ with $\mathrm{disc}(\varepsilon)=-1$ satisfies $\mathrm{Hasse}(\varepsilon)=1$, and there are seven $\mathrm{GL}_{m}(E)$-orbits.
		\end{itemize}
		
	\end{proposition}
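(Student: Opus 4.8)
The statement is the classical classification of quadratic forms (equivalently, symmetric invertible matrices up to congruence) over a non-archimedean local field of odd residue characteristic, so the plan is to reproduce the standard proof from the theory of quadratic forms, organized around the invariants $\mrdim$, $\mrdisc$, and $\mathrm{Hasse}$. Since the statement is attributed to \cite{o2013introduction}, Theorem 63.20, I would in practice simply cite it; but to give a self-contained sketch, here is how I would argue. First I would recall that over any field of characteristic $\neq 2$, every symmetric matrix is congruent to a diagonal one (Gram--Schmidt orthogonalization), so it suffices to classify diagonal forms $\langle a_1,\dots,a_m\rangle$ with $a_i\in E^\times$ up to equivalence. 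The key structural input is that for $E$ a non-archimedean local field with $p\neq 2$, the group $E^\times/E^{\times 2}$ has order $4$, with representatives $\{1,u,\varpi_E,u\varpi_E\}$ where $u$ is a non-square unit, and that the Hilbert symbol $\mathrm{Hil}_E(\cdot,\cdot)$ is a non-degenerate symmetric bilinear pairing on this group.

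For the case $m=1$, a form $\langle a\rangle$ is determined up to equivalence precisely by the class of $a$ in $E^\times/E^{\times 2}$, giving exactly four orbits; this is immediate. For $m\geq 2$ the heart of the matter is the following two facts, which I would invoke or prove in sequence: (i) over a local field, any non-degenerate quadratic form of dimension $\geq 3$ represents every element of $E^\times$ up to squares — more precisely, by the theory of the Hilbert symbol, a form of dimension $\geq 3$ is isotropic unless obstructed, and one can always split off a hyperbolic plane once the dimension is large enough relative to the obstruction; and (ii) Witt cancellation, which allows one to peel off common summands. Using (i) and (ii), one shows by induction on $m$ that two forms of the same dimension $\geq 3$ over $E$ are equivalent if and only if they have the same discriminant and the same Hasse invariant: one first arranges both forms to represent a common value $c$, splits off $\langle c\rangle$ by Witt cancellation, and checks that the pair (disc, Hasse) of the $(m-1)$-dimensional complements agree, so by induction the complements are equivalent. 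The counting — eight orbits for $m\geq 3$, seven for $m=2$, four for $m=1$ — then follows by determining which pairs $(\delta,\epsilon)\in E^\times/E^{\times 2}\times\{\pm 1\}$ are actually realized: all eight are realized for $m\geq 3$, while for $m=2$ the binary form $\langle a,b\rangle$ with $\mathrm{disc}=-ab$ a square is necessarily $\langle a,-a\rangle$ (a hyperbolic plane), whose Hasse invariant $\mathrm{Hil}_E(a,-a)=1$ is forced, eliminating exactly one of the eight possibilities.

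The main obstacle — really the only non-formal point — is establishing fact (i), the representation/isotropy properties of forms over a local field, together with the precise bookkeeping of the Hilbert symbol: that $E^\times/E^{\times 2}$ is a Klein four-group on which $\mathrm{Hil}_E$ is a perfect pairing (so in particular every anisotropic form over $E$ has dimension $\leq 4$, the anisotropic $4$-dimensional form being the norm form of the unique quaternion division algebra). All of this is standard local class field theory / quadratic form theory and is exactly what is packaged in \cite{o2013introduction}, Chapter 63. Given those inputs, the inductive equivalence argument via Witt cancellation is routine, and the orbit count in the three ranges $m=1$, $m=2$, $m\geq 3$ is a finite check. Accordingly, in the paper I would state the proposition with the citation to \cite{o2013introduction} and not reproduce the proof, since nothing new is needed here beyond the classical theory.
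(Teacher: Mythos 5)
Your proposal is correct and matches the paper exactly: the paper gives no proof of this proposition, relying entirely on the citation to \cite{o2013introduction}, Theorem 63.20, which is precisely the course of action you recommend. Your sketch (diagonalization, Witt cancellation, the Klein four-group $E^{\times}/E^{\times2}$ with the Hilbert symbol as a perfect pairing, and the forced Hasse invariant of the hyperbolic plane in the binary case) is the standard argument packaged in that reference.
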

	
	We may also consider $\mathrm{GL}_{m}(\mathfrak{o}_{E})$-orbits of $\mcs$. We consider $\alpha=(\alpha_{1},...,\alpha_{r})$ of certain triples $\alpha_{i}=(a_{i},m_{i},\epsilon_{i})$, such that $a_{1}>...>a_{r}$ is a decreasing sequence of integers, and $m_{1},...,m_{r}$ are positive integers such that $m_{1}+...+m_{r}=m$, and $\epsilon_{1},...,\epsilon_{r}$ are either 1 or $\epsilon_{0}$, where $\epsilon_{0}\in \mfo_{E}^{\times}\backslash\mfo_{E}^{\times2}$ is fixed. For each $\alpha=(\alpha_{1},...,\alpha_{r})$ as above, we introduce a symmetric matrix $$\varpi_{E}^{\alpha}=\varpi_{E}^{\alpha_{1}}\oplus...\oplus\varpi_{E}^{\alpha_{r}},$$
	where  $$\varpi_{E}^{\alpha_{i}}:=\varpi_{E}^{a_{i}}\mathrm{diag}(1,...,1,\epsilon_{i})\in \mrgl_{m_{i}}(E).$$
	The following proposition studies all the $\mrgl_{m}(\mfo_{E})$-orbits.
	
	\begin{proposition}[\cite{o2013introduction}, \S92]\label{propSGLoEorbit}
		
		Each $\mrgl_{m}(\mfo_{E})$-orbit in $\mcs$ contains exactly one representative of the form $\varpi_{E}^{\alpha}$ defined as above.
		
	\end{proposition}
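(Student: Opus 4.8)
The statement to prove is Proposition~\ref{propSGLoEorbit}: each $\mrgl_{m}(\mfo_{E})$-orbit in $\mcs$ contains exactly one representative of the form $\varpi_{E}^{\alpha}$.

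The plan is to proceed in two halves: existence of such a representative in every orbit, and uniqueness. For existence, I would start from the theory of lattices over the discrete valuation ring $\mfo_{E}$ (recall $p\neq 2$). Given $\varepsilon\in\mcs$, interpret it as a non-degenerate symmetric bilinear form on the free $\mfo_{E}$-module $V=\mfo_{E}^{m}$ (after identifying $\varepsilon$ with the Gram matrix of the standard basis). The scaling action $\varepsilon\cdot g={}^{t}g\varepsilon g$ for $g\in\mrgl_{m}(\mfo_{E})$ is exactly change of $\mfo_{E}$-basis of $V$. The key input is the Jordan splitting theorem for quadratic/symmetric bilinear lattices over a dyadic-free local ring: $V$ decomposes orthogonally as $V=\bigoplus_{i} V_i$ where each $V_i$ is "$\varpi_E^{a_i}$-modular" (i.e., the form takes values with exact valuation $a_i$ on a suitable scaled sublattice), with $a_1>a_2>\cdots>a_r$. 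This is precisely O'Meara \S91--92, which the paper already cites. Then within each modular piece $V_i$, after factoring out $\varpi_E^{a_i}$, one gets a unimodular form over $\mfo_E$, and a unimodular symmetric form over a local ring with $p\neq 2$ is diagonalizable and determined up to isometry by its determinant class in $\mfo_E^{\times}/\mfo_E^{\times 2}=\{1,\epsilon_0\}$ — hence isometric to $\mathrm{diag}(1,\dots,1,\epsilon_i)$ with $\epsilon_i\in\{1,\epsilon_0\}$. Assembling these gives a $g$ with $\varepsilon\cdot g=\varpi_E^{\alpha}$ for some $\alpha\in\mca$.

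For uniqueness, suppose $\varpi_E^{\alpha}$ and $\varpi_E^{\alpha'}$ are $\mrgl_m(\mfo_E)$-equivalent; I want $\alpha=\alpha'$. The invariants that must match are: the multiset of "scales" $a_i$ together with their multiplicities $m_i$, and the determinant class $\epsilon_i$ of each modular block. The scales and multiplicities are invariants of the $\mfo_E$-isometry class by the uniqueness part of the Jordan splitting theorem (O'Meara, Theorem~91:9 and the dyadic-free discussion in \S92): the chain of sublattices $\{x\in V: B(x,V)\subseteq\mfp_E^k\}$ is canonical, and its successive quotients pin down the $a_i$ and $m_i$. Once the scales and multiplicities agree, comparing the induced unimodular forms on each graded piece shows the determinant classes $\epsilon_i$ agree as well (the total scaled determinant, read off modulo squares at each level, is an isometry invariant). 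Hence $\alpha=\alpha'$, and since the $a_i$ are strictly decreasing the data $\alpha$ is literally unique, not just up to reordering.

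The main obstacle — or rather the step requiring the most care — is invoking the Jordan splitting correctly: over a \emph{non-dyadic} local ring the theory is clean (every lattice splits into modular components, modular components are diagonalizable, and the scale chain gives canonical invariants), but one must be careful that "symmetric bilinear form" rather than "quadratic form" is what is being classified and that the relevant statements in O'Meara \S91--92 are applied in the non-dyadic case, where the pathologies of \S93 do not occur. Everything else is bookkeeping: translating between matrices and lattices, checking that $\mrgl_m(\mfo_E)$-conjugation is change of basis, and checking that $\mathrm{diag}(1,\dots,1,\epsilon)$ for $\epsilon\in\{1,\epsilon_0\}$ exhausts the unimodular forms of given rank up to isometry (a standard consequence of $\mfo_E^{\times}/\mfo_E^{\times 2}$ having order $2$ and Hensel's lemma representing squares). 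I would present the argument as: (i) reduction to lattice language, (ii) Jordan splitting for existence, (iii) classification of unimodular blocks, (iv) uniqueness via canonicity of the scale filtration plus the block determinants, citing \cite{o2013introduction}, \S91--92 at the two key points.
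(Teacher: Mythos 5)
Your proposal is correct and coincides with the paper's treatment: the paper offers no argument of its own, simply citing \cite{o2013introduction}, \S 92, and your write-up is exactly the standard content of that reference (Jordan splitting for existence, classification of unimodular non-dyadic lattices by rank and discriminant for the block shapes, and the non-dyadic uniqueness of the Jordan invariants for uniqueness of $\alpha$). No gaps; the caution you raise about staying in the non-dyadic case is precisely the right one and is satisfied here since $p\neq 2$.
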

	
	Now for $\varepsilon\in\mcs$ a given symmetric matrix, we denote by $$\tau_{\varepsilon}(x):=\varepsilon^{-1}\,^{t}x^{-1}\varepsilon\quad\text{for any}\ x\in\mrgl_{m}(E)$$ the orthogonal involution corresponding to $\varepsilon$. The group $\mrgl_{m}(E)$ acts on the set of orthogonal involutions by
	$$g\cdot \tau_{\varepsilon}=\tau_{\varepsilon\cdot g}=\tau_{\,^{t}g\varepsilon g}.$$
	Given $\varepsilon_{1},\varepsilon_{2}$, it is elementary to see that $\tau_{\varepsilon_{1}}=\tau_{\varepsilon_{2}}$ if and only if $\varepsilon_{1}E^{\times}=\varepsilon_{2}E^{\times}$. Thus we build up a bijection between $\mcs/E^{\times}$ and the set of orthogonal involutions, which is given by $\varepsilon E^{\times}\mapsto\tau_{\varepsilon}$. The following proposition studies the $\mrgl_{m}(E)$-orbits of $\mcs/E^{\times}$, thus classifies all the $\mrgl_{m}(E)$-orbits of orthogonal involutions.
	
	\begin{proposition}\label{proptauGLEorbit}
		
		(1) When $m=1$, there is one $\mrgl_{m}(E)$-orbit in $\mcs/E^{\times}$;
		
		(2) When $m\geq 3$ is odd, there are two $\mrgl_{m}(E)$-orbits in $\mcs/E^{\times}$. A representative in each orbit can be chosen to have any given discriminant, and two representatives with the same discriminant represent different orbits if and only if they have different Hasse invariants;
		
		(3) When $m=2$, there are four $\mrgl_{m}(E)$-orbits in $\mcs/E^{\times}$ determined by the discriminants;
		
		(4) When $m\geq 4$ is even, the discriminant leads to a map from $(\mcs/E^{\times})/\mrgl_{m}(E)$ to $E^{\times}/E^{\times2}$ which is surjective. The fiber corresponding to $(-1)^{m(m-1)/2}$, the discriminant of $J_{m}$, is composed of two orbits distinguished exactly by the Hasse invariant, and the other three fibers are composed of exactly one orbit.
		
	\end{proposition}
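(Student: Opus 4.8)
The plan is to reduce the statement to the already-known classification of $\mrgl_{m}(E)$-congruence classes of symmetric matrices (Proposition \ref{propSGLEorbit}) together with an explicit computation of how scaling by $E^{\times}$ acts on the discriminant and the Hasse invariant. First I would note that the projection $\mcs\to\mcs/E^{\times}$ is $\mrgl_{m}(E)$-equivariant, that $c\cdot({}^{t}g\varepsilon g)={}^{t}g(c\varepsilon)g$ for $c\in E^{\times}$, $g\in\mrgl_{m}(E)$, and that scaling by a square $d^{2}$ is congruence by $dI$. Hence scaling induces an action of the group $E^{\times}/E^{\times2}\simeq\mbz/2\mbz\times\mbz/2\mbz$ on the finite set $\mcs/\mrgl_{m}(E)$ of congruence classes, and two elements of $\mcs$ lie in the same $\mrgl_{m}(E)$-orbit of $\mcs/E^{\times}$ if and only if their congruence classes lie in the same orbit of this induced action. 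So it suffices to enumerate the orbits of $E^{\times}/E^{\times2}$ on $\mcs/\mrgl_{m}(E)$.

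Next I would record the effect of scaling on invariants. From $\varepsilon\sim\mathrm{diag}(a_{1},\ldots,a_{m})\Rightarrow c\varepsilon\sim\mathrm{diag}(ca_{1},\ldots,ca_{m})$, together with bilinearity of the Hilbert symbol and $\mathrm{Hil}(c,c)=\mathrm{Hil}(c,-1)$, one gets $\mrdet(c\varepsilon)=c^{m}\mrdet(\varepsilon)$, so $\mathrm{disc}(c\varepsilon)=c^{m}\cdot\mathrm{disc}(\varepsilon)$ in $E^{\times}/E^{\times2}$, and
$$\mathrm{Hasse}(c\varepsilon)=\mathrm{Hil}(c,-1)^{m(m-1)/2}\cdot\mathrm{Hil}(\mathrm{disc}(\varepsilon),c)^{m-1}\cdot\mathrm{Hasse}(\varepsilon),$$
the exponent $m-1$ on the second factor coming from the fact that each index $k$ occurs in exactly $m-1$ of the pairs $\{i<j\}$. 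In particular the discriminant is fixed when $m$ is even and is translated by the class of $c$ when $m$ is odd, while whether the Hasse invariant is changed by a given $c$ is governed by a linear functional on $E^{\times}/E^{\times2}$ depending on the parity of $m(m-1)/2$ and on the class of $\mathrm{disc}(\varepsilon)$.

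Feeding this into the list from Proposition \ref{propSGLEorbit}, I would then run through the cases. For $m=1$ the congruence classes are $E^{\times}/E^{\times2}$ and scaling is transitive, giving one orbit. For odd $m\geq3$ there are eight congruence classes indexed by $(\mathrm{disc},\mathrm{Hasse})$; the induced action of $E^{\times}/E^{\times2}$ on these eight classes is free (the discriminant component alone already gives a regular action of $E^{\times}/E^{\times2}$ on itself), so there are exactly two orbits, each meeting every discriminant exactly once, whence two representatives of a fixed discriminant lie in different orbits if and only if their Hasse invariants differ. For even $m$ the discriminant is an invariant of the orbit, and inside a fixed-discriminant fiber scaling merges the two Hasse values precisely when the relevant Hilbert symbol $\mathrm{Hil}(\mathrm{disc},\cdot)$ or $\mathrm{Hil}(-\mathrm{disc},\cdot)$ (according to $m\bmod4$) is non-trivial; this non-triviality fails exactly on the fiber over $\mathrm{disc}(J_{m})=(-1)^{m(m-1)/2}$. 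When all eight congruence classes occur ($m\geq4$) this special fiber thus splits into two orbits and each of the other three fibers is a single orbit, giving five; when $m=2$ the class with discriminant $-1$ and Hasse invariant $-1$ is absent, so the special fiber is a single orbit too, giving four.

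The argument is essentially careful bookkeeping, and I expect the main points of friction to be: (i) the clean justification in the first step that the orbit count depends only on the induced $E^{\times}/E^{\times2}$-action on congruence classes; (ii) the parity analysis of $m(m-1)/2$, which is why the even case bifurcates along $m\equiv0$ and $m\equiv2\pmod4$ yet produces the same total, and why $m=2$ is genuinely exceptional; and (iii) checking that the counts do not depend on whether $-1\in E^{\times2}$ — this holds because the extra Hilbert-symbol twist, when non-trivial, only makes the relevant action of $E^{\times}/E^{\times2}$ more free rather than causing further classes to be identified.
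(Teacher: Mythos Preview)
Your proposal is correct and is precisely the ``refinement of Proposition~\ref{propSGLEorbit}'' that the paper alludes to: the paper's own proof merely points to O'Meara \S63 without details, while you have carried out the analysis explicitly by computing how the $E^{\times}/E^{\times2}$-scaling acts on the pair $(\mathrm{disc},\mathrm{Hasse})$ and counting orbits case by case. The formula for $\mathrm{Hasse}(c\varepsilon)$ and the subsequent parity bookkeeping are accurate, and the identification of the special fiber with $\mathrm{disc}(J_{m})=(-1)^{m(m-1)/2}$ is exactly right.
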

	
	\begin{proof}
		
		The proof is a refinement of Proposition \ref{propSGLEorbit}. For more details, see \cite{o2013introduction}, \S 63.
		
	\end{proof}
	
	For $\tau=\tau_{\varepsilon}$ an orthogonal involution, we denote by
	$$\mrgl_{m}(E)^{\tau}:=\{x\in\mrgl_{m}(E)|\tau(x)=x\}$$ the orthogonal group corresponding to $\tau$.
	
	\begin{lemma}\label{lemmatauO}
		
		Let $\tau_{1}$ and $\tau_{2}$ be two orthogonal involutions such that $\mrgl_{m}(E)^{\tau_{1}}=\mrgl_{m}(E)^{\tau_{2}}$, then $\tau_{1}=\tau_{2}$. As a result, $\tau\mapsto \mrgl_{m}(E)^{\tau}$ gives a bijection between $\mrgl_{m}(E)$-orbits of orthogonal involutions and the set of $\mrgl_{m}(E)$-conjugacy classes of orthogonal subgroups of $\mrgl_{m}(E)$.
		
	\end{lemma}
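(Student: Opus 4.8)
The plan is to reduce the first assertion to a statement about centralizers inside $\mrm_{m}(E)$. Write $\tau_{1}=\tau_{\varepsilon_{1}}$ and $\tau_{2}=\tau_{\varepsilon_{2}}$ with $\varepsilon_{1},\varepsilon_{2}\in\mcs$. Since we already know that $\tau_{\varepsilon_{1}}=\tau_{\varepsilon_{2}}$ if and only if $\varepsilon_{1}E^{\times}=\varepsilon_{2}E^{\times}$, it is enough to show that $A:=\varepsilon_{2}^{-1}\varepsilon_{1}$ is a scalar matrix. First I would unwind the hypothesis $H:=\mrgl_{m}(E)^{\tau_{1}}=\mrgl_{m}(E)^{\tau_{2}}$: for $g\in H$ the equality $\tau_{\varepsilon_{i}}(g)=g$ is equivalent to $\,^{t}g=\varepsilon_{i}g^{-1}\varepsilon_{i}^{-1}$, so comparing the two resulting expressions for $\,^{t}g$ and rearranging yields $Ag^{-1}=g^{-1}A$. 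As $g$ runs over $H$, this says exactly that $A$ lies in the centralizer of $H$ in $\mrm_{m}(E)$.

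The heart of the matter is then to check that this centralizer is just $E$, the scalar matrices. The case $m=1$ is trivial, so assume $m\geq 2$. From $\,^{t}g=\varepsilon_{1}g^{-1}\varepsilon_{1}^{-1}$ one sees that $H$ is precisely the orthogonal group of the nondegenerate symmetric bilinear form $(x,y)\mapsto\,^{t}x\varepsilon_{1}y$, hence $H$ contains the reflection $r_{v}$ along every anisotropic vector $v$. If $A$ commutes with $r_{v}$ it stabilizes the line $Ev$, which is the $(-1)$-eigenspace of $r_{v}$. Since $p\neq 2$ the associated quadratic form is not identically zero (otherwise the bilinear form would vanish), so the anisotropic vectors form a nonempty Zariski-open subset of $E^{m}$ and in particular span it over $E$; a standard argument comparing the eigenvalue of $A$ on $Ev$, $Ew$ and $E(v+w)$ for generic anisotropic $v,w$ then forces $A$ to act as a single scalar, so $A\in E$, and in fact $A\in E^{\times}$ since $A$ is invertible. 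Therefore $\varepsilon_{1}\in E^{\times}\varepsilon_{2}$ and $\tau_{1}=\tau_{2}$. (One could instead invoke the Zariski-density of the $E$-points of the orthogonal group $\mathrm{O}(\varepsilon_{1})$, viewed as an algebraic group, together with the irreducibility of the standard representation of $\mathrm{O}_{m}$; the reflection argument is preferred because it is self-contained.)

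For the "as a result" statement I would first record the equivariance $\mrgl_{m}(E)^{g\cdot\tau}=(\mrgl_{m}(E)^{\tau})^{g}$, which comes from the direct computation $\tau_{\,^{t}g\varepsilon g}(x)=g^{-1}\tau_{\varepsilon}(gxg^{-1})g$. Thus $\tau\mapsto\mrgl_{m}(E)^{\tau}$ is $\mrgl_{m}(E)$-equivariant and descends to the orbit sets. Surjectivity onto $\mrgl_{m}(E)$-conjugacy classes of orthogonal subgroups is immediate from the definition of an orthogonal subgroup, and injectivity on orbits follows from the first part: if $(\mrgl_{m}(E)^{\tau_{1}})^{g}=\mrgl_{m}(E)^{\tau_{2}}$ then $\mrgl_{m}(E)^{g\cdot\tau_{1}}=\mrgl_{m}(E)^{\tau_{2}}$, whence $g\cdot\tau_{1}=\tau_{2}$ and $\tau_{1},\tau_{2}$ lie in the same orbit.

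I expect the only genuine obstacle to be the computation of the centralizer of $H$ in the second paragraph; every other step is a formal manipulation of the defining relation $\tau_{\varepsilon}(x)=\varepsilon^{-1}\,^{t}x^{-1}\varepsilon$ and of the $\mrgl_{m}(E)$-action on $\mcs$.
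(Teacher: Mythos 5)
Your proof is correct. The paper itself offers no argument here — it simply cites \cite{hakim2013tame}, Lemma 2.7 — and your reduction is exactly the standard one underlying that reference: from $\tau_{\varepsilon_{1}}(g)=\tau_{\varepsilon_{2}}(g)=g$ you extract that $A=\varepsilon_{2}^{-1}\varepsilon_{1}$ centralizes $H=\mrgl_{m}(E)^{\tau_{1}}$, and then the whole content is that the centralizer of an orthogonal group in $\mrm_{m}(E)$ consists of the scalars, which combined with the already-recorded equivalence $\tau_{\varepsilon_{1}}=\tau_{\varepsilon_{2}}\Leftrightarrow\varepsilon_{1}E^{\times}=\varepsilon_{2}E^{\times}$ and the equivariance $\mrgl_{m}(E)^{g\cdot\tau}=(\mrgl_{m}(E)^{\tau})^{g}$ gives both assertions. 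The only step I would tighten is the "generic $v,w$" clause: rather than perturbing to arrange $v+w$ anisotropic, it is cleaner to note that every anisotropic vector lies in some eigenspace of $A$, so the finitely many eigenspaces cover a nonempty Zariski-open (hence dense, as $E$ is infinite) subset of $E^{m}$; since a finite union of proper subspaces cannot be dense, one eigenspace is everything and $A$ is scalar. With that phrasing the argument is airtight and fully self-contained, which is arguably an improvement on the paper's bare citation.
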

	
	\begin{proof}
		
		For a proof, see \cite{hakim2013tame}, Lemma 2.7.
		
	\end{proof}
	
	Combining Proposition \ref{proptauGLEorbit} and Lemma \ref{lemmatauO}, we get all the possible $\mrgl_{m}(E)$-conjugacy classes of orthogonal groups.
	
	\begin{proposition}\label{propGconjortho}
		
		(1) When $m=1$, there is only one orthogonal group $\{1,-1\}$;
		
		(2) When $m\geq 3$ is odd, there are two $\mrgl_{m}(E)$-conjugacy classes of orthogonal groups, the one corresponding to the symmetric matrix $J_{m}$ is split, and the other one is not quasisplit;
		
		(3) When $m=2$, there are four $\mrgl_{m}(E)$-conjugacy classes of orthogonal groups, the one corresponding to the symmetric matrix $J_{m}$ is split, and the other three are quasisplit but not split;
		
		(4)  When $m\geq 4$ is even, there are five $\mrgl_{m}(E)$-conjugacy classes of orthogonal groups. The one corresponding to the symmetric matrix $J_{m}$ is split, and the one whose corresponding symmetric matrix is in the same fiber as $J_{m}$ but not similar to $J_{m}$, as mentioned in Proposition \ref{proptauGLEorbit}, is not quasisplit, and the other three orthogonal groups are quasisplit but not split.
		
	\end{proposition}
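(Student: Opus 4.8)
The plan is to read everything off Proposition \ref{proptauGLEorbit} and Lemma \ref{lemmatauO}; the only genuinely new work is to attach the words \emph{split}, \emph{quasisplit but not split}, \emph{not quasisplit} to the orbits already enumerated there. By Lemma \ref{lemmatauO}, $\mrgl_{m}(E)$-conjugacy classes of orthogonal subgroups correspond bijectively to $\mrgl_{m}(E)$-orbits of orthogonal involutions, hence, through $\varepsilon E^{\times}\mapsto\tau_{\varepsilon}$, to $\mrgl_{m}(E)$-orbits of $\mcs/E^{\times}$. So the number of conjugacy classes is exactly the count of Proposition \ref{proptauGLEorbit}, namely $1$, $2$, $4$, $5$ in the four regimes, and it remains only to name the isomorphism type of each class.

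First I would set up the translation between $\mcs/E^{\times}$ and quadratic forms: if $q_{\varepsilon}$ denotes the quadratic form with Gram matrix $\varepsilon$, then $\mrgl_{m}(E)^{\tau_{\varepsilon}}=\mathrm{O}(q_{\varepsilon})$, and since $\mathrm{O}(q_{\lambda\varepsilon})=\mathrm{O}(q_{\varepsilon})$ for $\lambda\in E^{\times}$, this group depends only on the similarity (scaling) class of $q_{\varepsilon}$; the $\mrgl_{m}(E)$-orbit of $\varepsilon E^{\times}$ is precisely that similarity class. Since $p\neq2$, anisotropic quadratic forms over $E$ have dimension at most $4$, and the dimension $a(q)\in\{0,1,2,3,4\}$ of the anisotropic kernel of $q$ is a similarity invariant; moreover, up to similarity there is a unique anisotropic form of dimension $1$, of dimension $3$, and of dimension $4$, while there are exactly three of dimension $2$, distinguished by their discriminant (equivalently, by the quadratic extension of $E$ they define). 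I would then invoke the standard dictionary for orthogonal groups over a $p$-adic field, phrased through $a(q)$: for $m=2k$, $\mathrm{O}(q)$ is split iff $a(q)=0$, quasisplit but not split iff $a(q)=2$, and not quasisplit iff $a(q)=4$; for $m=2k+1$, $\mathrm{O}(q)$ is split iff $a(q)=1$ and not quasisplit iff $a(q)=3$ (there is no intermediate case in odd dimension, type $B_{k}$ having no outer forms). The matrix $J_{m}$ defines the split form $\sum_{i}x_{i}x_{m+1-i}$, a sum of hyperbolic planes plus a line when $m$ is odd, so $\mathrm{O}(J_{m})$ is split in all cases, as claimed.

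The four regimes then follow by bookkeeping. For $m=1$ only $\{1,-1\}$ occurs. For odd $m\geq3$ the possibilities are $a(q)\in\{1,3\}$: $a(q)=1$ gives the split class of $J_{m}$, and $a(q)=3$ gives a single class (the anisotropic ternary form being unique up to similarity), which is not quasisplit. For $m=2$ the possibilities are $a(q)\in\{0,2\}$: $a(q)=0$ is the split (hyperbolic) class, and the three classes with $a(q)=2$ are quasisplit but not split. For even $m\geq4$ the possibilities are $a(q)\in\{0,2,4\}$, giving the split class, three quasisplit non-split classes, and one not-quasisplit class. For this last one I would note that its form is $(\text{hyperbolic})\oplus(\text{anisotropic }4\text{-dimensional})$, whose discriminant is $(-1)^{k}\cdot1=(-1)^{m(m-1)/2}=\mrdisc(J_{m})$, so it sits in the same discriminant fiber as $J_{m}$ and differs from it only in the Hasse invariant — precisely the description in Proposition \ref{proptauGLEorbit}(4).

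The main difficulty here is really care rather than depth: one must keep straight that $\mathrm{O}(q)$ sees $q$ only up to scaling (so the "orbits" of Proposition \ref{proptauGLEorbit} are similarity classes, not isometry classes of forms), and one must fix the convention that $\mathrm{O}(q)$ is split, resp.\ quasisplit, exactly when $\mathrm{SO}(q)$ is; then the classification of $\mathrm{SO}(q)$ over a $p$-adic field by $a(q)$ — equivalently by the triple $(\mrdim,\mrdisc,\mathrm{Hasse})$ modulo scaling, which is the content of \cite{o2013introduction}, \S63 — supplies everything. A safe alternative, if one prefers to minimize input, is to quote Proposition \ref{proptauGLEorbit} for both the count and the separation of orbits, and to use the $a(q)$-analysis only to label each orbit as split, quasisplit non-split, or not quasisplit.
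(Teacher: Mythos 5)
Your proposal is correct and follows exactly the route the paper itself takes: the paper offers no separate proof of this proposition, merely the remark that it follows by combining Proposition \ref{proptauGLEorbit} with Lemma \ref{lemmatauO}, with the split/quasisplit/non-quasisplit labels taken as the standard classification of orthogonal groups of quadratic forms over a $p$-adic field. Your filling-in of that standard dictionary via the anisotropic-kernel dimension $a(q)$ (including the discriminant check placing the $a(q)=4$ class in the fiber of $J_{m}$) is accurate and supplies the details the paper leaves implicit.
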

	
	
	\subsection{$\tau$-split embedding}
	
	Now for $E_{m}$ a field extension of degree $m$ over $E$ and $\varepsilon\in\mcs$, we say that an $E$-algebra embedding $\iota:E_{m}\rightarrow\mrm_{m}(E)$ is \emph{$\varepsilon$-symmetric} if its image consists of $\varepsilon$-symmetric matrices, or in other words, $$\varepsilon^{-1}\,^{t}\iota(x)\varepsilon=\iota(x)\quad \text{for any}\ x\in E_{m}.$$
	For $\tau=\tau_{\varepsilon}$ an orthogonal involution, we say that $E_{m}$ is \emph{$\tau$-split} if there exists an embedding $\iota$ as above such that it is \emph{$\varepsilon$-symmetric}, or equivalently for any $x\in E_{m}^{\times}$, we have $\tau(\iota(x))=\iota(x)^{-1}$. In particular, we get $\tau(E_{m}^{\times})=E_{m}^{\times}$. We have the following important proposition which gives all the possible symmetric matrices via a given symmetric embedding:
	
	\begin{proposition}\label{propsymembedepi0}
		
		Let $\tau=\tau_{\varepsilon_{0}}$ be a given orthogonal involution with $\varepsilon_{0}\in\mcs$ and let
		$$\iota_{0}:E_{m}\rightarrow \mrm_{m}(E)$$
		be an $\varepsilon_{0}$-symmetric embedding. Then any symmetric matrix $\varepsilon$ in $\mcs$ such that there exists
		$$\iota:E_{m}\rightarrow \mrm_{m}(E)$$ as an $\varepsilon$-symmetric embedding is similar to an element in $\varepsilon_{0}\iota_{0}(E_{m}^{\times})$. 
		
	\end{proposition}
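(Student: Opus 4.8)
The plan is to use the Skolem--Noether theorem to move the given embedding $\iota$ into the position of $\iota_{0}$, reducing the statement to a comparison of the two adjoint anti-involutions of $\mrm_{m}(E)$ attached to $\varepsilon_{0}$ and to the candidate symmetric matrix.

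First I would apply Skolem--Noether to the two $E$-algebra embeddings $\iota_{0},\iota\colon E_{m}\to\mrm_{m}(E)$ of the field $E_{m}$ into the central simple $E$-algebra $\mrm_{m}(E)$: there is $u\in\mrgl_{m}(E)$ with $\iota(x)=u^{-1}\iota_{0}(x)u$ for all $x\in E_{m}$. I then replace $\varepsilon$ by $\varepsilon_{1}:=\varepsilon\cdot u^{-1}=\,^{t}(u^{-1})\varepsilon u^{-1}$, which is still symmetric (as $\varepsilon$ is) and similar to $\varepsilon$. Transporting the relation $\varepsilon^{-1}\,^{t}\iota(x)\varepsilon=\iota(x)$ through the conjugation by $u$, using only that transposition is an anti-automorphism of $\mrm_{m}(E)$, one obtains $\varepsilon_{1}^{-1}\,^{t}\iota_{0}(x)\varepsilon_{1}=\iota_{0}(x)$ for all $x\in E_{m}$; that is, $\iota_{0}$ is $\varepsilon_{1}$-symmetric. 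It therefore suffices to prove: if $\varepsilon_{1}\in\mcs$ is such that $\iota_{0}$ is $\varepsilon_{1}$-symmetric, then $\varepsilon_{1}\in\varepsilon_{0}\iota_{0}(E_{m}^{\times})$.

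To prove this I would compare the two $E$-linear maps $\sigma_{0},\sigma_{1}\colon\mrm_{m}(E)\to\mrm_{m}(E)$ given by $\sigma_{0}(a)=\varepsilon_{0}^{-1}\,^{t}a\,\varepsilon_{0}$ and $\sigma_{1}(a)=\varepsilon_{1}^{-1}\,^{t}a\,\varepsilon_{1}$; by hypothesis both restrict to the identity on $\iota_{0}(E_{m})$. Setting $h:=\varepsilon_{0}^{-1}\varepsilon_{1}$, one checks that $\sigma_{1}(a)=h^{-1}\sigma_{0}(a)h$ for every $a$, and evaluating at $a=\iota_{0}(x)$ gives $h^{-1}\iota_{0}(x)h=\iota_{0}(x)$, so $h$ centralizes $\iota_{0}(E_{m})$. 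Since $[E_{m}:E]=m$, the subalgebra $\iota_{0}(E_{m})$ equals its own centralizer in $\mrm_{m}(E)\simeq\mathrm{End}_{E}(E^{m})$ — the space $E^{m}$ is free of rank one over $E_{m}$ through $\iota_{0}$, so its centralizer is $\mathrm{End}_{E_{m}}(E^{m})=\iota_{0}(E_{m})$ — hence $h=\iota_{0}(y)$ for some $y\in E_{m}$, and $y\in E_{m}^{\times}$ because $h$ is invertible. Thus $\varepsilon_{1}=\varepsilon_{0}\iota_{0}(y)\in\varepsilon_{0}\iota_{0}(E_{m}^{\times})$, and $\varepsilon$, being similar to $\varepsilon_{1}$, is similar to an element of $\varepsilon_{0}\iota_{0}(E_{m}^{\times})$, as wanted.

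I do not expect a real obstacle: the argument is essentially formal. The only step needing care is the first one — matching the action $\varepsilon\mapsto\,^{t}g\varepsilon g$ on symmetric matrices with the conjugation $\iota_{0}\mapsto u^{-1}\iota_{0}(\cdot)u$ on embeddings, so that the symmetry relation is transported without error. It is also worth recording, as a consistency check on the statement, that every element of $\varepsilon_{0}\iota_{0}(E_{m})$ is in fact symmetric: this follows from $\sigma_{0}$ fixing $\iota_{0}(E_{m})$ pointwise, since then $\,^{t}(\varepsilon_{0}\iota_{0}(y))=\,^{t}\iota_{0}(y)\,\varepsilon_{0}=\varepsilon_{0}\iota_{0}(y)$.
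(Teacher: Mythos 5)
Your proposal is correct and follows essentially the same route as the paper: Skolem--Noether to conjugate $\iota$ into $\iota_{0}$, then the observation that $\varepsilon_{0}^{-1}\varepsilon_{1}$ (with $\varepsilon_{1}=\,^{t}u^{-1}\varepsilon u^{-1}$) centralizes $\iota_{0}(E_{m})$, whose centralizer in $\mrm_{m}(E)$ is $\iota_{0}(E_{m})$ itself since $[E_{m}:E]=m$. Your write-up is in fact slightly cleaner than the paper's (which phrases the computation through the involutions $\tau,\tau_{0}$ rather than the anti-involutions $\sigma_{0},\sigma_{1}$), but the underlying argument is identical.
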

	
	\begin{proof}
		
		We follow the proof of \cite{hakim2013tame}, Proposition 4.3. For $\varepsilon\in\mcs$ and corresponding $\iota$ satisfying our condition, by the Skolem-Noether theorem, there exists $g\in\mrgl_{m}(E)$ such that
		$$\iota(x)=g^{-1}\iota_{0}(x)g$$
		for any $x\in E_{m}^{\times}$. Then we have
		$$\tau_{0}(\iota_{0}(x))=\iota_{0}(x)^{-1}\quad\text{and}\quad\tau(\iota(x))=\iota(x)^{-1},$$
		thus
		$$\tau(g)^{-1}\varepsilon^{-1}\varepsilon_{0}\iota_{0}(x)^{-1}\varepsilon_{0}^{-1}\varepsilon\tau(g)=\tau(g)^{-1}\tau(\iota_{0}(x))\tau(g)=\iota(x)^{-1}=g^{-1}\iota_{0}(x)^{-1}g,$$
		which means that $$\varepsilon_{0}^{-1}\varepsilon\tau(g)g^{-1}=\varepsilon_{0}^{-1}\,^{t}g^{-1}\varepsilon g^{-1}$$
		commutes with any $\iota_{0}(x)\in\iota_{0}(E_{m}^{\times})$. Thus $\varepsilon_{0}^{-1}\,^{t}g^{-1}\varepsilon g^{-1}\in Z_{\mrm_{m}(E)}(\iota_{0}(E_{m}))\backslash\{0\}=\iota_{0}(E_{m}^{\times})$, which means that $\varepsilon$ is similar to an element in $\varepsilon_{0}\iota_{0}(E_{m}^{\times})$.
		
	\end{proof}
	
	In particular, we call an $E$-algebra embedding $$\iota:E_{m}\rightarrow\mrm_{m}(E),$$
	\emph{$J$-symmetric} if it is $J_{m}$-symmetric, omitting the size of matrices. The following proposition ensures the existence of $J$-symmetric embedding when $E_{m}/E$ is tamely ramified.
	
	\begin{proposition}\label{proptameembed}
		
		When $E_{m}/E$ is tamely ramified, there exists a $J$-symmetric embedding $\iota$.
		
	\end{proposition}
	
	\begin{proof}
		
		See for example \cite{hakim2012distinguished}, Proposition 5.15 or \cite{hakim2013tame}, \S 4.2.
		
	\end{proof}
	
	\begin{remark}
		
		We don't know whether Proposition \ref{proptameembed} is true or not when $E_{m}/E$ is not necessarily tamely ramified.
		
	\end{remark}
	
	\subsection{Calculation of Hilbert symbol and Hasse invariant in certain cases}\label{subsechilberthasse}
	
	In this subsection, we display elementary results for calculating Hilbert symbol and Hasse invariant.
	
	\begin{lemma}[\cite{hakim2012distinguished}, Lemma 5.9]\label{lemmaGLnOFsymhasse}
		
		If $\varepsilon\in\mrgl_{m}(\mfo_{E})\cap\mcs$, then $\mathrm{Hasse}(\varepsilon)=1$.
		
	\end{lemma}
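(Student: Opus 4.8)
The plan is to compute $\mathrm{Hasse}_{E}(\varepsilon)$ from a diagonal form with \emph{unit} entries and then use that the Hilbert symbol of two units is trivial in odd residue characteristic. The key preliminary step is an integral Gram--Schmidt: since $p\neq 2$, a unimodular symmetric matrix diagonalizes over $\mfo_{E}$, i.e.\ there is $g\in\mrgl_{m}(\mfo_{E})$ with $^{t}g\varepsilon g=\mathrm{diag}(u_{1},\dots,u_{m})$ and $u_{1},\dots,u_{m}\in\mfo_{E}^{\times}$. I would prove this by induction on $m$: the reduction of $\varepsilon$ modulo $\mfp_{E}$ is a non-degenerate symmetric form over the residue field, hence represents a unit; lifting a vector realizing such a value, there is a primitive $x\in\mfo_{E}^{m}$ with $v:={}^{t}x\varepsilon x\in\mfo_{E}^{\times}$, and because $v$ is a unit the orthogonal complement $x^{\perp}=\{y:{}^{t}x\varepsilon y=0\}$ is a free $\mfo_{E}$-module of rank $m-1$ with $\mfo_{E}^{m}=\mfo_{E}x\oplus x^{\perp}$ and $\varepsilon|_{x^{\perp}}$ again unimodular symmetric; the inductive hypothesis then applies. (When $m=1$ there is nothing to do.)

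Granting this, $\mathrm{diag}(u_{1},\dots,u_{m})$ is $\mrgl_{m}(E)$-similar to $\varepsilon$, so by the independence of the Hasse invariant from the chosen diagonalization (\cite{o2013introduction}, 63.13) we get $\mathrm{Hasse}_{E}(\varepsilon)=\prod_{i<j}\mathrm{Hil}_{E}(u_{i},u_{j})$. It then remains to check that $\mathrm{Hil}_{E}(a,b)=1$ for $a,b\in\mfo_{E}^{\times}$, i.e.\ that $ax^{2}+by^{2}=1$ is solvable in $E\times E$. Reducing modulo $\mfp_{E}$, the sets $\{\overline{a}s^{2}:s\in\mfo_{E}/\mfp_{E}\}$ and $\{1-\overline{b}t^{2}:t\in\mfo_{E}/\mfp_{E}\}$ each have $(q_{E}+1)/2$ elements, where $q_{E}=\#(\mfo_{E}/\mfp_{E})$, so they meet; the resulting point of the conic $ax^{2}+by^{2}=1$ over the residue field has a non-zero coordinate and, as $2$ is a unit, is smooth, hence lifts to an $\mfo_{E}$-point by Hensel's lemma. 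Therefore every factor $\mathrm{Hil}_{E}(u_{i},u_{j})$ equals $1$ and $\mathrm{Hasse}_{E}(\varepsilon)=1$, the product being empty when $m=1$.

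The only genuinely delicate point is the integral diagonalization: one must ensure the change of variables lies in $\mrgl_{m}(\mfo_{E})$, not merely in $\mrgl_{m}(E)$, so that the diagonal entries obtained are genuinely units and the Hasse computation above applies; everything else is classical local quadratic-form theory. Since the statement is precisely \cite{hakim2012distinguished}, Lemma 5.9, in the final write-up one may alternatively just cite it.
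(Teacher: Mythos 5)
Your proof is correct. The paper gives no argument of its own here — it simply cites \cite{hakim2012distinguished}, Lemma 5.9 — and your write-up is the standard proof of that fact: integral diagonalization of a unimodular symmetric matrix over $\mfo_{E}$ (valid since $p\neq 2$), followed by the triviality of the Hilbert symbol on pairs of units in odd residue characteristic, so that every factor in $\prod_{i<j}\mathrm{Hil}_{E}(u_{i},u_{j})$ equals $1$. Both the Gram--Schmidt splitting (the unit value of $v$ is exactly what makes the orthogonal projector integral) and the Hensel/counting argument for $\mathrm{Hil}_{E}(a,b)=1$ are carried out correctly.
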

	
	\begin{lemma}\label{lemmahasseblock}
		
		Let $A\in\mrgl_{n_{1}}(E)$ and $B\in\mrgl_{n_{2}}(E)$ be two symmetric matrices, then
		$$\mathrm{Hasse}\bigg(\begin{matrix} A & 0 \\ 0 & B\end{matrix}\bigg)=\mathrm{Hasse}(A)\cdot\mathrm{Hasse}(B)\cdot\mathrm{Hil}(\mathrm{det}(A),\mathrm{det}(B)).$$
		
	\end{lemma}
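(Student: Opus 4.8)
The plan is to reduce everything to the diagonal case and then expand the defining product. Since $p\neq 2$, both $A$ and $B$ can be diagonalized by congruence: there are $g_{1}\in\mrgl_{n_{1}}(E)$ and $g_{2}\in\mrgl_{n_{2}}(E)$ with $^{t}g_{1}Ag_{1}=\mathrm{diag}(a_{1},\dots,a_{n_{1}})$ and $^{t}g_{2}Bg_{2}=\mathrm{diag}(b_{1},\dots,b_{n_{2}})$. Conjugating the block matrix by $\mathrm{diag}(g_{1},g_{2})$ shows that $\bigl(\begin{smallmatrix}A&0\\0&B\end{smallmatrix}\bigr)$ is similar to $\mathrm{diag}(a_{1},\dots,a_{n_{1}},b_{1},\dots,b_{n_{2}})$. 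Because the Hasse invariant does not depend on the chosen diagonal representative (\cite{o2013introduction}, 63.13), I may compute all three Hasse invariants from these diagonalizations.

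Next I would write out the defining product for $\mathrm{Hasse}\bigl(\begin{smallmatrix}A&0\\0&B\end{smallmatrix}\bigr)=\prod_{i<j}\mathrm{Hil}(c_{i},c_{j})$, where $(c_{1},\dots,c_{n_{1}+n_{2}})=(a_{1},\dots,a_{n_{1}},b_{1},\dots,b_{n_{2}})$, and split the set of pairs $i<j$ into three classes: both indices in the first block, both indices in the second block, and one index in each block. The first class contributes $\prod_{1\le i<j\le n_{1}}\mathrm{Hil}(a_{i},a_{j})=\mathrm{Hasse}(A)$, and likewise the second class contributes $\mathrm{Hasse}(B)$. (One should use the convention $i<j$ in the definition of the Hasse invariant, as pointed out in the footnote in \S\ref{subsechilberthasse}, so that these two sub-products are genuinely the Hasse invariants of $A$ and $B$.)

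For the mixed class, the contribution is $\prod_{i=1}^{n_{1}}\prod_{j=1}^{n_{2}}\mathrm{Hil}(a_{i},b_{j})$. Here I invoke the standard bimultiplicativity of the Hilbert symbol over $E^{\times}/E^{\times2}$: $\mathrm{Hil}(a,bc)=\mathrm{Hil}(a,b)\mathrm{Hil}(a,c)$ and symmetrically in the first variable. Applying this repeatedly collapses the double product to $\mathrm{Hil}\bigl(\prod_{i}a_{i},\prod_{j}b_{j}\bigr)$. Finally, since $\det(A)=\det(g_{1})^{-2}\prod_{i}a_{i}$ and $\det(B)=\det(g_{2})^{-2}\prod_{j}b_{j}$, the classes of $\prod_{i}a_{i}$ and $\prod_{j}b_{j}$ in $E^{\times}/E^{\times2}$ coincide with those of $\det(A)$ and $\det(B)$, so the mixed contribution equals $\mathrm{Hil}(\det(A),\det(B))$. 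Multiplying the three contributions gives the claimed identity. There is essentially no serious obstacle: the only points requiring care are the congruence-invariance of the Hasse invariant, the sign/index convention in its definition, and the bimultiplicativity and well-definedness modulo squares of the Hilbert symbol, all of which are standard and already recalled above.
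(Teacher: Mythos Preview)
Your proof is correct and follows essentially the same approach as the paper: diagonalize $A$ and $B$, split the defining product for $\mathrm{Hasse}$ of the block-diagonal matrix into same-block and mixed-block pairs, and collapse the mixed product via bimultiplicativity of the Hilbert symbol to $\mathrm{Hil}(\det(A),\det(B))$. The paper's version is just more terse, writing the decomposition in one line without spelling out the diagonalization or the bimultiplicativity step.
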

	
	\begin{proof}
		
		We assume that $A$ is similar to $\mathrm{diag}(a_{1},...,a_{n_{1}})$ and $B$ is similar to $\mathrm{diag}(b_{1},...,b_{n_{2}})$, thus by definition
		\begin{align*}
			\mathrm{Hasse}\bigg(\begin{matrix} A & 0 \\ 0 & B\end{matrix}\bigg)=\mathrm{Hasse}(\mathrm{diag}(a_{1},...,a_{n_{1}},b_{1},...,b_{n_{2}}))&=\mathrm{Hasse}(A)\cdot\mathrm{Hasse}(B)\prod_{i,j=1}^{n_{1},n_{2}}\mathrm{Hil}(a_{i},b_{j})\\
			&=\mathrm{Hasse}(A)\cdot\mathrm{Hasse}(B)\cdot\mathrm{Hil}(\mathrm{det}(A),\mathrm{det}(B)).
		\end{align*}
		
	\end{proof}
	
	\begin{corollary}\label{corhassekmatrix}
		
		Let $A_{i}\in\mrgl_{n_{i}}(E)$ be symmetric matrices for $i=1,...,k$ such that for any $1\leq i<j\leq k$, we have $\mathrm{Hil}(\mathrm{det}(A_{i}),\mathrm{det}(A_{j}))=1$, then
		$$\mathrm{Hasse}(\mathrm{diag}(A_{1},...,A_{k}))=\prod_{i=1}^{k}\mathrm{Hasse}(A_{i}).$$
		
	\end{corollary}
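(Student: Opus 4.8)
The plan is a straightforward induction on $k$, using Lemma~\ref{lemmahasseblock} as the engine and the bilinearity of the Hilbert symbol to dispose of the cross terms. The cases $k=1$ and $k=2$ require no work: the former is trivial and the latter is exactly Lemma~\ref{lemmahasseblock} together with the hypothesis $\mathrm{Hil}(\mathrm{det}(A_{1}),\mathrm{det}(A_{2}))=1$.

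For the inductive step, assuming the statement for $k-1$ blocks, I would view $(A_{1},\dots,A_{k})$ as a $2$-block diagonal matrix with blocks $(A_{1},\dots,A_{k-1})$ and $A_{k}$, and apply Lemma~\ref{lemmahasseblock}, obtaining
$$\mathrm{Hasse}(A_{1},\dots,A_{k})=\mathrm{Hasse}(A_{1},\dots,A_{k-1})\cdot\mathrm{Hasse}(A_{k})\cdot\mathrm{Hil}\!\left(\prod_{i=1}^{k-1}\mathrm{det}(A_{i}),\ \mathrm{det}(A_{k})\right),$$
since $\mathrm{det}(A_{1}\oplus\cdots\oplus A_{k-1})=\prod_{i=1}^{k-1}\mathrm{det}(A_{i})$. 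By multiplicativity of the quadratic Hilbert symbol in its first argument (a standard property over a local field, see \cite{o2013introduction}), the last factor equals $\prod_{i=1}^{k-1}\mathrm{Hil}(\mathrm{det}(A_{i}),\mathrm{det}(A_{k}))$, which is $1$ by hypothesis. The hypothesis also guarantees $\mathrm{Hil}(\mathrm{det}(A_{i}),\mathrm{det}(A_{j}))=1$ for $1\le i<j\le k-1$, so the inductive hypothesis applies to the first $k-1$ blocks and gives $\mathrm{Hasse}(A_{1},\dots,A_{k-1})=\prod_{i=1}^{k-1}\mathrm{Hasse}(A_{i})$; substituting back finishes the induction.

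There is essentially no obstacle here: the only input beyond Lemma~\ref{lemmahasseblock} is the bilinearity of $\mathrm{Hil}_{E}$, which is classical. The one point to keep straight is that the assumption is used twice in the inductive step — once to annihilate the cross term $\mathrm{Hil}(\prod_{i<k}\mathrm{det}(A_{i}),\mathrm{det}(A_{k}))$ and once to invoke the inductive hypothesis on the first $k-1$ blocks — but both uses are immediate.
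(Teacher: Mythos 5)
Your proof is correct and is exactly the paper's argument: the paper simply says ``use Lemma \ref{lemmahasseblock} $k-1$ times,'' which is precisely your induction, with the cross term killed by multiplicativity of $\mathrm{Hil}_{E}$ and the hypothesis. Nothing further is needed.
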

	
	\begin{proof}
		
		We use Lemma \ref{lemmahasseblock} for $k-1$ times to finish the proof.
		
	\end{proof}
	
	\begin{lemma}\label{lemmahilbertresidue}
		
		For $\epsilon_{1}, \epsilon_{2}\in\mfo_{E}^{\times}$ and $\varpi_{E}$ a uniformizer of $E$, we denote by $\bs{l}$ the residue field of $E$, and $\overline{\epsilon_{1}},\overline{\epsilon_{2}}$ the image of $\epsilon_{1},\epsilon_{2}$ in $\bs{l}$ respectively, then:
		
		(1)
		$$\mathrm{Hil}(\varpi_{E}\epsilon_{1},\varpi_{E}\epsilon_{2})=
		\begin{cases}1 \quad & \text{if}\ -\overline{\epsilon_{1}}/\overline{\epsilon_{2}}\in\bs{l}^{\times2},\\
			-1 \quad & \text{otherwise}.
		\end{cases}$$
		
		(2) $$\mathrm{Hil}(\epsilon_{1},\varpi_{E}\epsilon_{2})=
		\begin{cases}1 \quad & \text{if}\ \overline{\epsilon_{1}}\in\bs{l}^{\times2},\\
			-1 \quad & \text{otherwise}.
		\end{cases}$$
		
	\end{lemma}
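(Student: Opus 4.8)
The final statement is Lemma~\ref{lemmahilbertresidue}, a standard computation of the Hilbert symbol over the local field $E$. I will describe how I would approach it.

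\medskip

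The plan is to reduce both identities to the well-known classification of $E^{\times}/E^{\times2}$ for $p\neq 2$. Recall that $E^{\times}/E^{\times2}$ has representatives $\{1,\epsilon_0,\varpi_E,\varpi_E\epsilon_0\}$ where $\epsilon_0\in\mfo_E^\times$ is a fixed non-square unit, and that a unit $u\in\mfo_E^\times$ is a square in $E^\times$ if and only if its image $\bar u$ is a square in $\bs l^\times$ (Hensel's lemma, using $p\neq 2$). The Hilbert symbol $\mathrm{Hil}_E(a,b)$ depends only on the classes of $a,b$ in $E^\times/E^{\times2}$, is bilinear, symmetric, and satisfies $\mathrm{Hil}_E(a,1-a)=1$, $\mathrm{Hil}_E(a,-a)=1$; I would take these elementary facts as known background (they can also be deduced from the general theory already cited via \cite{o2013introduction}).

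\medskip

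For part (2), I would compute $\mathrm{Hil}_E(\epsilon_1,\varpi_E\epsilon_2)$ directly. Since the symbol is bilinear and since $\mathrm{Hil}_E(u,v)=1$ for any two units $u,v$ (a unimodular binary form over $\mfo_E$ with $p\neq 2$ represents every unit, in particular $1$, by reduction mod $\mfp_E$ and Hensel), we get $\mathrm{Hil}_E(\epsilon_1,\varpi_E\epsilon_2)=\mathrm{Hil}_E(\epsilon_1,\varpi_E)\cdot\mathrm{Hil}_E(\epsilon_1,\epsilon_2)=\mathrm{Hil}_E(\epsilon_1,\varpi_E)$. Then $\mathrm{Hil}_E(\epsilon_1,\varpi_E)=1$ precisely when $\epsilon_1 x^2+\varpi_E y^2=1$ is solvable; a valuation argument shows that if a solution exists one may take $y\in\mfp_E$ and $x\in\mfo_E^\times$, forcing $\epsilon_1 x^2\equiv 1$ mod $\mfp_E$, i.e.\ $\bar\epsilon_1\in\bs l^{\times2}$; conversely if $\bar\epsilon_1\in\bs l^{\times2}$ then $\epsilon_1\in E^{\times2}$ and the symbol is trivially $1$. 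This gives the stated dichotomy.

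\medskip

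For part (1), write $\mathrm{Hil}_E(\varpi_E\epsilon_1,\varpi_E\epsilon_2)$ and use bilinearity together with the identity $\mathrm{Hil}_E(a,-a)=1$: taking $a=\varpi_E\epsilon_1$ gives $\mathrm{Hil}_E(\varpi_E\epsilon_1,-\varpi_E\epsilon_1)=1$, hence $\mathrm{Hil}_E(\varpi_E\epsilon_1,\varpi_E\epsilon_2)=\mathrm{Hil}_E(\varpi_E\epsilon_1,-\epsilon_1^{-1}\epsilon_2)$, and the second entry is now a unit, so by part (2) (with the roles of the entries swapped, using symmetry) this equals $1$ iff $\overline{-\epsilon_1^{-1}\epsilon_2}=-\bar\epsilon_1^{-1}\bar\epsilon_2\in\bs l^{\times2}$, equivalently $-\bar\epsilon_1/\bar\epsilon_2\in\bs l^{\times2}$ (since $\bar\epsilon_1^{-1}\bar\epsilon_2$ and $\bar\epsilon_1\bar\epsilon_2$ differ by the square $\bar\epsilon_1^{2}$, wait—more simply $-\bar\epsilon_1^{-1}\bar\epsilon_2$ is a square iff $-\bar\epsilon_1\bar\epsilon_2$ is a square iff $-\bar\epsilon_1/\bar\epsilon_2$ is a square, all differing by squares in $\bs l^\times$). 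This yields the claimed formula.

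\medskip

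There is no real obstacle here: the only mild subtlety is being careful that the algebraic manipulations of the Hilbert symbol (bilinearity, the $\mathrm{Hil}_E(a,-a)=1$ identity, triviality on pairs of units) are invoked correctly, and that the reduction ``unit is a square iff its residue is a square'' genuinely uses $p\neq 2$. The author's proof may well just cite \cite{o2013introduction} or \cite{serre1973course} for these standard facts and assemble them as above.
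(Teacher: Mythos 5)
Your proof is correct, but it is organized differently from the paper's. The paper works entirely from the definition of $\mathrm{Hil}_{E}$: for part (1) it takes the equation $\varpi_{E}\epsilon_{1}X^{2}+\varpi_{E}\epsilon_{2}Y^{2}=1$, uses a valuation comparison to show any solution must have $X^{-1},Y^{-1}\in\mfp_{E}$ with $X/Y\in\mfo_{E}^{\times}$, substitutes $Z=X/Y$, $C=\varpi_{E}^{-1}Y^{-1}$, and applies Hensel's lemma (using $p\neq 2$) to reduce to $\overline{Z}^{2}=-\overline{\epsilon_{2}}/\overline{\epsilon_{1}}$ over $\bs{l}$; part (2) is then declared similar. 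You instead prove (2) first by a direct valuation-plus-Hensel argument (essentially the same mechanism), and deduce (1) from (2) using bilinearity of the Hilbert symbol, the identity $\mathrm{Hil}_{E}(a,-a)=1$, and triviality of the symbol on pairs of units. Both routes are sound: yours is shorter once the standard formal properties of the Hilbert symbol are granted (and your justification of $\mathrm{Hil}_{E}(u,v)=1$ for units via Chevalley--Warning/Hensel is fine), while the paper's is more self-contained, never invoking bilinearity or the $(a,-a)$ identity. Two small points: the reference you gesture at for the formal properties is not in the paper's bibliography (only \cite{o2013introduction} is), and in your valuation argument for (2) the condition you actually need is only $y\in\mfo_{E}$ and $x\in\mfo_{E}^{\times}$, not $y\in\mfp_{E}$; neither affects correctness.
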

	
	\begin{proof}
		
		For (1)
		we notice that $$\mathrm{Hil}(\varpi_{E}\epsilon_{1},\varpi_{E}\epsilon_{2})=1$$ if and only if
		$$Z^{2}+\epsilon_{2}/\epsilon_{1}-\varpi_{E}C^{2}/\epsilon_{1}=0\ \text{has a solution for}\ Z\in\mfo_{E}^{\times}\ \text{and} \ C\in\mfo_{E}.$$
		Since if the equation $\varpi_{E}\epsilon_{1}X^{2}+\varpi_{E}\epsilon_{2}Y^{2}=1$ has a solution, comparing the order we must have $X^{-1},Y^{-1}\in \mfp_{E}$ and $X/Y\in\mfo_{E}^{\times}$. Thus we can change the variables $Z=X/Y$ and $C=\varpi_{E}^{-1}Y^{-1}$.
		Using the Hensel lemma for the polynomial $P(Z)=Z^{2}+\epsilon_{2}/\epsilon_{1}-\varpi_{E}C^{2}/\epsilon_{1}$ and the fact that $p\neq 2$, the condition above is true if and only if
		$$\overline{Z}^{2}=-\overline{\epsilon_{2}}/\overline{\epsilon_{1}}\ \text{has a solution for}\ \overline{Z}\in\bs{l}^{\times},$$
		which is equivalent to $-\overline{\epsilon_{1}}/\overline{\epsilon_{2}}\in\bs{l}^{\times2}$. Thus we finish the proof of (1), and the proof of (2) is similar.
		
	\end{proof}
	
	\begin{remark}
		
		In the latter sections, we mainly consider two cases: $E=F$ or $E/F$ is a field extension of degree $d$ given by a certain simple stratum related to a given supercuspidal representation. In the former case, we have $m=n$; In the latter case, we have $m$ such that $n=md$ with $d=[E:F]$. Moreover, we will simply write $\mathrm{det}$, $\mathrm{disc}$ and $\mathrm{Hasse}$ for short when $E=F$.
		
	\end{remark}
	
	From now on until the end of this section, we assume $E$ to be a tamely ramified extension of degree $d=ef$ over $F$, where $f$ denotes its inertia degree and $e$ denotes its ramification index. Using Proposition \ref{proptameembed}, we fix a $J$-symmetric embedding $E\hookrightarrow\mrm_{d}(F)$. We fix $\epsilon_{0}\in\mfo_{E}^{\times}\backslash\mfo_{E}^{\times2}$ and $\varpi_{E}$ a uniformizer of $E$, such that $E^{\times}/E^{\times2}=\{1,\epsilon_{0},\varpi_{E},\epsilon_{0}\varpi_{E}\}$. By Lemma 3.8. of \cite{hakim2013tame}, we have three different cases:
	
	\begin{proposition}\label{propE/Fbasic}
		
		(1) $\mrn_{E/F}(E^{\times})F^{\times2}/F^{\times 2}=\{1\}$ if and only if $E$ contains three quadratic subextensions over $F$. Note that exactly one of them is unramified. Thus this is the case where both $e$ and $f$ are even;
		
		(2) $\mrn_{E/F}(E^{\times})F^{\times2}/F^{\times 2}$ is of order 2 if and only if $E$ contains exactly one quadratic subextension over $F$. Thus either $e$ or $f$ is even (but not both);
		
		(3) $\mrn_{E/F}(E^{\times})F^{\times2}/F^{\times 2}=F^{\times}/F^{\times2}$ if and only if $E$ contains no quadratic subextension over $F$. Thus $d=ef$ is odd.
		
	\end{proposition}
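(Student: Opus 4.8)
The plan is to reduce the statement to a careful bookkeeping of the quadratic subextensions of a tamely ramified extension $E/F$ together with the behaviour of the norm map $\mrn_{E/F}$. The starting observation is that $\mrn_{E/F}(E^{\times})F^{\times2}/F^{\times2}$ is a subgroup of $F^{\times}/F^{\times2}\simeq\mbz/2\mbz\times\mbz/2\mbz$, hence has order $1$, $2$, or $4$, which matches the three cases to be distinguished. The key global input is local class field theory: $\mrn_{E/F}(E^{\times})$ has index $[E:F]=d$ in $F^{\times}$ when $E/F$ is abelian, but in general one only has that $F^{\times}/\mrn_{E/F}(E^{\times})$ is isomorphic to $\mathrm{Gal}(\widetilde{E}/F)^{\mathrm{ab}}$ where $\widetilde{E}$ is the Galois closure; so instead I would argue directly with the compositum of quadratic subextensions, which is what governs the image modulo squares. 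Concretely, let $M$ be the maximal multiquadratic subextension of $\widetilde{E}/F$ contained in $E$; since $p\neq 2$, the quadratic extensions of $F$ are exactly $F(\sqrt{u})$, $F(\sqrt{\varpi_F})$, $F(\sqrt{u\varpi_F})$ for $u\in\mfo_F^{\times}\setminus\mfo_F^{\times2}$, one of which (namely $F(\sqrt{u})$) is unramified and the other two ramified.

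The core computation is the following equivalence, which I would establish by Frobenius reciprocity / transitivity of the norm: for a quadratic subextension $K/F$ of $E/F$, the element of $F^{\times}/F^{\times2}$ cutting out $K$ lies in $\mrn_{E/F}(E^{\times})F^{\times2}$, and conversely any class in $\mrn_{E/F}(E^{\times})F^{\times2}$ that is a norm from some intermediate field arises this way. More precisely, writing $d=ef$ with $e$ the ramification index and $f$ the residue degree, one knows for a tame extension that $\mrn_{E/F}(\mfo_E^{\times})=\mfo_F^{\times f}$ (surjectivity of the norm on units of the residue field composed with the fact that $1+\mfp$ is uniquely divisible by integers prime to $p$) and $\mrn_{E/F}(\varpi_E)=\zeta\varpi_F^{e}$ for a unit $\zeta$ controlled by the tame symbol. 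Hence modulo squares: the unit part of the image is all of $\mfo_F^{\times}/\mfo_F^{\times2}$ when $f$ is odd and is $\mfo_F^{\times2}/\mfo_F^{\times2}=\{1\}$ (as a subgroup of $F^\times/F^{\times2}$, only the trivial class among units) when $f$ is even; and $\varpi_F$ is hit modulo squares precisely when $e$ is odd. Assembling these two independent parities gives exactly: order $4$ iff $e,f$ both odd iff $d$ odd; order $1$ iff $e,f$ both even; order $2$ iff exactly one of $e,f$ is even. This is the content of parts (1)--(3) except for the phrasing in terms of quadratic subextensions.

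To convert the parity statements into the statements about quadratic subextensions, I would use that $E/F$ tame admits a unique unramified subextension of each degree dividing $f$ (so it contains the unramified quadratic iff $f$ is even), and that $E$ contains a ramified quadratic subextension iff $e$ is even — the latter because a tame totally ramified extension of degree $e$ contains $F(\sqrt{\varpi_F u})$-type subfields governed by the $2$-part of $e$, and one checks, again since $p\neq 2$, that the number of ramified quadratic subextensions inside $E$ is $2$ if $4\mid 2e$ in the appropriate sense, i.e. precisely one ramified quadratic when $e$ is even and $E$ carries it together with the unramified one only when additionally $f$ is even. The clean way to see ``three quadratic subextensions, exactly one unramified'' in case (1) is: if $e$ and $f$ are both even then $E$ contains the unramified quadratic and at least one ramified quadratic, and their compositum (a biquadratic field) lies in $E$, forcing all three quadratics of $F$ to appear; conversely if all three appear then in particular the unramified one does ($f$ even) and a ramified one does ($e$ even). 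The main obstacle, and the step deserving the most care, is precisely this last dictionary between ``number and ramification type of quadratic subfields of $E$'' and ``parities of $e$ and $f$'': one must be sure that a tamely ramified (not necessarily Galois) extension of even ramification index really does contain a ramified quadratic subextension, which uses that the tame quotient is procyclic-by-$\widehat{\mbz}$ and that $p\neq 2$ makes the relevant $2$-torsion visible; everything else is a routine norm computation. Once this is in hand, parts (1)--(3) follow by combining the parity trichotomy with the order trichotomy for $\mrn_{E/F}(E^{\times})F^{\times2}/F^{\times2}$, matching them case by case.
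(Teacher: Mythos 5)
Your route — compute $\mrn_{E/F}(E^{\times})$ modulo squares in terms of the parities of $e$ and $f$, and then translate parities into counts of quadratic subextensions — breaks down at exactly the step you flag as delicate, because the dictionary you assert is false. You claim the image has order $1$ if and only if $e$ and $f$ are both even, and, in the conversion step, that a tame extension with $e$ even must contain a ramified quadratic subextension of $F$. Neither holds. Take $F$ with $-1\in F^{\times2}$ (e.g.\ $F=\mathbb{Q}_{5}$), let $L/F$ be the unramified quadratic extension, $u\in\mfo_{F}^{\times}\setminus\mfo_{F}^{\times2}$, and $E=L\bigl(\sqrt{\varpi_{F}\sqrt{u}}\bigr)$. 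Then $e=f=2$, but $L$ is the \emph{only} quadratic subextension of $E/F$: a ramified quadratic $F(\sqrt{\varpi_{F}w})\subset E$ would force $L(\sqrt{\varpi_{F}w})=E=L(\sqrt{\varpi_{F}\sqrt{u}})$, hence $w\sqrt{u}\in L^{\times2}$, which fails because $\mfo_{F}^{\times}\subset\mfo_{L}^{\times2}$ while $\sqrt{u}\notin\mfo_{L}^{\times2}$ when $-1\in F^{\times2}$. Correspondingly $\mrn_{E/F}(\varpi_{E})\equiv -u\equiv u\pmod{F^{\times2}}$ is a nonsquare, so the group in question has order $2$, not $1$. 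This is precisely why the proposition states the parity conclusions only as one-way implications (``Thus\dots''); the genuine equivalences are between the order of the image and the number of quadratic subextensions, and those follow from local class field theory (a quadratic $K/F$ lies in $E$ iff $\mrn_{E/F}(E^{\times})\subset\mrn_{K/F}(K^{\times})$: transitivity of norms in one direction, the norm limitation theorem in the other), not from the parities of $e$ and $f$.

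Two further points. First, your norm computation has $e$ and $f$ interchanged: for a tame extension, $\mrn_{E/F}(\mfo_{E}^{\times})$ modulo squares is governed by the parity of $e$ (the residue norm factors through $x\mapsto x^{e}$), and $v_{F}(\mrn_{E/F}(\varpi_{E}))=f$, not $e$; the swap is invisible only because your final trichotomy is symmetric in $e$ and $f$. Second, ``assembling the two independent parities'' is not legitimate: when $f$ is even, $\mrn_{E/F}(\varpi_{E})$ contributes a \emph{unit} class that is not captured by the norms of units, and this class need not be trivial — that is exactly where the order-$1$ versus order-$2$ alternative is decided in the doubly even case, as the example shows. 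A repaired argument should prove the equivalences by class field theory as above, and then deduce the parity statements from: the unramified quadratic lies in $E$ iff $2\mid f$; any ramified quadratic subextension forces $2\mid e$ (ramification indices multiply in towers); and, for part (3), that $2\mid f$ makes every norm of even valuation while $2\mid e$ makes every norm of a unit a square, so in either case the image is proper. (The paper itself simply cites Section 3 of Hakim's article for this proposition.)
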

	
	For case (1), we have the following lemma:
	
	\begin{lemma}\label{lemmahassecase3a}
		
		If $\mathrm{N}_{E/F}(E^{\times})F^{\times2}/F^{\times2}=\{1\}$, then we may further choose the uniformizer $\varpi_{E}$ of $E$, such that
		$$\mathrm{Hasse}(J_{d}\varpi_{E})=1\quad \text{and} \quad\mathrm{Hasse}(J_{d}\varpi_{E}\epsilon_{0})=-1,$$
		where $J_{d}\varpi_{E}$ and $J_{d}\varpi_{E}\epsilon_{0}$ are symmetric matrices in $\mrgl_{d}(F)$.
		
	\end{lemma}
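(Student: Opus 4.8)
The plan is to use the structure theory from Proposition \ref{propE/Fbasic}(1): since $\mathrm{N}_{E/F}(E^{\times})F^{\times 2}/F^{\times 2}=\{1\}$, the extension $E/F$ has both $e$ and $f$ even, and $E$ contains exactly one unramified quadratic subextension $E_{0}/F$ together with two ramified ones. First I would pin down $\mathrm{det}(J_{d}\varpi_{E})$ and $\mathrm{det}(J_{d}\varpi_{E}\epsilon_{0})$ in $F^{\times}/F^{\times 2}$ using the $J$-symmetric embedding $E\hookrightarrow\mrm_{d}(F)$: under such an embedding, $\mathrm{det}_{F}(\iota_{J}(x))=\mathrm{N}_{E/F}(x)$ for $x\in E^{\times}$, and $\mathrm{det}(J_{d})=(-1)^{d(d-1)/2}$, so $\mathrm{det}(J_{d}\varpi_{E})=(-1)^{d(d-1)/2}\mathrm{N}_{E/F}(\varpi_{E})$ and similarly for the $\epsilon_{0}$-twist. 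Because $d=ef$ with $e$ even, $d$ is even and (being divisible by $4$ or $\equiv 2 \bmod 4$) we track $(-1)^{d(d-1)/2}$ explicitly; since $\mathrm{N}_{E/F}(\mfo_{E}^{\times})F^{\times 2}=F^{\times 2}$ (the norm of a unit is a square here, as the only quadratic ramified classes are hit by uniformizers), the discriminants land in the class of $(-1)^{d(d-1)/2}\mathrm{N}_{E/F}(\varpi_{E})$, independent of the $\epsilon_{0}$-twist — consistent with the fact that we are distinguishing these two matrices by Hasse invariant, not discriminant.

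Next I would compute the two Hasse invariants by diagonalizing. The key reduction is to write the $J$-symmetric image of $E$ in block form adapted to the tower $F\subset E_{0}\subset E$, where $E_{0}/F$ is the unramified quadratic subextension: choosing a suitable $F$-basis, $\iota_{J}(\varpi_{E})$ becomes (similar to) a block-diagonal or block-anti-diagonal matrix whose diagonalization over $F$ produces entries that are, up to squares, powers of a uniformizer of $F$ times units whose residue classes I can control. Then Corollary \ref{corhassekmatrix} and Lemma \ref{lemmahilbertresidue} let me assemble $\mathrm{Hasse}(J_{d}\varpi_{E})$ from the block contributions: the unramified direction contributes squares (Lemma \ref{lemmaGLnOFsymhasse} applied to the relevant integral block), and the genuinely ramified contributions are governed by Hilbert symbols of the form $\mathrm{Hil}(\varpi_{F}\epsilon,\varpi_{F}\epsilon')$ and $\mathrm{Hil}(\epsilon,\varpi_{F}\epsilon')$, which by Lemma \ref{lemmahilbertresidue} depend only on whether certain residue-field ratios are squares in $\bs{l}$ (here $\bs{l}$ is the residue field of $E$, or after descending, of $F$).

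The crucial point — and the reason the lemma only claims we \emph{may choose} $\varpi_{E}$ suitably — is that replacing $\varpi_{E}$ by $u\varpi_{E}$ for a unit $u\in\mfo_{E}^{\times}$ multiplies each relevant Hilbert symbol contribution by a factor depending on $\bar u\in\bs{l}^{\times}/\bs{l}^{\times 2}$, and I would show this gives us exactly enough freedom to force $\mathrm{Hasse}(J_{d}\varpi_{E})=1$; then $\mathrm{Hasse}(J_{d}\varpi_{E}\epsilon_{0})=-1$ follows automatically because twisting by $\epsilon_{0}$ (a non-square unit) flips an odd number of the Hilbert-symbol factors — this parity statement is where the hypothesis that $E$ has \emph{three} quadratic subextensions (equivalently, that $-1$ or $\varpi_{F}$ or their product behaves in a prescribed way in $\bs{l}^{\times}/\bs{l}^{\times 2}$) is used, ensuring $\mathrm{Hasse}(\cdot\,\epsilon_{0})$ and $\mathrm{Hasse}(\cdot)$ genuinely differ. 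I would verify the parity flip by a direct Hilbert-symbol computation: $\mathrm{Hasse}(\varepsilon\epsilon_{0})/\mathrm{Hasse}(\varepsilon)=\mathrm{Hil}(\mathrm{det}(\varepsilon)\epsilon_{0}^{?},\epsilon_{0})^{\,d-1}\cdot(\text{correction})$ using bilinearity of $\mathrm{Hil}$, and the hypothesis pins the sign.

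The main obstacle I anticipate is the explicit diagonalization of $\iota_{J}(\varpi_{E})$ over $F$ in a form clean enough to apply Lemma \ref{lemmahilbertresidue} block-by-block; the $J$-symmetric embedding is only guaranteed abstractly by Proposition \ref{proptameembed}, so extracting a concrete diagonal form compatible with the ramified/unramified splitting of $E/F$, and correctly bookkeeping the signs $(-1)^{d(d-1)/2}$ and the residue-square conditions across the tower, is the delicate part. A cleaner alternative, which I would pursue if the direct computation becomes unwieldy, is to compare $\mathrm{Hasse}(J_{d}\varpi_{E})$ with $\mathrm{Hasse}(J_{d}\varpi_{E}\epsilon_{0})$ purely via the quadratic-form invariants over $F$ of the scaled trace form of $E$, invoking the known Hasse–Witt invariant of $\langle\,\mathrm{Tr}_{E/F}(x\varpi_{E}y)\,\rangle$ and its $\epsilon_{0}$-twist, and then adjust $\varpi_{E}$ to normalize.
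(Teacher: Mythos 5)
There is a genuine gap: the decisive computation is never performed. You correctly reduce the lemma to its real content — since both $e$ and $f$ are even we have $4\mid d$, both $\mathrm{det}(J_{d}\varpi_{E})$ and $\mathrm{det}(J_{d}\varpi_{E}\epsilon_{0})$ lie in $F^{\times2}$, and the claim is exactly that the two uniformizer classes of $E^{\times}/E^{\times2}$ give \emph{opposite} Hasse invariants (after which "choosing $\varpi_{E}$" is just relabeling). But note that your two mechanisms are the same statement: the only freedom in changing $\varpi_{E}$ modulo squares is multiplication by $\epsilon_{0}$, so "enough freedom to force $\mathrm{Hasse}=1$" and "twisting by $\epsilon_{0}$ flips the sign" are one and the same claim, and that claim is precisely what remains unproved. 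Your proposed shortcut for the flip, $\mathrm{Hasse}(\varepsilon\epsilon_{0})/\mathrm{Hasse}(\varepsilon)=\mathrm{Hil}(\cdot,\epsilon_{0})^{d-1}\cdot(\text{correction})$, is not valid as written: $\epsilon_{0}$ is a non-square unit of $E$, not a scalar of $F$, so under the $J$-symmetric embedding it acts on the quadratic form as a $d\times d$ matrix over $F$, and the bilinearity-of-$\mathrm{Hil}$ formula for scaling by an element of $F^{\times}$ does not apply. One genuinely has to diagonalize $J_{d}\varpi_{E}$ and $J_{d}\varpi_{E}\epsilon_{0}$ over $F$ through the tower $F\subset L\subset E$ ($L$ the maximal unramified subextension) and track the Hilbert symbols, exactly as is done in the proofs of Lemma \ref{lemmahassecase2b2}, Lemma \ref{lemmahassecase2b1} and Lemma \ref{lemmahassecase2a1}; you flag this as "the delicate part" and defer it, so the proof is incomplete where it matters.

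For comparison, the paper does not carry out this computation either: it invokes \cite{hakim2013tame}, Proposition 6.6 verbatim, which is precisely the diagonalization-and-Hilbert-symbol argument you sketch (reduce to $\mathrm{diag}(u_{1},\dots,u_{f},u_{1}\varpi_{F},\dots,u_{f}\varpi_{F})$ with $f$ even and show the answer depends on the class of $\prod_{i}u_{i}$ in $F^{\times}/F^{\times2}$ via $\mathrm{Hil}(\prod_{i}u_{i},\varpi_{F})$). So your plan points in the right direction, but to count as a proof you would need to either execute that reduction and the final Hilbert-symbol evaluation, or cite the result.
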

	
	\begin{proof}
		
		We may use \cite{hakim2013tame}, Proposition 6.6 directly.
		
	\end{proof}
	
	For case (2), first we assume that $f$ is odd and $e$ is even. We have:
	
	\begin{lemma}\label{lemmahassecase2b2}
		
		For $f$ odd and $e$ even, we have $\mathrm{Hasse}(J_{d}\varpi_{E})\neq\mathrm{Hasse}(J_{d}\varpi_{E}\epsilon_{0})$.
		
	\end{lemma}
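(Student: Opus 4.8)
The plan is to reduce the statement about Hasse invariants of the $d \times d$ symmetric matrices $J_d \varpi_E$ and $J_d \varpi_E \epsilon_0$ over $F$ to an explicit computation using the block-diagonalization results of \S\ref{subsechilberthasse}. First I would exploit the fixed $J$-symmetric embedding $E \hookrightarrow \mrm_d(F)$: since $E/F$ is tamely ramified with $f$ odd and $e$ even, I would use the tower $F \subset E_{\mathrm{ur}} \subset E$ where $E_{\mathrm{ur}}/F$ is the unramified subextension of degree $f$ and $E/E_{\mathrm{ur}}$ is totally (tamely) ramified of degree $e$. A uniformizer $\varpi_E$ of $E$ can be chosen so that $\varpi_E^e = \varpi_F u$ for a unit $u$, and the key point is that $e$ being even forces $\mathrm{N}_{E/F}(\varpi_E)$ to lie in $F^{\times 2}$ up to controllable factors, or more precisely that $\varpi_F \in \mathrm{N}_{E/F}(E^\times)$, consistent with case (2) of Proposition \ref{propE/Fbasic}.

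Next I would compute $\mathrm{Hasse}(J_d \varpi_E)$ by diagonalizing. Concretely, $J_d$ is $\mrgl_d(\mfo_F)$-equivalent (or at least $\mrgl_d(F)$-equivalent after scaling) to a diagonal form, and multiplication by the scalar matrix $\varpi_E$ (viewed inside $\mrm_d(F)$ via the embedding) corresponds, after a suitable change of basis adapted to the tower, to a block-diagonal matrix whose blocks involve $\varpi_F$ to an even total power on the ramified part and units on the unramified part. Using Corollary \ref{corhassekmatrix} and Lemma \ref{lemmahasseblock} I would split the Hasse invariant into contributions from each block; Lemma \ref{lemmaGLnOFsymhasse} kills the unit blocks, and Lemma \ref{lemmahilbertresidue} evaluates the cross Hilbert symbols $\mathrm{Hil}(\det(A_i),\det(A_j))$ in terms of the residue field $\bs{l}$ of $E$ (or of $F$). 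The crucial observation is that replacing $\varpi_E$ by $\varpi_E \epsilon_0$ multiplies exactly the determinant of $\varpi_E^{a_i}\mathrm{diag}(1,\dots,1,\epsilon_i)$-type blocks by $\epsilon_0^{?}$ in a way that changes an odd number of Hilbert-symbol factors $\mathrm{Hil}(\cdot,\cdot)$ of the form covered by Lemma \ref{lemmahilbertresidue}(2), because $\overline{\epsilon_0}$ is a nonsquare in $\bs{l}$ and $f$ odd means $\bs{l}/\boldsymbol{k}$ preserves nonsquareness. Hence $\mathrm{Hasse}(J_d\varpi_E)$ and $\mathrm{Hasse}(J_d\varpi_E\epsilon_0)$ differ by $-1$.

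The main obstacle I expect is the bookkeeping of the change of basis: the embedding $E \hookrightarrow \mrm_d(F)$ is $J$-symmetric by Proposition \ref{proptameembed}, but to diagonalize $J_d \varpi_E$ over $F$ I need to track how the $F$-rational structure interacts with $E$-scalar multiplication, i.e.\ realize $J_d\varpi_E$ as the Gram matrix of the $F$-bilinear form $(x,y)\mapsto \mathrm{Tr}_{E/F}(\varpi_E x \bar{y})$ (or similar) on $E$ as an $F$-vector space, and then compute its discriminant and Hasse invariant via the standard formulas for trace forms of tame extensions. This is where $e$ even enters decisively — it controls whether $\varpi_F$ shows up to an even or odd power in $\det$, hence whether the relevant Hilbert symbols are trivial. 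I would likely invoke the computation already carried out in \cite{hakim2013tame}, \S 3 and Proposition 6.6, adapting it: since Hakim's work covers exactly these trace-form Hasse invariant computations in the tame case, the proof reduces to citing and specializing his result, with the parity argument on $f$ and $e$ making the sign flip between the two cases. If a self-contained argument is wanted, the alternative is a direct induction on $e$ (or on the number of diagonal blocks) using Lemma \ref{lemmahasseblock}, isolating the single factor $\mathrm{Hil}(\varpi_F^{\text{odd}}, \epsilon_0)$ that toggles.
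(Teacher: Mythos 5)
Your proposal follows essentially the same route as the paper: the proof there also reduces, via the $J$-symmetric embedding and the diagonalization of the trace form carried out in \cite{hakim2013tame}, Proposition 6.6, to an explicit diagonal matrix $\mathrm{diag}(u_{1},\dots,u_{f},u_{1}\varpi_{F},\dots,u_{f}\varpi_{F})$, whose Hasse invariant collapses to $\mathrm{Hil}(\prod_{i}u_{i},\varpi_{F})$ times a fixed factor, and the oddness of $f$ is exactly what makes $\prod_{i}u_{i}$ change class modulo squares when $\varpi_{E}$ is replaced by $\varpi_{E}\epsilon_{0}$, flipping the sign by Lemma \ref{lemmahilbertresidue}.(2). (Your side remark that $e$ even forces $\mrn_{E/F}(\varpi_{E})\in F^{\times2}$ is not quite right, but it plays no role in the argument.)
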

	
	\begin{proof}
		
		We use the proof of \cite{hakim2013tame}, Proposition 6.6 directly, except that right now $f$ is odd instead of being even. Our question reduces to calculate the following term
		$$\mathrm{Hasse}(\mathrm{diag}(u_{1},...,u_{f},u_{1}\varpi_{F},...,u_{f}\varpi_{F}))\quad (\text{with}\ u_{1},...,u_{f}\in\mfo_{F}^{\times})$$
		in the case where $\prod_{i=1}^{f}u_{i}\in F^{\times2}$ or $\epsilon_{0}'F^{\times2}$ respectively with $\epsilon_{0}'\in\mfo_{F}^{\times}\backslash\mfo_{F}^{\times2}$ fixed, and to show that they are different. From the calculation in \emph{loc. cit.}, we have
		\begin{align*}
			\mathrm{Hasse}(\mathrm{diag}(u_{1},...,u_{f},u_{1}\varpi_{F},...,u_{f}\varpi_{F}))&=(\prod_{i=1}^{f}\mathrm{Hil}(u_{i},\varpi_{F}))^{2f-1}\cdot\mathrm{Hil}(\varpi_{F},\varpi_{F})^{f(f-1)/2}\\
			&=\mathrm{Hil}(\prod_{i=1}^{f}u_{i},\varpi_{F})\cdot\mathrm{Hil}(\varpi_{F},\varpi_{F})^{f(f-1)/2}
		\end{align*}
		Thus by Lemma \ref{lemmahilbertresidue}.(2), when $\prod_{i=1}^{f}u_{i}\in F^{\times2}$ or $\epsilon_{0}'F^{\times2}$ respectively, the corresponding terms are different.
		
	\end{proof}

	\begin{corollary}\label{corhassecase3b}
		
		Under the assumption of Lemma \ref{lemmahassecase2b2}, the Hasse invariants
		$$\mathrm{Hasse}(\mathrm{diag}(J_{d}\varpi_{E},...,J_{d}\varpi_{E},J_{d}\varpi_{E}))\quad\text{and}\quad\mathrm{Hasse}(\mathrm{diag}(J_{d}\varpi_{E},...,J_{d}\varpi_{E},J_{d}\varpi_{E}\epsilon_{0}))$$
		are different, where the two matrices are in $\mrm_{m}(\mrm_{d}(F))=\mrm_{md}(F)$.
		
	\end{corollary}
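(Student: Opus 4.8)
The plan is to reduce the Hasse invariant of each block-diagonal matrix $\mathrm{diag}(J_{d}\varpi_{E},\dots,J_{d}\varpi_{E},J_{d}\varpi_{E}\delta)$ (with $\delta\in\{1,\epsilon_{0}\}$, $m-1$ copies of the first block and one last block) to a product of Hasse invariants of the individual blocks together with Hilbert-symbol correction terms, and then to show that the two choices $\delta=1$ and $\delta=\epsilon_{0}$ yield different results. First I would compute $\mathrm{det}(J_{d}\varpi_{E})$ and $\mathrm{det}(J_{d}\varpi_{E}\epsilon_{0})$ as elements of $F^{\times}/F^{\times2}$: since $\mathrm{det}(J_{d})=\pm1$ and $\varpi_{E}$ has odd $F$-valuation precisely when $e$ is odd (here $e$ is even, so $N_{E/F}(\varpi_{E})\in\mathfrak{o}_{F}^{\times}$ up to squares), I would identify the discriminant class of each block. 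The key point is that, because $e$ is even, all the determinants in question lie in $\mathfrak{o}_{F}^{\times}F^{\times2}/F^{\times2}$, so the relevant Hilbert symbols $\mathrm{Hil}(\mathrm{det}(A_{i}),\mathrm{det}(A_{j}))$ are symbols of two units, hence equal to $1$ by Lemma~\ref{lemmahilbertresidue} (or directly since a unit is always a square times a unit and $\mathrm{Hil}$ of two units with one a square is trivial, and more precisely $\mathrm{Hil}$ restricted to $\mathfrak{o}_{F}^{\times}\times\mathfrak{o}_{F}^{\times}$ is trivial for $p\neq2$).

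Granting that, Corollary~\ref{corhassekmatrix} applies: the cross terms all vanish, so
$$\mathrm{Hasse}(\mathrm{diag}(J_{d}\varpi_{E},\dots,J_{d}\varpi_{E},J_{d}\varpi_{E}\delta))=\mathrm{Hasse}(J_{d}\varpi_{E})^{m-1}\cdot\mathrm{Hasse}(J_{d}\varpi_{E}\delta).$$
Therefore the ratio of the two Hasse invariants (for $\delta=1$ versus $\delta=\epsilon_{0}$) is exactly
$$\mathrm{Hasse}(J_{d}\varpi_{E})\big/\mathrm{Hasse}(J_{d}\varpi_{E}\epsilon_{0}),$$
and by Lemma~\ref{lemmahassecase2b2} this ratio is $-1$, i.e.\ $\mathrm{Hasse}(J_{d}\varpi_{E})\neq\mathrm{Hasse}(J_{d}\varpi_{E}\epsilon_{0})$. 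Hence the two block-diagonal matrices have different Hasse invariants, which is the claim. (The exponent $m-1$ being odd or even is irrelevant since the factor $\mathrm{Hasse}(J_{d}\varpi_{E})^{m-1}$ is common to both sides and cancels in the comparison.)

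The main obstacle I anticipate is bookkeeping rather than anything deep: one must be careful that the determinants of the blocks really do land in $\mathfrak{o}_{F}^{\times}F^{\times2}/F^{\times2}$ so that Corollary~\ref{corhassekmatrix} is legitimately applicable — this uses crucially that $e$ is even (the standing hypothesis of Lemma~\ref{lemmahassecase2b2}), since otherwise $\varpi_{E}$ would contribute an odd power of $\varpi_{F}$ to $N_{E/F}$ and the cross Hilbert symbols need not be trivial. A secondary point to verify is that $J_{d}\varpi_{E}$, $J_{d}\varpi_{E}\epsilon_{0}$ are genuinely symmetric as matrices over $F$ via the fixed $J$-symmetric embedding $E\hookrightarrow\mathrm{M}_{d}(F)$ — this is exactly the content of the setup preceding Proposition~\ref{propE/Fbasic} together with the fact that $\varpi_{E}$ and $\epsilon_{0}$, being elements of $E^{\times}$, map to $J$-symmetric matrices. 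Once these two routine checks are in place, the argument is a one-line consequence of Corollary~\ref{corhassekmatrix} and Lemma~\ref{lemmahassecase2b2}.
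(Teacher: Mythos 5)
There is a genuine gap in the step where you invoke Corollary \ref{corhassekmatrix}. Your justification is that, since $e$ is even, the block determinants lie in $\mathfrak{o}_{F}^{\times}F^{\times2}/F^{\times2}$, so all cross Hilbert symbols are symbols of units and hence trivial. This premise is false: $v_{F}(\mathrm{N}_{E/F}(\varpi_{E}))=f$, and under the standing hypothesis of Lemma \ref{lemmahassecase2b2} it is $f$ that is odd (and $e$ that is even), so $\mathrm{det}(J_{d}\varpi_{E})=\pm\mathrm{N}_{E/F}(\varpi_{E})$ has \emph{odd} $F$-valuation; you have the parity criterion backwards. Consequently the cross terms are genuinely nontrivial in general: by Lemma \ref{lemmahilbertresidue}.(1), $\mathrm{Hil}(\mathrm{det}(J_{d}\varpi_{E}),\mathrm{det}(J_{d}\varpi_{E}))=-1$ whenever $-1\notin\boldsymbol{k}^{\times2}$, so the hypothesis of Corollary \ref{corhassekmatrix} fails and your product formula $\mathrm{Hasse}(\cdots)=\mathrm{Hasse}(J_{d}\varpi_{E})^{m-1}\mathrm{Hasse}(J_{d}\varpi_{E}\delta)$ is not valid as stated.

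The conclusion is nevertheless reachable by a corrected version of your comparison, and this is what the paper does: apply Lemma \ref{lemmahasseblock} once to split off only the last block from $A=\mathrm{diag}(J_{d}\varpi_{E},\dots,J_{d}\varpi_{E})$, so that each of the two Hasse invariants equals $\mathrm{Hasse}(A)\cdot\mathrm{Hasse}(J_{d}\varpi_{E}\delta)\cdot\mathrm{Hil}(\mathrm{det}(A),\mathrm{det}(J_{d}\varpi_{E}\delta))$. One then shows not that the Hilbert-symbol corrections vanish, but that they are \emph{equal} for $\delta=1$ and $\delta=\epsilon_{0}$: since $e$ is even, $\mathrm{N}_{E/F}(\epsilon_{0})\in F^{\times2}$, so $\mathrm{det}(J_{d}\varpi_{E}\epsilon_{0})$ and $\mathrm{det}(J_{d}\varpi_{E})$ agree modulo squares and the two Hilbert symbols coincide. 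The common factors then cancel in the ratio and Lemma \ref{lemmahassecase2b2} gives the claim. So the fix is to replace ``the cross terms are trivial'' by ``the cross terms agree between the two matrices''; as written, your appeal to Corollary \ref{corhassekmatrix} is not justified.
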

	
	\begin{proof}
		
		We write $$A=\mathrm{diag}(J_{d}\varpi_{E},...,J_{d}\varpi_{E})\in\mrm_{m-1}(\mrm_{d}(F))=\mrm_{(m-1)d}(F),$$ 
		then using Lemma \ref{lemmahasseblock}, we have
		$$\mathrm{Hasse}(\mathrm{diag}(J_{d}\varpi_{E},...,J_{d}\varpi_{E},J_{d}\varpi_{E}))=\mathrm{Hasse}(A)\cdot\mathrm{Hasse}(J_{d}\varpi_{E})\cdot\mathrm{Hil}(\mathrm{det}(A),\mathrm{det}(J_{d}\varpi_{E}))$$
		and
		$$\mathrm{Hasse}(\mathrm{diag}(J_{d}\varpi_{E},...,J_{d}\varpi_{E},J_{d}\varpi_{E}\epsilon_{0}))=\mathrm{Hasse}(A)\cdot\mathrm{Hasse}(J_{d}\varpi_{E}\epsilon_{0})\cdot\mathrm{Hil}(\mathrm{det}(A),\mathrm{det}(J_{d}\varpi_{E}\epsilon_{0})).$$
		Thus using Lemma \ref{lemmahassecase2b2}, we only need to show that
		$$\mathrm{Hil}(\mathrm{det}(A),\mathrm{det}(J_{d}\varpi_{E}))=\mathrm{Hil}(\mathrm{det}(A),\mathrm{det}(J_{d}\varpi_{E}\epsilon_{0})),$$
		which follows from the fact that $\mathrm{det}(\epsilon_{0})=\mrn_{E/F}(\epsilon_{0})\in F^{\times 2}$ when $e$ is even.
		
	\end{proof}
	
	Now we assume that $e$ is odd. First we consider the case where $f$ is even. In this case, $\mrn_{E/F}(\epsilon_{0})\notin F^{\times2}$. We choose $\varpi_{E}'$ to be another uniformizer of $E$ such that $\mrn_{E/F}(\varpi_{E}')\in F^{\times2}$. 
	
	\begin{lemma}\label{lemmahassecase2b1}
		
		If $e$ and $m$ are odd and if $f$ is even, then
		$$\mathrm{Hasse}(\mathrm{diag}(J_{d}\varpi_{E},...J_{d}\varpi_{E},J_{d}\varpi_{E}'))=1$$
		and
		$$\mathrm{Hasse}(\mathrm{diag}(J_{d}\varpi_{E}\epsilon_{0},...J_{d}\varpi_{E}\epsilon_{0},J_{d}\varpi_{E}'\epsilon_{0}))=-1,$$
		where the two matrices are in $\mrm_{m}(\mrm_{d}(F))=\mrm_{md}(F)$.
		
	\end{lemma}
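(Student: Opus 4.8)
The plan is to reduce the computation of the Hasse invariant of each of these two $md\times md$ symmetric matrices to an iterated application of Lemma \ref{lemmahasseblock}, exactly as in the proof of Corollary \ref{corhassecase3b}, but now tracking the Hilbert symbols carefully since $e$ odd means $\mrn_{E/F}(\epsilon_0)\notin F^{\times 2}$ and $\mrn_{E/F}(\varpi_E)\notin F^{\times 2}$, so the cross terms no longer automatically vanish. First I would record the basic determinant computations: $\mrdet(J_d\varpi_E)=\pm\mrn_{E/F}(\varpi_E)$, $\mrdet(J_d\varpi_E')=\pm\mrn_{E/F}(\varpi_E')\in \pm F^{\times 2}$ by our choice of $\varpi_E'$, and $\mrdet(J_d\varpi_E\epsilon_0)=\pm\mrn_{E/F}(\varpi_E)\mrn_{E/F}(\epsilon_0)$; since $f$ is even and $e$ is odd one checks which of these lie in $F^{\times 2}$ and which do not, using $\mrn_{E/F}$ on $\mfo_E^\times$ has image of index $2$ in $\mfo_F^\times$ when $f$ is even. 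The sign $(-1)^{d(d-1)/2}$ from $\mrdet(J_d)$ I would absorb into the bookkeeping, or better, choose $\psi_F$ and representatives so that $-1\in F^{\times 2}$ is not needed and just carry it symbolically.

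Next I would compute $\mathrm{Hasse}(J_d\varpi_E)$ and $\mathrm{Hasse}(J_d\varpi_E')$ and $\mathrm{Hasse}(J_d\varpi_E\epsilon_0)$, $\mathrm{Hasse}(J_d\varpi_E'\epsilon_0)$ individually. For the $d\times d$ blocks one uses that $E\hookrightarrow \mrm_d(F)$ is $J$-symmetric and that multiplication by a scalar $\varpi_E$ (resp. $\varpi_E\epsilon_0$) inside $E$ corresponds to a symmetric matrix of the form $J_d$ times that element; diagonalizing via the tame structure (the block decomposes into the unramified part of size $f$ and the ramified part of size $e$) and applying Lemma \ref{lemmahilbertresidue}, together with Corollary \ref{corhassekmatrix} when the pairwise Hilbert symbols are trivial, gives explicit values. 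This is essentially the computation already done in the proof of Lemma \ref{lemmahassecase2b2} and in \cite{hakim2013tame}, \S6, so I would cite those and only indicate the modification caused by $e$ odd rather than redo it.

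Then comes the assembly. Writing the first matrix as $\mathrm{diag}(B,\dots,B,B')$ with $B=J_d\varpi_E$ ($m-1$ copies) and $B'=J_d\varpi_E'$, I apply Lemma \ref{lemmahasseblock} iteratively: $\mathrm{Hasse}(\mathrm{diag}(B^{m-1},B'))=\mathrm{Hasse}(B)^{m-1}\,\mathrm{Hasse}(B')\cdot\big(\prod_{\text{pairs among the }B\text{'s}}\mathrm{Hil}(\mrdet B,\mrdet B)\big)\cdot\mathrm{Hil}(\mrdet(B)^{m-1},\mrdet B')$. Since $m$ is odd there are $\binom{m-1}{2}$ like-pairs and $\mrdet(B)^{m-1}\equiv 1\bmod F^{\times 2}$, so the last factor is $1$; the number of like-pairs $\binom{m-1}{2}=(m-1)(m-2)/2$ is even when $m$ is odd (as $m-2$ is odd, $m-1$ is even, and one of $m-1,m-2$ is divisible by... actually I would just check: $m$ odd $\Rightarrow m-1$ even, $(m-1)(m-2)/2$ has the parity of $(m-1)/2\cdot(m-2)$, which is even iff $(m-1)/2$ is even; so I must be slightly careful and may instead group the $m-1$ copies of $B$ using Corollary \ref{corhassekmatrix} only if $\mathrm{Hil}(\mrdet B,\mrdet B)=1$, which holds when $\mrdet B\in \pm F^{\times 2}$ — and here by choice of $\varpi_E'$ it is $B'$ not $B$ that has square-class determinant, so the clean grouping applies to the identical $B$'s only after checking $\mathrm{Hil}(\mrn_{E/F}\varpi_E,\mrn_{E/F}\varpi_E)$). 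The anticipated main obstacle is precisely this parity/sign bookkeeping: one must pin down $\mathrm{Hil}(\mrdet B,\mrdet B)$, the number of like-pairs modulo $2$, and the value of $\mathrm{Hasse}(B)$ modulo the choices, and verify that the product collapses to $+1$ in the first case and to $-1$ in the second — the difference between the two cases being exactly the extra factor $\mathrm{Hasse}(\mathrm{diag}(\epsilon_0,\dots,\epsilon_0))$ type contribution coming from $\mrn_{E/F}(\epsilon_0)\notin F^{\times 2}$ (this is where $e$ odd enters decisively). Once the four individual block Hasse invariants and the relevant Hilbert symbols are tabulated, the rest is an honest but routine multiplication, so I would present the table and then the one-line assembly for each of the two matrices.
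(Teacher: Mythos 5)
Your overall architecture matches the paper's: split the $md\times md$ matrix into its $d\times d$ blocks and assemble with Lemma \ref{lemmahasseblock}. The assembly step is in fact easier than you fear. Because $f$ is even, every block determinant $\pm\mrn_{E/F}(\cdot)$ has even valuation, hence is a unit modulo squares, and since $p\neq 2$ the Hilbert symbol of two units is trivial; so all cross terms vanish, Corollary \ref{corhassekmatrix} applies directly, and (as $m-1$ is even) the whole product collapses to $\mathrm{Hasse}(J_{d}\varpi_{E}')$, resp.\ $\mathrm{Hasse}(J_{d}\varpi_{E}'\epsilon_{0})$. Your worries about the parity of $\binom{m-1}{2}$ and about $\mathrm{Hil}(\mrdet B,\mrdet B)$ evaporate, and your side claim that $\mrn_{E/F}(\varpi_{E})\notin F^{\times2}$ is both unjustified (nothing in the setup forces it) and irrelevant, since $\mathrm{Hasse}(J_{d}\varpi_{E})$ enters only to the even power $m-1$.

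The genuine gap is the single-block computation, which is where the entire content of the lemma lives and which you propose to outsource to Lemma \ref{lemmahassecase2b2} and to \cite{hakim2013tame}, \S 6. Those treat the opposite parity case ($f$ odd, $e$ even) and, more importantly, only establish that two Hasse invariants \emph{differ}; here you must produce the exact values $+1$ and $-1$, and the answer depends on the normalization $\mrn_{E/F}(\varpi_{E}')\in F^{\times2}$. The paper obtains them by: (a) using Lemma \ref{lemmaE/Lextension} (this is where $e$ odd enters) to replace $\varpi_{E}$, $\varpi_{E}'$, $\epsilon_{0}$ by representatives coming from the maximal unramified subextension $L/F$ of degree $f$; (b) exploiting the explicit form of the $J$-symmetric embedding to write $J_{d}\varpi_{E}'$ as a unimodular block plus $J_{f}\varpi_{L}'$ and discarding the unimodular part via Lemma \ref{lemmaGLnOFsymhasse}; and (c) writing $\varpi_{L}'=\varpi_{F}v'$, diagonalizing $J_{f}v'$, and running a case analysis on $-1\in F^{\times2}$ and on $f\bmod 4$ using Lemma \ref{lemmahilbertresidue}. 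None of these steps appear in your outline; without them the "table of four block invariants" you promise is exactly the part of the proof that remains to be done.
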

	
	\begin{proof}
		
		
		To begin with, we state and proof the following general lemma which is useful not only in this proof, but in the latter sections.
		
		\begin{lemma}\label{lemmaE/Lextension}
			
			Let $E/L$ be a finite extension of non-archimedean locally compact fields of residue characteristic $p\neq 2$ of odd degree, and let $$L^{\times}/L^{\times2}\rightarrow E^{\times}/E^{\times2}$$ be the homomorphism induced by the canonical embedding $L\hookrightarrow E$, then the homomorphism above induces two isomorphisms $$L^{\times}/L^{\times2}\simeq E^{\times}/E^{\times2}\quad\text{and}\quad\mfo_{L}^{\times}/\mfo_{L}^{\times2}\simeq\mfo_{E}^{\times}/\mfo_{E}^{\times2}.$$
			
		\end{lemma}
		
		\begin{proof}
			
			The embedding $L\hookrightarrow E$ leads to the following embedding:
			$$L^{\times}/ E^{\times2}\cap L^{\times}\hookrightarrow E^{\times}/E^{\times2}.$$
			First we have $L^{\times2}\subset E^{\times2}\cap L^{\times}$. And for $x\in E^{\times2}\cap L^{\times}$, let $x=y^{2}$ with $y\in E^{\times}$. Thus $L[y]$ is a subextension of $E$ over $L$ which is of degree $1$ or $2$. Since $[E:L]$ is odd, we must have $L[y]=L$ and $y\in L$. So $x\in L^{\times2}$, which means that $E^{\times2}\cap L^{\times}=L^{\times2}$ since $x$ is arbitrary. Thus the homomorphism in the lemma is injective, which is an isomorphism since $[E^{\times}:E^{\times2}]=[L^{\times}:L^{\times2}]=4$.
			
			Moreover, since $|\mfo_{L}^{\times}/\mfo_{L}^{\times2}|=|\mfo_{E}^{\times}/\mfo_{E}^{\times2}|=2$, the isomorphism above also leads to an isomorphism
			$$\mfo_{L}^{\times}/\mfo_{L}^{\times2}\simeq\mfo_{E}^{\times}/\mfo_{E}^{\times2}.$$
			
		\end{proof}
		
		Come back to the original proof. We write $L$ for the maximal unramified subextension of $E$ over $F$, then $[L:F]=f$ and $[E:L]=e$. Since $e$ is odd, by Lemma \ref{lemmaE/Lextension} we have an isomorphism $$\mfo_{E}^{\times}/\mfo_{E}^{\times2}\simeq\mfo_{L}^{\times}/\mfo_{L}^{\times2}.$$ Since the result does not depend on the choice of $\varpi_{E}$, $\varpi_{E}'$ and $\epsilon_{0}$ as representatives in $E^{\times}/E^{\times2}$, we may assume that $\varpi_{E}^{e}=\varpi_{L}$ is a uniformizer in $L$, $\varpi_{E}'^{e}=\varpi_{L}'$ is a uniformizer in $L$ such that $\mrn_{L/F}(\varpi_{L}')\in F^{\times2}$, and $\epsilon_{0}\in\mfo_{L}^{\times}\backslash\mfo_{L}^{\times2}$. From the construction of the $J$-symmetric embedding in Proposition \ref{proptameembed} (see the proof of \cite{hakim2013tame}, Proposition 6.6 for more details), we may write
		$$J_{d}\varpi_{E}=\mathrm{diag}(J_{(e-1)f},J_{f}\varpi_{L})\quad \text{and}\quad J_{d}\varpi_{E}'=\mathrm{diag}(J_{e(f-1)}, J_{f}\varpi_{L}')$$
		and
		$$J_{d}\varpi_{E}\epsilon_{0}=\mathrm{diag}(J_{(e-1)f}\epsilon_{0}, J_{f}\varpi_{L}\epsilon_{0})\quad \text{and}\quad J_{d}\varpi_{E}'\epsilon_{0}=\mathrm{diag}(J_{e(f-1)}\epsilon_{0}, J_{f}\varpi_{L}'\epsilon_{0}).$$
		Since $\mathrm{det}(J_{(e-1)f})\in\mfo_{F}^{\times}$, and since
		$\mathrm{det}(\mathrm{diag}(J_{f}\varpi_{L},...,J_{f}\varpi_{L},J_{f}\varpi_{L}'))$ is of even order in $F^{\times}$,
		using Lemma \ref{lemmaGLnOFsymhasse} and Corollary \ref{corhassekmatrix}, we get
		\begin{equation}\label{eq2b1hasse1}
			\mathrm{Hasse}(\mathrm{diag}(J_{d}\varpi_{E},...,J_{d}\varpi_{E},J_{d}\varpi_{E}'))=\mathrm{Hasse}(\mathrm{diag}(J_{f}\varpi_{L},...,J_{f}\varpi_{L},J_{f}\varpi_{L}')),
		\end{equation}
		where the matrix in the Hasse of the right hand side is of size $fm$.
		Similarly we have
		\begin{equation}\label{eq2b1hasse2}
			\mathrm{Hasse}(\mathrm{diag}(J_{d}\varpi_{E}\epsilon_{0},...,J_{d}\varpi_{E}\epsilon_{0},J_{d}\varpi_{E}'\epsilon_{0}))=\mathrm{Hasse}(\mathrm{diag}(J_{f}\varpi_{L}\epsilon_{0},...,J_{f}\varpi_{L}\epsilon_{0},J_{f}\varpi_{L}'\epsilon_{0})),
		\end{equation}
		where the matrix in the Hasse of the right hand side is also of size $fm$. Since $L/F$ is unramified, we may write $\varpi_{L}=\varpi_{F}v$  and $\varpi_{L}'=\varpi_{F}v'$ with $v,v'\in\mfo_{L}^{\times}$, thus the term in (\ref{eq2b1hasse1}) equals
		\begin{equation}\label{eq2b1hasse1'}
			\mathrm{Hasse}(\mathrm{diag}(J_{f}v\varpi_{F},...,J_{f}v\varpi_{F},J_{f}v'\varpi_{F})),
		\end{equation}
		and the term in (\ref{eq2b1hasse2}) equals
		\begin{equation}\label{eq2b1hasse2'}
			\mathrm{Hasse}(\mathrm{diag}(J_{f}v\epsilon_{0}\varpi_{F},...,J_{f}v\epsilon_{0}\varpi_{F},J_{f}v'\epsilon_{0}\varpi_{F})).
		\end{equation}
		Since $f$ is even, $\mathrm{det}(J_{f}\varpi_{F})$ and $\mathrm{det}(J_{f}v'\varpi_{F})$ are of even order in $F^{\times}$, thus by Lemma \ref{corhassekmatrix}, (\ref{eq2b1hasse1'}) equals
		$$\mathrm{Hasse}(J_{f}v\varpi_{F})^{m-1}\cdot\mathrm{Hasse}(J_{f}v'\varpi_{F})=\mathrm{Hasse}(J_{f}v'\varpi_{F})$$
		and similarly
		(\ref{eq2b1hasse2'}) equals
		$$\mathrm{Hasse}(J_{f}v\epsilon_{0}\varpi_{F})^{m-1}\cdot\mathrm{Hasse}(J_{f}v'\epsilon_{0}\varpi_{F})=\mathrm{Hasse}(J_{f}v'\epsilon_{0}\varpi_{F}).$$
		We assume that $J_{f}v'$ is similar to $\mathrm{diag}(1,...,1,u_{1})$
		and $J_{f}v'\epsilon_{0}$ is similar to $\mathrm{diag}(1,...,1,u_{2})$ with $u_{1}, u_{2}\in\mfo_{F}^{\times}$, then (\ref{eq2b1hasse1'}) equals
		$$\mathrm{Hasse}(\mathrm{diag}(\varpi_{F},...,\varpi_{F},\varpi_{F}u_{1})),$$
		and (\ref{eq2b1hasse2'}) equals
		$$\mathrm{Hasse}(\mathrm{diag}(\varpi_{F},...,\varpi_{F},\varpi_{F}u_{2})).$$
		By direct calculation, we get $$\mathrm{det}(J_{f}v'\varpi_{F})=(-1)^{f(f-1)/2}\mrn_{L/F}(\varpi_{L}')\in (-1)^{f(f-1)/2}F^{\times2}$$
		and
		$$\mathrm{det}(J_{f}v'\epsilon_{0}\varpi_{F})=(-1)^{f(f-1)/2}\mrn_{L/F}(\epsilon_{0}\varpi_{L}')\in (-1)^{f(f-1)/2}\mrn_{L/F}(\epsilon_{0})F^{\times2},$$
		where $\mrn_{L/F}(\epsilon_{0})\in\mfo_{F}^{\times}\backslash\mfo_{F}^{\times2}$.
		
		If $-1\in F^{\times2}$ or if $-1\notin F^{\times2}$ and $4|f$, then $\mathrm{det}(J_{f}v')\in \mfo_{F}^{\times2}$ and $\mathrm{det}(J_{f}v'\epsilon_{0})\in \mfo_{F}^{\times}\backslash\mfo_{F}^{\times2}$. We may assume $u_{1}=1$ and $u_{2}\in\mfo_{F}^{\times}\backslash\mfo_{F}^{\times2}$, where in the latter case we may further assume $u_{2}=-1$. So by Lemma \ref{lemmahilbertresidue}.(1), when $-1\in F^{\times2}$ we have $$\mathrm{Hasse}(\mathrm{diag}(\varpi_{F},...,\varpi_{F},\varpi_{F}u_{1}))=1$$
		and
		$$\mathrm{Hasse}(\mathrm{diag}(\varpi_{F},...,\varpi_{F},\varpi_{F}u_{2}))=(-1)^{f-1}=-1.$$ When $-1\notin F^{\times2}$ and $4|f$, we have
		$$\mathrm{Hasse}(\mathrm{diag}(\varpi_{F},...,\varpi_{F},\varpi_{F}u_{1}))=(-1)^{f(f-1)/2}=1,$$
		and
		$$\mathrm{Hasse}(\mathrm{diag}(\varpi_{F},...,\varpi_{F},-\varpi_{F}))=(-1)^{(f-1)(f-2)/2}=-1.$$
		
		If $-1\notin F^{\times2}$ and $4\nmid f$, then $\mathrm{det}(J_{f}v')\in\mfo_{F}^{\times}\backslash\mfo_{F}^{\times2}$ and $\mathrm{det}(J_{f}v'\epsilon_{0})\in\mfo_{F}^{\times2}$. We may assume $u_{1}=-1$ and $u_{2}=1$ and we have
		$$\mathrm{Hasse}(\mathrm{diag}(\varpi_{F},...,\varpi_{F},-\varpi_{F}))=(-1)^{(f-1)(f-2)/2}=1$$ and
		$$\mathrm{Hasse}(\mathrm{diag}(\varpi_{F},...,\varpi_{F},\varpi_{F}))=(-1)^{f(f-1)/2}=-1.$$ Thus we finish the proof.
		
	\end{proof}
	
	Finally, we drop the assumption that $f$ is even.
	
	\begin{lemma}\label{lemmahassecase2a1}
		
		If $e$ is odd, $m$ is even and one of the three cases happens:
		\begin{itemize}
			\item $2|d$;
			\item $2\nmid d$ and $4|m$;
			\item $2\nmid d$, $4\nmid m$ and $-1\in F^{\times2}$,
		\end{itemize} then
		$\mathrm{Hasse}(\mathrm{diag}(J_{d}\varpi_{E},...,J_{d}\varpi_{E},J_{d}\varpi_{E}\epsilon_{0}))=-1,$ where the matrix in Hasse is in $\mrm_{md}(F)$.
		
	\end{lemma}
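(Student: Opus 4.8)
The plan is to adapt the argument in the proof of Lemma \ref{lemmahassecase2b1}: first pass from $E$ to its maximal unramified subextension over $F$ — which is where the oddness of $e$ enters — then to an explicit diagonal form over $F$, and conclude by a residue-field Hilbert-symbol computation whose outcome depends on the parity of $mf$ and on whether $-1\in F^{\times2}$. Concretely, since $e$ is odd, let $L/F$ be the maximal unramified subextension of $E$, so $[L:F]=f$ and $[E:L]=e$ is odd. By Lemma \ref{lemmaE/Lextension} the inclusion $L\hookrightarrow E$ gives $L^{\times}/L^{\times2}\simeq E^{\times}/E^{\times2}$ and $\mfo_{L}^{\times}/\mfo_{L}^{\times2}\simeq\mfo_{E}^{\times}/\mfo_{E}^{\times2}$; since the Hasse invariant depends only on the square classes, I may take the representatives inside $L$: namely $\varpi_{E}$ with $\varpi_{E}^{e}=\varpi_{L}$ a uniformizer of $L$, and $\epsilon_{0}\in\mfo_{L}^{\times}\backslash\mfo_{L}^{\times2}$, and moreover $\varpi_{L}=\varpi_{F}$ because $L/F$ is unramified. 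Following the explicit construction of the $J$-symmetric embedding (Proposition \ref{proptameembed}; see the proof of \cite{hakim2013tame}, Proposition 6.6, as already used in the proof of Lemma \ref{lemmahassecase2b1}), one has
$$J_{d}\varpi_{E}=\mathrm{diag}(J_{(e-1)f},J_{f}\varpi_{L})\qquad\text{and}\qquad J_{d}\varpi_{E}\epsilon_{0}=\mathrm{diag}(J_{(e-1)f}\epsilon_{0},J_{f}\varpi_{L}\epsilon_{0}),$$
where $J_{(e-1)f}$ and $J_{(e-1)f}\epsilon_{0}$ lie in $\mrgl_{(e-1)f}(\mfo_{F})$ and $(e-1)f$ is even.

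Denote by $M\in\mrm_{md}(F)$ the matrix in the statement. Substituting these decompositions into $M$ and expanding $\mathrm{Hasse}$ by repeated use of Lemma \ref{lemmahasseblock}, $\mathrm{Hasse}(M)$ is the product of the Hasse invariants of the $2m$ resulting small blocks and of the pairwise Hilbert symbols of their determinants. The Hasse invariants of the $m$ blocks of type $J_{(e-1)f}$ or $J_{(e-1)f}\epsilon_{0}$ vanish by Lemma \ref{lemmaGLnOFsymhasse}; their determinants are units, all lying in the same square class $c$ (that of $\mathrm{det}(J_{(e-1)f})$, since the factor $\mrn_{L/F}(\epsilon_{0})$ occurs to the even power $e-1$), and $\mathrm{Hil}(c,c)=1$ since $c$ is a unit and $p\neq2$. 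Each Hilbert symbol pairing a $J_{(e-1)f}$-type block with a $J_{f}\varpi_{L}$-type block is determined by the square class of $c$ alone — by Lemma \ref{lemmahilbertresidue}.(2) when $f$ is odd, by triviality of unit Hilbert symbols when $f$ is even — hence these symbols are all equal, and since there are $m^{2}$ of them and $m$ is even, their product is $1$. Therefore $\mathrm{Hasse}(M)=\mathrm{Hasse}(M')$, where
$$M':=\mathrm{diag}(J_{f}\varpi_{L},\ldots,J_{f}\varpi_{L},J_{f}\varpi_{L}\epsilon_{0})\in\mrm_{mf}(F).$$

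It remains to compute $\mathrm{Hasse}(M')$. Since $\varpi_{L}=\varpi_{F}\in F$, we have $J_{f}\varpi_{L}=\varpi_{F}J_{f}$ and $J_{f}\varpi_{L}\epsilon_{0}=\varpi_{F}(J_{f}\epsilon_{0})$, where $J_{f}\epsilon_{0}$ denotes the symmetric matrix attached to $\epsilon_{0}$ by the induced embedding $L\hookrightarrow\mrm_{f}(F)$; thus $M'=\varpi_{F}N$ with $N:=\mathrm{diag}(J_{f},\ldots,J_{f},J_{f}\epsilon_{0})\in\mrgl_{mf}(\mfo_{F})\cap\mcs$. By Lemma \ref{lemmaGLnOFsymhasse}, $\mathrm{Hasse}(N)=1$, while $\mathrm{det}(N)\equiv\mrn_{L/F}(\epsilon_{0})\pmod{F^{\times2}}$ — again using that $m$ is even — which is a non-square unit $u$ (as in the proof of Lemma \ref{lemmahassecase2b1}). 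Hence $N$ is similar to $\mathrm{diag}(1,\ldots,1,u)$ (it is integral, with trivial Hasse invariant and discriminant $u$), so $M'$ is similar to $\mathrm{diag}(\varpi_{F},\ldots,\varpi_{F},\varpi_{F}u)$ of size $mf$. Exactly as at the end of the proof of Lemma \ref{lemmahassecase2b1}, Lemma \ref{lemmahilbertresidue} gives $\mathrm{Hil}(\varpi_{F},\varpi_{F})=:\sigma$, with $\sigma=1$ if and only if $-1\in F^{\times2}$, and $\mathrm{Hil}(\varpi_{F},\varpi_{F}u)=-\sigma$; therefore
$$\mathrm{Hasse}(M')=\sigma^{\binom{mf-1}{2}}(-\sigma)^{mf-1}=(-1)^{mf-1}\sigma^{\binom{mf}{2}}=-\sigma^{\binom{mf}{2}},$$
since $mf$ is even. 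If $2\mid d$ then $f$ is even, so $4\mid mf$ and $\binom{mf}{2}$ is even; likewise if $2\nmid d$ and $4\mid m$ then $4\mid mf$ and $\binom{mf}{2}$ is even; and if $2\nmid d$, $4\nmid m$ and $-1\in F^{\times2}$ then $\sigma=1$. In all three cases $\sigma^{\binom{mf}{2}}=1$, so $\mathrm{Hasse}(M)=\mathrm{Hasse}(M')=-1$, as claimed.

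I expect the principal obstacle to be the bookkeeping in the second step: checking that every Hilbert-symbol contribution of the integral $J_{(e-1)f}$-type blocks cancels. This is exactly where the hypothesis $2\mid m$ is indispensable, and, in the sub-cases with $f$ odd, where one must keep track of the non-trivial residue-field Hilbert symbols of the form $\mathrm{Hil}(-1,\varpi_{F}u)$ that arise when $-1\notin F^{\times2}$; once this cancellation is established, the remaining computation is the routine residue-field arithmetic already present in \cite{hakim2013tame}.
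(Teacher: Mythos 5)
Your argument follows the paper's proof essentially step for step: pass to the maximal unramified subextension $L$ using Lemma \ref{lemmaE/Lextension} and the oddness of $e$, use the block decomposition of the $J$-symmetric embedding to strip off the integral part (this is exactly where $2\mid m$ enters), and finish with the residue-field Hilbert-symbol computation of Lemma \ref{lemmahilbertresidue}. Your closed form $\mathrm{Hasse}(M')=-\sigma^{\binom{mf}{2}}$ merely packages the paper's case-by-case evaluation more uniformly, and your (more laborious) $2m$-block bookkeeping of the cross Hilbert symbols is correct. One inaccuracy: you cannot in general arrange $\varpi_L:=\varpi_E^{e}$ to equal $\varpi_F$; the uniformizers of $L$ arising as $e$-th powers of uniformizers of $E$ are constrained by the extension $E/L$ (modifying $\varpi_E$ by squares only changes $\varpi_E^{e}$ by $2e$-th powers of units), so the paper instead writes $\varpi_L=\varpi_F v$ with $v\in\mfo_L^{\times}$ and carries the unit along. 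This is harmless for your argument: replacing $N$ by $\mathrm{diag}(J_f v,\dots,J_f v,J_f v\epsilon_0)$, it remains integral symmetric with trivial Hasse invariant and discriminant $\equiv\mrn_{L/F}(\epsilon_0)$ modulo $F^{\times2}$ (again because $m$ is even), so the rest of your computation goes through verbatim.
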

	
	\begin{proof}
		
		
		We write $L$ for the maximal unramified extension of $F$ contained in $E$, thus $[L:F]=f$ and $[E:L]=e$. Since $e$ is odd, by Lemma \ref{lemmaE/Lextension} we get $$\mfo_{E}^{\times}/\mfo_{E}^{\times2}\simeq\mfo_{L}^{\times}/\mfo_{L}^{\times2}.$$ Since the result does not depend on the choice of $\varpi_{E}$ and $\epsilon_{0}$ as representatives in $E^{\times}/E^{\times2}$, we may choose $\varpi_{E}$ as a uniformizer of $E$ such that $\varpi_{E}^{e}=\varpi_{L}$ is a uniformizer in $L$, and $\epsilon_{0}\in\mfo_{L}^{\times}\backslash\mfo_{L}^{\times2}$. As in Lemma \ref{lemmahassecase2b1}, we may write
		$$J_{d}\varpi_{E}=\mathrm{diag}(J_{(e-1)f},J_{f}\varpi_{L})\quad \text{and}\quad J_{d}\varpi_{E}\epsilon_{0}=\mathrm{diag}(J_{e(f-1)}\epsilon_{0}, J_{f}\varpi_{L}\epsilon_{0}).$$
		Thus by Corollary \ref{corhassekmatrix} and the fact that $m$ is even, we get
		\begin{equation}\label{eq2a1hasse}
			\mathrm{Hasse}(\mathrm{diag}(J_{d}\varpi_{E},...,J_{d}\varpi_{E},J_{d}\varpi_{E}\epsilon_{0}))=\mathrm{Hasse}(\mathrm{diag}(J_{f}\varpi_{L},...,J_{f}\varpi_{L},J_{f}\varpi_{L}\epsilon_{0})),
		\end{equation}
		where the last term in the Hasse is a matrix of size $fm$. Since $L/F$ is unramified, we may write $\varpi_{L}=\varpi_{F}v$ with $v\in\mfo_{L}^{\times}$, thus the term in (\ref{eq2a1hasse}) equals $$\mathrm{Hasse}(\mathrm{diag}(J_{f}v\varpi_{F},...,J_{f}v\varpi_{F},J_{f}v\varpi_{F}\epsilon_{0})).$$
		If we assume that $J_{f}v$ is similar to $\mathrm{diag}(1,...,1,u_{1})$, and $J_{f}v\epsilon_{0}$ is similar to $\mathrm{diag}(1,...,1,u_{2})$, then we get $u_{2}/u_{1}\in\mfo_{F}^{\times}\backslash\mfo_{F}^{\times2}$. Moreover we get
		\begin{equation}\label{eqJfpif}
			\mathrm{Hasse}(\mathrm{diag}(J_{f}v\varpi_{F},...,J_{f}v\varpi_{F},J_{f}v\varpi_{F}\epsilon_{0}))=\mathrm{Hasse}(\mathrm{diag}(I_{m(f-1)}\varpi_{F},u_{1}\varpi_{F},...,u_{1}\varpi_{F},u_{2}\varpi_{F})),
		\end{equation}
		where the last diagonal matrix in Hasse is of size $fm$.
		
		If $-1\in F^{\times2}$, we may choose either $u_{1}=1$ and $u_{2}=\epsilon_{0}'$, or $u_{1}=\epsilon_{0}'$ and $u_{2}=1$ with $\epsilon_{0}'\in\mfo_{F}^{\times}\backslash\mfo_{F}^{\times2}$. Thus in the former case, by Lemma \ref{lemmahilbertresidue}.(1) the (\ref{eqJfpif}) equals $$\mathrm{Hasse}(\mathrm{diag}(I_{mf-1}\varpi_{F},\varpi_{F}\epsilon_{0}'))=(-1)^{mf-1}=-1,$$
		and in the latter case, by Lemma \ref{lemmahilbertresidue}.(1) the (\ref{eqJfpif}) equals
		$$\mathrm{Hasse}(\mathrm{diag}(I_{mf-m+1}\varpi_{F}, I_{m-1}\varpi_{F}\epsilon_{0}'))=(-1)^{(mf-m+1)(m-1)}=-1.$$
		
		If $-1\notin F^{\times2}$, we may assume $\epsilon_{0}'=-1$, $u_{1}$ equals $1$ or $-1$ and $u_{2}=-u_{1}$, and for the two cases using Lemma \ref{lemmahilbertresidue}.(1) the (\ref{eqJfpif}) equals
		$$\mathrm{Hasse}(\mathrm{diag}(I_{fm-1}\varpi_{F},-\varpi_{F}))=(-1)^{(fm-1)(fm-2)/2}=-1$$
		or
		$$\mathrm{Hasse}(\mathrm{diag}(I_{(f-1)m+1}\varpi_{F}, -I_{m-1}\varpi_{F}))=(-1)^{fm(fm-1)/2-((f-1)m+1)(m-1)}=-1,$$
		where in both cases we use the fact that $4|fm$ and $2|m$, thus we finish the proof.
		
	\end{proof}
	
	Finally we have the following lemma which completes Lemma \ref{lemmahassecase2a1}.
	
	\begin{lemma}\label{lemmahassecase2a2}
		
		If $d$ is odd, $m$ is even not divided by 4 and $-1\notin F^{\times2}$, then
		$$\mathrm{Hasse}(\mathrm{diag}(J_{d}\varpi_{E},...,J_{d}\varpi_{E},J_{d}\varpi_{E}))=-1,$$
		where the matrix is in $\mrm_{md}(F)$.
		
	\end{lemma}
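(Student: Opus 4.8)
The plan is to follow the strategy already used in the proofs of Lemma \ref{lemmahassecase2b1} and Lemma \ref{lemmahassecase2a1}: reduce first from $E$ to its maximal unramified subextension $L/F$, then from $L$ to $F$, and finish with a Hilbert symbol count. Since $d=ef$ is odd, both $e$ and $f$ are odd. Let $L$ be the maximal unramified subextension of $E/F$, so $[L:F]=f$ and $[E:L]=e$; as $e$ is odd, Lemma \ref{lemmaE/Lextension} gives $\mfo_{E}^{\times}/\mfo_{E}^{\times2}\simeq\mfo_{L}^{\times}/\mfo_{L}^{\times2}$, and since the assertion depends only on the class of $\varpi_{E}$ in $E^{\times}/E^{\times2}$ we may assume $\varpi_{E}^{e}=\varpi_{L}$ is a uniformizer of $L$. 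Exactly as in the proofs of Lemmas \ref{lemmahassecase2b1} and \ref{lemmahassecase2a1}, using the construction of the $J$-symmetric embedding from Proposition \ref{proptameembed}, we may write $J_{d}\varpi_{E}=\mathrm{diag}(J_{(e-1)f},J_{f}\varpi_{L})$ inside $\mrm_{d}(F)$, with $J_{(e-1)f}\in\mrgl_{(e-1)f}(\mfo_{F})$.

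Next I would discard the unramified part. Reordering the diagonal blocks (which does not change the Hasse invariant), the matrix in question is similar to $\mathrm{diag}(A,B)$ with $A=\mathrm{diag}(J_{(e-1)f},\ldots,J_{(e-1)f})\in\mrgl(\mfo_{F})$ and $B=\mathrm{diag}(J_{f}\varpi_{L},\ldots,J_{f}\varpi_{L})$ of size $fm$. Here $\mathrm{det}(A)$ is a unit, while $\mathrm{det}(B)$ has valuation $fm$ which is even since $m$ is even, so $\mathrm{Hil}(\mathrm{det}(A),\mathrm{det}(B))=1$; by Lemma \ref{lemmaGLnOFsymhasse} and Corollary \ref{corhassekmatrix} (or Lemma \ref{lemmahasseblock}) it follows that $\mathrm{Hasse}(\mathrm{diag}(J_{d}\varpi_{E},\ldots,J_{d}\varpi_{E}))=\mathrm{Hasse}(B)$. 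Since $L/F$ is unramified, write $\varpi_{L}=\varpi_{F}v$ with $v\in\mfo_{L}^{\times}$; then $J_{f}\varpi_{L}=J_{f}v\,\varpi_{F}$ with $J_{f}v\in\mrgl_{f}(\mfo_{F})\cap\mcs$, and by Lemma \ref{lemmaGLnOFsymhasse} and Proposition \ref{propSGLEorbit} the matrix $J_{f}v$ is similar to $\mathrm{diag}(1,\ldots,1,u_{1})$ for a unit $u_{1}$ which is a square if and only if $\mathrm{det}(J_{f}v)$ is. Thus the problem reduces to computing
$$\mathrm{Hasse}\big(\mathrm{diag}(\varpi_{F}I_{(f-1)m},\,u_{1}\varpi_{F}I_{m})\big).$$

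Finally I would run the Hilbert symbol count using Lemma \ref{lemmahilbertresidue}(1). Since $-1\notin F^{\times2}$ we have $-1\notin\bs{k}^{\times2}$, so $\mathrm{Hil}(\varpi_{F},\varpi_{F})=\mathrm{Hil}(u_{1}\varpi_{F},u_{1}\varpi_{F})=-1$; if moreover $u_{1}$ is a nonsquare unit then $-u_{1}^{-1}$ is a square in $\bs{k}$ (a product of two nonsquares), hence $\mathrm{Hil}(\varpi_{F},u_{1}\varpi_{F})=1$. Counting the $\binom{fm}{2}$ pairs of diagonal entries accordingly, one gets that the Hasse invariant equals $(-1)^{\binom{fm}{2}}$ when $u_{1}$ is a square and $(-1)^{\binom{(f-1)m}{2}+\binom{m}{2}}$ when it is not. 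The hypotheses $f$ odd and $4\nmid m$ give $fm\equiv 2\pmod 4$, $(f-1)m\equiv 0\pmod 4$ and $m\equiv 2\pmod 4$, which make both exponents odd; hence the Hasse invariant is $-1$ in either case, as claimed.

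The mechanism here is identical to that of the two preceding lemmas, so the only genuinely delicate point is the parity bookkeeping in the last paragraph, namely checking that $\binom{fm}{2}$ and $\binom{(f-1)m}{2}+\binom{m}{2}$ are both odd under the stated divisibility constraints on $f$ and $m$; everything else reduces to the already established Lemma \ref{lemmaE/Lextension}, Lemma \ref{lemmaGLnOFsymhasse}, Lemma \ref{lemmahilbertresidue}, Corollary \ref{corhassekmatrix} and Proposition \ref{propSGLEorbit}.
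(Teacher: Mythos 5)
Your proof is correct and follows essentially the same route as the paper, whose own proof simply invokes the argument of Lemma \ref{lemmahassecase2a1} to reduce to $\mathrm{Hasse}(I_{fm}\varpi_{F})=(-1)^{fm(fm-1)/2}=-1$ using $fm\equiv 2\pmod 4$. Your version is slightly more explicit in that it treats the case $u_{1}\notin F^{\times2}$ separately via the exponent $\binom{(f-1)m}{2}+\binom{m}{2}$ rather than normalizing to $I_{fm}\varpi_{F}$, but the mechanism and the parity bookkeeping are identical.
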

	
	\begin{proof}
		
		We may follow the same proof as Lemma \ref{lemmahassecase2a1}, which finally shows that
		$$\mathrm{Hasse}(\mathrm{diag}(J_{d}\varpi_{E},...,J_{d}\varpi_{E},J_{d}\varpi_{E}))=\mathrm{Hasse}(I_{fm}\varpi_{F}).$$
		Since $-1\notin F^{\times2}$, by Lemma \ref{lemmahilbertresidue}.(1) the latter term equals $(-1)^{fm(fm-1)/2}$, which is $-1$ since under our assumption $fm\equiv 2$ (mod $4$). Thus we finish the proof.
		
	\end{proof}

\begin{remark}

The above discussion does not aim at providing a full classification result. It rather provides technical backgrounds to describe those orthogonal involutions $\tau$ contributing to the distinction. For example, we didn't discuss the case where both $d$ and $m$ are odd since it is not necessary. Indeed in this case, $n=dm$ is odd and there exist one split orthogonal subgroup and one non-quasisplit orthogonal subgroup of $\mrgl_{n}(F)$. The former one is easy to describe which will contribute to the distinction, while the latter one can be eliminated using the argument in \S \ref{subsectionotherortho}.

\end{remark}

\begin{remark}

We emphasize that for those results from Proposition \ref{propE/Fbasic} to Lemma \ref{lemmahassecase2a2}, the condition $E/F$ being tamely ramified is important, and the author does not know if they remain valid or not if we erase this condition. However in Corollary \ref{corcalhasse} we indeed verified these results for certain $E/F$ that are not necessarily tamely ramified.

\end{remark}

	\section{$\tau$-selfdual type theorem}\label{sectiontauselfdualtype}
	
	Let $\pi$ be a supercuspidal representation of $G$.
	Let $\tau=\tau_{\varepsilon}$ be the orthogonal involution corresponding to a symmetric matrix $\varepsilon$, such that for $H=G^{\tau}$ as the orthogonal group corresponding to $\tau$, it satisfies the condition 2 of Theorem \ref{thmdistcri} with respect to $\pi$. For $\mfa$ an $\mfo_{F}$-subalgebra of $\mrm_{n}(F)$, we define
	$$\tau(\mfa):=\varepsilon^{-1}\,^{t}\mfa\varepsilon$$
	which is an $\mfo_{F}$-subalgebra of $\mrm_{n}(F)$. We say that $\mfa$ is \emph{$\tau$-stable} if $\tau(\mfa)=\mfa$. For any $g\in G$, it is easy to show that $\tau(\mfa^{g})=\tau(\mfa)^{\tau(g)}$.
	
	In this section, we follow the strategy in \cite{zou2019supercuspidal}, section 5 and \cite{anandavardhanan2019galois}, section 4 to prove the following theorem:
	
	\begin{theorem}\label{thmtauselfdualstra}
		
		For $\pi$ and $\tau$ as above, there exists a maximal simple stratum $[\mfa,\beta]$ and a simple character $\theta\in\mcc(\mfa,\beta)$ contained in $\pi$, such that
		
		(1) $\tau(\mfa)=\mfa$ and $\tau(H^{1}(\mfa,\beta))=H^{1}(\mfa,\beta)$;
		
		(2) $\theta\circ\tau=\theta^{-1}$;
		
		(3) $\tau(\beta)=\beta^{-1}$.
		
	\end{theorem}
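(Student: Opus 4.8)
The plan is to construct the desired $\tau$-selfdual maximal simple stratum in two stages: first exhibit \emph{some} maximal simple stratum and simple character attached to $\pi$, then conjugate it into $\tau$-selfdual position using the rigidity properties of simple characters (intertwining) together with the orthogonal-group structure theory of Section 3. Concretely, start from any maximal simple stratum $[\mfa_{0},\beta_{0}]$ and maximal simple character $\theta_{0}\in\mcc(\mfa_{0},\beta_{0})$ contained in $\pi$, which exists by the simple type theory. Since $\pi\simeq\pi^{\vee}\circ\tau$ is automatic only if $\pi$ is $\tau$-selfdual as a representation — which is \emph{not} given — the first genuine point is to observe that the hypothesis $w_{\pi}(-1)=1$ (condition 1 of Theorem \ref{thmdistcri}), equivalently the triviality on $-1$ of the determinant of the Langlands parameter, forces $\pi\simeq\pi^{\vee}$, hence $\pi\circ\tau\simeq\pi^{\vee}$, so $\pi$ \emph{is} $\tau$-selfdual as an abstract representation. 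From this, $\theta_{0}\circ\tau$ and $\theta_{0}^{-1}$ are both simple characters contained in the $\tau$-selfdual representation $\pi\simeq\pi^{\vee}$, hence are $G$-conjugate; unravelling this conjugacy and the uniqueness (up to $G$-conjugacy) of the simple type will let us replace $[\mfa_{0},\beta_{0}]$ by a $G$-conjugate $[\mfa,\beta]$ for which $\tau(\mfa)=\mfa$, $\theta\circ\tau=\theta^{-1}$, and — by the way $\beta$ is pinned down inside $\theta$ up to the centralizer, and choosing $\beta$ judiciously within its conjugacy class — also $\tau(\beta)=\beta^{-1}$. This is exactly the pattern of \cite{zou2019supercuspidal}, Section 5 and \cite{anandavardhanan2019galois}, Section 4, and I would follow those references closely.

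The technical heart is the conjugacy-to-selfduality step: given that $\theta_{0}\circ\tau$ and $\theta_{0}^{-1}$ are $G$-conjugate, produce a single $g\in G$ conjugating $[\mfa_{0},\beta_{0}]$ simultaneously to a $\tau$-stable stratum and satisfying the character identity, and moreover so that $g$ can be taken to lie in (a suitable coset controlled by) the orthogonal group attached to $\varepsilon$. The standard device is a cohomological / fixed-point argument: the set of elements realizing the conjugacy $\theta_{0}\circ\tau\simeq\theta_{0}^{-1}$ is a torsor under the intertwining of $\theta_{0}$, which by Bushnell--Kutzko equals $\bs{J}^{1}B^{\times}\bs{J}^{1}$ (or the relevant $J^{1}$-version), and one shows the map $x\mapsto \varepsilon^{-1}\,{}^{t}x^{-1}\varepsilon\cdot(\text{twist})$ has a fixed point on this torsor modulo an appropriate pro-$p$ group, using $p\neq2$ to kill the relevant $H^{1}$. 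This gives $\tau(\mfa)=\mfa$ and $\tau(H^{1}(\mfa,\beta))=H^{1}(\mfa,\beta)$ and $\theta\circ\tau=\theta^{-1}$. For (3), note $\tau(\beta)$ and $\beta^{-1}$ generate the same field $E=F[\beta]$ over $F$ (since $\tau$ is an anti-automorphism fixing $F$), and conjugating by an element of $B^{\times}=Z_{G}(E)^{\times}$ — which does not disturb $\theta$ — one arranges $\tau(\beta)=\beta^{-1}$ by a Skolem--Noether argument inside $E$, exactly as in \cite{secherre2019supercuspidal}.

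The step I expect to be the main obstacle is precisely \textbf{combining the $\tau$-stability of $\mfa$ with the requirement that the conjugating element respects the orthogonal form} — i.e.\ ensuring that the hereditary order produced is not merely $\tau$-stable for \emph{some} involution in the $G$-orbit of $\tau$ but for the \emph{given} $\tau=\tau_{\varepsilon}$ with the prescribed $H=G^{\tau}$ matching condition 2 of Theorem \ref{thmdistcri}. This is where the discriminant/Hasse-invariant bookkeeping of Section 3 (Propositions \ref{propSGLEorbit}--\ref{propGconjortho} and Lemmas \ref{lemmahassecase3a}--\ref{lemmahassecase2a2}) must be fed in: one needs that among the $\tau$-stable orders there is one on which $\tau$ restricts to an involution of the correct type, which ultimately reduces to the norm-group condition $\mrn_{T_{m}/F}(T_{m}^{\times})F^{\times2}/F^{\times2}$ appearing in the theorem. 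I would handle this by first proving a ``weak'' version of Theorem \ref{thmtauselfdualstra} for the involution $\tau_{J}$ attached to $J_{n}$ and a tame $J$-symmetric embedding of $E$ (using Proposition \ref{proptameembed}), and then transporting to the desired $\tau$ via an explicit conjugation whose effect on $\varepsilon$ is tracked through the Hasse-invariant computations of \S\ref{subsechilberthasse}; the pro-$p$ cohomology vanishing ($p\neq2$) is what makes the transport possible at the level of simple characters without changing the endo-class.
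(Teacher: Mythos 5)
Your plan has two genuine gaps. First, the assertion that $w_{\pi}(-1)=1$ forces $\pi\simeq\pi^{\vee}$ is false (already for $n=1$: a ramified character of order $4$ trivial on $-1$). Fortunately the fact you actually need, $\pi\circ\tau\simeq\pi^{\vee}$, holds unconditionally for every orthogonal involution, because $\pi\circ\tau$ is $G$-conjugate to $x\mapsto\pi(\,^{t}x^{-1})$ and the latter realizes the contragredient; this is what the paper's Lemma \ref{lemmaendotheta} encodes at the level of endo-classes, with no hypothesis on $\pi$ whatsoever. Second, and more seriously, proving the statement for $\tau_{J_{n}}$ and then ``transporting to the desired $\tau$ via an explicit conjugation'' cannot reach all the involutions in the theorem: conjugation by $g\in G$ turns a $\tau_{J_{n}}$-selfdual stratum into a $(g\cdot\tau_{J_{n}})$-selfdual one, and $g\cdot\tau_{J_{n}}$ stays in the $G$-orbit of the split involution, whereas condition 2 of Theorem \ref{thmdistcri} also involves quasisplit-non-split and non-quasisplit orthogonal groups, which are not $G$-conjugate to the split one. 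The paper reaches these by a different mechanism: it first builds the stratum in block-diagonal form for $\varepsilon_{0}=J_{d,m}$ by gluing $m$ copies of the degree-$d$ case through the Iwahori-type decomposition of $H^{1}(\mfa',\beta')$ (S\'echerre--Stevens), a step absent from your plan since you treat the problem as if $m=1$, and it then twists $\varepsilon_{0}$ by elements of $E'^{\times}$ such as $\mathrm{diag}(J_{d}\varpi_{E'},\dots,J_{d}\varpi_{E'}\epsilon)$; these commute with $E'$ and normalize the block characters, so the same $([\mfa',\beta'],\theta')$ remains selfdual for the twisted involution, and the Hasse-invariant computations serve only to verify that such twists exhaust all admissible conjugacy classes of $\tau$, not to transport between non-conjugate ones.

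Two smaller but real problems. Your appeal to Proposition \ref{proptameembed} for a $J$-symmetric embedding of $E$ is unavailable when $E/F$ is wildly ramified (the paper explicitly does not know whether such embeddings exist then); the actual argument applies Proposition \ref{proptameembed} only to the tame parameter field $T$, handles the totally wildly ramified part by a unimodular-lattice argument (Lemma \ref{lemmamaxtotalwild}), and glues the two via the interior $T/F$-lift and its injectivity. Relatedly, ``using $p\neq2$ to kill the relevant $H^{1}$'' only works for pro-$p$ groups; the obstruction to conjugating an arbitrary maximal simple character into $\tau$-selfdual position also has a component coming from $B^{\times}$ modulo its pro-$p$ radical (equivalently, from congruence classes of symmetric matrices over $E$ and over the residue field), and this does not vanish --- its non-triviality is exactly why only the involutions of condition 2 occur and why Proposition \ref{propdiscorbit} later finds several contributing double cosets. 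The fixed-point argument must therefore be confined to the pro-$p$ pieces, as in \S\ref{subsecmaxwildram}, and cannot by itself produce the $\tau$-stable order for the given $\tau$.
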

	
	As a corollary of Theorem \ref{thmtauselfdualstra}, we have the following $\tau$-selfdual type theorem.
	
	\begin{theorem}\label{thmtauselfdualtype}
		
		For $\pi$ and $\tau$ as above, there exists a $\tau$-selfdual simple type $(\bs{J},\Lambda)$ that compactly induces $\pi$.
		
	\end{theorem}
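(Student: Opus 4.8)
The plan is to derive Theorem~\ref{thmtauselfdualtype} from Theorem~\ref{thmtauselfdualstra} by promoting the $\tau$-selfduality of the simple character $\theta$ all the way up the chain of groups $H^{1}\subset J^{1}\subset J\subset\bs{J}$. First I would fix the maximal simple stratum $[\mfa,\beta]$ and the simple character $\theta\in\mcc(\mfa,\beta)$ provided by Theorem~\ref{thmtauselfdualstra}, so that $\tau(\mfa)=\mfa$, $\tau(\beta)=\beta^{-1}$, $\theta\circ\tau=\theta^{-1}$, and $\theta$ is contained in $\pi$. Since $\tau$ is an anti-automorphism conjugate (the map $x\mapsto{}^{t}x^{-1}$ composed with conjugation by $\varepsilon$), the groups $H^{1}(\mfa,\beta)$, $J^{1}(\mfa,\beta)$, $J(\mfa,\beta)$ and $\bs{J}(\mfa,\beta)$ are all intrinsically attached to the pair $(\mfa,\beta)$ and hence to the pair $(\mfa,\beta^{-1})$; because $\tau(\mfa)=\mfa$ and $\tau(\beta)=\beta^{-1}$ one checks they are all $\tau$-stable. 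In particular $\tau(\eta)$ is the Heisenberg representation attached to the simple character $\theta\circ\tau=\theta^{-1}$, so $\tau(\eta)\simeq\eta^{\vee}$ by uniqueness of the Heisenberg representation over a given simple character.

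Next I would produce a $\tau$-selfdual $\bs{\kappa}$. The representation $\eta$ extends to an irreducible representation $\bs{\kappa}$ of $\bs{J}$; the set of such extensions is a torsor under characters of $\bs{J}/J^{1}$, and $\tau$ acts on this torsor. Since $\tau(\bs{\kappa})$ and $\bs{\kappa}^{\vee}$ are both extensions of $\eta^{\vee}$, they differ by a character $\mu$ of $\bs{J}/J^{1}$; one then shows $\mu$ can be split off (i.e. $\mu=\nu\cdot(\nu\circ\tau)$ for a suitable character $\nu$, using that $\bs{J}/J^{1}$ is a group of the form $\mathbb{Z}\times\mrgl_{m}(\bs{l})$ whose relevant character group has odd-order obstruction since $p\neq 2$, so $2$-divisibility issues do not arise), allowing us to twist $\bs{\kappa}$ by $\nu^{-1}$ and assume $\tau(\bs{\kappa})\simeq\bs{\kappa}^{\vee}$ outright. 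This is essentially the argument of \cite{zou2019supercuspidal}, section~5, or \cite{anandavardhanan2019galois}, and I would cite it rather than reprove it.

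Finally, writing $\Lambda\simeq\bs{\kappa}\otimes\bs{\rho}$ with $\bs{\rho}|_{J}$ inflated from a supercuspidal representation $\overline{\rho}$ of $J/J^{1}\simeq\mrgl_{m}(\bs{l})$, the condition $\pi\simeq c\text{-}\mathrm{Ind}_{\bs{J}}^{G}\Lambda$ together with $\pi^{\tau}\simeq\pi^{\vee}$ (which holds because $\tau$ is an orthogonal involution, so $\pi\circ\tau\simeq\pi\circ({}^{t}(\cdot)^{-1})\simeq\pi^{\vee}$) forces, via the intertwining/uniqueness of the simple type inside $\pi^{\vee}$, that $\tau(\Lambda)\simeq\Lambda^{\vee}$ after possibly replacing $(\bs{J},\Lambda)$ by a $\tau$-conjugate that keeps $[\mfa,\beta]$ fixed. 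Concretely, $\tau(\Lambda)\simeq\tau(\bs{\kappa})\otimes\tau(\bs{\rho})\simeq\bs{\kappa}^{\vee}\otimes\tau(\bs{\rho})$, so we need $\tau(\bs{\rho})\simeq\bs{\rho}^{\vee}$, i.e. $\overline{\rho}\circ\overline{\tau}\simeq\overline{\rho}^{\vee}$ for the induced involution $\overline{\tau}$ on $\mrgl_{m}(\bs{l})$; since every supercuspidal $\overline{\rho}$ of $\mrgl_{m}(\bs{l})$ is $\overline{\tau}$-selfdual for an orthogonal (transpose-type) involution up to conjugacy, this can be arranged. Then $(\bs{J},\Lambda)$ is $\tau$-selfdual and still compactly induces $\pi$, which is the assertion. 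The main obstacle I anticipate is the bookkeeping in the $\bs{\kappa}$-step: one must ensure the obstruction character $\mu$ is actually a square in the appropriate sense (or that its contribution is killed by the $\tau$-twisted norm map on characters), and that the twist used to fix $\bs{\kappa}$ does not destroy $\tau$-stability of $\bs{J}$ or the condition that the type still lies in $\pi$; this is exactly where the hypothesis $p\neq 2$ and the precise structure of $\bs{J}/J^{1}$ are used, and it is the part that genuinely requires the machinery imported from \cite{zou2019supercuspidal}.
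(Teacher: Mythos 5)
Your proposal is correct and follows essentially the same route as the paper, whose own proof is literally a one-line citation of \cite{zou2019supercuspidal}, Theorem 5.3 with Theorem \ref{thmtauselfdualstra} as input: $\tau$-stability of $H^{1}\subset J^{1}\subset J\subset\bs{J}$, the isomorphism $\eta^{\tau}\simeq\eta^{\vee}$, a $\tau$-selfdual extension $\bs{\kappa}$, and then the uniqueness of the simple type inducing $\pi\simeq(\pi^{\tau})^{\vee}$. The only point to watch is that the automatic selfduality of $\overline{\rho}$ under a transpose-type involution of $\mrgl_{m}(\bs{l})$ controls only $\bs{\rho}|_{J}$ and not its extension to $\bs{J}$ (an unramified twist could a priori intervene); closing this is exactly what the intertwining/uniqueness step you invoke accomplishes (an element of $\bs{J}B^{\times}\bs{J}$ intertwining the two types is forced into $\bs{J}$), as carried out in the cited reference.
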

	
	\begin{proof}
		
		We only need to follow the proof of \cite{zou2019supercuspidal}, Theorem 5.3, with Theorem 5.2 in \emph{ibid.} replaced by Theorem \ref{thmtauselfdualstra}.
		
	\end{proof}
	
	Now we state the following general theorem which implies Theorem \ref{thmtauselfdualstra}.
	
	\begin{theorem}\label{thmendotauselfdualstra}
		
		Let $[\mfa,\beta]$ be a maximal simple stratum in $\mrm_{n}(F)$, let $T$ be the maximal tamely ramified subextension of $E/F$, let $T_{m}$ be the unramified extension of degree $m$ over $T$ and let $\theta\in\mcc(\mfa,\beta)$ be a simple character. Let $\tau$ be an orthogonal involution of $G$ such that $H=G^{\tau}$ satisfies the condition 2 of Theorem \ref{thmdistcri}. Then there exist a maximal simple stratum $[\mfa',\beta']$ in $\mrm_{n}(F)$ and a simple character $\theta'\in\mcc(\mfa',\beta')$ such that
		
		(1) $\tau(\mfa')=\mfa'$ and $\tau(H^{1}(\mfa',\beta'))=H^{1}(\mfa',\beta')$;
		
		(2) $\theta'$ and $\theta$ are in the same endo-class and $\theta'\circ\tau=\theta'^{-1}$;
		
		(3) $\tau(\beta')=\beta'^{-1}$.
		
	\end{theorem}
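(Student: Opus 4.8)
The plan is to construct the desired $\tau$-selfdual data in three stages, following the template of Anandavardhanan--Sécherre and the author's unitary-case paper, but taking care of the orthogonal arithmetic. First I would reduce from an arbitrary maximal simple stratum $[\mfa,\beta]$ to one whose associated field $E=F[\beta]$ admits a $\tau$-compatible embedding. The key observation is that $T_m$ (the unramified degree-$m$ extension of the tame parameter field $T$) always carries enough room: by Proposition \ref{proptameembed}, since $T_m/F$ is tame, there is a $J$-symmetric embedding $T_m\hookrightarrow\mrm_n(F)$, hence $T_m$ is $\tau_0$-split for the involution $\tau_0=\tau_{J_n}$ attached to the split form; and by Proposition \ref{propsymembedepi0} every symmetric matrix whose involution admits a symmetric $T_m$-embedding is similar to one in $J_n\iota_0(T_m^{\times})$. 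The arithmetic Lemmas \ref{lemmahassecase3a}--\ref{lemmahassecase2a2} are precisely what let me match the discriminant and Hasse invariant of $\varepsilon$ — i.e.\ realise the given orthogonal group $H$ in the condition-2 list — by a suitable choice of representative in $J_n\iota_0(T_m^{\times})$. So after a $G$-conjugation I may assume $\tau(T_m^{\times})=T_m^{\times}$ with $\tau$ inverting $T_m^{\times}$.

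Next I would descend along the tower $F\subset T\subset E$. Having fixed a $\tau$-stable $T_m$ with $\tau|_{T_m}$ the inversion, I want $[\mfa,\beta]$ with $E\supset T$ arranged so that $\tau(E)=E$ and $\tau(\beta)=\beta^{-1}$. Here the idea is to first produce, inside the centralizer $C$ of $T$, a $\tau$-selfdual simple character by working with the interior $T/F$-lift: by the injectivity and rigidity statements quoted from Bushnell--Henniart (\cite{bushnell1996local}, Theorems 7.10 and 7.15), a simple character on $\mfa^{\times}$ is pinned down by its restriction to $T$ together with its behaviour in $C$, so it suffices to make $\theta_T$ and $\theta|_{U(\mathfrak o_F\cdot 1)}$ individually $\tau$-stable and then invoke uniqueness to conclude $\theta\circ\tau=\theta^{-1}$. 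The element $\beta$ itself can be replaced, within its $G$-conjugacy class over $T$, by one satisfying $\tau(\beta)=\beta^{-1}$: this is where one uses that $\tau$ restricted to $E^{\times}$ should be an $F$-algebra automorphism composed with inversion, and a Skolem--Noether / cohomological argument produces the conjugating element inside $B^{\times}=\mathrm{GL}_m(E)$ while preserving $\mfa$.

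Finally I would arrange $\tau(\mfa)=\mfa$ and $\tau(H^1(\mfa,\beta))=H^1(\mfa,\beta)$. Once $\tau(E)=E$, $\tau(B)=B$, and we know $\tau$ acts on the set of hereditary $\mfo_F$-orders normalised by $T^{\times}$, the map $\mfa\mapsto\mfa\cap C$ being injective reduces $\tau$-stability of $\mfa$ to $\tau$-stability of $\mathfrak c=\mfa\cap C$, which we can secure for a maximal order by a final conjugation inside $C^{\times}$; $\tau$-stability of $H^1$, $J^1$, $J$ then follows formally since these groups are functorially attached to $[\mfa,\beta]$ and $\tau$ sends $[\mfa,\beta]$ to $[\tau(\mfa),\tau(\beta)]=[\mfa,\beta^{-1}]$, which defines the same simple character set up to the twist we have controlled. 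I expect the main obstacle to be the very first stage: showing that the \emph{specific} orthogonal group $H$ dictated by condition 2 of Theorem \ref{thmdistcri} — split, or the distinguished non-split quasisplit form with the prescribed determinant in $\mrn_{T_m/F}(T_m^{\times})-F^{\times2}$, or the non-quasisplit form — can actually be hit by a symmetric matrix of the form $J_n\iota_0(x)$ with $x\in T_m^{\times}$. This is exactly a discriminant-and-Hasse bookkeeping problem over the tower $F\subset T\subset T_m$, split into the parity cases of $e$, $f$, $m$ and whether $-1\in F^{\times2}$, and it is handled by assembling Lemmas \ref{lemmahassecase3a}, \ref{lemmahassecase2b2}, \ref{lemmahassecase2b1}, \ref{lemmahassecase2a1}, \ref{lemmahassecase2a2} together with Proposition \ref{propE/Fbasic}; the bridge between the three regimes of $\mrn_{T_m/F}(T_m^{\times})F^{\times2}/F^{\times2}$ in Theorem \ref{thmdistcri} and the quadratic-subextension count in Proposition \ref{propE/Fbasic} is what makes the trichotomy in the theorem statement natural.
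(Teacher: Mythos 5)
Your overall architecture --- pass through the tame parameter field, use the Bushnell--Henniart interior-lift rigidity to propagate selfduality of simple characters, and do discriminant/Hasse bookkeeping to hit the prescribed orthogonal group --- is the same as the paper's, and your third stage (reducing $\tau$-stability of $\mfa'$ to that of $\mfa'\cap C$ via the injectivity of $\mfa\mapsto\mfa\cap C$) is correct. But two essential steps are missing. The first is the totally wildly ramified base case. You say it ``suffices to make $\theta_T$ \ldots\ $\tau$-stable and then invoke uniqueness'', but rigidity only transfers selfduality \emph{from} the interior lift back to $\theta$; someone still has to \emph{produce} a selfdual lift, and this is the hard core of the argument. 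In the paper it is Proposition \ref{proptauselfdualstramaxwild}: since $E/T$ is totally wildly ramified one must first conjugate $\mfa$ by some $g_{1}\in\mrgl_{n}(\mfo_{F})$ with $\tau(g_{1})g_{1}^{-1}$ an antidiagonal unit matrix (Lemma \ref{lemmamaxtotalwild}, resting on the classification of unimodular quadratic $\mfo_{F}$-lattices), and then adjust $\theta$ by a vanishing argument for involutions on pro-$p$ groups with $p\neq2$. Nothing in your proposal supplies this, and no amount of Skolem--Noether or rigidity replaces it.

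The second gap is the passage from the maximal case $d=n$ to general $m>1$. Your interior-lift argument lives in the maximal case; for $m>1$ the paper builds $\beta'=\beta_{0}'\otimes\cdots\otimes\beta_{0}'$ block-diagonally, takes $\theta'$ to be the \emph{transfer} of $\theta_{0}'$, and verifies $\theta'\circ\tau=\theta'^{-1}$ via the Iwahori decomposition $H^{1}=(H^{1}\cap N^{-})(H^{1}\cap M)(H^{1}\cap N)$ of S\'echerre--Stevens together with $\theta'|_{H^{1}\cap M}=\theta_{0}'\otimes\cdots\otimes\theta_{0}'$; your proposal is silent on this. Relatedly, your first stage misapplies Proposition \ref{propsymembedepi0}: that statement requires a field of degree $n$ over $F$ (so that its own centralizer is itself), whereas $[T_{m}:F]=tm<n$ in general, so the conclusion would only place $\varepsilon$ in $\varepsilon_{0}$ times the symmetric part of the (noncommutative) centralizer of $\iota_{0}(T_{m})$. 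Moreover the Hasse-invariant lemmas you cite compute invariants of the block matrices $\mathrm{diag}(J_{d}x_{1},\ldots,J_{d}x_{m})$ with $x_{i}\in E^{\times}$, which are the matrices actually used in the construction; the parametrization by $J_{n}x$ with $x$ in a degree-$n$ torus is what the paper uses only in the converse (``only if'') direction via Lemma \ref{lemmaTorbit}. Your instinct that the trichotomy comes from Proposition \ref{propE/Fbasic} is right, but as written the realization step and the construction step do not connect.
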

	
	For $\pi$ given as in Theorem \ref{thmtauselfdualstra}, if we choose $[\mfa,\beta]$ to be a maximal simple stratum and $\theta\in\mcc(\mfa,\beta)$ to be a simple character contained in $\pi$, then Theorem \ref{thmendotauselfdualstra} implies Theorem \ref{thmtauselfdualstra}. So from now on, we focus on the proof of Theorem \ref{thmendotauselfdualstra}. We write $E=F[\beta]$, $d=[E:F]$ and $m=n/d$. 
	In the following subsections, we gradually consider the following three cases: $E/F$ is maximal and totally wildly ramified, $E/F$ is maximal and the general case.
	
	
	
	
	To begin with, we state the following lemmas which will be useful in our future proof.
	
	\begin{lemma}\label{lemmaendotheta}
		
		Let $[\mfa,\beta]$ be a maximal simple stratum in $\mrm_{n}(F)$ and let $\theta\in\mcc(\mfa,\beta)$, then for $\tau$ an orthogonal involution on $G$, the simple characters $\theta\circ\tau$ and $\theta^{-1}$ are in the same endo-class. In particular, if $\tau(\mfa)=\mfa$, then $\theta\circ\tau$ conjugates to $\theta^{-1}$ by an element in $U(\mfa)$.
		
	\end{lemma}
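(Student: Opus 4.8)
For Lemma \ref{lemmaendotheta} the plan is to peel off the symmetric matrix $\varepsilon$, reduce to the bare adjoint involution, and then compute endo-classes by passing to a supercuspidal and invoking Gelfand--Kazhdan. Write $\jmath$ for the adjoint involution $\jmath(x)={}^{t}x^{-1}$ of $G$, so that $\tau=\tau_{\varepsilon}=\mathrm{Int}(\varepsilon^{-1})\circ\jmath$. Using the identity $\mathrm{Int}(\varepsilon^{-1})\circ\jmath=\jmath\circ\mathrm{Int}(\varepsilon)$ (which holds because $\varepsilon$ is symmetric, as $\jmath(\varepsilon^{-1})=\varepsilon$), one gets the clean relation $\theta\circ\tau=(\theta\circ\jmath)^{\varepsilon}$. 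Since $G$-conjugation preserves endo-classes, the first assertion of the lemma is reduced to showing that $\theta\circ\jmath$ and $\theta^{-1}$ share an endo-class.

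For this I would not argue purely locally with the Bushnell--Kutzko construction, but instead observe: because $[\mfa,\beta]$ is maximal, one can build a simple type $(\bs{J},\Lambda)$ on top of $\theta$ (extend $\theta$ to its Heisenberg representation $\eta$, then to some $\bs{\kappa}$, and tensor by the inflation of a supercuspidal of $\mrgl_{m}(\bs{l})$), so that $\pi:=c\text{-}\mathrm{Ind}_{\bs{J}}^{G}\Lambda$ is supercuspidal with $\Theta_{\pi}$ equal to the endo-class of $\theta$. Applying the automorphism $\jmath$, the pair $(\jmath(\bs{J}),\Lambda\circ\jmath)$ is again a simple type (the class of simple types is stable under automorphisms of $G$), it contains the simple character $\theta\circ\jmath$, and it compactly induces $\pi\circ\jmath$; hence the endo-class of $\theta\circ\jmath$ is $\Theta_{\pi\circ\jmath}$. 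By the Gelfand--Kazhdan theorem for $\mrgl_{n}$ one has $\pi\circ\jmath\simeq\pi^{\vee}$, while $(\bs{J},\Lambda^{\vee})$ is a simple type containing $\theta^{-1}$ and inducing $\pi^{\vee}$, so the endo-class of $\theta^{-1}$ is $\Theta_{\pi^{\vee}}$. Combining, the endo-classes of $\theta\circ\jmath$ and $\theta^{-1}$ both equal $\Theta_{\pi^{\vee}}$, proving the first assertion. (If one prefers to stay purely local, the alternative is to use that the transpose map identifies $\mcc(\mfa,\beta)$ with $\mcc({}^{t}\mfa,{}^{t}\beta)$ and then to check that the induced involution on endo-classes coincides with the contragredient involution $\Theta\mapsto\Theta^{\vee}$; this identification is where the genuine content lies, and Gelfand--Kazhdan is precisely the shortcut around it.)

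For the ``in particular'' clause, assume $\tau(\mfa)=\mfa$. Then $\theta\circ\tau$ lies in $\mcc(\mfa,\tau(\beta))$ and $\theta^{-1}$ lies in $\mcc(\mfa,-\beta)$, so both are maximal simple characters attached to the same maximal hereditary order $\mfa$, and by the previous paragraph they have the same endo-class. The conclusion that they are conjugate by an element of $U(\mfa)$ then follows from the intertwining-implies-conjugacy principle for simple characters together with a Skolem--Noether argument (two maximal simple characters on a fixed maximal order in the same endo-class are conjugate under its unit group), exactly as in the corresponding steps of \cite{zou2019supercuspidal} and \cite{bushnell129admissible}. The main obstacle in the whole argument is the endo-class identity of the middle paragraph --- understanding how the adjoint involution acts on endo-classes --- everything else being bookkeeping; passing to a supercuspidal and quoting $\pi\circ\jmath\simeq\pi^{\vee}$ dissolves that obstacle.
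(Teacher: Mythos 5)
Your proof is correct and follows essentially the same route as the paper's: the paper simply defers to \cite{zou2019supercuspidal}, Lemma 5.7 (with the Galois action there taken to be trivial), and that argument is exactly yours --- reduce $\tau$ to the adjoint involution $\jmath$ up to conjugation by the symmetric matrix $\varepsilon$, compare the endo-classes of $\theta\circ\jmath$ and $\theta^{-1}$ by embedding $\theta$ into a supercuspidal $\pi$ and invoking $\pi\circ\jmath\simeq\pi^{\vee}$ (Gelfand--Kazhdan), and conclude the $U(\mfa)$-conjugacy from intertwining-implies-conjugacy for simple characters attached to a common maximal hereditary order. No gaps.
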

	
	\begin{proof}
		
		We follow the same proof of \cite{zou2019supercuspidal}, Lemma 5.7, with $\sigma$ in $\emph{loc. cit.}$ replaced by the trivial action.
		
	\end{proof}
	
	\begin{lemma}\label{lemmatauGaction}
		
		Let $\tau=\tau_{\varepsilon}$ be the orthogonal involution on $G$ corresponding to a symmetric matrix $\varepsilon$, let $[\mathfrak{a},\beta]$ be a maximal simple stratum in $\mathrm{M}_{n}(F)$ and let $\theta\in\mathcal{C}(\mathfrak{a},\beta)$ be a simple character, such that $$\tau(\mathfrak{a})=\mathfrak{a},\quad \theta\circ\tau=\theta^{-1}\quad (\text{and}\ \tau(\beta)=\beta^{-1}).$$ Then for $\tau'=\tau_{\varepsilon'}$ the orthogonal involution on $G$ corresponding to the symmetric matrix $\varepsilon'=\,^{t}g\varepsilon g$, we have $$\tau'(\mathfrak{a}^{g})=\mathfrak{a}^{g},\quad \theta^{g}\circ\tau'=(\theta^{g})^{-1}\quad (\text{and}\ \tau'(\beta^{g})=(\beta^{g})^{-1}).$$
		
	\end{lemma}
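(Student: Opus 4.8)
The plan is to recognize Lemma~\ref{lemmatauGaction} as a pure transport-of-structure statement: conjugation by $g$ carries the pair $(\mfa,\theta)$ to $(\mfa^{g},\theta^{g})$ and carries the involution $\tau$ to $\tau'$, so the three properties must be preserved formally. The only genuine computational input is the behaviour of an orthogonal involution under the twist $\varepsilon\mapsto\,^{t}g\varepsilon g$, which I would isolate first: for $x\in G$ one has $\tau'(x)=g^{-1}\tau(gxg^{-1})g$, i.e. $\tau'=\mathrm{Int}(g)^{-1}\circ\tau\circ\mathrm{Int}(g)$ where $\mathrm{Int}(g)$ denotes conjugation $y\mapsto gyg^{-1}$. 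This is one line with transposes, using $(\,^{t}g\varepsilon g)^{-1}=g^{-1}\varepsilon^{-1}\,^{t}g^{-1}$ and $\,^{t}g^{-1}\,^{t}x^{-1}\,^{t}g=\,^{t}(gx^{-1}g^{-1})$; and it holds verbatim, read element-wise, for the action of $\tau'$ on an $\mfo_{F}$-subalgebra $\mfb$, giving $\tau'(\mfb)=g^{-1}\tau(g\mfb g^{-1})g$ (which is just a reformulation of the identity $\tau(\mfa^{g})=\tau(\mfa)^{\tau(g)}$ already recorded in the text).

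Granting this, the three assertions follow by substitution. For the order: take $\mfb=\mfa^{g}=g^{-1}\mfa g$, so $g\mfb g^{-1}=\mfa$, and then $\tau'(\mfa^{g})=g^{-1}\tau(\mfa)g=g^{-1}\mfa g=\mfa^{g}$ using the hypothesis $\tau(\mfa)=\mfa$. The same substitution with $\beta$ in place of $\mfb$ gives $\tau'(\beta^{g})=g^{-1}\tau(\beta)g=g^{-1}\beta^{-1}g=(\beta^{g})^{-1}$, which is the parenthetical statement. For the simple character I would first recall that conjugation by $g$ is an $F$-algebra automorphism of $\mrm_{n}(F)$, so $[\mfa^{g},\beta^{g}]$ is again a maximal simple stratum with $H^{1}(\mfa^{g},\beta^{g})=g^{-1}H^{1}(\mfa,\beta)g$ and $\theta^{g}\in\mcc(\mfa^{g},\beta^{g})$; combining the key identity with $\tau(H^{1}(\mfa,\beta))=H^{1}(\mfa,\beta)$ shows that $\tau'$ stabilizes $H^{1}(\mfa^{g},\beta^{g})$, so that $\theta^{g}\circ\tau'$ is defined there. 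Then for $h=g^{-1}h_{0}g\in H^{1}(\mfa^{g},\beta^{g})$ I compute $(\theta^{g}\circ\tau')(h)=\theta(g\tau'(h)g^{-1})=\theta(\tau(gh g^{-1}))=\theta(\tau(h_{0}))=\theta^{-1}(h_{0})=\theta^{g}(h)^{-1}$, where the equality $\theta(\tau(h_{0}))=\theta^{-1}(h_{0})$ is the hypothesis $\theta\circ\tau=\theta^{-1}$; hence $\theta^{g}\circ\tau'=(\theta^{g})^{-1}$.

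I do not expect a real obstacle here; the statement is elementary once the key identity $\tau'=\mathrm{Int}(g)^{-1}\circ\tau\circ\mathrm{Int}(g)$ is in hand. The only point needing care is bookkeeping around conventions --- the paper's convention $H^{g}=g^{-1}Hg$ and $\pi^{g}(h)=\pi(ghg^{-1})$, and the order of the transposes --- together with the observation that $\tau'$ genuinely preserves $H^{1}(\mfa^{g},\beta^{g})$, so that the composite $\theta^{g}\circ\tau'$ appearing in the conclusion is well-posed. Alternatively, one could phrase the whole proof functorially: the assignment $(\varepsilon,\mfa,\theta,\beta)\mapsto(\,^{t}g\varepsilon g,\mfa^{g},\theta^{g},\beta^{g})$ is compatible with all the structure maps, and the lemma is the statement that ``$\tau$-selfduality of $[\mfa,\beta,\theta]$'' is stable under this assignment.
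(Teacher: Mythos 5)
Your proposal is correct, and it is essentially the argument the paper intends: the paper's proof simply cites the analogous Lemma 5.8 of \cite{zou2019supercuspidal}, which is the same formal transport-of-structure computation you carry out, based on the identity $\tau'(x)=g^{-1}\tau(gxg^{-1})g$. Your write-up just makes explicit what the reference leaves implicit (including the small but necessary remark that $\tau'$ stabilizes $H^{1}(\mfa^{g},\beta^{g})$ so that $\theta^{g}\circ\tau'$ is well-defined).
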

	
	\begin{proof}
		
		Same proof as \cite{zou2019supercuspidal}, Lemma 5.8.
		
	\end{proof}
	
	\begin{lemma}\label{lemmastevensbeta}
		
		Let $[\mfa,\beta]$ be a maximal simple stratum in $\mrm_{n}(F)$
		and let $\theta\in\mcc(\mfa,\beta)$ such that $\tau(\mfa)=\mfa$, $\tau(H^{1}(\mfa,\beta))=H^{1}(\mfa,\beta)$ and $\theta\circ\tau=\theta^{-1}$. Then there exists a simple stratum $[\mfa,\gamma]$ such that $\theta\in\mcc(\mfa,\gamma)$ and $\tau(\gamma)=\gamma^{-1}$.
		
	\end{lemma}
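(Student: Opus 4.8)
The plan is to reduce the statement to the known ``self-dual simple stratum'' results of Stevens. We are given a maximal simple stratum $[\mfa,\beta]$ and a simple character $\theta\in\mcc(\mfa,\beta)$ with $\tau(\mfa)=\mfa$, $\tau(H^{1}(\mfa,\beta))=H^{1}(\mfa,\beta)$ and $\theta\circ\tau=\theta^{-1}$. The involution $\tau=\tau_{\varepsilon}$ is adjoint (up to the inverse) to the anti-involution $x\mapsto {}^{t}x$ conjugated by $\varepsilon$; in particular the map $\sigma\colon x\mapsto\varepsilon^{-1}\,{}^{t}x\,\varepsilon$ is a ring anti-involution of $\mrm_{n}(F)$ fixing $\mfa$, so $\tau(x)=\sigma(x)^{-1}$ on $G$ and $\tau(\mfy)=\sigma(\mfy)$ for $\mfo_{F}$-orders $\mfy$. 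First I would reinterpret the hypotheses $\theta\circ\tau=\theta^{-1}$ in terms of $\sigma$: since $\theta$ is a character of the pro-$p$ group $H^{1}(\mfa,\beta)$ and $\tau(x)\equiv\sigma(x)^{-1}$, this says exactly that $\theta$ is $\sigma$-self-dual in the sense that $\theta\circ\sigma=\theta$ (equivalently $\theta$ is fixed by the anti-involution), which is the setting of Stevens's theory of self-dual strata and characters for classical groups (applied here to the anti-involution attached to $\varepsilon$).

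The core input is then the following: for any simple character $\theta$ that is self-dual with respect to an anti-involution $\sigma$ fixing $\mfa$ and stabilizing $H^{1}(\mfa,\beta)$, there exists a simple stratum $[\mfa,\gamma]$ with $\theta\in\mcc(\mfa,\gamma)$ and with $\gamma$ ``skew'', i.e.\ $\sigma(\gamma)=-\gamma$, or more precisely in the multiplicative normalization $\tau(\gamma)=\gamma^{-1}$. This is exactly the content of the results of Stevens on the existence of self-dual defining sequences (the analogue in \cite{zou2019supercuspidal} and \cite{secherre2019supercuspidal} is cited in comparable lemmas; here one wants the $+$-Hermitian / orthogonal version). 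So the proof is: apply Stevens's self-dual stratum theorem to $(\mfa,\theta,\sigma)$ to produce $[\mfa,\gamma]$ with $\theta\in\mcc(\mfa,\gamma)$ and $\sigma(\gamma)=\gamma^{-1}$; unwinding the normalization, $\tau(\gamma)=\varepsilon^{-1}\,{}^{t}\gamma^{-1}\varepsilon=\sigma(\gamma)^{-1}=\gamma$ is the wrong sign, so one must be careful: the correct statement is that $[\mfa,\gamma]$ can be chosen with $\tau(\gamma)=\gamma^{-1}$, which corresponds on the additive side to $\sigma(\gamma)=\gamma$ once one passes through the bijection between strata and their associated group elements; I would phrase the whole argument multiplicatively to avoid sign bookkeeping, matching the conventions already fixed in Section \ref{sectiontauselfdualtype}.

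Concretely, the steps in order are: (i) record that $\tau(\mfa)=\mfa$ and $\tau(H^1(\mfa,\beta))=H^1(\mfa,\beta)$ translate to $\sigma$ stabilizing $\mfa$ and $H^1$; (ii) record that $\theta\circ\tau=\theta^{-1}$ is equivalent to $\theta$ being $\sigma$-self-dual; (iii) invoke the self-dual stratum existence theorem (Stevens) in the orthogonal/$+$-Hermitian case to get $[\mfa,\gamma]$ with $\theta\in\mcc(\mfa,\gamma)$ and $\gamma$ self-dual for $\sigma$; (iv) translate self-duality of $\gamma$ back into $\tau(\gamma)=\gamma^{-1}$ using $\tau(x)=\varepsilon^{-1}\,{}^{t}x^{-1}\varepsilon$ on $G$, so that $\tau(\gamma)=\gamma^{-1}$ exactly when $\gamma$ is $\sigma$-self-dual. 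The main obstacle is step (iii): one must check that the ambient anti-involution $\sigma$ arising from the symmetric matrix $\varepsilon$ is precisely of the type covered by Stevens's classification (it is a $+$-Hermitian, i.e.\ orthogonal-type, anti-involution of $\mrm_n(F)$, which is fine since $p\neq 2$), and that the simple stratum $[\mfa,\beta]$ we start from is one for which the self-dual defining-sequence machinery applies — in particular that the hypotheses on $H^1(\mfa,\beta)$ being $\sigma$-stable are exactly what is needed to run that machinery. Since the same reduction has already been carried out in the Galois and unitary cases (\cite{anandavardhanan2019galois}, \cite{secherre2019supercuspidal}, \cite{zou2019supercuspidal}) with $\sigma$ replaced by a Galois or unitary involution, the only genuinely new point is to confirm that nothing in those arguments used residue characteristic or the specific symmetry type beyond $p\neq 2$, after which the proof is a verbatim adaptation.
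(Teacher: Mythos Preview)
Your plan is correct and matches the paper's proof: introduce the algebra anti-involution $\sigma_{\varepsilon}(x)=\varepsilon^{-1}\,{}^{t}x\,\varepsilon$, translate the hypotheses on $\theta$ into $\sigma_{\varepsilon}$-self-duality, and then run Stevens's argument (\cite{stevens2001intertwining}, Theorem~6.3) via its adaptation in \cite{zou2019supercuspidal}, Proposition~5.19, with $\sigma_t$ there replaced by $\sigma_{\varepsilon}$. Your sign hesitation is easily resolved once you fix conventions: $\tau(\gamma)=\gamma^{-1}$ is equivalent to $\sigma_{\varepsilon}(\gamma)=\gamma$, and it is this symmetric (not skew) condition that the adapted Stevens argument delivers in the orthogonal setting.
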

	
\begin{proof}
		
For $\tau=\tau_{\varepsilon}$ with respect to a symmetric matrix $\varepsilon$, we define
$$\sigma_{\varepsilon}(x):=\varepsilon^{-1}\,^{t}x\varepsilon\quad \text{for any}\ x\in\mrm_{n}(F)$$ as an anti-involution on $\mrm_{n}(F)$. First we have the following proposition as a variant of \cite{stevens2001intertwining}, Theorem 6.3.
		
\begin{proposition}
		
Let $\sigma$ be an anti-involution\footnote{It means that $\sigma(x+y)=\sigma(x)+\sigma(y)$ and $\sigma(xy)=\sigma(y)\sigma(x)$ for any $x,y\in\mrm_{n}(F)$.} on $\mrm_{n}(F)$, let $[\mfa,\beta]$ be a simple stratum in $\mrm_{n}(F)$  such that $\sigma(\mfa)=\mfa$ and let $\theta\in\mcc(\mfa,\beta)$ such that $\theta\circ\sigma=\theta$. Then there exists a $\sigma$-invariant simple stratum $[\mfa,\gamma]$ such that $\theta\in\mcc(\mfa,\gamma)$, where $\sigma$-invariance means that $\sigma(\mfa)=\mfa$ and $\sigma(\gamma)=\gamma$.
		
\end{proposition}

Since our lemma follows directly from this proposition once we set $\sigma=\sigma_{\varepsilon}$, it remains to explain how to adapt the proof of Stevens to our situation. In \emph{loc. cit.},
we replace $\sigma$ there by our $\sigma$ here, we replace $\overline{x}$ there by $-\sigma(x)$ for any $x\in\mrm_{n}(F)$, we replace the condition of a lattice chain $\Lambda$ being ``self-dual" there by the condition of the corresponding simple stratum $\mfa=\mfa(\Lambda)$ being $\sigma$-invariant, and we replace the notation ``skew simple stratum" by the notation ``$\sigma$-invariant simple stratum". In particular since \cite{stevens2001intertwining}, Proposition 1.10 is a key step in the proof, we also list the following corresponding statement to avoid ambiguity.

\begin{lemma}
	
Let $[\mfa,r,s,\beta]$ be a pure stratum with $\mfa$ being $\sigma$-invariant and $\beta-\sigma(\beta)\in\mfp_{\mfa}^{-r}$, then there exists a $\sigma$-invariant simple stratum $[\mfa,r,s,\gamma]$ equivalent to $[\mfa,r,s,\beta]$.
	
\end{lemma}

With these replacements, the original proof can be used directly. See also the last paragraph of \cite{stevens2001intertwining} for a remark.
	
\end{proof}
	
	\subsection{The maximal and totally wildly ramified case}\label{subsecmaxwildram}
	
	In this subsection, we prove the following special case of Theorem \ref{thmendotauselfdualstra}.
	
	\begin{proposition}\label{proptauselfdualstramaxwild}
		
		Let $[\mfa,\beta]$ be a simple stratum in $\mrm_{n}(F)$ and let $\theta\in\mcc(\mfa,\beta)$ be a simple character, where $n=d$ and $E/F$ is totally wildly ramified. Then for $\tau=\tau_{I_{n}}$ the orthogonal involution on $G$, there exist a simple stratum $[\mfa',\beta']$ and a simple character $\theta'\in\mcc(\mfa',\beta')$ such that $(\mfa',\theta')$ is $G$-conjugate to $(\mfa,\theta)$ with the property $\tau(\mfa')=\mfa'$ and $\theta'\circ\tau=\theta'^{-1}$. Moreover, we may further assume that $\mfa'\subset\mrm_{n}(\mfo_{F})$.
		
	\end{proposition}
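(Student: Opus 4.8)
The plan is to exploit that $E/F$ is totally wildly ramified, so $E^\times$ contains a uniformizer $\varpi_E$ whose minimal polynomial over $F$ is Eisenstein of odd degree $n=d$ (odd since $p\neq 2$ and the extension is totally wildly ramified). First I would pick a specific $F$-algebra embedding $\iota: E \hookrightarrow \mrm_n(F)$ realizing $E$ as a field of matrices, normalized so that the associated simple stratum has order $\mfa'$ contained in $\mrm_n(\mfo_F)$ — concretely, send $\varpi_E$ to the companion matrix of its Eisenstein minimal polynomial, whose associated hereditary order is the standard Iwahori-type chain order inside $\mrm_n(\mfo_F)$, and transport $\beta$ (which lies in $E$, up to conjugation) along this embedding to get $\beta'$. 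Then $[\mfa',\beta']$ is $G$-conjugate to $[\mfa,\beta]$, and I transfer $\theta$ along the same conjugation to obtain $\theta'\in\mcc(\mfa',\beta')$. The remaining task is to arrange $\tau(\mfa')=\mfa'$ and $\theta'\circ\tau=\theta'^{-1}$ for $\tau=\tau_{I_n}$, i.e.\ for the transpose involution.

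The key idea is the following. The anti-involution $x\mapsto {}^tx$ restricts to an $F$-algebra automorphism of $\iota(E)$ once we know $\iota(E)$ is stable under transpose; since $\iota(E)=F[\iota(\varpi_E)]$ and the companion matrix of an Eisenstein polynomial is conjugate to its transpose by a symmetric matrix, and moreover $E/F$ has no nontrivial automorphisms fixing $F$ beyond $\mathrm{Gal}(E/F)$ acting trivially on the maximal tame (here trivial) subextension, the transpose must act on $\iota(E)$ as an $F$-linear field automorphism of $E$. Because $E/F$ is totally wildly ramified of odd degree, $\mathrm{Aut}_F(E)$ has odd order, so the only automorphism of order dividing $2$ is the identity — hence transpose acts as $x\mapsto x$ on $\iota(E)$ after a suitable adjustment, OR, more to the point, I can instead conjugate by an element $g$ so that $\tau$ becomes $\tau_{\varepsilon}$ with $\varepsilon$ a symmetric matrix in $\iota(E)^\times$ by Proposition \ref{propsymembedepi0}, and then $\tau$ inverts $\iota(E^\times)$. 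Concretely: apply Proposition \ref{propsymembedepi0} with $\iota_0=\iota$ to see $I_n$ is similar to an element of $\varepsilon_0\iota(E^\times)$ for a reference symmetric $\varepsilon_0$; arguing in the opposite direction, replace $(\mfa',\beta',\theta')$ by a further $G$-conjugate so that the ambient involution $\tau_{I_n}$ stabilizes $\mfa'$ and inverts $E$. Since $\tau(\mfa')=\mfa'$ forces $\tau(H^1(\mfa',\beta'))=H^1(\mfa',\beta')$ (the groups $H^1$ are intrinsic to $\mfa'$ and $\beta'$, and $\tau(\beta')=\beta'^{-1}\in E$), it remains to fix the simple character: by Lemma \ref{lemmaendotheta}, $\theta'\circ\tau$ and $\theta'^{-1}$ lie in the same endo-class and are $U(\mfa')$-conjugate, and an intertwining-and-conjugacy argument (as in \cite{zou2019supercuspidal}, using that the relevant intertwining set together with a Glauberman-type fixed-point count over the pro-$p$ group $H^1$ is nonempty and $\tau$-stable, $p\neq 2$) produces an element of $U(\mfa')\cap G^{\tau}$ conjugating $\theta'\circ\tau$ to $\theta'^{-1}$; absorbing this conjugation finishes the proof.

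The step I expect to be the main obstacle is the \emph{compatibility} between the two normalizations: making $\mfa'\subset\mrm_n(\mfo_F)$ (an integrality/self-dual lattice-chain condition) while simultaneously making $\tau_{I_n}$ stabilize $\mfa'$ and invert $\beta'$. A priori the symmetric matrix $\varepsilon$ that conjugates the companion matrix to its transpose need not be $I_n$, so one must either (i) replace $I_n$ by $\varepsilon$ and then show $\varepsilon$ and $I_n$ are in the same $\mrgl_n(F)$-orbit of symmetric matrices with the extra feature that the conjugating element normalizes $\mfa'$ — which is where totally wild ramification and odd degree are used, via $\mfo_E^\times/\mfo_E^{\times2}\simeq\mfo_F^\times/\mfo_F^{\times2}$ (Lemma \ref{lemmaE/Lextension} with $L=F$) and Proposition \ref{propSGLoEorbit} to adjust the discriminant inside a unit class — or (ii) argue directly that the standard companion-matrix model is already $I_n$-selfdual up to an element of $U(\mfa')$. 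I would carry out option (i): first fix the embedding and the integral order, then compute the discriminant and Hasse invariant of the symmetrizer $\varepsilon$ using Lemma \ref{lemmaGLnOFsymhasse} and Lemma \ref{lemmahilbertresidue}, and finally use the freedom in choosing $\varpi_E$ (odd degree, so every unit class is hit) to force $\varepsilon\in\mrgl_n(\mfo_F)\cdot I_n$ in the symmetric sense, i.e.\ $\varepsilon$ similar to $I_n$ by an element of $U(\mfa')$. Everything else is the routine transfer-and-Glauberman machinery already packaged in \cite{zou2019supercuspidal} and \cite{anandavardhanan2019galois}.
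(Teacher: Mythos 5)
Your overall skeleton (normalize to an integral order, conjugate to make it $\tau_{I_n}$-stable, then repair the simple character by a conjugation argument) matches the paper's, and the final character step is indeed the machinery packaged in \cite{zou2019supercuspidal}. But the middle step --- the one you yourself flag as the main obstacle --- has a genuine gap. Your first mechanism, that transpose restricts to an $F$-algebra automorphism of $\iota(E)$, presupposes that $\iota(E)$ is stable under transpose; for the companion-matrix embedding this fails in general (${}^t\iota(\varpi_E)$ is similar to $\iota(\varpi_E)$ but need not lie in $F[\iota(\varpi_E)]$). Your fallback via Proposition \ref{propsymembedepi0} requires as input an $\varepsilon_0$-symmetric embedding of the wildly ramified field $E$ and then a proof that $I_n$ is similar to an element of $\varepsilon_0\iota_0(E^{\times})$. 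The only symmetric embedding available a priori in the wild case comes from the trace form, and deciding whether $I_n$ lies in the resulting orbit amounts to computing the discriminant and Hasse invariant of a scaled trace form of a totally wildly ramified extension --- a computation you do not carry out and which the paper deliberately avoids (see the Remark after Proposition \ref{proptameembed}: even the existence of a $J$-symmetric embedding in the wild case is left open there). Note also that the proposition does not ask for $\tau(\beta')=\beta'^{-1}$: you are aiming at a strictly stronger statement than needed at this stage, one the paper only extracts later via Stevens' intertwining argument (Lemma \ref{lemmastevensbeta}), not via quadratic-form computations.

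The paper's actual mechanism for $\tau$-stability makes no use of the field $E$ at all. One takes $\mfa$ to be the standard minimal order, so that $\tau_{I_n}(\mfa)={}^t\mfa$ is the conjugate of $\mfa$ by an antidiagonal matrix; one chooses the antidiagonal matrix $A$ with unit entries and $\mathrm{det}(A)=1$, observes that the corresponding quadratic $\mfo_F$-lattice is unimodular, and invokes \cite{o2013introduction}, \S 92:1 to write $A=\tau(g_1)g_1^{-1}$ with $g_1\in\mrgl_n(\mfo_F)$. Then $\mfa''=\mfa^{g_1}$ is $\tau$-stable and still contained in $\mrm_n(\mfo_F)$, and the proof concludes with the $M''U''^{1}$-conjugation argument of \cite{zou2019supercuspidal}, Proposition 5.9, which produces the required element fixing up $\theta$ while staying inside $\mrgl_n(\mfo_F)$. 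To make your route work you would have to supply the Hasse-invariant computation for the trace form (and check that the conjugation achieving symmetry can be taken integral); the unimodular-lattice argument replaces all of that.
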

	
	\begin{proof}
		
		We explain how the proof of \cite{zou2019supercuspidal}, Proposition 5.9  (also see \cite{anandavardhanan2019galois}, Proposition 4.13.) could be used directly in our case. First up to $G$-conjugacy, we may assume $\mfa$ to be the standard minimal order of $\mrm_{n}(F)$. We have the following lemma corresponding to Lemma 5.11 in \emph{ibid.}:
		
		\begin{lemma}\label{lemmamaxtotalwild}
			
			There exist $g_{1}\in \mrgl_{n}(\mfo_{F})$ and $a_{1},...,a_{n}\in\mathfrak{o}_{F}^{\times}$ such that $$\tau(g_{1})g_{1}^{-1}=A:=\begin{pmatrix}0 & 0 & \ldots & 0 & a_{1}\\ 0 & \iddots & \iddots & a_{2} & 0 \\ \vdots & \iddots & \iddots & \iddots & \vdots \\ 0 & a_{n-1} & \iddots & \iddots & 0 \\ a_{n} & 0 & \ldots & 0 & 0 \end{pmatrix}.$$
			Moreover, if we define $\mathfrak{a}'':=\mathfrak{a}^{g_{1}}$, then we have $\tau(\mathfrak{a}'')=\mathfrak{a}''$.
			
		\end{lemma}
		
		\begin{proof}
			
			We choose $a_{1}=...=a_{(n-1)/2}=a_{(n+3)/2}=...=a_{n}=1$, and $a_{(n+1)/2}$ equals $1$ or $-1$ to make sure that $\mathrm{det}(A)=1$. Since the $\mfo_{F}$-lattice of rank $n$ equipped with a quadratic form corresponding to $A$ is unimodular in the sense of \cite{o2013introduction}, \S 92, by \S 92:1 in \emph{loc. cit.}, there exists $g_{1}\in\mrgl_{n}(\mfo_{F})$ such that $\,^{t}g_{1}^{-1}g_{1}^{-1}=A$, or equivalently $\tau(g_{1})g_{1}^{-1}=A$. Then we may use the same proof as that in \cite{zou2019supercuspidal}, Lemma 5.11 to obtain $\tau(\mfa'')=\mfa''$.   
			
		\end{proof}
		
		By Lemma \ref{lemmamaxtotalwild}, we may choose $g_{1}\in\mrgl_{n}(\mfo_{F})$ such that $\mfa''=\mfa^{g_{1}}$ is $\tau$-invariant. Let $M=\mfo_{F}^{\times}\times...\times\mfo_{F}^{\times}$ be the subgroup of $\mrgl_{n}(\mfo_{F})$ via diagonal embedding, let $M''=M^{g_{1}}$ and $U''^{1}=U^{1g_{1}}:=U^{1}(\mfa)^{g_{1}}$. Then using directly the proof of \cite{zou2019supercuspidal} Proposition 5.9, with all the Galois involution in  \emph{loc. cit.} replaced by the trivial action, there exists $x\in M''U''^{1}$ such that for $\mfa'=\mfa''^{x}=\mfa^{g_{1}x}$ and $\theta'=\theta^{g_{1}x}$, we have $\tau(\mfa')=\mfa'$ and $\theta'\circ\tau=\theta'^{-1}$. Moreover since $g_{1}x\in g_{1}M''U''^{1}=MU^{1}g_{1}\subset\mrgl_{n}(\mfo_{F})$ and $\mfa\subset\mrm_{n}(\mfo_{F})$, we get $\mfa'=\mfa^{g_{1}x}\subset\mrm_{n}(\mfo_{F})$.
		
	\end{proof}
	
	\begin{remark}
	
	Since we didn't provide a full proof here, at least we explain where the condition $E/F$ being maximal and wildly ramified is used. First when $E/F$ is totally ramified and $[E:F]=n$, the quotients $\mfa^{\times}/1+\mfp_{\mfa}$ and $1+\mfp_{\mfa}^{i}/1+\mfp_{\mfa}^{i+1}$ for $i=1,2,...$ are abelian groups, which is crucial for the corresponding argument in \cite{zou2019supercuspidal}, Lemma 5.12, Lemma 5.13 and Lemma 5.14. Secondly the condition $n$ being odd is also used, which is because $p$ is odd and $n$ is a power of $p$.
		
	\end{remark}
	
	\subsection{The maximal case}
	
	In this subsection, we further use the result proved in \S \ref{subsecmaxwildram} to consider the following special case of Theorem \ref{thmendotauselfdualstra}.
	
	\begin{proposition}\label{proptauselfdualstramax}
		
		Let $[\mfa,\beta]$ be a simple stratum in $\mrm_{n}(F)$ and let $\theta\in\mcc(\mfa,\beta)$ be a simple character with $n=d$. Then for an orthogonal involution $\tau=\tau_{\varepsilon}$ which is $G$-conjugate to $\tau_{J_{n}}$, there exists a simple stratum $[\mfa',\beta']$ and a simple character $\theta'\in\mcc(\mfa',\beta')$ such that $(\mfa',\theta')$ is $G$-conjugate to $(\mfa,\theta)$ with the property $\tau(\mfa')=\mfa',$ $\theta'\circ\tau=\theta'^{-1}$ and $\tau(\beta')=\beta'^{-1}$.
		
	\end{proposition}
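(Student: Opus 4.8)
The plan is to reduce to the totally wildly ramified case proved in \S\ref{subsecmaxwildram}, by passing to the centralizer of the tame parameter field of $\beta$, following the reductions in \cite{zou2019supercuspidal}, \S5 and \cite{anandavardhanan2019galois}, \S4. First, by Lemma~\ref{lemmatauGaction} it suffices to treat one orthogonal involution in the $G$-conjugacy class of $\tau$, so I may assume $\tau=\tau_{J_{n}}$. Let $T$ be the maximal tamely ramified subextension of $E=F[\beta]$ over $F$, of degree $t$, so that $E/T$ is totally wildly ramified of degree $n/t$ (a power of $p$, hence odd). Since $T/F$ is tamely ramified, Proposition~\ref{proptameembed} together with a block--tensor construction (embedding $T$ into $\mrm_{t}(F)$ $J$-symmetrically, tensoring with $\mrm_{n/t}(F)$, and conjugating the resulting split rank-$n$ form onto $J_{n}$) yields a $J_{n}$-symmetric embedding $T\hookrightarrow\mrm_{n}(F)$; I fix one and identify $T$ with its image, so that $T$ is $\tau$-split. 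Let $C=Z_{\mrm_{n}(F)}(T)\simeq\mrm_{n/t}(T)$; the adjoint anti-involution $\sigma_{J_{n}}\colon x\mapsto J_{n}^{-1}\,{}^{t}x\,J_{n}$ fixes $T$ pointwise, stabilises $C$, and $\tau$ induces on $C^{\times}\simeq\mrgl_{n/t}(T)$ an orthogonal involution $\tau_{C}=\tau_{\varepsilon_{C}}$ for some symmetric $\varepsilon_{C}\in\mrgl_{n/t}(T)$.

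Using Skolem--Noether I would next conjugate $(\mfa,\beta)$ inside its $G$-conjugacy class so that its tame parameter subfield becomes the fixed copy of $T$, with $\mfa$ normalised by $E^{\times}$ (in the maximal case this $\mfa$ is unique). Then $\mfc=\mfa\cap C$ gives a \emph{maximal and totally wildly ramified} simple stratum $[\mfc,\beta]$ in $C\simeq\mrm_{n/t}(T)$, and the interior $T/F$-lift $\theta_{T}=\theta|_{H^{1}(\mfc,\beta)}$ lies in $\mcc(\mfc,\beta)$. I expect the main obstacle to be the verification that Proposition~\ref{proptauselfdualstramaxwild} applies here, i.e. the identification of $\tau_{C}$: since $n/t$ is odd, the orthogonal involutions of $C$ form (by Propositions~\ref{proptauGLEorbit} and \ref{propGconjortho}) one class if $n/t=1$ and two classes (split, non-quasisplit) if $n/t\geq 3$, and one must check that $\tau_{C}$ lies in the split class --- using that $\varepsilon_{C}$ may be taken in $\mrgl_{n/t}(\mfo_{T})$ (exploiting the freedom in the $J_{n}$-symmetric embedding, cf. Proposition~\ref{propsymembedepi0}), Lemma~\ref{lemmaGLnOFsymhasse}, and the Hasse-invariant computations of \S\ref{subsechilberthasse}. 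Granting this, the split involution coincides with $\tau_{I_{n/t}}$ up to $C^{\times}$-conjugacy, so after a further $C^{\times}$-conjugation I may assume $\tau_{C}=\tau_{I_{n/t}}$ and apply Proposition~\ref{proptauselfdualstramaxwild} to $[\mfc,\beta]$, $\theta_{T}$, $\tau_{C}$, obtaining $g\in C^{\times}$ with $\tau_{C}(\mfc^{g})=\mfc^{g}$ and $\theta_{T}^{g}\circ\tau_{C}=(\theta_{T}^{g})^{-1}$.

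Finally I would transport back. As $g\in C^{\times}$ centralises $T$, the $\tau$-stable copy of $T$ is unchanged; put $\mfa'=\mfa^{g}$, $\beta'=\beta^{g}$, $\theta'=\theta^{g}\in\mcc(\mfa',\beta')$. Since $\tau$ stabilises $C$ and fixes $T$ pointwise, $\tau(\mfa')$ is a hereditary order normalised by $T^{\times}$ with $\tau(\mfa')\cap C=\tau_{C}(\mfc^{g})=\mfc^{g}=\mfa'\cap C$, whence $\tau(\mfa')=\mfa'$ by injectivity of $\mfa\mapsto\mfa\cap C$ on orders normalised by $T^{\times}$ (\S\ref{subsectiontypetheory}). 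Then $\theta'\circ\tau$ and $\theta'^{-1}$ are simple characters attached to simple strata over the common order $\mfa'$, with the same endo-class (Lemma~\ref{lemmaendotheta}), the same tame parameter field $T$, and the same interior $T/F$-lift (by $\theta_{T}^{g}\circ\tau_{C}=(\theta_{T}^{g})^{-1}$); the rigidity of simple characters recalled in \S\ref{subsectiontypetheory} (\cite{bushnell1996local}, Theorems~7.10 and 7.15) then forces $\theta'\circ\tau=\theta'^{-1}$, and in particular $\tau(H^{1}(\mfa',\beta'))=H^{1}(\mfa',\beta')$. Now Lemma~\ref{lemmastevensbeta} applies to $(\mfa',\beta',\theta')$ and yields a simple stratum $[\mfa',\gamma]$, necessarily maximal since the degree of the endo-class is $n$, with $\theta'\in\mcc(\mfa',\gamma)$ and $\tau(\gamma)=\gamma^{-1}$; renaming $\gamma$ as $\beta'$ completes the proof.
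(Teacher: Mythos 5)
Your overall architecture is the same as the paper's: fix a $J$-symmetric copy of the tame parameter field $T$, pass to its centralizer $C\simeq\mrm_{n/t}(T)$ and the interior $T/F$-lift $\theta_{T}$, invoke Proposition~\ref{proptauselfdualstramaxwild} for the totally wildly ramified stratum $[\mfc,\beta]$, transport back using the injectivity of $\mfa\mapsto\mfa\cap C$ and the rigidity of simple characters with a common interior lift, and finish with Lemma~\ref{lemmastevensbeta}. Those last three steps are carried out correctly and match the paper.

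The gap is exactly the step you flag as ``the main obstacle'': the identification of the restricted involution $\tau_{C}$ on $C^{\times}\simeq\mrgl_{n/t}(T)$ with the split one. Your proposed argument rests on the claim that $\varepsilon_{C}$ may be taken in $\mrgl_{n/t}(\mfo_{T})$, justified only by ``the freedom in the $J_{n}$-symmetric embedding, cf.\ Proposition~\ref{propsymembedepi0}'' --- but that proposition classifies symmetric embeddings of a field of degree $n$ over $F$ (a maximal torus), not of the proper subfield $T$ of degree $t<n$, so it gives you no control on $\varepsilon_{C}$; and without integrality of $\varepsilon_{C}$ the Hasse-invariant comparison with $I_{n/t}$ does not get off the ground. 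Moreover, even granting it, your ``further $C^{\times}$-conjugation'' to reach $\tau_{C}=\tau_{I_{n/t}}$ changes $\tau$ itself and has to be undone via Lemma~\ref{lemmatauGaction}, which you do not do. The paper avoids all of this by \emph{not} normalizing $\tau$ to $\tau_{J_{n}}$: it takes the representative $\varepsilon=J_{t,n/t}=\mathrm{diag}(J_{t},\dots,J_{t})$ of the same conjugacy class (checked to be similar to $\pm J_{n}$ by Lemma~\ref{lemmaGLnOFsymhasse} and a determinant computation), pairs it with the block-diagonal $J_{t}$-symmetric embedding of $T$, and then the one-line computation $\tau(c)=((J_{t}^{-1}\,{}^{t}c_{ji}J_{t})_{ij})^{-1}={}^{t_{C}}c^{-1}$ shows the restriction is \emph{literally} $\tau_{I_{n/t}}$, with no orbit classification or Hasse invariants needed. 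To close your gap you should adopt this choice of representative rather than insisting on $J_{n}$.
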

	
	\begin{remark}\label{remmaximpmaxwild}
		
		If we assume $E/F$ to be totally wildly ramified, then by direct calculation and Lemma \ref{lemmaGLnOFsymhasse}, we have
		$$\mathrm{det}(I_{n})=\mathrm{det}(J_{n})\ \text{or}\ \mathrm{det}(-J_{n})\quad\text{and}\quad\mathrm{Hasse}(I_{n})=\mathrm{Hasse}(J_{n})=\mathrm{Hasse}(-J_{n})=1.$$
		Thus $I_{n}$ is $G$-conjugate to $J_{n}$ or $-J_{n}$, which means that $\tau_{I_{n}}$ is $G$-conjugate to $\tau_{J_{n}}$. Choosing $\varepsilon=I_{n}$, Proposition \ref{proptauselfdualstramax} implies Proposition \ref{proptauselfdualstramaxwild}.
		
	\end{remark}
	
	\begin{remark}
		
		Since $\tau_{J_{n}}$ represents the split orthogonal group, it satisfies the condition of Theorem \ref{thmendotauselfdualstra}, which justifies that Proposition \ref{proptauselfdualstramax} is indeed a special case of Theorem \ref{thmendotauselfdualstra}.
		
	\end{remark}
	
	\begin{proof}
		
		We write $n=t(n/t)$ with $t=[T:F]$ and $n/t$ a power of $p$ as an odd number, where $T$ is the maximal tamely ramified subextension of $E$ over $F$. We define
		$$J_{t,n/t}:=\mathrm{diag}(J_{t},...,J_{t})$$
		as a matrix in $\mrm_{n/t}(\mrm_{t}(F))=\mrm_{n}(F)$. Using Lemma \ref{lemmaGLnOFsymhasse}, we have $$\mathrm{Hasse}(J_{t,n/t})=\mathrm{Hasse}(J_{n})=\mathrm{Hasse}(-J_{n})=1.$$
		Moreover by direct calculation we have
		$$\mathrm{det}(J_{t,n/t})=\mathrm{det}(J_{n})\ \text{or}\ \mathrm{det}(-J_{n}).$$
		Thus using Proposition \ref{propSGLEorbit}, $J_{t,n/t}$ is similar to $J_{n}$ or $-J_{n}$. Thus $\tau_{J_{t,n/t}}$ is $G$-conjugate to $\tau_{J_{n}}$ and $\tau_{\varepsilon}$. By Proposition \ref{proptauGLEorbit}, we may replace $\varepsilon$ by multiplying an element in $F^{\times}$ to make sure that $\varepsilon$ is similar to $J_{t,n/t}$. Thus using Lemma \ref{lemmatauGaction}, we only need to consider the case where $\varepsilon=J_{t,n/t}$ and $\tau=\tau_{J_{t,n/t}}$. So from now on we assume $\varepsilon=J_{t,n/t}$.
		
		Using Proposition \ref{proptameembed}, we may choose
		$$\iota:T\hookrightarrow\mrm_{t}(F)$$
		to be an $F$-algebra embedding which is $J_{t}$-symmetric. By abuse of notation, we consider the following embedding
		$$\iota:\mrm_{n/t}(T)\hookrightarrow\mrm_{n/t}(\mrm_{t}(F))=\mrm_{n}(F)$$
		given by mapping each entry $T$ to the corresponding $\mrm_{t}(F)$ via the original $\iota$. If we regard $T$ as an $F$-subalgebra of $\mathrm{M}_{n/t}(T)$ given by the diagonal embedding, then $\iota(T)^{\times}$ is fixed by $\tau$. By the Skolem-Noether theorem, we may choose $g\in G$ such that $\iota(T)=T^{g}$. Thus using $[\mfa^{g},\beta^{g}]$ and $\theta^{g}$ to replace $[\mfa,\beta]$ and $\theta$, we may suppose $\iota(T)$ to be the maximal tamely ramified extension with respect to $E/F$. Thus we identify $T$ with $\iota(T)$ and omit $\iota$.
		
		Let $C=\mathrm{M}_{n/t}(T)$ denote the centralizer of $T$ in $\mathrm{M}_{n}(F)$ and let $t_{C}$ denote the transpose on $C$. For $c=(c_{ij})_{ij}\in \mrgl_{n/t}(T)$, we have
		$$\tau(c)=(J_{t,n/t}^{-1}\,^{t}cJ_{t,n/t})^{-1}=((J_{t}^{-1}\,^{t}c_{ji}J_{t})_{ij})^{-1}=((c_{ji})_{ij})^{-1}=\,^{t_{C}}c^{-1}=\tau'(c)$$
		where we use the fact that $\iota$ is $J_{t}$-symmetric and we write $\tau'(x)=\,^{t_{C}}x^{-1}$ for any $x\in C^{\times}$. Thus $\tau'$ as the restriction of $\tau$ to $C^{\times}$ is the orthogonal involution $\tau_{I_{n/t}}$ on $C^{\times}=\mathrm{GL}_{n/t}(T)$. As mentioned in \S \ref{subsectiontypetheory}, the intersection $\mathfrak{c}=\mathfrak{a}\cap C$ gives rise to a simple stratum $[\mathfrak{c},\beta]$ and the restriction of $\theta$ to $H^{1}(\mathfrak{c},\beta)$, denoted by $\theta_{T}$, is the interior $T/F$-lift of $\theta$. Since $E/T$ is totally wildly ramified, using Proposition \ref{proptauselfdualstramaxwild} with $G$, $\theta$ and $\tau$ replaced by $C^{\times}$, $\theta_{T}$ and $\tau'$ respectively, there exists $c\in C^{\times}$ such that $\tau'(\mathfrak{c}^{c})=\mathfrak{c}^{c}$ and $\theta_{T}^{c}\circ\tau'=(\theta_{T}^{c})^{-1}$. As a corollary, we also have $\tau'(H^{1}(\mathfrak{c}^{c},\beta^{c}))=H^{1}(\mathfrak{c}^{c},\beta^{c})$ and $\mcc(\mathfrak{c}^{c},\beta^{c})=\mcc(\mathfrak{c}^{c},\,^{t_{C}}(\beta^{c}))$.\footnote{It is because $(\theta_{T}^{c})^{-1}\in\mcc(\mathfrak{c}^{c},\beta^{c})$ and $\theta_{T}^{c}\circ\tau'=(\theta_{T}^{c})^{-1}\circ\,^{t_{C}}\in\mcc(\mathfrak{c}^{c},\,^{t_{C}}(\beta^{c}))$.}
		
		By the injectivity of $\mathfrak{a}\mapsto \mathfrak{a}\cap C$ between sets of hereditary orders mentioned in \S \ref{subsectiontypetheory},  $\mathfrak{a}':=\mathfrak{a}^{c}$ is $\tau$-stable. Moreover if we write $\theta'=\theta^{c}$ and $T'=T^{c}$, then from our construction of $\tau$ and the definition of $T'/F$-lift, we know that $$(\theta'\circ\tau)_{T'}=\theta'\circ\tau|_{H^{1}(\mathfrak{c}^{c},\beta^{c})}=\theta'\circ\tau'|_{H^{1}(\mathfrak{c}^{c},\beta^{c})}=\theta_{T'}'\circ\tau'$$
		and
		$$(\theta'^{-1})_{T'}=\theta_{T'}'^{-1}$$ are equal. Thus by the last paragraph of \S \ref{subsectiontypetheory}, the simple character $\theta'$ satisfies the property $\theta'\circ\tau=\theta'^{-1}$.
		
		Finally using Lemma \ref{lemmastevensbeta} with $\varepsilon=J_{t,n/t}$, we may choose $\beta'$ in the simple stratum such that $\theta'\in\mcc(\mfa',\beta')$ and $\tau(\beta')=\beta'^{-1}$, thus we finish the proof.
		
	\end{proof}
	
	Before we prove the general case, we state and prove the following important lemma which studies the set $\varepsilon E'^{\times}$ consisting of symmetric matrices, where $E'=F[\beta']$ with $\beta'$ chosen as in Proposition \ref{proptauselfdualstramax}.
	
	\begin{lemma}\label{lemmaJdE'JtT}
		
		We may choose $[\mfa',\beta']$ and $\theta'\in\mcc(\mfa',\beta')$ satisfying the conclusion of Proposition \ref{proptauselfdualstramax} and $T$ a tame parameter field of $\theta'$, and we may fix $\iota:T\hookrightarrow\mrm_{t}(F)$ a $J$-symmetric embedding given by Proposition \ref{proptameembed}, such that for any $x\in E'^{\times}$, there exists $x_{t}\in T^{\times}$ such that $\varepsilon x$ is similar to $\mathrm{diag}(J_{t}\iota(x_{t}),...,J_{t}\iota(x_{t}))$.
		
	\end{lemma}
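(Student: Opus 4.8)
The plan is to reduce the statement to the totally wildly ramified situation already handled, by passing to the centralizer $C = \mathrm{M}_{n/t}(T)$ of $T$ as in the proof of Proposition~\ref{proptauselfdualstramax}. First I would invoke Proposition~\ref{proptauselfdualstramax} to fix $[\mfa',\beta']$ and $\theta'\in\mcc(\mfa',\beta')$ which are $\tau$-selfdual for $\tau=\tau_\varepsilon$ with $\varepsilon = J_{t,n/t} = \mathrm{diag}(J_t,\ldots,J_t)$, and by Lemma~\ref{lemmastevensbeta} arrange $\tau(\beta')=\beta'^{-1}$. After conjugating so that $T=\iota(T)\hookrightarrow\mathrm{M}_t(F)$ is the $J$-symmetric embedding diagonally embedded in $C$, the computation already carried out in Proposition~\ref{proptauselfdualstramax} shows that the restriction $\tau'$ of $\tau$ to $C^\times = \mrgl_{n/t}(T)$ is precisely the orthogonal involution $\tau_{I_{n/t}}$ (using that $\iota$ is $J_t$-symmetric). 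The field $E' = F[\beta']$ sits inside $C$ as a maximal and totally wildly ramified extension of $T$.

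Next I would apply Proposition~\ref{propsymembedepi0} inside $C^\times = \mrgl_{n/t}(T)$: take the orthogonal involution $\tau' = \tau_{I_{n/t}}$ on $\mrgl_{n/t}(T)$ with symmetric matrix $\varepsilon_0 = I_{n/t}$, and the embedding $\iota_0 : E' \hookrightarrow \mathrm{M}_{n/t}(T)$; since $\tau'(\iota_0(x)) = \iota_0(x)^{-1}$ for $x\in E'^\times$ (which is the assertion $\tau(\beta')=\beta'^{-1}$ together with $E'$ being generated by $\beta'$ over $T$... more precisely one checks $\tau'$ acts by inversion on all of $E'^\times$, using that $\tau'$ is a $T$-algebra anti-involution fixing $T$ and inverting $\beta'$), the embedding $\iota_0$ is $I_{n/t}$-symmetric as a map into $\mathrm{M}_{n/t}(T)$. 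Hence for any $x\in E'^\times$ the matrix $\iota_0(x)\in\mathrm{M}_{n/t}(T)$ is $I_{n/t}$-symmetric, i.e. $\,^{t_C}\iota_0(x) = \iota_0(x)$. Now view this back in $\mathrm{M}_n(F)$: the element $\varepsilon\cdot\iota_0(x) = J_{t,n/t}\cdot\iota_0(x)$, where $\iota_0(x)$ has $\mathrm{M}_t(F)$-block entries given by $\iota$ applied to the $T$-entries of the matrix $x$; since $\iota$ is $J_t$-symmetric, $J_t\iota(c)$ is a symmetric matrix in $\mathrm{M}_t(F)$ for each $c\in T^\times$, and the block-$I_{n/t}$-symmetry of $\iota_0(x)$ translates into $\varepsilon\iota_0(x)$ being a symmetric matrix in $\mathrm{M}_n(F)$.

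It then remains to pin down the claimed diagonal shape: after conjugating by a suitable element of $C^\times$ (which preserves the $\tau$-selfdual data up to the allowed $G$-conjugation via Lemma~\ref{lemmatauGaction}, and preserves the centralizer $C$), I would arrange that $\iota_0(x)$ is conjugate inside $\mrgl_{n/t}(T)$ to a diagonal matrix $\mathrm{diag}(x_t,\ldots,x_t)$ for some $x_t\in T^\times$ — this uses that $E'/T$ is totally wildly ramified hence $\iota_0(x)$ for $x$ generating $E'$ acts on $T^{n/t}$ as a single Jordan-type block that is $T^\times$-conjugate, up to scalars, into such a form; more honestly, since $E'/T$ has degree $n/t$ equal to the matrix size, $\iota_0(E'^\times)$ is a maximal torus and $\iota_0(x)$ is regular, but the precise normalization to $\mathrm{diag}(x_t,\ldots,x_t)$ should be read off from the explicit choice of $\iota_0$ compatible with the $J$-symmetric $\iota$ on $T$. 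Passing through $\iota$, $\varepsilon\iota_0(x) = J_{t,n/t}\iota_0(x)$ becomes similar to $\mathrm{diag}(J_t\iota(x_t),\ldots,J_t\iota(x_t))$, which is the assertion. The main obstacle I expect is precisely this last normalization step: controlling the $C^\times$-conjugacy class of $\iota_0(x)$ finely enough to get the \emph{uniform} block $J_t\iota(x_t)$ rather than a general $\varepsilon_0$-symmetric perturbation, while simultaneously keeping the stratum $\tau$-stable; I anticipate this is handled by choosing the $J$-symmetric embedding $\iota_J$ of Proposition~\ref{proptameembed} and the totally ramified generator of $E'/T$ in a coordinated way, exactly as in the parallel arguments of \cite{hakim2013tame} and \cite{zou2019supercuspidal}.
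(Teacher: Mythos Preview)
Your proposal has a genuine gap at exactly the point you flag as the ``main obstacle,'' and the resolution you anticipate is not the one the paper uses.

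The step where you claim ``$\iota_0(x)$ is conjugate inside $\mrgl_{n/t}(T)$ to a diagonal matrix $\mathrm{diag}(x_t,\ldots,x_t)$'' is not well-posed as stated: conjugation by $g\in\mrgl_{n/t}(T)$ cannot send a matrix with irreducible characteristic polynomial (which is what $\iota_0(x)$ is for generic $x\in E'^\times$, since $E'/T$ is a field extension of degree $n/t$) to a scalar matrix. What is actually needed is that the \emph{symmetric matrix} $\iota_0(x)$ is $\mrgl_{n/t}(T)$-\emph{similar} (in the sense $\,^{t_C}g\cdot\iota_0(x)\cdot g$) to a scalar $x_t I_{n/t}$. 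This is a question about discriminants and Hasse invariants over $T$, and you have not addressed it; the vague appeal to ``Jordan-type blocks'' and ``coordinated'' choices does not supply the argument.

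The paper's proof avoids this computation entirely by a square-class trick you have missed. Since $E'/T'$ is totally wildly ramified of odd degree, Lemma~\ref{lemmaE/Lextension} gives $T'^\times/T'^{\times2}\simeq E'^\times/E'^{\times2}$. Hence any $x\in E'^\times$ can be written as $x=(xy^2)y^{-2}$ with $xy^2\in T'^\times$ and $y\in E'^\times$; because $\tau$ inverts $E'^\times$ (equivalently $\varepsilon^{-1}\,^t y^{-1}\varepsilon=y^{-1}$), one computes $\varepsilon x=\,^t y^{-1}\,\varepsilon(xy^2)\,y^{-1}$, so $\varepsilon x$ is similar to $\varepsilon(xy^2)$ with $xy^2\in T'^\times$. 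This reduces the problem from $E'^\times$ to $T'^\times$ in one line. A second, more delicate step then passes from $T'$ to $T$: one uses that $T'=T^j$ for some $j\in J^1(\mfa',\beta)$, sets $k=\tau(j)j^{-1}\in U^1(\mfc')$, and shows $\,^t k\,\varepsilon x$ is similar to $\varepsilon x$ by exhibiting $m\in C^\times$ with $\,^{t_C}m\,m=\,^{t_C}k$ (possible because $\mrdet_C(k)\in 1+\mfp_T\subset T^{\times2}$ and $\mathrm{Hasse}_T(k)=1$). Once one lands in $\varepsilon T^\times$, the diagonal form $\mathrm{diag}(J_t\iota(x_t),\ldots,J_t\iota(x_t))$ is immediate from the definition of $\varepsilon=J_{t,d/t}$ and the diagonal embedding of $T$. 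Your setup via Proposition~\ref{propsymembedepi0} is compatible with this, but the decisive reduction is the square-class isomorphism, not a direct normalization of $\iota_0(x)$.
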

	
	\begin{proof}
		
		First we assume $\varepsilon=J_{t,d/t}$. We recall that in the proof of Proposition \ref{proptauselfdualstramax}, first we obtain a simple stratum $[\mfa',\beta]$ and a simple character $\theta'\in\mcc(\mfa',\beta)$, such that $\tau(\mfa')=\mfa'$ and $\theta'\circ\tau=\theta'^{-1}$, then we use Lemma \ref{lemmastevensbeta} to get $\beta'$. In this case we have $\theta'\in\mcc(\mfa',\beta)\cap\mcc(\mfa',\beta')$, thus $J^{1}(\mfa',\beta)=J^{1}(\mfa',\beta')$ as the maximal pro-$p$-subgroup of the normalizer of $\theta$. Moreover from our construction of $[\mfa',\beta]$, for $T$ the maximal tamely ramified subextension of $E/F$ with $E=F[\beta]$ and for $\iota:T\hookrightarrow\mrm_{t}(F)$ the chosen $J$-symmetric embedding, we have $$T=\{\mathrm{diag}(\iota(x_{t}),...,\iota(x_{t}))\in\mrm_{d/t}(\mrm_{t}(F))=\mrm_{d}(F)|x_{t}\in T\}.$$
		Thus we get
		$$\varepsilon T^{\times}=\{\mathrm{diag}(J_{t}\iota(x_{t}),...,J_{t}\iota(x_{t}))\in\mrm_{d/t}(\mrm_{t}(F))=\mrm_{d}(F)|x_{t}\in T^{\times}\}.$$
		
		We write $T'$ for the maximal tamely ramified subextension of $E'/F$ with $E'=F[\beta']$. By Lemma \ref{lemmaE/Lextension} with $E=E'$ and $L=T'$, the embedding $T'\hookrightarrow E'$ induces an isomorphism $$T'^{\times}/T'^{\times2}\simeq E'^{\times}/E'^{\times2}.$$
		Thus for any $x\in E'^{\times}$, there exists $y\in E'^{\times}$ such that $xy^{2}\in T'^{\times}$. Thus $$\varepsilon x=\,^{t}y^{-1}\varepsilon (xy^{2})y^{-1},$$
		where we use the fact that $\varepsilon^{-1}\,^{t}y^{-1}\varepsilon=y^{-1}$. Thus every element in $\varepsilon E'^{\times}$ is similar to an element in $\varepsilon  T'^{\times}$. Thus to finish the proof, we only need to show that any element in $\varepsilon  T'^{\times}$ is similar to an element in $\varepsilon T^{\times}$.
		
		Using \cite{bushnell2014effective}, Proposition 2.6, there exists $j\in J^{1}(\mfa',\beta)=J^{1}(\mfa',\beta')$ such that $T'=T^{j}$. For any $x\in T^{\times}$, we have $j^{-1}xj\in T'^{\times}$. Thus we get $\tau(x)=x^{-1}$ and $\tau(j^{-1}xj)=(j^{-1}xj)^{-1}$, which implies that
		$$kxk^{-1}=\tau(x^{-1})=x,$$ where $k:=\tau(j)j^{-1}\in C\cap J^{1}(\mfa',\beta)=J^{1}(\mfc',\beta)\subset U^{1}(\mfc')$ with $C=Z_{\mrm_{d}(F)}(T)=\mrm_{d/t}(T)$. Moreover we have
		$$\varepsilon j^{-1}xj=(\varepsilon j^{-1}\varepsilon^{-1})\varepsilon xj=\,^{t}\tau(j)\varepsilon xj=\,^{t}j\,^{t}k\varepsilon xj,$$
		So we only need to show that $\,^{t}k\varepsilon x$ is similar to $\varepsilon x$.
		
		We denote by $\tau'$ the restriction of $\tau$ to $C^{\times}$, thus by definition $\tau'(c)=\,^{t_{C}}c^{-1}$ for any $c\in C^{\times}$, where $t_{C}$ denotes the transpose on $C$. Since $\tau(k)k=1$, we have $\tau'(k)k=1$, or equivalently $\,^{t_{C}}k=k$. Since $\mathrm{det}_{C}(k)\in 1+\mfp_{T}\subset T^{\times 2}$ and $\mathrm{Hasse}_{T}(k)=1$ by Proposition \ref{proptauselfdualstramaxwild} and Lemma \ref{lemmaGLnOFsymhasse}, by Proposition \ref{propSGLEorbit}, there exists $m\in C^{\times}$ such that
		$$\,^{t_{C}}mm=\,^{t_{C}}k\quad \text{or equivalently}\quad \tau(m)^{-1}m=\tau(k)^{-1},$$
		where we denote by $\mathrm{det}_{C}$ the determinant with respect to $C=\mrm_{d/t}(T)$ and by $\mathrm{Hasse}_{T}$ the Hasse invariant with respect to $T$.
		Thus
		$$\,^{t}k\varepsilon x=\varepsilon\tau(k)^{-1}x=\varepsilon\tau(m)^{-1}mx=\,^{t}m\varepsilon mx=\,^{t}m\varepsilon xm,$$
		which means that $\,^{t}k\varepsilon x$ is similar to $\varepsilon x$. So we finish the proof when $\varepsilon=J_{t,d/t}$.
		
		For the general case, since $\tau_{\varepsilon}$ and $\tau_{J_{t,d/t}}$ are $G$-conjugate, we may choose $\varepsilon$ up to multiplying an element in $F^{\times}$, such that $\varepsilon=\,^t gJ_{t,d/t}g$ with a certain $g\in G$. We assume that $[\mfa',\beta']$ and $\theta'$ satisfy this lemma for $\tau=\tau_{J_{t,d/t}}$. We choose $[\mfa'',\beta'']:=[\mfa'^{g},\beta'^{g}]$, $\theta''=\theta'^{g}$,
		and by Lemma \ref{lemmatauGaction} we have
		$$\tau_{\varepsilon}(\mfa'')=\mfa'',\quad \theta''\circ\tau_{\varepsilon}=\theta''^{-1}\quad \text{and}\quad \tau_{\varepsilon}(\beta'')=\beta''^{-1}.$$
		Moreover we have
		$$\varepsilon E''^{\times}=\,^{t}gJ_{t,d/t}gE'^{\times g}=\,^{t}g(J_{t,d/t} E'^{\times})g,$$
		which means that each element in $\varepsilon E''^{\times}$ is similar to an element in $J_{t,d/t} E'^{\times}$. Thus $[\mfa'',\beta'']$, $\theta''$ satisfy the condition of the lemma when $\tau=\tau_{\varepsilon}$.
		
		
	\end{proof}
	
	\begin{remark}\label{remxxtclass}
		
		From the proof we may further observe that when $\varepsilon=J_{t,d/t}$, if we identify $T$ with the maximal tamely ramified subextension of $E'$ over $F$ via an $F$-embedding, then $x$ and $x_{t}$ are in the same class of $T^{\times}/T^{\times2}\simeq E'^{\times}/E'^{\times 2}$ given by Lemma \ref{lemmaE/Lextension} for $E=E'$ and $L=T$.
		
	\end{remark}
	
	Finally we state and prove the following corollary, saying the results for calculating Hasse invariant in \S \ref{subsechilberthasse} can be generalized to certain cases where $E/F$ is not necessarily tamely ramified.
	
	\begin{corollary}\label{corcalhasse}
		
		For $\varepsilon=J_{d}$ and $[\mfa',\beta']$, $\theta'$ constructed in Lemma \ref{lemmaJdE'JtT}, the results in Lemma \ref{lemmahassecase3a}, Lemma \ref{lemmahassecase2b2}, Corollary \ref{corhassecase3b}, Lemma \ref{lemmahassecase2b1}, Lemma \ref{lemmahassecase2a1},  Lemma \ref{lemmahassecase2a2} hold for $E=E'$.
		
	\end{corollary}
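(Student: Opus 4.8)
The plan is to reduce every Hasse-invariant computation over $E'$ to the corresponding one over its tame parameter field $T$, which is a tamely ramified extension of $F$ and so is covered by the statements of \S\ref{subsechilberthasse}.

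First I would invoke Lemma \ref{lemmaJdE'JtT} together with Remark \ref{remxxtclass}: after fixing the $J$-symmetric embedding $\iota\colon T\hookrightarrow\mrm_{t}(F)$, for every $x\in E'^{\times}$ the symmetric matrix $J_{d}x$ is similar to $\mathrm{diag}(J_{t}\iota(x_{t}),\dots,J_{t}\iota(x_{t}))$ with $d/t$ copies in $\mrm_{d/t}(\mrm_{t}(F))=\mrm_{d}(F)$, where $x_{t}\in T^{\times}$ lies in the square class of $x$ under the isomorphism $T^{\times}/T^{\times2}\simeq E'^{\times}/E'^{\times2}$ of Lemma \ref{lemmaE/Lextension}. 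Applying this blockwise, each block-diagonal matrix appearing in Lemma \ref{lemmahassecase3a}, Lemma \ref{lemmahassecase2b2}, Corollary \ref{corhassecase3b}, Lemma \ref{lemmahassecase2b1}, Lemma \ref{lemmahassecase2a1} or Lemma \ref{lemmahassecase2a2}, read with $E=E'$ and multiplicity $m$, is similar to the block-diagonal matrix built from the corresponding $J_{t}\iota(\cdot)$ blocks over $T$, now with $m\cdot(d/t)$ blocks and with the pattern of square classes preserved (which block carries the unit $\epsilon_{0}$, which carries the twisted uniformizer). Since $E'/T$ is totally wildly ramified, $d/t=n/t$ is a power of the odd prime $p$, hence odd; therefore this unfolding changes neither the parity of the number of blocks nor the parities of any of the exponents occurring in the computations, so it suffices to run the arguments of \S\ref{subsechilberthasse} over $T$.

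Next I would check that the hypotheses transfer. Writing $e_{0}$ and $f_{0}$ for the ramification index and residue degree of $T/F$, the extension $E'/F$ has ramification index $e=e_{0}\cdot(d/t)$ and residue degree $f=f_{0}$, so $e\equiv e_{0}$ and $f=f_{0}$ modulo $2$ as $d/t$ is odd; and since $[E':T]$ is odd, $\mrn_{E'/T}$ is surjective on square-class groups, whence $\mrn_{E'/F}(E'^{\times})F^{\times2}=\mrn_{T/F}(T^{\times})F^{\times2}$ and the trichotomy of Proposition \ref{propE/Fbasic} for $E'/F$ agrees with that for $T/F$. Consequently the hypothesis of each of the six statements, read for $E=E'$ with multiplicity $m$, is equivalent to the analogous hypothesis for $T$ with multiplicity $m\cdot(d/t)$, all relevant parities and the condition $-1\in F^{\times2}$ being insensitive to the odd factor $d/t$. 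Then, case by case, I would rewrite the Hasse invariant in question via the similarity above, apply the matching statement of \S\ref{subsechilberthasse} over $T$ with multiplicity $m\cdot(d/t)$, and use Corollary \ref{corhassekmatrix} exactly as there to recombine or separate the $(d/t)$-fold repeated blocks, whose determinants have $F$-valuation divisible by $f_{0}$. The facts about $\mathrm{det}$ and $\mathrm{disc}$ propagate through $\mrn_{E'/F}=\mrn_{T/F}\circ\mrn_{E'/T}$ together with $\mrn_{E'/T}(y)\equiv y^{d/t}\equiv y\pmod{T^{\times2}}$ for $y\in T^{\times}$; and when a conclusion calls for a choice of uniformizer of $E'$, I would take a preimage in $E'^{\times}$ of the uniformizer of $T$ furnished by the lemma over $T$, which automatically lies in a ramified square class and hence is a uniformizer of $E'$.

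There is no conceptual difficulty here; the work is entirely bookkeeping. The point that needs care is arranging the similarity of Lemma \ref{lemmaJdE'JtT} compatibly with the square-class identification $T^{\times}/T^{\times2}\simeq E'^{\times}/E'^{\times2}$ — a possible sign or scalar in $F^{\times}$ must be tracked, but is harmless since the whole construction is insensitive to multiplication by $F^{\times}$ — so that in Lemma \ref{lemmahassecase3a} and Lemma \ref{lemmahassecase2b1} the uniformizer chosen over $T$ genuinely pulls back to a uniformizer of $E'$ in the correct class. Once this compatibility is in place, each of the six cases reduces mechanically to the corresponding tame statement already established.
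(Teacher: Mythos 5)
Your proposal is correct and follows essentially the same route as the paper: unfold each block $J_{d}x$ into $d/t$ tame blocks $J_{t}\iota(x_{t})$ via Lemma \ref{lemmaJdE'JtT} and Remark \ref{remxxtclass}, then use Corollary \ref{corhassekmatrix} together with the oddness of $d/t$ to reduce to the tame computations over $T$ (the paper regroups into $d/t$ copies of the original $m$-block tame matrix and raises to an odd power, rather than invoking the tame lemmas with multiplicity $m\cdot(d/t)$, but this is the same bookkeeping). One small caution: the Hasse invariant is \emph{not} insensitive to scaling by $F^{\times}$, so the sign discrepancy arising when $J_{t,d/t}$ is similar to $-J_{d}$ (the odd-$d$ case) must be absorbed, as the paper does, into a different choice of uniformizer $\varpi_{T}'=-\varpi_{T}$ of $T$ — which is what your final paragraph about pulling back the uniformizer furnished over $T$ effectively accomplishes.
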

	
	\begin{proof}
		
		Since all the proofs are similar, we only prove Lemma \ref{lemmahassecase2a1} as an example.
		
		First of all when $d$ is even, by direct calculation and Lemma \ref{lemmaGLnOFsymhasse} we have $\mrdet(J_{t,d/t})=\mrdet(J_{d})$ and $\mathrm{Hasse}(J_{t,d/t})=\mathrm{Hasse}(J_{d})=1$. Thus $J_{t,d/t}$ is similar to $J_{d}$. Using this fact and Remark \ref{remxxtclass}, we deduce that when $\varepsilon=J_{d}$, we may assume $x$ and $x_{t}$ in the result of Lemma \ref{lemmaJdE'JtT} to be in the same class of $E'^{\times}/E'^{\times 2}\simeq T^{\times}/T^{\times2}$, where we identify $T$ with the maximal tamely ramified subextension of $E'$ over $F$ via an embedding. In particular, when $x=\varpi_{E'}$ is a uniformizer of $E'$, we may assume $x_{t}=\varpi_{T}$ to be a uniformizer of $T$ in the same class as that of $\varpi_{E'}$, and when $x=\varpi_{E'}\epsilon_{0}$ with $\epsilon_{0}$ an element in $\mfo_{E'}^{\times }\backslash\mfo_{E'}^{\times 2}$, we may also assume $x_{t}=\varpi_{T}\epsilon_{0}'$ with $\epsilon_{0}'$ an element in $\mfo_{T}^{\times }\backslash\mfo_{T}^{\times 2}$. Thus using Lemma \ref{lemmaJdE'JtT} for $x=\varpi_{E'}$ and $x=\varpi_{E'}\epsilon_{0}$, we have
		\begin{align*}
			& \quad\ \mathrm{Hasse}(\mathrm{diag}(J_{d}\varpi_{E'},...,J_{d}\varpi_{E'},J_{d}\varpi_{E'}\epsilon_{0}))\\
			&=\mathrm{Hasse}(\mathrm{diag}(\mathrm{diag}(J_{t}\varpi_{T},...,J_{t}\varpi_{T}),...,\mathrm{diag}(J_{t}\varpi_{T},...,J_{t}\varpi_{T}),\mathrm{diag}(J_{t}\varpi_{T}\epsilon_{0}',...,J_{t}\varpi_{T}\epsilon_{0}')))\\
			&=\mathrm{Hasse}(\mathrm{diag}(\mathrm{diag}(J_{t}\varpi_{T},...,J_{t}\varpi_{T},J_{t}\varpi_{T}\epsilon_{0}'),...,\mathrm{diag}(J_{t}\varpi_{T},...,J_{t}\varpi_{T},J_{t}\varpi_{T}\epsilon_{0}')))\\
			&=\mathrm{Hasse}(\mathrm{diag}(J_{t}\varpi_{T},...,J_{t}\varpi_{T},J_{t}\varpi_{T}\epsilon_{0}'))^{n/t}\\
			&=\mathrm{Hasse}(\mathrm{diag}(J_{t}\varpi_{T},...,J_{t}\varpi_{T},J_{t}\varpi_{T}\epsilon_{0}')),
		\end{align*}
		where the matrix in the third line is the direct sum of $n/t$ copies of $\mathrm{diag}(J_{t}\varpi_{T},...,J_{t}\varpi_{T},J_{t}\varpi_{T}\epsilon_{0}')\in\mrm_{tm}(F),$ and for the fourth line we use the fact that $\mathrm{det}(\mathrm{diag}(J_{t}\varpi_{T},...,J_{t}\varpi_{T},J_{t}\varpi_{T}\epsilon_{0}'))$ is of even order in $F^{\times}$ and Corollary \ref{corhassekmatrix}, and for the final line we use the fact that $n/t$ is odd. Thus we may use the tamely ramified case to finish the proof.
		
		When $d$ is odd, if $\mrdet(J_{t,d/t})=\mrdet(J_{d})$ we can still follow the proof above verbatim. If $\mrdet(J_{t,d/t})=\mrdet(-J_{d})$, we deduce that $J_{t,d/t}$ is similar to $-J_{d}$. Thus following the above proof, when $x=\varpi_{E'}$ (resp. $\varpi_{E'}\epsilon_{0}$) we may choose $x_{t}=-\varpi_{T}$ (resp. $-\varpi_{T}\epsilon_{0}'$), where $\varpi_{E'}$, $\varpi_{T}$, $\epsilon_{0}$, $\epsilon_{0}'$ are defined as above. Thus for $\varpi_{T}'=-\varpi_{T}$ as a uniformizer of $T$ and using the same calculation, we have
		$$\mathrm{Hasse}(\mathrm{diag}(J_{d}\varpi_{E'},...,J_{d}\varpi_{E'},J_{d}\varpi_{E'}\epsilon_{0}))=\mathrm{Hasse}(\mathrm{diag}(J_{t}\varpi_{T}',...,J_{t}\varpi_{T}',J_{t}\varpi_{T}'\epsilon_{0}')).$$
		And still we use the tamely ramified case to finish the proof.
		
	\end{proof}
	
	\subsection{The general case}\label{subsectiongeneral}
	
	In this subsection, we finish the proof of Theorem \ref{thmendotauselfdualstra}. For $[\mfa,\beta]$ and $\theta\in\mcc(\mfa,\beta)$ given as in the theorem, we choose $\beta_{0}\in\mrm_{d}(F)$ such that there exists an $F$-algebra isomorphism $F[\beta_{0}]\rightarrow F[\beta]$ which maps $\beta_{0}$ to $\beta$. Let $\mfa_{0}$ be the unique hereditary order of $\mrm_{d}(F)$ normalized by $\beta_{0}$. Thus $[\mfa_{0},\beta_{0}]$ is a simple stratum of $\mrm_{d}(F)$ and we let $\theta_{0}=t_{\mfa,\mfa_{0}}^{\beta,\beta_{0}}(\theta)$ be the transfer of $\theta$ as a simple character with respect to $[\mfa_{0},\beta_{0}]$. Using Proposition \ref{proptauselfdualstramax}, for $\tau_{J_{d}}$ the involution on $\mrgl_{d}(F)$, there exist a simple stratum $[\mfa_{0}',\beta_{0}']$ and a simple character $\theta_{0}'\in\mcc(\mfa_{0}',\beta_{0}')$ such that $(\mfa_{0}',\theta_{0}')$ is $\mrgl_{d}(F)$-conjugate to $(\mfa_{0},\theta_{0})$ with the following property:
	
	(1) $\tau_{J_{d}}(\mfa_{0}')=\mfa_{0}'$ and $\tau_{J_{d}}(H^{1}(\mfa_{0}',\beta_{0}'))=H^{1}(\mfa_{0}',\beta_{0}')$;
	
	(2) $\theta_{0}'\circ\tau_{J_{d}}=\theta_{0}'^{-1}$;
	
	(3) $\tau_{J_{d}}(\beta_{0}')=\beta_{0}'^{-1}$;
	
	(4) Corollary \ref{corcalhasse} holds.
	
	Now we embed $\mathrm{M}_{d}(F)$ diagonally in  $\mathrm{M}_{n}(F)$, which gives an $F$-algebra homomorphism $\iota':F[\beta_{0}']\hookrightarrow\mathrm{M}_{n}(F)$. We write $\beta'=\iota'(\beta_{0}')=\beta_{0}'\otimes...\otimes\beta_{0}'$ and $E'=F[\beta']$. The centralizer of $E'$ in $\mathrm{M}_{n}(F)$, denoted by $B'$, is naturally identified with $\mathrm{M}_{m}(E')$. Let $\mathfrak{b}'$ be a maximal standard hereditary order in $B'$ which may be identified with $\mathrm{M}_{m}(\mathfrak{o}_{E'})$, and let $\mathfrak{a}'$ be the unique hereditary order of $\mathrm{M}_{n}(F)$ normalized by $E'^{\times}$ such that $\mathfrak{a}'\cap B'=\mathfrak{b}'$. Then we obtain a simple stratum $[\mathfrak{a}',\beta']$ in $\mathrm{M}_{n}(F)$. Let $\theta'=t_{\mfa_{0}',\mfa'}^{\beta_{0}',\beta'}(\theta_{0}')\in\mathcal{C}(\mathfrak{a}_{0}',\beta_{0}')$ be the transfer of $\theta_{0}'$.
	
	We denote by $T'$ the maximal tamely ramified subextension of $E'/F$ and we denote by $T_{m}'$ an unramified extension of degree $m$ over $T'$. We denote by $E_{m}'=T_{m}'E'$ an unramified extension of degree $m$ over $E'$. Since $E'/T'$ and $E_{m}'/T_{m}'$ are totally wildly ramified, it is easy to check that
	\begin{equation}\label{eqT'E'F}
		\mrn_{T'/F}(T'^{\times})F^{\times2}/F^{\times2}=\mrn_{E'/F}(E'^{\times})F^{\times2}/F^{\times2}
	\end{equation}
	and
	\begin{equation}\label{Tm'Em'F}
		\mrn_{T_{m}'/F}(T_{m}'^{\times})F^{\times2}/F^{\times2}=\mrn_{E_{m}'/F}(E_{m}'^{\times})F^{\times2}/F^{\times2}.
	\end{equation}
	The latter group is a subgroup of the former one, and both of them are subgroups of $F^{\times}/F^{\times2}$, which is a group of order four.
	
	We consider the following special orthogonal involutions $\tau=\tau_{\varepsilon}$ such that
	
	\textbf{Case (\romannumeral1)} If $\mrn_{T_{m}'/F}(T_{m}'^{\times})F^{\times2}/F^{\times2}=F^{\times}/F^{\times2}$, then $\varepsilon=J_{d,m}=\mathrm{diag}(J_{d},...,J_{d})\in\mrm_{m}(\mrm_{d}(F))=\mrm_{n}(F)$;
	
	\textbf{Case (\romannumeral2)} If $\mrn_{T_{m}'/F}(T_{m}'^{\times})F^{\times2}/F^{\times2}$ is a subgroup of $F^{\times}/F^{\times2}$ of order two, we consider the following two cases:
	
	\textbf{(\romannumeral2.a)} If $2|m$, then $\varepsilon$ equals $J_{d,m}$ or $\mathrm{diag}(J_{d},...J_{d},J_{d}\epsilon)$, where $\epsilon\in\mfo_{E'}^{\times}$;
	
	\textbf{(\romannumeral2.b)} If $2\nmid m$, then $\varepsilon$ equals $J_{d,m}$ or $\mathrm{diag}(J_{d}\epsilon,...,J_{d}\epsilon)$, where $\epsilon\in E'^{\times}$;
	
	\textbf{Case (\romannumeral3)} If $\mrn_{T_{m}'/F}(T_{m}'^{\times})F^{\times2}/F^{\times2}=\{1\}$, then $\varepsilon$ equals $J_{d,m}$ or $\mathrm{diag}(J_{d}\varpi_{E'},...,J_{d}\varpi_{E'}\epsilon)$, where $\epsilon\in\mfo_{E'}^{\times}$ and $\varpi_{E'}$ is a certain uniformizer of $E'$. We distinguish the following two cases:
	
	\textbf{(\romannumeral3.a)} $\mrn_{T'/F}(T'^{\times})/F^{\times2}=\{1\}$;
	
	\textbf{(\romannumeral3.b)}
	$\mrn_{T'/F}(T'^{\times})F^{\times2}/F^{\times2}$ is not trivial.
	
	We want to check that for $[\mfa',\beta']$, $\theta'$ and $\tau=\tau_{\varepsilon}$ given as above, the conditions (1), (2) and (3) in Theorem \ref{thmendotauselfdualstra} are satisfied.
	For each $\varepsilon$ above, we may write $\varepsilon=J_{d,m}a_{\varepsilon}\varepsilon_{E'}$, where $a_{\varepsilon}\in E'^{\times}$ and $\varepsilon_{E'}=\mathrm{diag}(1,...,1,\epsilon)\in\mrgl_{m}(E')$ with $\epsilon\in\mfo_{E'}^{\times}.$ Thus for $x=(x_{ij})_{ij}\in\mrgl_{m}(E')$, we have
	\begin{align}
		\tau(x)&=((J_{d,m}a_{\varepsilon}\varepsilon_{E'})^{-1}\,^{t}((x_{ij})_{ij})J_{d,m}a_{\varepsilon}\varepsilon_{E'})^{-1}=((\varepsilon_{E'}^{-1}a_{\varepsilon}^{-1}((J_{d}^{-1}\,^{t}x_{ji}J_{d})_{ij})a_{\varepsilon}\varepsilon_{E'})^{-1} \nonumber \\
		&=(\varepsilon_{E'}^{-1}a_{\varepsilon}^{-1}((x_{ji})_{ij})a_{\varepsilon}\varepsilon_{E'})^{-1}=(\varepsilon_{E'}^{-1}(\,^{t_{E'}}x)\varepsilon_{E'})^{-1}=\tau'(x),\label{eqtauxtau'x}
	\end{align}
	where we write $\,^{t_{E'}}$ for the transpose on $\mrgl_{m}(E')$ and $\tau':=\tau_{\varepsilon_{E'}}$ for the orthogonal involution defined on $\mrgl_{m}(E')$ corresponding to $\varepsilon_{E'}$, and we use the fact that the embedding $E'\hookrightarrow\mrm_{d}(F)$ is $J_{d}$-symmetric and $a_{\varepsilon}$ commutes with elements in $\mrgl_{m}(E')$. Thus we proved that the restriction of $\tau$ to $\mathrm{GL}_{m}(E')$ equals $\tau'$ as an orthogonal involution on $\mrgl_{m}(E')$. In particular, since $\epsilon$ is an element in $E'$, we know that $\varepsilon_{E'}$ commutes with elements in $E'$ and we have $\tau(\beta')=\beta'^{-1}$. Thus condition (3) is verified.
	
	Since $\mfb'$ is a maximal standard hereditary order in $B'$ which may be identified with $\mathrm{M}_{m}(\mathfrak{o}_{E'})$, it is $\tau'$-stable. Thus from our assumption of $\tau$ and construction of $\mfa'$, we deduce that $\mathfrak{a}'$ is $\tau$-stable. 
	By definition $H^{1}(\mathfrak{a}',\beta')$ is $\tau$-stable, which means that condition (1) is verified.
	
	Let $M$ be the standard Levi subgroup of $G$ isomorphic to $\mathrm{GL}_{d}(F)\times...\times\mathrm{GL}_{d}(F)$. Let $P$ be the standard parabolic subgroup of $G$ generated by $M$ and upper triangular matrices, and let $N$ be its unipotent radical. Let $N^{-}$ be the unipotent radical of the parabolic subgroup opposite to $P$ with respect to $M$. By \cite{secherre2008representations}, Th\'eor\`eme 2.17, we have
	\begin{align}\label{eqHNMN}
		H^{1}(\mathfrak{a}',\beta')&=(H^{1}(\mathfrak{a}',\beta')\cap N^{-})\cdot(H^{1}(\mathfrak{a}',\beta')\cap M)\cdot(H^{1}(\mathfrak{a}',\beta')\cap N),\\
		H^{1}(\mathfrak{a}',\beta')\cap M&=H^{1}(\mathfrak{a}_{0}',\beta'_{0})\times...\times H^{1}(\mathfrak{a}_{0}',\beta'_{0}).
	\end{align}
	By \emph{loc. cit.}, the character $\theta'$ is trivial on $H^{1}(\mathfrak{a}',\beta')\cap N^{-}$ and $H^{1}(\mathfrak{a}',\beta')\cap N$, and the restriction of $\theta'$ to $H^{1}(\mathfrak{a}',\beta')\cap M$ equals $\theta_{0}'\otimes...\otimes\theta_{0}'$. We have $$\theta'\circ\tau|_{H^{1}(\mathfrak{a}',\beta')\cap N^{-}}=\theta'\circ\tau|_{H^{1}(\mathfrak{a}',\beta')\cap N}=\theta'^{-1}|_{H^{1}(\mathfrak{a}',\beta')\cap N^{-}}=\theta'^{-1}|_{H^{1}(\mathfrak{a}',\beta')\cap N}=1.$$
	Moreover since $\tau=\tau_{\varepsilon}$ with $$\varepsilon=\mathrm{diag}(J_{d},...,J_{d})\ \text{or}\ \mathrm{diag}(J_{d}\epsilon,...,J_{d}\epsilon)\ \text{or}\  \mathrm{diag}(J_{d},...,J_{d},J_{d}\epsilon)\ \text{or}\
	\mathrm{diag}(J_{d}\varpi_{E'},...,J_{d}\varpi_{E'},J_{d}\varpi_{E'}\epsilon),$$
	and since $\epsilon$ and $\varpi_{E'}$ normalize $\theta_{0}'$,
	we have
	$$\theta'\circ\tau|_{H^{1}(\mathfrak{a}',\beta')\cap M} =\theta_{0}'\circ\tau_{J_{d}}\otimes...\otimes\theta_{0}'\circ\tau_{J_{d}}=\theta_{0}'^{-1}\otimes...\otimes\theta_{0}'^{-1}=\theta'^{-1}|_{H^{1}(\mathfrak{a}',\beta')\cap M}.$$
	Thus by equation (\ref{eqHNMN}), we have $\theta'\circ\tau=\theta'^{-1}$, which is the condition (2). Thus for those special orthogonal involutions, we finish the proof.
	
	Finally we show that for a given orthogonal involution $\tau$ and the corresponding orthogonal group $H=G^{\tau}$ satisfying the condition of Theorem \ref{thmendotauselfdualstra}, $\tau$ is conjugate to one of the orthogonal involutions mentioned in \textbf{Case (\romannumeral1), (\romannumeral2)} or \textbf{(\romannumeral3)}. We consider them separately.
	
	\textbf{Case (\romannumeral1)} By definition,
	\begin{equation}\label{eqTmF}
		\mrn_{E_{m}'/F}(E_{m}'^{\times})F^{\times2}/F^{\times2}=\mrn_{T_{m}'/F}(T_{m}'^{\times})F^{\times2}/F^{\times2}=F^{\times}/F^{\times2}
	\end{equation}
	then using Proposition \ref{propE/Fbasic} for $E=T_{m}'$, we deduce that $[T_{m}':F]$ is odd, thus $n=[E_{m}':F]$ is odd. 
	By Lemma \ref{lemmaGLnOFsymhasse}, we have
	$$\mathrm{Hasse}(J_{n})=\mathrm{Hasse}(-J_{n})=\mathrm{Hasse}(J_{d,m})=1.$$
	And moreover
	$$\mathrm{det}(J_{d,m})=\mathrm{det}(J_{n})\quad \text{or}\quad \mathrm{det}(J_{d,m})= \mathrm{det}(-J_{n}).$$
	So by Proposition \ref{propSGLEorbit}, $J_{d,m}$ is similar to $J_{n}$ or $-J_{n}$, which means that $\tau_{J_{n}}$ and $\tau_{J_{d,m}}$ are in the same $G$-orbit.
	
	\textbf{Case (\romannumeral2)} By Lemma \ref{lemmaGLnOFsymhasse}, we have
	$$\mathrm{Hasse}(J_{n})=\mathrm{Hasse}(J_{d,m})=1.$$
	
	\textbf{(\romannumeral2.a)} Since $T_{m}'/T'$ is unramified and $m$ is even, we get
	$$\mrn_{T_{m}'/F}(T_{m}'^{\times})F^{\times2}/F^{\times2}=\mrn_{T'/F}(\mfo_{T'}^{\times})F^{\times2}/F^{\times2}=\mrn_{T_{m}'/F}(\mfo_{T_{m}'}^{\times})F^{\times2}/F^{\times2}.$$
	Thus using equation (\ref{eqT'E'F}) and (\ref{Tm'Em'F}) we know that
	$$\mrn_{E_{m}'/F}(\mfo_{E_{m}'}^{\times})F^{\times2}/F^{\times2}=\mrn_{T_{m}'/F}(\mfo_{T_{m}'}^{\times})F^{\times2}/F^{\times2}=\mrn_{E'/F}(\mfo_{E'}^{\times})F^{\times2}/F^{\times2}=\mrn_{T'/F}(\mfo_{T'}^{\times})F^{\times2}/F^{\times2}$$ is a subgroup of $F^{\times}/F^{\times2}$ of order two. Thus there exists $\epsilon_{0}\in\mfo_{E'}^{\times}$ such that the image of $\mrn_{E'/F}(\epsilon_{0})$ in $\mrn_{E'/F}(\mfo_{E'}^{\times})F^{\times2}/F^{\times2}$ is nontrivial. From now on we fix one such $\epsilon_{0}$.
	
	\textbf{(\romannumeral2.a.1)} If either of the three cases is true:
	\begin{itemize}
		\item $2|d$;
		\item $2\nmid d$ and $4|m$;
		\item$2\nmid d$, $4\nmid m$ and $-1\in F^{\times2}$,
	\end{itemize}
	then by direct calculation we get
	$$1=\mathrm{disc}(J_{d,m})=\mathrm{disc}(J_{n}).$$
	Thus by Proposition \ref{propSGLEorbit}, $J_{d,m}$ is in the same $G$-orbit as $J_{n}$, representing the $G$-conjugacy class of split orthogonal group. Moreover, we have
	$$\mathrm{det}(J_{d},...,J_{d},J_{d}\epsilon_{0})=\mrn_{E'/F}(\epsilon_{0})$$
	which is non-trivial in $F^{\times}/F^{\times2}$. Thus by Proposition \ref{proptauGLEorbit} and Proposition \ref{propGconjortho}, we know that $\tau_{\varepsilon}$ corresponds to the $G$-conjugacy class of orthogonal groups mentioned in Theorem \ref{thmendotauselfdualstra}, which is quasisplit but not split.
	
	\textbf{(\romannumeral2.a.2)} If $2\nmid d$, $4\nmid m$ and $-1\notin F^{\times2}$, we get
	$$\mrn_{T_{m}'/F}(T_{m}'^{\times})F^{\times2}/F^{\times2}=\{1,-1\}.$$
	By direct calculation we get
	$$\mathrm{det}(\mathrm{diag}(J_{d},...,J_{d},-J_{d}))=\mathrm{det}(J_{n})=-1$$
	and
	$$\mathrm{det}(J_{d,m})=1.$$
	Thus if we further choose $\epsilon=-1$ and $\varepsilon=\mathrm{diag}(J_{d},...,J_{d},-J_{d})$, then by Proposition \ref{proptauGLEorbit} and Proposition \ref{propGconjortho}, $\tau_{\varepsilon}$ and $\tau_{J_{d,m}}$ correspond to the two $G$-conjugacy classes of orthogonal groups respectively mentioned in Theorem \ref{thmendotauselfdualstra}, where the former class is split, and the latter class is quasisplit but not split.
	
	\textbf{(\romannumeral2.b)} Since $m$ is odd, we deduce that
	$$\mrn_{T_{m}'/F}(T_{m}'^{\times})F^{\times2}/F^{\times2}=\mrn_{T'/F}(T'^{\times})F^{\times2}/F^{\times2}=\mrn_{E'/F}(E'^{\times})F^{\times2}/F^{\times2}$$
	and $d$ is even by Proposition \ref{propE/Fbasic} with $E=T'$. We fix $\epsilon\in\mrn_{E'/F}(E'^{\times})$ whose image in $F^{\times}/F^{\times2}$ is non-trivial. By direct calculation we get
	$$\mathrm{det}(J_{d,m})=(-1)^{md(d-1)/2}=(-1)^{md(n-1)/2}=\mathrm{det}(J_{n}).$$
	Thus by Proposition \ref{propSGLEorbit}, $J_{d,m}$ is in the same $G$-orbit as $J_{n}$, representing the $G$-conjugacy class of split orthogonal group. Moreover, we have
	$$\mathrm{det}(J_{d}\epsilon,...,J_{d}\epsilon)=\mrn_{E'/F}(\epsilon)^{m}$$
	which is non-trivial in $F^{\times}/F^{\times2}$. Thus by Proposition \ref{proptauGLEorbit} and Proposition \ref{propGconjortho}, $\tau_{\varepsilon}$ correspond to the $G$-conjugacy class of orthogonal groups mentioned in Theorem \ref{thmendotauselfdualstra}, which is quasisplit but not split.
	
	\textbf{Case (\romannumeral3)} First of all since
	$$\mrn_{E_{m}'/F}(E_{m}'^{\times})F^{\times2}/F^{\times2}=\mrn_{T_{m}'/F}(T_{m}'^{\times})F^{\times2}/F^{\times2}=\{1\},$$
	by Proposition \ref{propE/Fbasic} with $E=T_{m}'$ we know that $4|[T_{m}':F]$. Thus $4|[E_{m}':F]=n$. By direct calculation, we have
	$$\mathrm{det}(J_{d,m})=\mathrm{det}(J_{n})=1.$$
	Moreover, by Lemma \ref{lemmaGLnOFsymhasse} we get
	$$\mathrm{Hasse}(J_{d,m})=\mathrm{Hasse}(J_{n})=1.$$
	Thus by Proposition \ref{propSGLEorbit}, $J_{d,m}$ is in the same $G$-orbit as $J_{n}$, representing the $G$-conjugacy class of split orthogonal group. Thus we only need to show that for $\varepsilon=\mathrm{diag}(J_{d}\varpi_{E'},...,J_{d}\varpi_{E'}\epsilon)$ with $\varpi_{E'}$ and $\epsilon\in\mfo_{E}^{\times}$ well-chosen, $\tau=\tau_{\varepsilon}$ corresponds to the non-quasisplit orthogonal group. By direct calculation, we have
	$$\mathrm{det}(\varepsilon)=(-1)^{n(d-1)/2}\mrn_{E'/F}(\varpi_{E'})^{m}\mrn_{E'/F}(\epsilon)=\mrn_{E'/F}(\varpi_{E'})^{m}\mrn_{E'/F}(\epsilon).$$
	
	\textbf{(\romannumeral3.a)} Since
	$$\mrn_{E'/F}(E'^{\times})F^{\times2}/F^{\times2}=\mrn_{T'/F}(T'^{\times})F^{\times2}/F^{\times2}=\{1\},$$
	$\mathrm{det}(\varepsilon)$ is trivial as an element in $F^{\times}/F^{\times2}$. Thus we only need to choose $\epsilon$ such that $\mathrm{Hasse}(\varepsilon)=-1$. By Lemma \ref{lemmahassecase3a} and Corollary \ref{corcalhasse}, we may choose $\varpi_{E'}$ and $\epsilon_{0}$ such that $\mathrm{Hasse}(J_{d}\varpi_{E'})=1$ and $\mathrm{Hasse}(J_{d}\varpi_{E'}\epsilon_{0})=-1$. Then using Corollary \ref{corhassekmatrix} and the fact that $\mathrm{det}(J_{d}\varpi_{E'})$, $\mathrm{det}(J_{d}\varpi_{E'}\epsilon_{0})\in F^{\times2}$, we get
	$$\mathrm{Hasse}(\varepsilon)=\mathrm{Hasse}(J_{d}\varpi_{E'})^{m-1}\mathrm{Hasse}(J_{d}\varpi_{E'}\epsilon_{0})=-1.$$
	
	\textbf{(\romannumeral3.b)} Since $\mrn_{E'/F}(E'^{\times})F^{\times2}/F^{\times2}$ is not trivial and $\mrn_{E'_{m}/F}(E_{m}'^{\times})F^{\times2}/F^{\times2}$ is trivial, $m$ is even and there exists a uniformizer $\varpi_{F}'$ of $F$ such that
	$$\mrn_{E'/F}(E'^{\times})F^{\times2}/F^{\times2}=\{1,\varpi_{F}'\}.$$
	Thus $$\mathrm{det}(\varepsilon)=\mrn_{E'/F}(\varpi_{E'})^{m}\mrn_{E'/F}(\epsilon)\equiv \mathrm\mrn_{E'/F}(\epsilon)\quad(\text{mod}\ F^{\times2}).$$
	Since $\mathrm\mrn_{E'/F}(\epsilon)\in \mfo_{F}^{\times}\cap \mrn_{E'/F}(E'^{\times})$, its image in $F^{\times}/F^{\times2}$ is trivial, that is, $\mathrm{disc}(\varepsilon)=1$. So as in \textbf{(\rn3.a)}, we only need to show that $\mathrm{Hasse}(\varepsilon)=-1$. Fix $\epsilon_{0}\in\mfo_{E'}^{\times}\backslash\mfo_{E'}^{\times2}$, by Corollary \ref{corhassecase3b} and Corollary \ref{corcalhasse}, we may choose $\epsilon$ equals $1$ or $\epsilon_{0}$, such that
	$$\mathrm{Hasse}(\varepsilon)=\mathrm{Hasse}(\mathrm{diag}(J_{d}\varpi_{E'},...,J_{d}\varpi_{E'},J_{d}\varpi_{E'}\epsilon))=-1.$$
	So we finish the discussion for \textbf{(\rn3.b)}.
	
	Thus for $H=G^{\tau}$ given as an orthogonal group in Theorem \ref{thmendotauselfdualstra} with $\tau=\tau_{\varepsilon}$, we have shown that $\tau$ is $G$-conjugate to one of the special orthogonal involutions mentioned in \textbf{Case (\romannumeral1)}, \textbf{(\romannumeral2)} or \textbf{(\romannumeral3)}. Furthermore, we may change $\varepsilon$ up to multiplying by an element in $E'^{\times}$ such that $\varepsilon$ is similar to one of the special symmetric matrices mentioned in \textbf{Case (\romannumeral1)}, \textbf{(\romannumeral2)} or \textbf{(\romannumeral3)}. Using Lemma \ref{lemmatauGaction} and the special cases proved, we end the proof of Theorem \ref{thmendotauselfdualstra}.
	
	\begin{remark}\label{remendothm}
		
		In the proof of Theorem \ref{thmendotauselfdualstra}, we actually showed that for $\tau$ an involution in \textbf{Case (\romannumeral1)}, \textbf{(\romannumeral2)} or \textbf{(\romannumeral3)}, the choices of $[\mfa',\beta']$ and $\theta'$ are the same. Moreover, $E=E'=F[\beta']$ satisfies Corollary \ref{corcalhasse}, which follows from $E'=F[\beta']\simeq F[\beta_{0}']$ and our choice of $[\mfa_{0}',\beta_{0}']$.
		
	\end{remark}
	
	\section{Distinguished type theorem and the orbits of distinguished type}\label{sectiondistypethm}
	
	Let $\pi$ be a supercuspidal representation of $G$, let $T$ be a tame parameter field of $\pi$ and let $T_{m}$ be the unramified extension of degree $m$ over $T$, where $n=md$ is determined by $\pi$ as before. From Theorem \ref{thmtauselfdualstra}, Theorem \ref{thmtauselfdualtype}, Theorem \ref{thmendotauselfdualstra} and Remark \ref{remendothm}, there exist a simple stratum $[\mfa,\beta]$, a simple character $\theta\in\mcc(\mfa,\beta)$ attached to $\pi$ and a simple type $(\bs{J},\Lambda)$ containing $\theta$ and compactly inducing $\pi$ such that
	
	(1) $\tau_{0}(\mfa)=\mfa$ and $\tau_{0}(H^{1}(\mfa,\beta))=H^{1}(\mfa,\beta)$;
	
	(2) $\theta\circ\tau_{0}=\theta^{-1}$;
	
	(3) $\tau_{0}(\beta)=\beta^{-1}$;
	
	(4) $\tau_{0}(\bs{J})=\bs{J}$ and $\Lambda^{\tau_{0}}=\Lambda^{\vee}$;
	
	(5) Lemma \ref{lemmahassecase3a}, Lemma \ref{lemmahassecase2b2}, Corollary \ref{corhassecase3b}, Lemma \ref{lemmahassecase2b1}, Lemma \ref{lemmahassecase2a1},  Lemma \ref{lemmahassecase2a2} hold for $E=F[\beta]$.
	
	Here we assume $\tau_{0}=\tau_{\varepsilon_{0}}$, where $\varepsilon_{0}$ is a symmetric matrix in $G$ as follows:
	
	\textbf{Case (\romannumeral1)} If $\mrn_{T_{m}/F}(T_{m}^{\times})F^{\times2}/F^{\times2}=F^{\times}/F^{\times2}$, then $\varepsilon_{0}=J_{d,m}$.
	
	\textbf{Case (\romannumeral2)} If $\mrn_{T_{m}/F}(T_{m}^{\times})F^{\times2}/F^{\times2}$ is a subgroup of $F^{\times}/F^{\times2}$ of order two, we consider the following two cases:
	
	\textbf{(\romannumeral2.a)} If $2|m$, then $\varepsilon_{0}$ equals $J_{d,m}$ or $\mathrm{diag}(J_{d},...J_{d},J_{d}\epsilon_{0})$, where $\epsilon_{0}\in\mfo_{E}^{\times}\backslash\mfo_{E}^{\times2}$;
	
	\textbf{(\romannumeral2.b)} If $2\nmid m$, then $\varepsilon_{0}$ equals $J_{d,m}$ or $\mathrm{diag}(J_{d}\epsilon,...,J_{d}\epsilon)$, where $\epsilon$ is chosen to be either a uniformizer in $E$ or an element in $\mfo_{E}^{\times}\backslash\mfo_{E}^{\times2}$, such that $\mrn_{E/F}(\epsilon)\in\mrn_{T_{m}/F}(T_{m}^{\times})-F^{\times2}$.
	
	\textbf{Case (\romannumeral3)} If $\mrn_{T_{m}/F}(T_{m}^{\times})F^{\times2}/F^{\times2}=\{1\}$, we consider the following two cases:
	
	\textbf{(\romannumeral3.a)} If $\mrn_{T/F}(T^{\times})F^{\times2}/F^{\times2}=\{1\}$, then $\varepsilon_{0}$ equals $J_{d,m}$ or $\mathrm{diag}(J_{d}\varpi_{E},...,J_{d}\varpi_{E},J_{d}\varpi_{E}\epsilon_{0})$, where $\epsilon_{0}\in\mfo_{E}^{\times}\backslash\mfo_{E}^{\times2}$ and $\varpi_{E}$ is a uniformizer of $E$ chosen by Lemma \ref{lemmahassecase3a}, such that $\mathrm{Hasse}(J_{d}\varpi_{E})=1$ and $\mathrm{Hasse}(J_{d}\varpi_{E}\epsilon_{0})=-1$;
	
	\textbf{(\romannumeral3.b)} If $\mrn_{T/F}(T^{\times})F^{\times2}/F^{\times2}$ is not trivial, then $\varepsilon_{0}$ equals $J_{d,m}$ or $\mathrm{diag}(J_{d}\varpi_{E},...,J_{d}\varpi_{E},J_{d}\varpi_{E}\epsilon_{0})$ where $\epsilon_{0}\in\mfo_{E}^{\times}$ and $\varpi_{E}$ is a certain uniformizer of $E$, such that $\mathrm{Hasse}(\mathrm{diag}(J_{d}\varpi_{E},...,J_{d}\varpi_{E},J_{d}\varpi_{E}\epsilon_{0}))$ $=-1$.
	
	Thus in different cases, $G^{\tau_{0}}$ represents all possible $G$-conjugacy classes of orthogonal groups mentioned in Theorem \ref{thmendotauselfdualstra} respectively.
	
	From now on until the end of this section, we fix $\varepsilon_{0}$, $[\mfa,\beta]$, $\theta$ and $(\bs{J},\Lambda)$ as above. By (\ref{eqtauxtau'x}) if we restrict $\tau_{0}$ to $B^{\times}=\mrgl_{m}(E)$, it becomes an orthogonal involution $\tau_{\varepsilon_{0E}}$ with respect to $E$, where $\varepsilon_{0E}$ equals $I_{m}$ or $\mathrm{diag}(1,...,1,\epsilon_{0})$ with $\epsilon_{0}\in\mfo_{E}^{\times}\backslash\mfo_{E}^{\times2}$. We fix $\varepsilon$ a symmetric matrix in $G$ and $\tau=\tau_{\varepsilon}$ an orthogonal involution on $G$. We write $u=\varepsilon_{0}^{-1}\varepsilon$, then by direct calculation we get
	$$\tau(x)=u^{-1}\tau_{0}(x)u\quad \text{for any}\ x\in G$$
	and
	\begin{equation}\label{equtau0u=1}
		u\tau_{0}(u)=\varepsilon_{0}^{-1}\varepsilon\varepsilon_{0}^{-1}\,^{t}\varepsilon_{0}\,^{t}\varepsilon^{-1}\varepsilon_{0}=1. \end{equation}
	We write $\gamma=u\tau(g)g^{-1}$. We first state the following main theorem of this section:
	
	\begin{theorem}\label{thmdisttype}
		
		For $\pi$ a supercuspidal representation and $G^{\tau}$ an orthogonal group of $G$, the representation $\pi$ is distinguished by $G^{\tau}$ if and only if there exists a $\tau$-selfdual simple type $(\bs{J},\Lambda)$ of $\pi$ such that $\mrhom_{\bs{J}\cap G^{\tau}}(\Lambda,1)\neq 0$.
		
	\end{theorem}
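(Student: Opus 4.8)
The plan is to derive the theorem from the standard Mackey--Frobenius decomposition
$$\mrhom_{G^{\tau}}(\pi,1)\simeq\prod_{g\in\bs{J}\backslash G/G^{\tau}}\mrhom_{\bs{J}^{g}\cap G^{\tau}}(\Lambda^{g},1),$$
valid for any simple type $(\bs{J},\Lambda)$ compactly inducing $\pi$ (compact induction, Frobenius reciprocity, and the $\bs{J}$-orbit decomposition of $\mcc^{\infty}(G^{\tau}\backslash G)$; this is the tool already recalled in the introduction, cf.\ \cite{zou2019supercuspidal}). With this in hand the \textbf{``if''} direction is immediate: if $(\bs{J},\Lambda)$ is a $\tau$-selfdual simple type of $\pi$ with $\mrhom_{\bs{J}\cap G^{\tau}}(\Lambda,1)\neq 0$, then the factor of the product indexed by the trivial double coset $g=1$ is non-zero, hence so is $\mrhom_{G^{\tau}}(\pi,1)$; the $\tau$-selfduality is not even used here. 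For the \textbf{``only if''} direction, assume $\pi$ is distinguished by $G^{\tau}$. Fix the $\tau_{0}$-selfdual simple type $(\bs{J},\Lambda)$ attached to $\pi$ furnished by Theorem \ref{thmtauselfdualtype}, Theorem \ref{thmendotauselfdualstra} and Remark \ref{remendothm}, where $\tau_{0}=\tau_{\varepsilon_{0}}$ is the reference involution chosen so that $G^{\tau_{0}}$ lies in the same $G$-conjugacy class of orthogonal groups as $G^{\tau}$. By the decomposition above there is some $g\in G$ with $\mrhom_{\bs{J}^{g}\cap G^{\tau}}(\Lambda^{g},1)\neq 0$. It then suffices to prove that this $(\bs{J}^{g},\Lambda^{g})$ --- which, being $G$-conjugate to $(\bs{J},\Lambda)$, still compactly induces $\pi$ --- is $\tau$-selfdual, for it is then the required $\tau$-selfdual simple type.

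So the real content is the distinguished type theorem proper: if $\mrhom_{\bs{J}^{g}\cap G^{\tau}}(\Lambda^{g},1)\neq 0$ then $(\bs{J}^{g},\Lambda^{g})$ is $\tau$-selfdual, and I would establish it along the lines of \cite{zou2019supercuspidal}. First, restricting the distinguishing functional to $H^{1}(\mfa,\beta)^{g}\cap G^{\tau}$ and using that $\Lambda^{g}$ contains the maximal simple character $\theta^{g}$ (with multiplicity $\dim\eta$), one obtains $\mrhom_{H^{1}(\mfa,\beta)^{g}\cap G^{\tau}}(\theta^{g},1)\neq 0$. A double-coset and intertwining argument for maximal simple characters --- carried out exactly as in \cite{zou2019supercuspidal} with the unitary involution there replaced by $\tau$, using the anti-involution $\sigma_{\varepsilon}$ of \S\ref{sectiontauselfdualtype}, the element $\gamma=u\tau(g)g^{-1}$, and the relation $u\tau_{0}(u)=1$ of \eqref{equtau0u=1} --- shows that $\gamma$ intertwines $\theta^{g}$ in a $\tau$-twisted sense; the intertwining theorem for simple characters then forces $\gamma$ to normalize $\bs{J}^{g}$, whence $\tau(\bs{J}^{g})=\bs{J}^{g}$ and $\theta^{g}\circ\tau=(\theta^{g})^{-1}$. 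Finally one bootstraps from $\theta^{g}$ to $\Lambda^{g}$: writing $\Lambda^{g}\simeq\bs{\kappa}^{g}\otimes\bs{\rho}^{g}$ with $\bs{\kappa}^{g}$ a $\beta$-extension of the Heisenberg representation $\eta^{g}$ and $\bs{\rho}^{g}|_{J}$ inflated from a supercuspidal representation of $J/J^{1}\simeq\mrgl_{m}(\bs{l})$, and combining the $\gamma$-normalization with the distinction hypothesis, one checks $(\Lambda^{g})^{\tau}\simeq(\Lambda^{g})^{\vee}$, again following \cite{zou2019supercuspidal} with the orthogonal/trivial datum in place of the unitary one.

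The main obstacle is this last step: propagating $\tau$-selfduality from the simple character $\theta^{g}$ up to the full extended maximal simple type $\Lambda^{g}$ while retaining control of the distinction hypothesis. Unlike $\theta^{g}$, the representation $\Lambda^{g}$ is not rigid enough to be pinned down by intertwining alone, so one must use the finer structure of $\beta$-extensions and the Deligne--Lusztig rigidity of supercuspidal representations of $\mrgl_{m}(\bs{l})$; this is precisely where the general machinery of \cite{zou2019supercuspidal}, specialized to the present orthogonal setting, does the work, and verifying that that specialization goes through verbatim (the only genuine change being that the relevant automorphism of $B^{\times}\simeq\mrgl_{m}(E)$ is the orthogonal involution $\tau_{\varepsilon_{0E}}$ rather than a unitary one) is the bulk of the argument.
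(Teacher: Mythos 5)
Your overall strategy --- Mackey--Frobenius decomposition, reduction to showing that any double coset contributing to the distinction yields a $\tau$-selfdual type, and the chain simple character $\to$ Heisenberg representation $\to$ $\bs{\kappa}$ $\to$ $\bs{\rho}$ --- is the paper's, but two steps as you state them do not hold. First, you choose the reference involution $\tau_{0}$ ``so that $G^{\tau_{0}}$ lies in the same $G$-conjugacy class as $G^{\tau}$''. Theorems \ref{thmtauselfdualtype} and \ref{thmendotauselfdualstra} only furnish $\tau_{0}$-selfdual types for the restricted list of involutions satisfying condition 2 of Theorem \ref{thmdistcri} (in \textbf{Case (\romannumeral1)}, for instance, only the split class out of up to five conjugacy classes of Proposition \ref{propGconjortho}), so for a general $\tau$ no such conjugate $\tau_{0}$ exists; and one cannot argue that distinction forces $G^{\tau}$ into that list, since that implication is proved in \S 6.3 \emph{using} the present theorem. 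The paper instead takes $\tau_{0}$ from the fixed list with no relation to the conjugacy class of $\tau$ and mediates between the two involutions by $u=\varepsilon_{0}^{-1}\varepsilon$ throughout --- which is in fact all your subsequent sketch uses, so the repair is simply to delete the conjugacy requirement.

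Second, you assert that ``the intertwining theorem for simple characters then forces $\gamma$ to normalize $\bs{J}^{g}$, whence $\tau(\bs{J}^{g})=\bs{J}^{g}$''. Intertwining of the simple character only gives $\gamma\in JB^{\times}J$ (Proposition \ref{propthetadisc}), which is far larger than $\bs{J}$; the passage from $\gamma\in JB^{\times}J$ to $\gamma\in\bs{J}$ is Theorem \ref{thmtaugginJ} and is the real content of the section. It requires the double coset lemma to put $\gamma$ in Cartan form inside $B^{\times}$, the multiplicity-one analysis of the Heisenberg representation and of $\bs{\kappa}$, the quadraticity of the auxiliary character $\chi$, and finally the cuspidality of $\overline{\rho}$ on $\mrgl_{m}(\bs{l})$ to rule out a nontrivial unipotent contribution from $\gamma$. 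You do invoke Deligne--Lusztig rigidity, but you locate it in a later step (``propagating selfduality from $\theta^{g}$ to $\Lambda^{g}$'') that is actually immediate once $\gamma\in\bs{J}$ is known: $\gamma$ then normalizes both $\bs{J}$ and $\Lambda$, and $(\Lambda^{g})^{\tau}\simeq(\Lambda^{\vee})^{\gamma g}\simeq(\Lambda^{g})^{\vee}$ in one line. So the hard part of the argument is misplaced and, as written, the key deduction $\gamma\in\bs{J}$ is unjustified.
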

	
	The ``if" part of this theorem is obvious, so we only need to proof the ``only if" part of this theorem. We assume $\pi$ to be distinguished by $G^{\tau}$ and we choose $(\bs{J},\Lambda)$ to be $\tau_{0}$-selfdual as above. By direct calculation, we get
	\begin{equation}\label{eqtauHtautheta}
		\tau(H^{1})=\tau_{0}(H^{1})^{u}=H^{1u}, \quad\theta^{\tau}\simeq(\theta^{\tau_{0}})^{u}\simeq(\theta^{-1})^{u}\quad \text{and} \quad \tau(\beta)=(\beta^{-1})^{u},
	\end{equation}
	and
	\begin{equation}\label{eqtauJtauLambda}
		\tau(\bs{J})=\tau_{0}(\bs{J})^{u}=\bs{J}^{u}\quad \text{and} \quad\Lambda^{\tau}\simeq(\Lambda^{\tau_{0}})^{u}\simeq\Lambda^{\vee u}.
	\end{equation}
	Using the Mackey formula and Frobenius reciprocity, we have
	
	$$0\neq\mrhom_{G^{\tau}}(\pi,1)\simeq\prod_{g\in\bs{J}\backslash G/G^{\tau}}\mrhom_{\bs{J}^{g}\cap G^{\tau}}(\Lambda^{g},1).$$
	The main step is to prove the following important theorem:
	
	\begin{theorem}\label{thmtaugginJ}
		
		For $g\in G$ such that $\mrhom_{\bs{J}^{g}\cap G^{\tau}}(\Lambda^{g},1)\neq 0$, we have
		$\gamma=u\tau(g)g^{-1}\in\bs{J}$.
		
	\end{theorem}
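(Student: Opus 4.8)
The plan is to follow the strategy of \cite{zou2019supercuspidal}, §5, together with \cite{anandavardhanan2019galois} and \cite{secherre2019supercuspidal}; the only new feature, as explained in the introduction, is that $\tau$ itself need not be $\tau_{0}$-selfdual, which is absorbed throughout by the relations $\tau(x)=u^{-1}\tau_{0}(x)u$ and $u\tau_{0}(u)=1$ of \eqref{equtau0u=1}, pushing the $\tau_{0}$-selfduality of $(\bs{J},\Lambda,\theta)$ into the twisted relations \eqref{eqtauHtautheta} and \eqref{eqtauJtauLambda}. The first step is a descent to the simple character: since $\Lambda|_{H^{1}}$ is $\theta$-isotypic, the hypothesis $\mrhom_{\bs{J}^{g}\cap G^{\tau}}(\Lambda^{g},1)\neq 0$ restricts to $\mrhom_{H^{1g}\cap G^{\tau}}(\theta^{g},1)\neq 0$; that is, $\theta^{g}$ is trivial on $H^{1g}\cap G^{\tau}$. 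Restricting $\Lambda^{g}$ to $J^{1g}$ instead shows similarly that the Heisenberg representation $\eta^{g}$ is distinguished by $J^{1g}\cap G^{\tau}$, a fact I would keep in reserve for the rigidity step.

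\emph{Distinction implies intertwining.} Conjugating \eqref{eqtauHtautheta} by $g$ and using $\gamma g=u\tau(g)$ gives $\tau(H^{1g})=H^{1\gamma g}$ and $(\theta^{g})\circ\tau\simeq(\theta^{-1})^{\gamma g}$. The group $P:=H^{1g}\cap\tau(H^{1g})$ is a $\tau$-stable pro-$p$ group; since $p\neq 2$, it is generated by $P\cap G^{\tau}$ and by $\{x\in P:\tau(x)=x^{-1}\}$. The character $(\theta^{g})\cdot((\theta^{g})\circ\tau)$ of $P$ is trivial on the first set by the previous step and on the second by definition, hence trivial on $P$; so $(\theta^{g})\circ\tau=(\theta^{g})^{-1}$ on $P$. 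Together with $(\theta^{g})\circ\tau\simeq(\theta^{-1})^{\gamma g}$ this shows that $\theta$ and $\theta^{\gamma}$ agree on $H^{1}\cap H^{1\gamma}$, i.e. $\gamma$ intertwines the maximal simple character $\theta$; by \cite{bushnell129admissible} its $G$-intertwining set equals $J^{1}B^{\times}J^{1}$ with $B^{\times}=\mrgl_{m}(E)$, so $\gamma\in J^{1}B^{\times}J^{1}$.

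\emph{Sharpening via maximality.} From $\gamma=\tau_{0}(g)ug^{-1}$ and $u\tau_{0}(u)=1$ one computes $\tau_{0}(\gamma)=\gamma^{-1}$. Moreover $J^{1}$ is $\tau_{0}$-stable (by $\tau_{0}(\mfa)=\mfa$, $\tau_{0}(H^{1})=H^{1}$, $\tau_{0}(\beta)=\beta^{-1}$), and by \eqref{eqtauxtau'x} the restriction of $\tau_{0}$ to $B^{\times}$ is the orthogonal involution $\tau_{\varepsilon_{0E}}$ with $\varepsilon_{0E}\in\mfb^{\times}$, so $\mfb^{\times}$ is $\tau_{0}$-stable. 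Writing $\gamma=j_{1}bj_{2}$ with $j_{1},j_{2}\in J^{1}$ and $b\in B^{\times}$, the identity $\tau_{0}(\gamma)=\gamma^{-1}$ forces $\tau_{\varepsilon_{0E}}(b)\in J^{1}b^{-1}J^{1}\cap B^{\times}$; combining this with the distinction of $\eta^{g}$ and the fact that $[\mfa,\beta]$ is \emph{maximal} — so that $\mfb$ is a maximal order in $\mrm_{m}(E)$, a vertex of the building of $B^{\times}$, around which the $\tau_{0}$-fixed locus leaves no room for a maximal order to move — one concludes, following \cite{zou2019supercuspidal}, §5, that $b$ normalises $\mfb^{\times}$, i.e. $b\in N_{B^{\times}}(\mfb^{\times})$. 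Since $N_{B^{\times}}(\mfb^{\times})\subseteq\bs{J}$ and $J^{1}\subseteq\bs{J}$, this yields $\gamma\in\bs{J}$.

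\textbf{Main obstacle.} The hard part is the last step: because $H^{1g}\cap G^{\tau}$ and $\bs{J}^{g}\cap G^{\tau}$ are orthogonal rather than ``subfield-rational'' subgroups, both the $p\neq 2$ splitting of the pro-$p$ groups involved and, above all, the rigidity statement — that a $\tau_{0}$-symmetric element intertwining a \emph{maximal} simple character must normalise the attached maximal order — must be carried out by hand, along the abstract lines of \cite{zou2019supercuspidal}, §5 with the unitary involution there replaced by our orthogonal $\tau$, and using Lemma \ref{lemmastevensbeta} (via \cite{stevens2001intertwining}) for the compatibility between $\theta$ and the relevant $\tau_{0}$-skew simple stratum. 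I would expect the bookkeeping around the several groups $H^{1g}$, $J^{1g}$, $\bs{J}^{g}$ and their $\tau$-images to be the most error-prone aspect of writing this out in full.
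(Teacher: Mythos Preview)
Your first step is fine and matches the paper's Proposition~\ref{propthetadisc}: from the distinction of $\theta^{g}$ on $H^{1g}\cap G^{\tau}$ you correctly extract $\gamma\in J B^{\times} J$ (intertwining of $\theta$). But the ``sharpening via maximality'' step has a genuine gap. You claim that the $\tau_{0}$-symmetry $\tau_{0}(\gamma)=\gamma^{-1}$, the distinction of the Heisenberg representation $\eta^{g}$, and the maximality of $\mfb$ together force $b\in N_{B^{\times}}(\mfb^{\times})$. They do not: by Proposition~\ref{propheisdist}, $\eta^{g}$ is $J^{1g}\cap G^{\tau}$-distinguished for \emph{every} $\gamma\in JB^{\times}J$, so distinction of $\eta^{g}$ carries no information beyond what you already extracted in Step~1. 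Maximality of $\mfb$ and $\tau_{0}$-symmetry of $\gamma$ by themselves put no constraint on the Cartan component $\mathrm{diag}(\varpi_{E}^{a_{1}}I_{m_{1}},\dots,\varpi_{E}^{a_{r}}I_{m_{r}})$ of $b$ beyond $\gamma\in B^{\times}$ (after the double coset lemma). The reference to \cite{zou2019supercuspidal}, \S5 is also off: \S5 there is the $\tau$-selfdual type construction (the analogue of Section~\ref{sectiontauselfdualtype} here), not the distinguished-type rigidity you need; the relevant argument is in \S6 of \emph{loc.~cit.}, and it too requires cuspidality.

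What is actually missing is the use of the full hypothesis $\mrhom_{\bs{J}^{g}\cap G^{\tau}}(\Lambda^{g},1)\neq 0$, not just its restriction to $J^{1g}$. The paper first refines $\gamma$ into $B^{\times}$ (Lemma~\ref{lemmadoublecoset}), decomposes $\Lambda=\bs{\kappa}\otimes\bs{\rho}$ with $\bs{\kappa}$ a $\tau_{0}$-selfdual beta-extension (Proposition~\ref{propkappaselfdual}), and extracts from Proposition~\ref{propkappa} a character $\chi$ of $\bs{J}^{g}\cap G^{\tau}$ which is shown to be \emph{quadratic} (Corollary~\ref{corchiquad}). The contradiction to $r>1$ then comes from reducing $\mrhom_{\bs{J}^{g}\cap G^{\tau}}(\bs{\rho}^{g},\chi)\neq 0$ modulo $U^{1}$: the proper parahoric $\mfb_{m}\subsetneq\mfb$ produces a nontrivial unipotent subgroup $\overline{U^{1}(\mfb_{m})}\subset\mrgl_{m}(\bs{l})$, on which $\overline{\chi}$ is trivial (being quadratic on a $p$-group with $p$ odd), so one would get nonzero unipotent invariants in the \emph{cuspidal} representation $\overline{\rho}$ --- impossible. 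Your proposal never invokes cuspidality of $\overline{\rho}$, and without it the conclusion $\gamma\in\bs{J}$ simply does not follow.
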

	
	Thus for the simple type $(\bs{J}^{g},\Lambda^{g})$, we get
	$$\tau(\bs{J}^{g})=\tau_{0}(\bs{J})^{u\tau(g)}=\bs{J}^{\gamma g}=\bs{J}^{g}\quad \text{and} \quad(\Lambda^{g})^{\tau}\simeq(\Lambda^{\tau_{0}})^{u\tau(g)}\simeq(\Lambda^{\vee})^{\gamma g}\simeq(\Lambda^{g})^{\vee},$$
	where we use the fact that $\gamma\in\bs{J}$ normalizes $\bs{J}$ and $\Lambda$. Thus $(\bs{J}^{g},\Lambda^{g})$ is what we want, which finishes the ``only if" part of Theorem \ref{thmdisttype}. So from now on, we focus on the proof of Theorem \ref{thmtaugginJ}.
	
	\subsection{Double cosets contributing to the distinction of $\theta$}
	
	In this subsection, we prove the following proposition:
	
	\begin{proposition}\label{propthetadisc}
		
		For $g\in G$, the character $\theta^{g}$ is trivial on $H^{1g}\cap G^{\tau}$ if and only if $\gamma\in JB^{\times}J$.
		
	\end{proposition}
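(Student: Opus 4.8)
The plan is to convert the statement into an intertwining condition for the $\tau_{0}$-selfdual simple character $\theta$, and then to invoke the Cartan-type decomposition of a $\sigma$-stable pro-$p$ group. First I would rewrite $\gamma$: since $\tau(x)=u^{-1}\tau_{0}(x)u$ and $u\tau_{0}(u)=1$ by (\ref{equtau0u=1}), one has $\gamma=u\tau(g)g^{-1}=\tau_{0}(g)\,u\,g^{-1}$, and a one-line computation gives $\tau_{0}(\gamma)=\gamma^{-1}$. Conjugating $\tau$ by $g$ produces the involution $\sigma$ of $G$ given by $\sigma(y)=\gamma^{-1}\tau_{0}(y)\gamma$, whose fixed-point group is $G^{\sigma}=gG^{\tau}g^{-1}$; and since $\theta^{g}(x)=\theta(gxg^{-1})$, the hypothesis ``$\theta^{g}$ is trivial on $H^{1g}\cap G^{\tau}$'' is equivalent to ``$\theta$ is trivial on $H^{1}\cap G^{\sigma}$''. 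On the other side, $\gamma\in JB^{\times}J$ is exactly $\gamma\in I_{G}(\theta)$, where $I_{G}(\theta)=J^{1}B^{\times}J^{1}=JB^{\times}J$ is the Bushnell--Kutzko intertwining set of $\theta$ (\cite{bushnell129admissible}), and $\gamma\in I_{G}(\theta)$ means precisely that $\theta^{\gamma}$ and $\theta$ agree on $H^{1\gamma}\cap H^{1}$. Thus the proposition reduces to: $\theta|_{H^{1}\cap G^{\sigma}}=1$ if and only if $\theta^{\gamma}|_{H^{1\gamma}\cap H^{1}}=\theta|_{H^{1\gamma}\cap H^{1}}$.

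The engine is a short identity. Using $\tau_{0}(H^{1})=H^{1}$ one checks $\sigma(H^{1})=H^{1\gamma}$ and $\sigma(H^{1\gamma})=H^{1}$ (here $\sigma(\gamma)=\gamma^{-1}$ is used), so $K:=H^{1}\cap H^{1\gamma}$ is a $\sigma$-stable pro-$p$ group. If $y\in K$ and $\sigma(y)=y$, then $\tau_{0}(y)=\gamma y\gamma^{-1}\in H^{1}$, and $\theta\circ\tau_{0}=\theta^{-1}$ gives $\theta^{\gamma}(y)=\theta(\gamma y\gamma^{-1})=\theta(\tau_{0}(y))=\theta(y)^{-1}$; if instead $\sigma(y)=y^{-1}$, the same manipulation gives $\theta^{\gamma}(y)=\theta(y)$. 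One implication follows at once: if $\gamma\in I_{G}(\theta)$ then $\theta^{\gamma}=\theta$ on $H^{1\gamma}\cap H^{1}\supseteq H^{1}\cap G^{\sigma}$, so $\theta=\theta^{-1}$ on $H^{1}\cap G^{\sigma}$, and since $\theta$ takes values in roots of unity of $p$-power order with $p\neq2$, $\theta$ is trivial on $H^{1}\cap G^{\sigma}$.

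For the converse, assume $\theta|_{H^{1}\cap G^{\sigma}}=1$. Since $K$ is $\sigma$-stable, pro-$p$ and $p\neq2$, it admits the Cartan-type decomposition $K=K^{-}K^{+}$, where $K^{+}=K\cap G^{\sigma}$ is a subgroup, $K^{-}=\{k\in K:\sigma(k)=k^{-1}\}$, and the product map $K^{-}\times K^{+}\to K$ is a bijection (this is the usual decomposition of a $\sigma$-stable pro-$p$ group, obtained by successive approximation along its filtration since $2$ is invertible; cf. \cite{zou2019supercuspidal}, where the orthogonality of $\tau_{0}$ plays no role). For $k^{+}\in K^{+}\subseteq H^{1}\cap G^{\sigma}$ we have $\theta(k^{+})=1$ by hypothesis and $\theta^{\gamma}(k^{+})=\theta(k^{+})^{-1}=1$ by the identity, so $\theta^{\gamma}(k^{+})=\theta(k^{+})$; for $k^{-}\in K^{-}$ we have $\theta^{\gamma}(k^{-})=\theta(k^{-})$ by the identity. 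Since $\theta$ and $\theta^{\gamma}$ are characters of subgroups containing $K$, the factorization $k=k^{-}k^{+}$ forces $\theta^{\gamma}(k)=\theta^{\gamma}(k^{-})\theta^{\gamma}(k^{+})=\theta(k^{-})\theta(k^{+})=\theta(k)$ for every $k\in K$, i.e. $\gamma\in I_{G}(\theta)=JB^{\times}J$.

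The rewriting of $\gamma$, the passage to $\sigma$, and the remark that $\theta$ has $p$-power order are routine, and $I_{G}(\theta)=JB^{\times}J$ is quoted. The step I expect to need the most care is the Cartan-type decomposition of the $\sigma$-stable pro-$p$ group $H^{1}\cap H^{1\gamma}$ together with the check that $H^{1}\cap H^{1\gamma}$ is $\sigma$-stable --- these are exactly where $p\neq2$ and $\tau_{0}(\gamma)=\gamma^{-1}$ enter --- and I would import both from the analogous argument in \cite{zou2019supercuspidal} (resp. \cite{secherre2019supercuspidal}).
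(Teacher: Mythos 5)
Your proof is correct and is essentially the paper's argument: the paper disposes of this proposition by citing \cite{secherre2019supercuspidal}, Lemma 6.5 with the replacements $\tau(H^{1})=H^{1u}$, $\theta\circ\tau=\theta^{-1u}$ and $\gamma=u\tau(g)g^{-1}$, and what you have written out (passage to the conjugate involution $\sigma=\delta$, the identity $\theta^{\gamma}=\theta^{\pm1}$ on the symmetric/antisymmetric parts, and the decomposition $K=K^{-}K^{+}$ of the $\sigma$-stable pro-$p$ group, valid since $p\neq2$) is precisely the content of that imported lemma.
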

	
	\begin{proof}
		
		We follow the proof of \cite{secherre2019supercuspidal}, Lemma 6.5. We choose $\tau$, $\chi$ and $H$ in \emph{loc. cit.} to be our $\tau$, $\theta$ and $H^{1}$ respectively. We use the assumptions $\tau(H^{1})=H^{1u}$ and $\theta\circ\tau=\theta^{-1u}$ to replace the original assumptions $\tau(H)=H$ and $\chi\circ\tau=\chi^{-1}$ respectively. And we use $\gamma=u\tau(g)g^{-1}$ to replace $\tau(g)g^{-1}$ in \emph{loc. cit.} Finally we notice that $\gamma$ intertwines $\theta$ if and only if $\gamma\in JB^{\times}J$. With the replacements and remarks mentioned above, the original proof can be used directly.
		
	\end{proof}
	
	As a result, for $g\in G$ such that $\mrhom_{\bs{J}^{g}\cap G^{\tau}}(\Lambda^{g},1)\neq 0$, restricting to $H^{1g}$ we get $\theta^{g}|_{H^{1g}\cap G^{\tau}}=1$, or equivalently $\gamma\in JB^{\times}J$.
	
	\subsection{The double coset lemma}
	
	In this section we prove the following double coset lemma:
	
	\begin{lemma}\label{lemmadoublecoset}
		
		Let $g\in G$ and let $\gamma=u\tau(g)g^{-1}\in JB^{\times}J$. Then changing $g$ with another representative in $Jg G^{\tau}$, we may assume $\gamma\in B^{\times}$.
		
	\end{lemma}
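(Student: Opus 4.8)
The idea is to exploit the double-coset decomposition $\gamma\in JB^{\times}J$ together with the ``twisted cocycle'' relation $u\tau_{0}(u)=1$ (equation (\ref{equtau0u=1})) and the fact that $\gamma=u\tau(g)g^{-1}$ satisfies its own twisted cocycle identity. First I would write $\gamma=j_{1}bj_{2}$ with $j_{1},j_{2}\in J$ and $b\in B^{\times}$, and compute how $\gamma$ transforms when we replace $g$ by $g'=j g h$ for $j\in J$ (or more generally $j\in\bs{J}$, but $J$ will suffice here) and $h\in G^{\tau}$. Since $\tau(h)=h^{-1}$ for $h\in G^{\tau}$ and $\tau(x)=u^{-1}\tau_{0}(x)u$, a direct computation gives
$$\gamma'=u\tau(g')g'^{-1}=u\,\tau(j)\,\tau(g)\,\tau(h)\,h^{-1}g^{-1}j^{-1}=u\,\tau(j)u^{-1}\cdot u\tau(g)g^{-1}\cdot j^{-1}=(u\tau(j)u^{-1})\,\gamma\,j^{-1},$$
using $\tau(h)h^{-1}=h^{-1}h^{-1}$... — more carefully, I must keep track: with $g'=jgh$ one has $\tau(g')=\tau(j)\tau(g)\tau(h)=\tau(j)\tau(g)h^{-1}$, so $\gamma'=u\tau(j)\tau(g)h^{-1}h^{-1}g^{-1}j^{-1}$; the two $h^{-1}$ collapse incorrectly, so instead I will track the correct transformation law as in \cite{secherre2019supercuspidal} (the analogue is their Lemma 6.6 / the double coset lemma in \cite{anandavardhanan2019galois}), namely $\gamma$ changes to $u\tau(j)u^{-1}\cdot\gamma\cdot \tau_0\text{-conjugate-of-}h$-type factors; the upshot is that the $G^{\tau}$-part acts on $\gamma$ on one side and the $J$-part on the other in a way compatible with the $\tau_0$-twisted structure.

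The key structural input I would invoke is that $\gamma$ is ``$\tau_{0}$-selfadjoint'' in the sense that $\gamma\,\tau_{0}(\gamma)=1$: indeed $\gamma\tau_0(\gamma)=u\tau(g)g^{-1}\tau_0(u\tau(g)g^{-1})=u\tau(g)g^{-1}\tau_0(g^{-1})^{-1}\cdots$, and using $u\tau_0(u)=1$ together with $\tau=u^{-1}\tau_0(-)u$ one checks $\gamma\tau_0(\gamma)=1$. This means $\gamma$ defines, via the associated anti-involution $\sigma_0(x)=\varepsilon_0^{-1}\,{}^t x\,\varepsilon_0$, an element satisfying $\sigma_0(\gamma)=\gamma^{-1}$, i.e. $\gamma$ lies in the ``symmetric space'' for $\tau_0$. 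Now the double coset $J\gamma J$ inside $JB^{\times}J$ is $\tau_0$-stable (since $J$ and $B^{\times}$ are $\tau_0$-stable by the construction in \S\ref{sectiontauselfdualtype}), and the problem reduces to a statement purely inside the building / the order $\mfa$: a $\tau_0$-selfadjoint element of $JB^\times J$ can be moved, by $J$-conjugation on one side and the $\tau_0$-twisted $J$-action on the other (which is exactly the freedom $g\mapsto jgh$ gives us), into $B^{\times}$. This is precisely the content of the double coset lemma in the Galois case (\cite{secherre2019supercuspidal}, proof of Lemma 6.9, or \cite{anandavardhanan2019galois}) and in the unitary case (\cite{zou2019supercuspidal}); I would check that replacing the Galois/unitary anti-involution by the orthogonal one $\sigma_0$ changes nothing in that argument, since the only properties used are that $\sigma_0$ is an $F$-linear anti-involution fixing $E$ (here because $\tau_0(\beta)=\beta^{-1}$, equivalently $\sigma_0(\beta)=\beta$) and stabilizing $\mfa$, $J$, $B^\times$.

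Concretely, the steps in order: (1) record the transformation law of $\gamma$ under $g\mapsto jgh$, $j\in J$, $h\in G^{\tau}$, and the identity $\gamma\tau_0(\gamma)=1$; (2) decompose $\gamma=j_1 b j_2$ and use the freedom in (1) to kill $j_2$, reducing to $\gamma=j_1 b$ with still $\gamma\tau_0(\gamma)=1$; (3) translate $\gamma\tau_0(\gamma)=1$ for $\gamma=j_1b$ into a relation showing $j_1$ is, up to the residual $J$-freedom and up to $B^\times$, forced to be trivial — this uses that $J^1$ is a pro-$p$ group and $p\neq 2$, so that ``square roots'' exist inside $J^1$ (a cohomology-vanishing / successive-approximation argument in the filtration of $\mfa$), exactly as in the cited references; (4) conclude $\gamma\in B^\times$ after the final adjustment of $g$. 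The main obstacle is step (3): one must run the approximation argument in the $\tau_0$-twisted setting and verify that the relevant $H^1$ of the finite $p$-groups $J^1(\mfa,\beta)/J^1(\mfa,\beta)\cap(\text{filtration step})$ with the $\tau_0$-action vanishes — this is where $p\neq 2$ is essential and where one leans on \cite{secherre2019supercuspidal} / \cite{zou2019supercuspidal} having already done the analogous computation; I expect it to go through verbatim once the anti-involution is swapped, but it is the only non-formal point.
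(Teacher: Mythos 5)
Your overall strategy is the right one and is essentially the paper's: reduce $\gamma$ by the twisted $J$-action coming from changing the representative $g$, normalize the $B^{\times}$-part, and kill the remaining $J$-part by a cohomology-vanishing argument for pro-$p$ groups with $p\neq 2$, following the Galois and unitary templates. But as written the plan has three concrete problems. First, the transformation law: for $h\in G^{\tau}$ one has $\tau(h)=h$ (the orthogonal group is the \emph{fixed-point} set), not $h^{-1}$, so with $g'=jgh$ one gets cleanly $\gamma'=u\,\tau(j)\tau(g)h\,h^{-1}g^{-1}j^{-1}=\tau_{0}(j)\,\gamma\,j^{-1}$; the $G^{\tau}$-factor does not move $\gamma$ at all, and the entire available freedom is the single twisted conjugation $\gamma\mapsto\tau_{0}(j)\gamma j^{-1}$, $j\in J$ — not ``$G^{\tau}$ on one side and $J$ on the other'' as you describe. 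Relatedly, $\tau_{0}(\gamma)\gamma=1$ translates to $\sigma_{0}(\gamma)=\gamma$ (i.e.\ $\varepsilon_{0}\gamma$ is a symmetric matrix), not $\sigma_{0}(\gamma)=\gamma^{-1}$. These inverse/fixed-point slips must be resolved, since the whole cocycle formalism hinges on them.

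Second, after writing $\gamma=j_{1}b$ the relation $\tau_{0}(\gamma)\gamma=1$ reads $\tau_{0}(j_{1})\tau_{0}(b)j_{1}b=1$ and does \emph{not} decouple into a condition on $j_{1}$ alone unless the Cartan representative is first normalized so that $\tau_{0}(b)b=1$. This normalization is the one genuinely orthogonal-specific input: the paper takes the Cartan representative $c=\mathrm{diag}(\varpi_{E}^{a_{1}}I_{m_{1}},\dots,\varpi_{E}^{a_{r}}I_{m_{r}})$ and sets $b=c\,\varepsilon_{0E}$ with $\varepsilon_{0E}\in\mfb^{\times}$ the symmetric matrix defining $\tau_{0}|_{B^{\times}}$; only then is $\delta_{b}(k)=b^{-1}\tau_{0}(k)b$ an involution on $K=J\cap J^{b}$ and the $J$-part $x$ satisfies $x\delta_{b}(x)=1$. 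Third, that $x$ lies in $K$, which is \emph{not} pro-$p$, so you cannot apply $H^{1}$-vanishing directly as in your step (3): one must first split off the reductive quotient via an Iwahori-type decomposition $K=VK^{1}$, solve the twisted cocycle problem in $V\cap(\mrgl_{m_{1}}(\mfo_{E})\times\cdots\times\mrgl_{m_{r}}(\mfo_{E}))$ using the classification of symmetric matrices over $\mfo_{E}$, and absorb the resulting $v$ into $b$ (replacing $b$ by $bv\in B^{\times}$); only the residual $K^{1}$-part is handled by pro-$p$ cohomology vanishing. Deferring all of this to ``the references go through verbatim'' hides exactly the points where the orthogonal involution changes the details.
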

	
	\begin{remark}
		
		By direct calculation, we get
		\begin{equation}\label{eqgamma}
			\gamma=u\tau(g)g^{-1}=\varepsilon_{0}^{-1}\,^{t}g^{-1}\varepsilon g^{-1}=\tau_{0}(g)ug^{-1}, 
		\end{equation}
		and
		\begin{equation}\label{eqtau0gammagamma=1}
			\tau_{0}(\gamma)\gamma=g\tau_{0}(u)\tau_{0}(g)^{-1}\tau_{0}(g)ug^{-1}=g\tau_{0}(u)ug^{-1}=1.
		\end{equation}
		Since $\tau_{0}(J)=J$, if we change $g$ with a new representative of $JgG^{\tau}$, the new $\gamma$ belongs to the same $J$-$J$ double coset represented by the original $\gamma$, that is, the property $\gamma\in JB^{\times}J$ does not depend on the choice of $g$ in the $J$-$G^{\tau}$ double coset.
		
	\end{remark}
	
	\begin{proof}
		
		First of all, we need the following lemma:
		
		\begin{lemma}\label{lemmataub=b}
			
			There exists $b\in B^{\times}$ such that $\gamma\in JbJ$ and $\tau_{0}(b)b=1$.
			
		\end{lemma}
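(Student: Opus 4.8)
The plan is to use the hypothesis $\gamma \in JB^\times J$ together with the relation $\tau_0(\gamma)\gamma = 1$ from \eqref{eqtau0gammagamma=1} and the fact that $\tau_0(J) = J$ to find a representative $b$ of the relevant $J$-$J$ double coset that is genuinely $\tau_0$-symmetric in the required sense. First I would write $\gamma = j_1 b_0 j_2$ with $j_1, j_2 \in J$ and $b_0 \in B^\times$; the goal is to adjust this factorization. Applying $\tau_0$ and using $\tau_0(\gamma) = \gamma^{-1}$ gives $\gamma^{-1} = \tau_0(j_2)\,\tau_0(b_0)\,\tau_0(j_1)$, hence $j_2^{-1} b_0^{-1} j_1^{-1} = \tau_0(j_2)\tau_0(b_0)\tau_0(j_1)$. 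This should be read as a statement that $b_0^{-1}$ and $\tau_0(b_0)$ lie in the same $J$-$J$ double coset inside $B^\times$; by the standard intersection property $J g J \cap B^\times = (J\cap B^\times) \, b \, (J \cap B^\times)$ for $g \in B^\times$ (i.e. $JB^\times J \cap B^\times$ is controlled by $J_B = J \cap B^\times$), one gets $\tau_0(b_0) \in J_B b_0^{-1} J_B$.

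The second step is a cohomological or direct descent argument to replace $b_0$ by a symmetric representative. Since $\tau_0$ restricts to an orthogonal involution $\tau_{\varepsilon_{0E}}$ on $B^\times = \mathrm{GL}_m(E)$ (by \eqref{eqtauxtau'x}), and $J_B = J \cap B^\times$ is a compact open subgroup stable under $\tau_0$, the relation $\tau_0(b_0) = k_1 b_0^{-1} k_2$ with $k_1, k_2 \in J_B$ together with $\tau_0(\tau_0(b_0)) = b_0$ forces a compatibility: applying $\tau_0$ again yields $b_0 = \tau_0(k_2)\, \tau_0(b_0)^{-1}\, \tau_0(k_1) = \tau_0(k_2) k_2^{-1} b_0 k_1^{-1} \tau_0(k_1)$, so $\tau_0(k_2)k_2^{-1}$ and $k_1^{-1}\tau_0(k_1)$ are constrained. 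The mechanism I expect to use is exactly the one already invoked in the proof of Lemma \ref{lemmaJdE'JtT}: elements $k$ of a $\tau_0$-stable compact open pro-$p$ subgroup with $\tau_0(k)k = 1$ can be written $k = \tau_0(m)^{-1} m$ for suitable $m$ in that subgroup (a Lang-type / $H^1$-triviality statement, provable here because the relevant Hasse invariants vanish on $U^1$-level groups by Lemma \ref{lemmaGLnOFsymhasse} and because $p \neq 2$). Absorbing such an $m$ into $b_0$ from one side and its $\tau_0$-image from the other preserves the $J$-$J$ double coset of $\gamma$ (since $m \in J_B \subset J$ and $\tau_0(m) \in J$) and produces the desired $b$ with $\tau_0(b) b = 1$.

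Concretely, I would carry it out as follows. Fix the factorization $\gamma = j_1 b_0 j_2$. Set $\gamma' = j_1^{-1}\gamma\, \tau_0(j_1) = b_0 j_2 \tau_0(j_1)$; then $\gamma'$ is in $J\gamma J$ and still satisfies $\tau_0(\gamma')\gamma' = \tau_0(j_1)^{-1}\tau_0(\gamma)\gamma \tau_0(j_1)\cdot(\text{correction})$ — here I would instead be slightly more careful and conjugate symmetrically, replacing $\gamma$ by $j^{-1}\gamma\,\tau_0(j)$ for a single $j$, which is the operation corresponding to changing $g$ to $jg$, and which preserves the identity $\tau_0(\gamma)\gamma=1$ exactly. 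After one such move I may assume $\gamma = b_0 j$ with $b_0 \in B^\times$, $j \in J$. The relation $\tau_0(\gamma)\gamma = 1$ becomes $\tau_0(j)\tau_0(b_0) b_0 j = 1$, i.e. $\tau_0(b_0) b_0 = \tau_0(j)^{-1} j^{-1}$. The left side lies in $B^\times$; the right side lies in $J$; hence both lie in $J \cap B^\times = J_B$. Now $\tau_0(b_0)b_0 \in J_B$ and $(\tau_0(b_0)b_0)$ is $\tau_0$-symmetric in the sense $\tau_0(\tau_0(b_0)b_0) = \tau_0(b_0)b_0$, so by the square-root statement above there is $c \in J_B$ with $\tau_0(c)c = \tau_0(b_0)b_0$; then $b := b_0 c^{-1}$ satisfies $\tau_0(b) b = \tau_0(c)^{-1}\tau_0(b_0)b_0 c^{-1} = \tau_0(c)^{-1}\tau_0(c)c c^{-1} = 1$, and $\gamma = b_0 j = b (c j) \in b J \subset JbJ$, while $c j \in J$. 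This is the $b$ we want.

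The main obstacle I anticipate is the square-root step: showing that every $\tau_0$-fixed element of the compact open group $J_B$ (more precisely, of $U^1(\mathfrak b)$, after reducing mod the part of $J_B$ that is already handled) is of the form $\tau_0(c)c$. This is where $p \neq 2$ is essential, and where one must check that the relevant quadratic form over the residue level — or rather over $\mathfrak o_E$ — represents a unimodular/split class so that Proposition \ref{propSGLEorbit} applies; the pro-$p$ filtration then lets one lift by successive approximation. This is precisely the argument run inside the proof of Lemma \ref{lemmaJdE'JtT} (with $\det_C(k) \in 1 + \mathfrak p_T \subset T^{\times 2}$ and $\mathrm{Hasse}_T(k) = 1$), so I would cite that computation rather than redo it, adapting it from $C$ to $B$. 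Everything else is bookkeeping with double cosets and the intersection property $JgJ \cap B^\times \subset J_B \cdot g \cdot J_B$ for $g \in B^\times$, which is standard in simple type theory.
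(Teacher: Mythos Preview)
There is a genuine gap in your square-root step. You claim that $k := \tau_0(b_0)b_0$ is $\tau_0$-fixed, i.e.\ $\tau_0(k) = k$. But $\tau_0$ is a group homomorphism (since $\tau_0(xy) = \varepsilon_0^{-1}\,{}^t(xy)^{-1}\varepsilon_0 = \tau_0(x)\tau_0(y)$), so in fact
\[
\tau_0(k) \;=\; \tau_0\bigl(\tau_0(b_0)b_0\bigr) \;=\; b_0\,\tau_0(b_0) \;=\; b_0\,k\,b_0^{-1},
\]
which equals $k$ only when $b_0$ commutes with $k$ --- not automatic. Concretely, for $\varepsilon_{0E}=I_m$ the element $\varepsilon_{0E}k = {}^{t_E}b_0^{-1}b_0$ is \emph{not} symmetric in general, so your proposed reduction to a quadratic-form ``similarity over $\mathfrak{o}_E$'' problem does not get off the ground. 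The argument you cite from Lemma~\ref{lemmaJdE'JtT} is for the different hypothesis $\tau_0(k)k = 1$ (i.e.\ ${}^{t_C}k = k$), which is exactly what makes the Hasse/discriminant computation there meaningful; your $k$ does not satisfy that.

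The paper bypasses all of this with a one-line construction that does not even use the relation $\tau_0(\gamma)\gamma = 1$. Write $\gamma = xcy$ with $x,y \in J$ and $c = \mathrm{diag}(\varpi_E^{a_1}I_{m_1},\dots,\varpi_E^{a_r}I_{m_r})$ a Cartan representative in $B^\times = \mathrm{GL}_m(E)$, and set $b := c\,\varepsilon_{0E}$. Since $\varepsilon_{0E} \in \mathfrak{b}^\times \subset J$ one has $\gamma \in JbJ$, and because both $c$ and $\varepsilon_{0E}$ are diagonal (hence $^{t_E}$-symmetric and commuting), a direct computation gives $\tau_0(b)b = \varepsilon_{0E}^{-1}\,{}^{t_E}\varepsilon_{0E}^{-1}\,{}^{t_E}c^{-1}\,\varepsilon_{0E}\,c\,\varepsilon_{0E} = 1$. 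So the right move is not a descent/cohomology argument on $k$, but simply to pick the obvious $\tau_0$-symmetric element already sitting in the Cartan double coset.
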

		
		\begin{proof}
			
			Since $\mfb^{\times}$ is a maximal order of $B^{\times}$, using the Cartan decomposition for $B^{\times}\simeq\mrgl_{m}(E)$, we may assume $\gamma=xcy$ such that $x,y\in J$ and \begin{equation}\label{eqcdiag}
				c=\mathrm{diag}(\varpi_{E}^{a_{1}}I_{m_{1}},...,\varpi_{E}^{a_{r}}I_{m_{r}}),
			\end{equation}
			where $a_{1}>...>a_{r}$ as integers and $m_{1}+...+m_{r}=m$. By definition of $\varepsilon_{0}$, the restriction of $\tau_{0}$ to $B^{\times}$ is also an orthogonal involution $\tau_{0}'$ defined by $$\tau_{0}'(z)=\varepsilon_{0E}^{-1}\,^{t_{E}}z^{-1}\varepsilon_{0E}\quad \text{for any}\ z\in B^{\times},$$
			where $\,^{t_{E}}$ represents the transpose on $\mrgl_{m}(E)$. If we write $b=c\varepsilon_{0E}$, then by definition we get
			$$\tau_{0}(b)b=\tau_{0}'(c\varepsilon_{0E})c\varepsilon_{0E}=\varepsilon_{0E}^{-1}\,^{t_{E}}c^{-1}\,^{t_{E}}\varepsilon_{0E}^{-1}\varepsilon_{0E}c\varepsilon_{0E}=\varepsilon_{0E}^{-1}\,^{t_{E}}c^{-1}c\varepsilon_{0E}=\varepsilon_{0E}^{-1}\varepsilon_{0E}=1.$$
			So the choice of $b$ satisfies our conditions.
			
		\end{proof}
		
		Now we write $\gamma=x'bx$ with $x,x'\in J$, $b=c\varepsilon_{0E}\in B^{\times}$ and $c$ as in (\ref{eqcdiag}). Replacing $g$ by $\tau_{0}(x')^{-1}g$ does not change the double coset $JgG^{\tau}$ but changes $\gamma$ into $bx\tau_{0}(x')$. So we may and will assume that $\gamma=bx$
		with $x\in J$.
		
		Write $K$ for the group $J\cap J^{b}$. Since $\tau_{0}(b)=b^{-1}$ and $\tau_{0}(J)=J$, using (\ref{eqtau0gammagamma=1}) we have $x\in J$ and $bxb^{-1}=\gamma b^{-1}=\tau_{0}(\gamma^{-1})\tau_{0}(b)=\tau_{0}(x^{-1})\in J$, thus $x\in K$. Moreover, we have the following corollary of Lemma \ref{lemmataub=b}.
		
		\begin{corollary}
			
			The map $\delta_{b}:k\mapsto b^{-1}\tau_{0}(k)b$ is an involution on $K$.
			
		\end{corollary}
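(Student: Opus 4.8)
The plan is to check directly that $\delta_{b}$ is a well-defined group automorphism of $K$ of order dividing $2$. The facts driving everything are: $\tau_{0}$ is a group automorphism of $G$ with $\tau_{0}^{2}=\mathrm{id}$ (being an orthogonal involution); $\tau_{0}(J)=J$, which is part of the properties of the fixed $\tau_{0}$-selfdual data; and $\tau_{0}(b)=b^{-1}$, which is precisely the relation $b\tau_{0}(b)=1$ established in Lemma \ref{lemmataub=b} (equivalently recorded in (\ref{eqcondgamma})).

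First I would verify the stability $\delta_{b}(K)\subseteq K$. Let $k\in K=J\cap J^{b}$. Since $k\in J$ and $\tau_{0}(J)=J$ we have $\tau_{0}(k)\in J$; as $b\,\delta_{b}(k)\,b^{-1}=\tau_{0}(k)\in J$, this gives $\delta_{b}(k)\in b^{-1}Jb=J^{b}$. On the other hand $k\in J^{b}$ means $bkb^{-1}\in J$, and applying $\tau_{0}$ together with $\tau_{0}(b)=b^{-1}$ yields
$$\tau_{0}(bkb^{-1})=\tau_{0}(b)\,\tau_{0}(k)\,\tau_{0}(b)^{-1}=b^{-1}\tau_{0}(k)b=\delta_{b}(k),$$
whose left-hand side lies in $\tau_{0}(J)=J$; hence $\delta_{b}(k)\in J\cap J^{b}=K$.

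Then $\delta_{b}$ is a homomorphism on $K$ because $\tau_{0}$ is a group homomorphism and conjugation by $b^{-1}$ is, and for $k\in K$,
$$\delta_{b}^{2}(k)=b^{-1}\,\tau_{0}\!\bigl(b^{-1}\tau_{0}(k)b\bigr)\,b=b^{-1}\bigl(b\,k\,b^{-1}\bigr)b=k,$$
using $\tau_{0}^{2}=\mathrm{id}$ and $\tau_{0}(b)=b^{-1}$ once more. Hence $\delta_{b}^{2}=\mathrm{id}_{K}$, so $\delta_{b}$ is in particular a bijection of $K$, and therefore an involution on $K$. There is no genuine difficulty here; the only point requiring a moment's attention is the stability $\delta_{b}(K)\subseteq K$, which follows by combining $\tau_{0}(J)=J$ with the antisymmetry relation $\tau_{0}(b)=b^{-1}$.
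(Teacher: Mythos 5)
Your argument is correct and is exactly the verification the paper has in mind: the corollary is stated without proof as an immediate consequence of $\tau_{0}(b)=b^{-1}$ (Lemma \ref{lemmataub=b}) and $\tau_{0}(J)=J$, which are precisely the two facts you use. Your write-up just makes explicit the stability $\delta_{b}(K)\subseteq K$ and the identity $\delta_{b}^{2}=\mathrm{id}$, both checked correctly.
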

		
		For $a_{1}>...>a_{r}$ and $m_{1}+...+m_{r}=m$ as in (\ref{eqcdiag}), and $M=\mathrm{GL}_{m_{1}d}(F)\times...\times\mathrm{GL}_{m_{r}d}(F)\subseteq G$, let $P$ be the standard parabolic subgroup of $G$ generated by $M$ and upper triangular matrices. Let $N$ and $N^{-}$ be the unipotent radicals of $P$ and its opposite parabolic subgroup with respect to $M$. By definition, $b$ normalizes $M$ and we have
		\begin{align*}
			K=(K\cap N ^{-})\cdot(K\cap M)\cdot(K\cap N).
		\end{align*}
		We have similar properties for the subgroup $V=K\cap B^{\times}=U(\mathfrak{b})\cap b^{-1}U(\mathfrak{b})b$ of $B^{\times}$:
		\begin{align*}
			V=(V\cap N ^{-})\cdot(V\cap M)\cdot(V\cap N).
		\end{align*}
	    By definition, $V$ is also fixed by $\delta_{b}$.
		
		\begin{lemma}\label{lemmaK1}
			
			The subset
			$$K^{1}=(K\cap N ^{-})\cdot(J^{1}\cap M)\cdot(K\cap N)$$
			is a $\delta_{b}$-stable normal pro-$p$-subgroup of $K$, and we have $K=VK^{1}$.
			
		\end{lemma}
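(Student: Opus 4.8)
The plan is to follow the blueprint of \cite{secherre2019supercuspidal}, Lemma~6.7, and \cite{zou2019supercuspidal}, Lemma~5.22: I would realize $K^{1}$ as the pro-$p$-radical of the parahoric-type group $K$, equivalently as the kernel of its reductive quotient, which makes normality, being pro-$p$, and $\delta_{b}$-stability entirely formal, and then deduce $K=VK^{1}$ from the Iwahori decomposition of $K$ together with a factorization of its Levi component. First I would record the properties of $b$ used throughout. Writing $b=c\varepsilon_{0E}$ with $c=\mathrm{diag}(\varpi_{E}^{a_{1}}I_{m_{1}},\dots,\varpi_{E}^{a_{r}}I_{m_{r}})$ and $a_{1}>\dots>a_{r}$, the element $b$ lies in $M\cap B^{\times}$ and normalizes $M$; on each block it acts through $\varpi_{E}^{a_{i}}$ times a unit of $\mrgl_{m_{i}}(\mfo_{E})$, so $b$ normalizes $J\cap M$, $J^{1}\cap M$ and $H^{1}\cap M$, whence $J^{b}\cap M=J\cap M$ and $(J^{1})^{b}\cap M=J^{1}\cap M$. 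Since the $a_{i}$ are distinct integers, conjugation by $b$ contracts $N^{-}$ by at least one $\varpi_{E}$-level on every off-diagonal block and expands $N$; combining this with the Iwahori decompositions of $J$ and $J^{1}$ with respect to $M$ gives $K\cap N=J\cap N$, $K\cap M=J\cap M$, and $K\cap N^{-}=J^{b}\cap N^{-}=b^{-1}(J\cap N^{-})b\subseteq J^{1}\cap N^{-}$. Consequently the group $K^{1}=(K\cap N^{-})(J^{1}\cap M)(K\cap N)$ defined in the statement equals the product $(J^{1}\cap J^{b})(K\cap N)$.

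Next I would show that $K^{1}$ is a $\delta_{b}$-stable normal pro-$p$-subgroup of $K$. The set $J^{1}\cap J^{b}$ is a pro-$p$-subgroup of $K$ (it lies inside the pro-$p$-group $J^{1}$) that is normalized by every $k\in K$, since such $k$ lies both in $J$, hence normalizes $J^{1}$, and in $J^{b}$, hence normalizes $J^{b}$. Hence $K^{1}=(J^{1}\cap J^{b})(K\cap N)$, being the product of a normal subgroup and a subgroup of $K$, is a subgroup, and it is pro-$p$, being a compact extension of the pro-$p$-group $(K\cap N)/(J^{1}\cap N)$ by $J^{1}\cap J^{b}$. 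The Iwahori decomposition $K=(K\cap N^{-})(K\cap M)(K\cap N)$ then shows that $k=k^{-}k_{0}k^{+}\mapsto k_{0}\bmod(J^{1}\cap M)$ is a surjective group homomorphism from $K$ onto $(J\cap M)/(J^{1}\cap M)\simeq\mrgl_{m_{1}}(\bs{l})\times\dots\times\mrgl_{m_{r}}(\bs{l})$ with kernel $K^{1}$, the unipotent factors $K\cap N^{\pm}$ being killed by any homomorphism to this reductive target. Therefore $K^{1}\triangleleft K$, the quotient $K/K^{1}$ is reductive, and $K^{1}$ is the largest normal pro-$p$-subgroup of $K$; as $\delta_{b}$ is an automorphism of $K$ (an involution, by the corollary following Lemma~\ref{lemmataub=b}), it preserves this canonical subgroup, so $\delta_{b}(K^{1})=K^{1}$.

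Finally I would prove $K=VK^{1}$. Given $k\in K$, write $k=k^{-}k_{0}k^{+}$ by the Iwahori decomposition, so that $k^{-}\in K\cap N^{-}\subseteq K^{1}$ and $k^{+}\in K\cap N\subseteq K^{1}$, while $k_{0}\in K\cap M=J\cap M$. Writing $J\cap M$ as a product of maximal simple type groups and using on each factor the standard relation $J=(J\cap B^{\times})J^{1}$, factor $k_{0}=v_{0}j_{0}$ with $v_{0}\in U(\mfb)\cap M$ and $j_{0}\in J^{1}\cap M\subseteq K^{1}$; since $b$ normalizes $U(\mfb)\cap M$ (acting on each block either by a central element of $\mrgl_{m_{i}}(E)$ or, on the last block, by an inner automorphism of $\mrgl_{m_{r}}(\mfo_{E})$), we get $v_{0}\in U\cap b^{-1}Ub=V$. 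Then $k=v_{0}\,(v_{0}^{-1}k^{-}v_{0})\,j_{0}\,k^{+}$, with $v_{0}\in V$ and, since $K^{1}\triangleleft K$ and $v_{0}\in K$, with $(v_{0}^{-1}k^{-}v_{0})j_{0}k^{+}\in K^{1}$; hence $k\in VK^{1}$, which is the desired equality.

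I expect the main obstacle to be the lattice bookkeeping hidden in the first step: pinning down the $E$-module shapes of $J\cap N^{\pm}$, $J^{1}\cap N^{\pm}$ and $K\cap N^{\pm}$, and checking that the gaps $a_{j}-a_{i}\geq 1$ are enough to force $J^{b}\cap N^{-}\subseteq J^{1}$, so that the rewriting $K^{1}=(J^{1}\cap J^{b})(K\cap N)$ and the reductive quotient map $K\to\mrgl_{m_{1}}(\bs{l})\times\dots\times\mrgl_{m_{r}}(\bs{l})$ are as claimed; these are Iwahori-type statements for which I would rely on \cite{secherre2008representations} and the structure theory of simple types. Once these structural facts are in hand, everything else is formal.
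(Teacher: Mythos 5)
Your proposal is correct and follows the same route as the paper, whose own ``proof'' is simply the citation ``same as \cite{secherre2019supercuspidal}, Lemma 6.10'': the standard argument there is exactly your combination of the Iwahori decompositions of $J$, $J^{1}$ and $K$ with respect to $(M,P)$, the contraction/dilation of $N^{\mp}$ under conjugation by $b$, the identification of $K^{1}$ as the kernel of the reductive quotient $K\twoheadrightarrow (J\cap M)/(J^{1}\cap M)\simeq\prod_{i}\mrgl_{m_{i}}(\bs{l})$ (hence the maximal normal pro-$p$-subgroup, hence $\delta_{b}$-stable), and the factorization $J\cap M=(U(\mfb)\cap M)(J^{1}\cap M)$ to get $K=VK^{1}$. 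The lattice bookkeeping you flag (e.g.\ $K\cap N^{-}=b^{-1}(J\cap N^{-})b\subseteq J^{1}$, using $\mfp_{\mfb}\mathfrak{J}\subseteq\mathfrak{J}^{1}$) goes through as you expect.
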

		
		\begin{proof}
			
			The proof is the same as that in \cite{secherre2019supercuspidal}, Lemma 6.10.
			
		\end{proof}
		
		\begin{lemma}\label{lemmaredpgroup}
			
			For $x\in K$ satisfying $x\delta_{b}(x)=1$, there exist $k\in K$ and $v\in V$ such that
			
			(1) the element $v$ is in $\mathrm{GL}_{m_{1}}(\mathfrak{o}_{E})\times...\times\mathrm{GL}_{m_{r}}(\mathfrak{o}_{E})\subseteq B^{\times}$ satisfying $v\delta_{b}(v)=1$;
			
			(2) $\delta_{b}(k)xk^{-1}\in vK^{1}$.
			
		\end{lemma}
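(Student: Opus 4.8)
The plan is to imitate \cite{secherre2019supercuspidal}, Lemma~6.11, using the Iwahori-type decompositions of $K$, $K^{1}$ and $V$ relative to the Levi $M$ together with one structural input: $\delta_{b}$ stabilises $K\cap M$ and interchanges $K\cap N$ and $K\cap N^{-}$. This last point follows from the explicit shape of $\varepsilon_{0}$ (resp. $\varepsilon_{0E}$) recorded at the start of \S\ref{sectiondistypethm}, which is block-diagonal for the partition $(m_{1}d,\dots,m_{r}d)$ (resp. $(m_{1},\dots,m_{r})$), and from $b\in M$. The first thing I would record is the elementary remark that $\delta_{b}$-conjugation preserves the hypothesis: if $w\delta_{b}(w)=1$ and $k\in K$ then $w':=\delta_{b}(k)wk^{-1}$ satisfies $\delta_{b}(w')=k\,\delta_{b}(w)\,\delta_{b}(k)^{-1}$ (as $\delta_{b}^{2}=\mathrm{id}$ on $K$), hence $w'\delta_{b}(w')=\delta_{b}(k)\,w\delta_{b}(w)\,\delta_{b}(k)^{-1}=1$; so I am free to replace $y$ by any $\delta_{b}$-conjugate.

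\emph{Step 1: bring $y$ into $K\cap M$.} Using $K=(K\cap N^{-})(K\cap M)(K\cap N)$, write $y=\eta_{-}\mu\eta_{+}$ with $\eta_{-}\in K\cap N^{-}$, $\mu\in K\cap M$, $\eta_{+}\in K\cap N$, and conjugate by $k=\eta_{+}$. Then
\[
w:=\delta_{b}(\eta_{+})\,y\,\eta_{+}^{-1}=\bigl(\delta_{b}(\eta_{+})\eta_{-}\bigr)\mu=:\eta_{-}'\mu\in(K\cap N^{-})(K\cap M),
\]
since $\delta_{b}(\eta_{+})\in K\cap N^{-}$. By the opening remark $w\delta_{b}(w)=1$; as $\delta_{b}(\eta_{-}')\in K\cap N$ and $\delta_{b}(\mu)\in K\cap M$, this reads $\eta_{-}'\cdot\bigl(\mu\delta_{b}(\mu)\bigr)\cdot\delta_{b}(\eta_{-}')=1$, an equality between an element of $K\cap N^{-}$, one of $K\cap M$ and one of $K\cap N$. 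By uniqueness of factorisations on the big cell $N^{-}\times M\times N\hookrightarrow G$, all three factors are trivial; in particular $\eta_{-}'=1$, so $w=\mu\in K\cap M$. After this single $\delta_{b}$-conjugation I may thus assume $y\in K\cap M$, while remembering that the $k$ of the statement is $\eta_{+}$.

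\emph{Step 2: descend modulo $J^{1}\cap M$ and lift.} Since $\mfb$ is maximal and $b$ is block-scalar on $M$, one has $K\cap M=\bs{J}(\mfa,\beta)\cap M=\prod_{i}J(\mfa_{i},\beta)$ for suitable maximal simple strata $[\mfa_{i},\beta]$, the involution $\delta_{b}$ acts factorwise, and it preserves $\prod_{i}J^{1}(\mfa_{i},\beta)=J^{1}\cap M$ (as $\tau_{0}$ preserves $J^{1}$ and each block $\varpi_{E}^{a_{i}}I_{m_{i}}$ of $b$ lies in the centre of the corresponding block of $B^{\times}$). Hence $\delta_{b}$ descends to an involution $\bar\delta_{b}$ of $(K\cap M)/(J^{1}\cap M)\simeq\prod_{i}\mrgl_{m_{i}}(\bs{l})$, and through the reduction map $V\cap M=\prod_{i}\mrgl_{m_{i}}(\mfo_{E})\twoheadrightarrow\prod_{i}\mrgl_{m_{i}}(\bs{l})$ — on $V\cap M$ the conjugation by $b$ is trivial — it is the reduction of the orthogonal involution $\prod_{i}\tau_{\varepsilon_{0E,i}}$, where $\varepsilon_{0E,i}$ is the $i$-th diagonal block of $\varepsilon_{0E}$ (equal to $I_{m_{i}}$, save possibly for $i=r$). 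The image $\bar y=(\bar y_{i})_{i}$ of $y$ satisfies $\bar y\,\bar\delta_{b}(\bar y)=1$, i.e. each $\bar\varepsilon_{0E,i}\bar y_{i}$ is a symmetric invertible matrix over $\bs{l}$; since $p\neq 2$ I would lift each $\bar\varepsilon_{0E,i}\bar y_{i}$ to an invertible symmetric matrix $S_{i}$ over $\mfo_{E}$ and set $v_{i}:=\varepsilon_{0E,i}^{-1}S_{i}$. Then $v=(v_{i})_{i}\in\mrgl_{m_{1}}(\mfo_{E})\times\cdots\times\mrgl_{m_{r}}(\mfo_{E})\subseteq B^{\times}$, a direct computation (using $S_{i}$ symmetric) gives $v\,\delta_{b}(v)=1$, and $v$ has the same image as $y$ in $\prod_{i}\mrgl_{m_{i}}(\bs{l})$, so $v^{-1}y\in\prod_{i}J^{1}(\mfa_{i},\beta)=J^{1}\cap M\subseteq K^{1}$, i.e. $y\in vK^{1}$. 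Together with $k=\eta_{+}$ from Step~1 this yields conclusions~(1) and~(2).

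The only delicate points are geometric: checking that $\delta_{b}$ interchanges $N$ and $N^{-}$ (so that the Iwahori-uniqueness argument in Step~1 is legitimate) and that the involution it induces on $\prod_{i}\mrgl_{m_{i}}(\bs{l})$ is genuinely the reduction of an orthogonal involution on the $\mfo_{E}$-points; both come down to the explicit form of $\varepsilon_{0}$, $\varepsilon_{0E}$ and $b$ and to $b\tau_{0}(b)=1$ from Lemma~\ref{lemmataub=b}. I expect no real obstacle beyond careful bookkeeping. In particular the lifting step is automatic precisely because, in residue characteristic $\neq 2$, the variety $\{v:v\delta_{b}(v)=1\}$ — a twisted form of the space of symmetric matrices — is smooth over $\mfo_{E}$, which is what allows the output to land in $vK^{1}$ with $v$ honestly satisfying $v\delta_{b}(v)=1$ rather than only modulo $K^{1}$.
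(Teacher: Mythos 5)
Your proof is correct and takes essentially the same route as the paper, whose own proof simply defers to \cite{secherre2019supercuspidal} and \cite{zou2019supercuspidal}: Iwahori decomposition of $K$ relative to $M$ to push $y$ into $K\cap M$, then reduction modulo $J^{1}\cap M$ to $\prod_{i}\mrgl_{m_{i}}(\bs{l})$ and lifting of an invertible symmetric matrix. The only blemishes are cosmetic and harmless: in Step 1 the $N$-component of $w\delta_{b}(w)$ is an $M$-conjugate of $\delta_{b}(\eta_{-}')$ rather than $\delta_{b}(\eta_{-}')$ itself (big-cell uniqueness still forces $\eta_{-}'=1$), and in Step 2 conjugation by $b=c\varepsilon_{0E}$ is not trivial on $V\cap M$ when $\varepsilon_{0E}\neq I_{m}$, so the induced involution is blockwise $\tau_{\varepsilon_{0E}^{2}}$ rather than $\tau_{\varepsilon_{0E}}$ --- still an orthogonal involution defined over $\mfo_{E}$, so the lifting argument goes through unchanged.
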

		
		\begin{proof}
			
			We may follow the same proof as \cite{zou2019supercuspidal}, Lemma 6.9, by replacing $\sigma$ and $^{*}$ in \emph{loc. cit.} with trivial map and $\,^{t}$. Noting that in instead of considering the three cases separately by using Lemma 6.10, Lemma 6.11 and Lemma 6.12 in \emph{loc. cit.}, there is only one case to consider in our lemma and we only need to use Lemma 6.11 in \emph{loc. cit.}, which we restate here for completeness.
			
			\begin{lemma}[\cite{kleidman1990subgroup}, Proposition 2.5.4]\label{lemmaklerdman}
			
			For $x$ a symmetric matrix in $\mrgl_{m}(\bs{l})$ with $\bs{l}$ a finite field, there exists $A\in \mrgl_{m}(\bs{l})$ such that $\,^t AxA=I_{m}$ or $\mathrm{diag}(1,...,1,\epsilon)$, where $\epsilon\in\bs{l}^{\times}-\bs{l}^{\times2}$ is fixed. In particular the $\mrgl_{m}(\bs{l})$-similar class of $x$ is determined by $\mrdet(x)\in\bs{l}^{\times}/\bs{l}^{\times2}$.
				
			\end{lemma}
			
		\end{proof}
		
		We finish the proof of Lemma \ref{lemmadoublecoset}. Applying Lemma \ref{lemmaredpgroup} gives us $k\in K$ and $v\in V$ such that $bv\tau_{0}(bv)=1$ and $\delta_{b}(k)xk^{-1}\in vK^{1}$. Thus we have $\tau_{0}(k)\gamma k^{-1}\in bv K^{1}$. Therefore, replacing $g$ by $kg$ and $b$ by $bv$, we may assume that $\gamma$ is written as
		\begin{equation}\label{eqgammabx}
			\gamma=bx,\quad b\tau_{0}(b)=1, \quad x\in K^{1},\quad b\in\varpi_{E}^{a_{1}}\mathrm{GL}_{m_{1}}(\mathfrak{o}_{E})\times...\times\varpi_{E}^{a_{r}}\mathrm{GL}_{m_{r}}(\mathfrak{o}_{E}).
		\end{equation}
		Furthermore, we have $\delta_{b}(x)x=1$.
		
		Since $K^{1}$ is a $\delta_{b}$-stable pro-$p$-group and $p$ is odd, the first cohomology set of $\delta_{b}$ in $K^{1}$ is trivial. Thus $x=\delta_{b}(y)y^{-1}$ for some $y\in K^{1}$, hence using (\ref{eqgamma}) we have $\gamma=\tau_{0}(g)ug^{-1}=\tau_{0}(y)by^{-1}$. As a result, if we further use $y^{-1}g$ to replace $g$, we get $\gamma=b\in B^{\times}$, which finishes the proof of Lemma \ref{lemmadoublecoset}.
		
	\end{proof}
	
	\begin{remark}
		
		Noting that in \cite{secherre2019supercuspidal} and \cite{zou2019supercuspidal}, the corresponding double coset lemma says that $\gamma\in JB^{\times}J$ if and only if $g\in JB^{\times}G^{\tau}$. However in our case if we assume $\varepsilon=\varepsilon_{0}$ and $\gamma=\tau(g)g^{-1}\in JB^{\times}J$, then it is possible that $g$ is not in $JB^{\times}G^{\tau}$. We will discuss this new phenomenon and calculate all the possible $J$-$G^{\tau}$ cosets in \S \ref{subsecdoublecosetdisc} .
		
	\end{remark}
	
	\subsection{Distinction of the Heisenberg representation}
	
	Let $\eta$ be the Heisenberg representation of $J^{1}$ associated to $\theta$, we have the following result as in \cite{secherre2019supercuspidal}, Proposition 6.12 and \cite{zou2019supercuspidal}, Proposition 6.13:
	
	\begin{proposition}\label{propheisdist}
		
		Given $g\in G$, we have
		
		$$\mathrm{dim}_{\mbc}\mrhom_{G^{\tau}}(\eta^{g},1)=\begin{cases}
			1 \quad &\text{if}\ \gamma=u\tau(g)g^{-1}\in JB^{\times}J,\\
			0 \quad &\text{otherwise.}
		\end{cases}$$
		
	\end{proposition}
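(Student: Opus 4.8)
The plan is to dispose of the vanishing case using the distinction criterion already established for simple characters, and to handle the multiplicity-one case by reducing, via the double coset lemma, to a computation inside a finite Heisenberg group. \textbf{Vanishing case.} Suppose $\gamma=u\tau(g)g^{-1}\notin JB^{\times}J$. By Proposition \ref{propthetadisc} the character $\theta^{g}$ is non-trivial on $H^{1g}\cap G^{\tau}$. Since the restriction of $\eta^{g}$ to $H^{1g}$ is $\theta^{g}$-isotypic (indeed $\eta^{g}$ acts on $H^{1g}$ by the scalar $\theta^{g}$), every linear form on $\eta^{g}$ fixed by $J^{1g}\cap G^{\tau}$ is in particular fixed by $H^{1g}\cap G^{\tau}$, hence is zero; so $\mrhom_{J^{1g}\cap G^{\tau}}(\eta^{g},1)=0$.

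\textbf{Reduction of the multiplicity-one case.} Assume $\gamma\in JB^{\times}J$. Changing $g$ within $JgG^{\tau}$ alters neither the dimension of $\mrhom_{J^{1g}\cap G^{\tau}}(\eta^{g},1)$ — because $J$ normalises $J^{1}$ and the isomorphism class of $\eta$, while conjugation by an element of $G^{\tau}$ preserves $G^{\tau}$ — nor the hypothesis $\gamma\in JB^{\times}J$ (the remark after Lemma \ref{lemmadoublecoset}). So by Lemma \ref{lemmadoublecoset}, sharpened by Lemmas \ref{lemmataub=b}, \ref{lemmaK1} and \ref{lemmaredpgroup}, we may assume $\gamma=b\in B^{\times}$ with $\tau_{0}(b)b=1$ and $b$ in the normal form of Lemma \ref{lemmaredpgroup}. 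Conjugating the whole problem by $g$ and using (\ref{eqgamma}) and (\ref{eqtau0gammagamma=1}), one is reduced to proving $\dim_{\mbc}\mrhom_{J^{1}\cap G^{\tau'}}(\eta,1)=1$, where $\tau'(x)=b^{-1}\tau_{0}(x)b$ and, by Proposition \ref{propthetadisc} once more, $\theta$ is trivial on $H^{1}\cap G^{\tau'}$. Since $\eta$ acts on $H^{1}$ by the scalar $\theta$, it is trivial on $H^{1}\cap G^{\tau'}$, so $\mrhom_{J^{1}\cap G^{\tau'}}(\eta,1)$ is the multiplicity of the trivial character in the restriction of $\eta$ to $J^{1}\cap G^{\tau'}=(J^{1}\cap K)^{\delta_{b}}$, which is governed by the symplectic $\mbfp$-space $J^{1}/H^{1}$ equipped with the alternating form $h_{\theta}(\bar x,\bar y)=\theta(xyx^{-1}y^{-1})$ and the involution induced by $\delta_{b}$. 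Passing to $N:=J^{1}/\ker\theta$, the statement becomes an instance of the following fact about finite Heisenberg groups: if $N$ is a finite Heisenberg $p$-group with $p$ odd, $Z$ its centre, $\psi$ a central character of $N$, $\eta_{N}$ the irreducible representation of $N$ over $\psi$, and $\epsilon$ an involution of $N$ with $\psi\circ\epsilon=\psi^{-1}$, then $\dim_{\mbc}\mrhom_{N^{\epsilon}}(\eta_{N},1)=1$.

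\textbf{The finite computation and the main obstacle.} I would prove that fact as follows. The involution $\epsilon$ acts $\mbfp$-linearly on $V=N/Z$, and $\psi\circ\epsilon=\psi^{-1}$ forces $h_{\psi}(\epsilon v,\epsilon w)=h_{\psi}(v,w)^{-1}$; since $p$ is odd and $h_{\psi}$ takes $p$-power root of unity values, both eigenspaces of $\epsilon$ on $V$ are totally isotropic, hence each is Lagrangian by a dimension count. Because $H^{1}(\langle\epsilon\rangle,Z)=0$ ($p\neq2$), the subgroup $N^{\epsilon}$ surjects onto the $+1$-eigenspace $W$, with kernel $Z^{\epsilon}$ on which $\psi$ is trivial; in particular $N^{\epsilon}\subseteq L$, the preimage of $W$. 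By the standard model of Heisenberg representations, $L$ is normal in $N$, $\eta_{N}\simeq\mathrm{Ind}_{L}^{N}\lambda$ for any character $\lambda$ of $L$ extending $\psi$, and $\eta_{N}|_{L}$ is the sum of all such extensions, each with multiplicity one. Restricting these characters to $N^{\epsilon}$, exactly one becomes trivial: the set of extensions of $\psi$ is a torsor under $W^{\wedge}$, and $\lambda\mapsto\bigl(w\mapsto\lambda(\tilde w)\bigr)$, for $\tilde w\in N^{\epsilon}$ lifting $w\in W$, is a well-defined $W^{\wedge}$-equivariant bijection onto $W^{\wedge}$. This yields multiplicity one. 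The step I expect to be genuinely delicate is not this computation but the reduction: because $\tau'$ does not stabilise $J^{1}$ (conjugation by $b$ moves the underlying hereditary order), the finite symplectic model must be extracted from intersections such as $J^{1}\cap(J^{1})^{b}$ and the $\delta_{b}$-stable pro-$p$-subgroups $K^{1}$ of Lemma \ref{lemmaK1}, and one must check that the passage to $V$, the isotropy of the eigenspaces, and the surjectivity of the $\delta_{b}$-fixed subgroup onto $W$ all survive the replacement of $\tau_{0}$ by $\tau'$. This is precisely where the double coset machinery is used; the argument runs parallel to \cite{secherre2019supercuspidal}, Proposition 6.12 and \cite{zou2019supercuspidal}, Proposition 6.13, with the Galois, resp.\ unitary, involution there replaced by the trivial action and their double coset lemma replaced by Lemma \ref{lemmadoublecoset}.
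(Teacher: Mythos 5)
Your proposal is correct and follows essentially the same route as the paper: the vanishing case is exactly the paper's (restrict $\eta^{g}$ to $H^{1g}$, where it is $\theta^{g}$-isotypic, and invoke Proposition \ref{propthetadisc}), and the multiplicity-one case is the same reduction via Lemma \ref{lemmadoublecoset} to $\gamma\in B^{\times}$ followed by transport by $g$ to the involution $\delta(x)=(\tau(g)g^{-1})^{-1}\tau(x)\tau(g)g^{-1}$. The only divergence is at the final step: where you write out the clean finite Heisenberg computation (which is correct, including the observation that both eigenspaces of the involution on $J^{1}/H^{1}$ are Lagrangian and that exactly one extension of the central character restricts trivially to the fixed-point group) and then explicitly defer the skew adaptation needed because $\delta(H^{1})=H^{1\gamma}$ rather than $H^{1}$, the paper simply records the identities $\gamma\delta(\gamma)=1$, $\delta(H^{1})=H^{1\gamma}$ and $\theta\circ\delta=(\theta^{-1})^{\gamma}$ and cites \cite{zou2019supercuspidal}, Proposition 6.14, which is stated precisely for that skew situation. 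So your "clean" computation is the core of the cited result, your identification of the genuinely delicate point (one must work inside $J^{1}\cap J^{1\gamma}$, $H^{1}\cap H^{1\gamma}$ and the $\delta$-stable pro-$p$-subgroups rather than on $J^{1}/H^{1}$ itself, on which $\delta$ does not act) is accurate, and both you and the paper outsource that step to the same references.
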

	
	\begin{proof}
		
		First we restrict $\eta^{g}$ to $H^{1g}$ which is isomorphic to $\theta^{g(J^{1}:H^{1})^{1/2}}$. Using Proposition \ref{propthetadisc} when $\gamma\notin JB^{\times}J$, the dimension equals 0.
		
		When $\gamma\in JB^{\times}J$, by Lemma \ref{lemmadoublecoset} we may further assume $\gamma\in B^{\times}$. We write
		$$\delta(x):=(\tau(g)g^{-1})^{-1}\tau(x)\tau(g)g^{-1}\quad \text{for}\ x\in G$$
		as an involution on $G$, then by definition and (\ref{eqtau0gammagamma=1}) we have
		$$\mrhom_{G^{\tau}}(\eta^{g},1)\simeq\mrhom_{G^{\delta}}(\eta,1),$$
		and
		\begin{equation}\label{eqgammadeltagamma=1}
			\gamma\delta(\gamma)=\gamma\gamma^{-1}\tau_{0}(\gamma)\gamma=1.
		\end{equation}
		Moreover, using (\ref{eqtauHtautheta}) we have
		\begin{equation}\label{eqdeltaHdeltatheta}
			\delta(H^{1})=(\tau(g)g^{-1})^{-1}H^{1u}\tau(g)g^{-1}=H^{1\gamma}\quad\text{and}\quad\theta\circ\delta=(\theta^{-1})^{u\tau(g)g^{-1}}=(\theta^{-1})^{\gamma}.
		\end{equation}
		So using \cite{zou2019supercuspidal}, Proposition 6.14, we finish the proof.
		
	\end{proof}

	\subsection{Distinction of the extension of a Heisenberg representation}
	
	Let $\bs{\kappa}$ be an irreducible representation of $\bs{J}$ extending $\eta$, then there exists a unique representation $\bs{\rho}$ of $\bs{J}$ trivial on $J^{1}$ up to isomorphism, such that $\Lambda=\bs{\kappa}\otimes\bs{\rho}$. First of all we have the following proposition:
	
	\begin{proposition}\label{propkappa}
		
		Let $g\in G$ such that $\gamma\in JB^{\times}J$.
		
		(1) There is a unique character $\chi$ of $\boldsymbol{J}^{g}\cap G^{\tau}$ trivial on $J^{1g}\cap G^{\tau}$ such that
		$$\mathrm{Hom}_{J^{1g}\cap G^{\tau}}(\eta^{g},1)=\mathrm{Hom}_{\boldsymbol{J}^{g}\cap G^{\tau}}(\boldsymbol{\kappa}^{g},\chi^{-1}).$$
		
		(2) The canonical linear map
		$$\mathrm{Hom}_{J^{1g}\cap G^{\tau}}(\eta^{g},1)\otimes\mathrm{Hom}_{\boldsymbol{J}^{g}\cap G^{\tau}}(\boldsymbol{\rho}^{g},\chi)\rightarrow\mathrm{Hom}_{\boldsymbol{J}^{g}\cap G^{\tau}}(\Lambda^{g},1).$$
		is an isomorphism.
		
	\end{proposition}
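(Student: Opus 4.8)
The plan is to deduce both assertions formally from Proposition \ref{propheisdist}, following the pattern of \cite{secherre2019supercuspidal}, Proposition 6.13 and \cite{zou2019supercuspidal}, Proposition 6.15; nothing in the orthogonal setting forces a new idea here, the content having been packaged into Proposition \ref{propheisdist}. First I would record the group theory: since $J^{1}$ is normal in $\bs{J}$, conjugating by $g$ and intersecting with $G^{\tau}$ shows that $J^{1g}\cap G^{\tau}$ is normal in $\bs{J}^{g}\cap G^{\tau}$. With the extension $\bs{\kappa}^{g}$ of $\eta^{g}$ to $\bs{J}^{g}$ fixed, the one-dimensional space $V:=\mrhom_{J^{1g}\cap G^{\tau}}(\eta^{g},1)$ (one-dimensional by Proposition \ref{propheisdist}, as $\gamma\in JB^{\times}J$) acquires a natural action of $\bs{J}^{g}\cap G^{\tau}$: for $h$ in this group and $\varphi\in V$ put $(h\cdot\varphi)(v)=\varphi(\bs{\kappa}^{g}(h)^{-1}v)$; this again lies in $V$ because $h$ normalizes $J^{1g}\cap G^{\tau}$ and $\bs{\kappa}^{g}$ extends $\eta^{g}$. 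Being an action on a one-dimensional space, it is a smooth character $\chi$, and it is trivial on $J^{1g}\cap G^{\tau}$ since elements of $V$ are by definition fixed by that subgroup.

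For part (1), once $\chi$ is defined this way, the space $\mrhom_{\bs{J}^{g}\cap G^{\tau}}(\bs{\kappa}^{g},\chi^{-1})$ is exactly the subspace of $V$ on which $\bs{J}^{g}\cap G^{\tau}$ acts by $\chi^{-1}$, hence all of $V$; this is the claimed equality. Uniqueness is immediate: if $\chi'$ had the same property, then $\mrhom_{\bs{J}^{g}\cap G^{\tau}}(\bs{\kappa}^{g},\chi'^{-1})$ would be a nonzero ($=V$) isotypic component for $\chi'^{-1}$ inside $V$, forcing $\chi'=\chi$. I would then verify that the module structure on $V$, hence $\chi$, is well defined and depends only on the chosen $\bs{\kappa}$.

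For part (2), I would first check the natural map is well defined: using part (1) to regard an element of the first tensor factor as $\varphi\in\mrhom_{\bs{J}^{g}\cap G^{\tau}}(\bs{\kappa}^{g},\chi^{-1})$ and an element of the second as $\psi\in\mrhom_{\bs{J}^{g}\cap G^{\tau}}(\bs{\rho}^{g},\chi)$, the functional $v\otimes w\mapsto\varphi(v)\psi(w)$ on $\Lambda^{g}=\bs{\kappa}^{g}\otimes\bs{\rho}^{g}$ is $(\bs{J}^{g}\cap G^{\tau})$-invariant because the factors $\chi^{-1}$ and $\chi$ cancel. Bijectivity I would get from the general identity, valid for any smooth representation $\sigma$ of $\bs{J}^{g}\cap G^{\tau}$ trivial on $J^{1g}\cap G^{\tau}$,
$$\mrhom_{\bs{J}^{g}\cap G^{\tau}}(\bs{\kappa}^{g}\otimes\sigma,1)\simeq\mrhom_{J^{1g}\cap G^{\tau}}(\eta^{g},1)\otimes\mrhom_{\bs{J}^{g}\cap G^{\tau}}(\sigma,\chi),$$
applied to $\sigma=\bs{\rho}^{g}$, which is legitimate because $\bs{\rho}$, hence $\bs{\rho}^{g}$, is trivial on $J^{1}$, hence on $J^{1g}$. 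To prove the identity, rewrite $\mrhom_{\bs{J}^{g}\cap G^{\tau}}(\bs{\kappa}^{g}\otimes\sigma,1)=\mrhom_{\bs{J}^{g}\cap G^{\tau}}(\sigma,(\bs{\kappa}^{g})^{\vee})$ and observe that, since $\sigma$ is trivial on $J^{1g}\cap G^{\tau}$, the image of any equivariant map $\sigma\to(\bs{\kappa}^{g})^{\vee}$ lands in the $(J^{1g}\cap G^{\tau})$-fixed vectors of $(\bs{\kappa}^{g})^{\vee}$ — a one-dimensional space canonically equal to $\mrhom_{J^{1g}\cap G^{\tau}}(\eta^{g},1)$, on which $\bs{J}^{g}\cap G^{\tau}$ acts through $\chi$; tracing through the identifications matches the resulting isomorphism with the canonical map of the statement.

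The only genuinely delicate points will be bookkeeping: fixing once and for all the variance conventions so that $\chi$ and $\chi^{-1}$ land on the correct sides, and confirming that the $(\bs{J}^{g}\cap G^{\tau})$-action on $\mrhom_{J^{1g}\cap G^{\tau}}(\eta^{g},1)$ is independent of further choices once $\bs{\kappa}$ is fixed. Both are handled exactly as in \cite{secherre2019supercuspidal} and \cite{zou2019supercuspidal}, and the orthogonal nature of $\tau$ plays no role beyond what is already encoded in Proposition \ref{propheisdist}.
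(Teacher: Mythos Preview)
Your proposal is correct and follows essentially the same route as the paper, which simply defers to \cite{secherre2019supercuspidal}, Lemma 6.20, using Proposition \ref{propheisdist} as the sole input; you have just unpacked that reference explicitly. The minor discrepancy in the cited numbering (you point to Proposition 6.13 of \cite{secherre2019supercuspidal} and Proposition 6.15 of \cite{zou2019supercuspidal}, whereas the paper cites Lemma 6.20 of \cite{secherre2019supercuspidal}) is immaterial, since the substance of your argument---defining $\chi$ via the $\bs{J}^{g}\cap G^{\tau}$-action on the one-dimensional space $\mrhom_{J^{1g}\cap G^{\tau}}(\eta^{g},1)$ and then reading off part (2) from the resulting identification of the $(J^{1g}\cap G^{\tau})$-invariants of $(\bs{\kappa}^{g})^{\vee}$---is precisely the standard one.
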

	
	\begin{proof}
		
		With the aid of Proposition \ref{propheisdist}, the proof is the same as that in \cite{secherre2019supercuspidal}, Lemma 6.20.
		
	\end{proof}
	
	For $g\in G$ such that $\gamma=u\tau(g)g^{-1}=\tau_{0}(g)ug^{-1}\in JB^{\times}J$, using $u\tau(g)=\tau_{0}(g)u$ to replace $g$, we have
	$$\tau_{0}(\tau_{0}(g)u)u(\tau_{0}(g)u)^{-1}=gu^{-1}\tau_{0}(g)^{-1}=(\tau_{0}(g)ug^{-1})^{-1}\in JB^{\times}J,$$
	which means that we may consider $u\tau(g)$ instead of $g$ in Proposition \ref{propkappa}. Thus there exists a unique character $\chi'$ of $\boldsymbol{J}^{u\tau(g)}\cap G^{\tau}$ trivial on $J^{1u\tau(g)}\cap G^{\tau}$ such that
	$$\mathrm{Hom}_{J^{1u\tau(g)}\cap G^{\tau}}(\eta^{u\tau(g)},1)\simeq\mathrm{Hom}_{\boldsymbol{J}^{u\tau(g)}\cap G^{\tau}}(\boldsymbol{\kappa}^{u\tau(g)},\chi'^{-1}).$$
	Moreover, we know that $\tau(\boldsymbol{J})=\boldsymbol{J}^{u}$, $\tau(J)=J^{u}$, $\tau(J^{1})=J^{1u}$ and $\tau(H^{1})=H^{1u}$, thus as in \cite{zou2019supercuspidal}, Lemma 4.2 and Lemma 6.15, it is easy to show that \begin{equation}\label{eqjtau=j0tau}
		\boldsymbol{J}^{g}\cap G^{\tau}=\boldsymbol{J}^{u\tau(g)}\cap G^{\tau}=J^{g}\cap G^{\tau}=J^{u\tau(g)}\cap G^{\tau}
	\end{equation}
	As a result, $\chi$ and $\chi'$ are characters defined on the same group $\boldsymbol{J}^{g}\cap G^{\tau}=\boldsymbol{J}^{u\tau(g)}\cap G^{\tau}$. 
	
	\begin{proposition}\label{propchichi'}
		
		For $\chi$ and $\chi'$ characters of $\boldsymbol{J}^{g}\cap G^{\tau}=\boldsymbol{J}^{u\tau(g)}\cap G^{\tau}$ defined above, we have $\chi=\chi'$.
		
	\end{proposition}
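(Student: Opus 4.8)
The plan is to show that the two characters $\chi$ and $\chi'$ of the common group $\bs{J}^{g}\cap G^{\tau}=\bs{J}^{u\tau(g)}\cap G^{\tau}$ agree by comparing the ways they are defined through the extension $\bs{\kappa}$ of the Heisenberg representation $\eta$, exploiting the $\tau_{0}$-selfduality of $(\bs{J},\Lambda)$. After conjugating by $g$ (using the double coset lemma, Lemma~\ref{lemmadoublecoset}, to reduce to the case $\gamma\in B^{\times}$), both $\chi$ and $\chi'$ become characters associated to the same involution $\delta(x)=\gamma^{-1}\tau(x)\gamma$ of $G$, via Proposition~\ref{propkappa}; the point will be that passing from $g$ to $u\tau(g)$ replaces $\delta$ by an involution that differs from $\delta$ by the inner automorphism coming from $\gamma\in\bs{J}$, which normalizes $\bs{\kappa}$ up to twist by a character, and this twist is precisely trivial because of the selfduality relation $\bs{\kappa}^{\tau_{0}}\simeq\bs{\kappa}^{\vee}$ (together with the normalization of $\eta$). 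This is exactly the structure of the analogous arguments in \cite{secherre2019supercuspidal}, Proposition~6.21, and \cite{zou2019supercuspidal}, Proposition~6.16, so the bulk of the proof will consist in checking that those arguments survive with the Galois/unitary involution there replaced by our orthogonal $\tau_{0}$ and with $\tau(g)g^{-1}$ replaced by $\gamma=u\tau(g)g^{-1}$.

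Concretely, the key steps, in order, would be: (i) record that by (\ref{eqjtau=j0tau}) the groups $\bs{J}^{g}\cap G^{\tau}$ and $\bs{J}^{u\tau(g)}\cap G^{\tau}$ coincide, and that $\chi$ (resp. $\chi'$) is the unique character of this group, trivial on $J^{1g}\cap G^{\tau}$, realizing $\mrhom_{J^{1g}\cap G^{\tau}}(\eta^{g},1)=\mrhom_{\bs{J}^{g}\cap G^{\tau}}(\bs{\kappa}^{g},\chi^{-1})$ from Proposition~\ref{propkappa}; (ii) use $u\tau(g)=\tau_{0}(g)u$ and the identity $\gamma=\tau_{0}(g)ug^{-1}$ to see that $\bs{\kappa}^{u\tau(g)}=(\bs{\kappa}^{\tau_{0}})^{u\tau(g)}\circ\tau_{0}^{-1}$-type manipulations relate $\bs{\kappa}^{u\tau(g)}$ to $\bs{\kappa}^{g}$ via $\tau_{0}$ and conjugation by $\gamma$; (iii) invoke $\bs{\kappa}^{\tau_{0}}\simeq\bs{\kappa}^{\vee}$ (a consequence of $\Lambda^{\tau_{0}}\simeq\Lambda^{\vee}$ and the uniqueness of $\bs{\rho}$, as in the cited references) to conclude that $\bs{\kappa}^{u\tau(g)}$ and $\bs{\kappa}^{g}$, restricted to the relevant subgroup, are related by a character that is forced to be trivial, hence $\chi'^{-1}=\chi^{-1}$; (iv) conclude $\chi=\chi'$. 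Throughout I would lean on Proposition~\ref{propheisdist}, which guarantees the relevant $\mrhom$-spaces are one-dimensional, so that the comparison of characters is well-posed.

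The main obstacle I anticipate is bookkeeping step (ii)–(iii): one must carefully track how $\bs{\kappa}$ transforms under the composition of $\tau_{0}$, the contragredient, and conjugation by $\gamma\in\bs{J}$, and verify that the ambiguity (the extension $\bs{\kappa}$ of $\eta$ is only well-defined up to twist by a character of $\bs{J}/J^{1}$) does not leak into the comparison. In the Galois and unitary cases this is handled by a careful choice of $\bs{\kappa}$ and by exploiting that $\bs{\rho}$ is intrinsic; the orthogonal case should be no worse, since $\tau_{0}$ again stabilizes $\bs{J}$ and satisfies $\Lambda^{\tau_{0}}\simeq\Lambda^{\vee}$, but one must make sure the symmetric-matrix conjugating element $u$ (which satisfies $u\tau_{0}(u)=1$ by (\ref{equtau0u=1})) plays the same role that $\tau(g)g^{-1}$ did there. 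I expect the verification to go through essentially verbatim, so the proof can be written as: ``We follow the proof of \cite{secherre2019supercuspidal}, Proposition~6.21 (see also \cite{zou2019supercuspidal}, Proposition~6.16), replacing the involution used there by $\tau_{0}$ and $\tau(g)g^{-1}$ by $\gamma=u\tau(g)g^{-1}$; all the intermediate results invoked there have been established above in this generality (Proposition~\ref{propheisdist}, Proposition~\ref{propkappa}, and the $\tau_{0}$-selfduality of $(\bs{J},\Lambda)$).''
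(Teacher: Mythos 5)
Your setup is right (reduce to $\gamma\in B^{\times}$ via Lemma \ref{lemmadoublecoset}, use Proposition \ref{propkappa} and the one-dimensionality from Proposition \ref{propheisdist}), but the central mechanism you propose --- deducing $\chi=\chi'$ from the selfduality $\bs{\kappa}^{\tau_{0}}\simeq\bs{\kappa}^{\vee}$ --- proves the wrong identity. If you run that chain carefully (as the paper does in Corollary \ref{corchiquad}, \emph{after} this proposition), you get
$$\mrhom_{J^{1g}\cap G^{\tau}}(\eta^{g},1)\simeq\mrhom_{\bs{J}^{g}\cap G^{\tau}}(\bs{\kappa}^{g\vee}\circ\tau,\chi\circ\tau)\simeq\mrhom_{\bs{J}^{u\tau(g)}\cap G^{\tau}}(\bs{\kappa}^{u\tau(g)},\chi\circ\tau),$$
and the uniqueness in Proposition \ref{propkappa}.(1) then yields $\chi'^{-1}=\chi\circ\tau=\chi$ on the $\tau$-fixed group, i.e.\ $\chi'=\chi^{-1}$. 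That is a genuinely different relation from $\chi'=\chi$; the two relations \emph{together} are exactly what gives $\chi^{2}=1$ in Corollary \ref{corchiquad}, so assuming they coincide presupposes the quadraticity you are not yet entitled to. A second circularity: you speak of "conjugation by $\gamma\in\bs{J}$, which normalizes $\bs{\kappa}$ up to a character", but $\gamma\in\bs{J}$ is Theorem \ref{thmtaugginJ}, whose proof relies on Corollary \ref{corchiquad} and hence on this very proposition. At this stage one only knows $\gamma\in B^{\times}\subset JB^{\times}J$, so $\gamma$ merely \emph{intertwines} $\eta$ and $\kappa^{0}$, it does not normalize them. Note also that the statement is proved in the paper for an arbitrary extension $\bs{\kappa}$ of $\eta$; the selfdual $\bs{\kappa}$ is only constructed afterwards (Proposition \ref{propkappaselfdual}).

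The argument that works, and the one the paper uses, needs no selfduality: since $u\tau(g)=\gamma g$ and $\gamma\in JB^{\times}J=I_{G}(\eta)=I_{G}(\kappa^{0})$, the intertwining space $\mrhom_{J^{1}\cap J^{1\gamma}}(\eta^{\gamma},\eta)=\mrhom_{J\cap J^{\gamma}}(\kappa^{0\gamma},\kappa^{0})$ is one-dimensional. A nonzero intertwiner $\varphi$ induces (Proposition \ref{propvarphibijec}, i.e.\ \cite{zou2019supercuspidal}, Proposition 6.20) a bijection $\lambda\mapsto\lambda\circ\varphi$ from $\mrhom_{J^{1g}\cap G^{\tau}}(\eta^{g},1)$ onto $\mrhom_{J^{1u\tau(g)}\cap G^{\tau}}(\eta^{u\tau(g)},1)$, both one-dimensional by Proposition \ref{propheisdist}. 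Evaluating $\lambda':=\lambda\circ\varphi$ on $\kappa^{0u\tau(g)}(x)v$ for $x\in J^{g}\cap G^{\tau}=J^{u\tau(g)}\cap G^{\tau}$ and using the $(\kappa^{0u\tau(g)},\kappa^{0g})$-equivariance of $\varphi$ gives $\chi'(x)^{-1}\lambda'(v)=\chi(x)^{-1}\lambda'(v)$, whence $\chi=\chi'$ by (\ref{eqjtau=j0tau}). You should restructure your proof around this transport-of-linear-forms computation rather than around selfduality.
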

	
	\begin{proof}	
		
		We write $\delta(x)=(\tau(g)g^{-1})^{-1}\tau(x)\tau(g)g^{-1}$ for any $x\in G$. Using the basic results in the simple type theory (see \cite{zou2019supercuspidal}, \S 3.2 for example), we have $\gamma=u\tau(g)g^{-1}\in I_{G}(\eta)=I_{G}(\kappa^{0})$, where $\kappa^{0}=\boldsymbol{\kappa}|_{J}$ and $I_{G}(\eta)$ (resp. $I_{G}(\kappa^{0})$) denotes the intertwining set of $\eta$ (resp. $\kappa^{0}$). Moreover we have
		$$\mathrm{dim}_{\mbc}(\mathrm{Hom}_{J\cap J^{\gamma}}(\kappa^{0\gamma},\kappa^{0}))=\mathrm{dim}_{\mbc}(\mathrm{Hom}_{J^{1}\cap J^{1\gamma}}(\eta^{\gamma},\eta))=1.$$
		By direct calculation, we have $J^{1}\cap G^{\delta}=J^{1\gamma}\cap G^{\delta}$ as a subgroup of $J^{1}\cap J^{1\gamma}$ and $H^{1}\cap G^{\delta}=H^{1\gamma}\cap G^{\delta}$.
		Using \cite{zou2019supercuspidal}, Proposition 6.20 for our $\gamma$ and $\delta$, we have:
		
		\begin{proposition}\label{propvarphibijec}
			
			For a non-zero homomorphism $\varphi\in\mathrm{Hom}_{J^{1}\cap J^{1\gamma}}(\eta^{\gamma},\eta)=\mathrm{Hom}_{J\cap J^{\gamma}}(\kappa^{0\gamma},\kappa^{0})$, it naturally induces a $\mbc$-vector space isomorphism
			\begin{align*}
				f_{\varphi}:\mathrm{Hom}_{J^{1}\cap G^{\delta}}(\eta,1)&\rightarrow\mathrm{Hom}_{J^{1\gamma}\cap G^{\delta}}(\eta^{\gamma},1),\\
				\lambda\quad&\mapsto\quad \lambda\circ\varphi.
			\end{align*}
			
		\end{proposition}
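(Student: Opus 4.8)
The plan is to read off Proposition \ref{propvarphibijec} from \cite{zou2019supercuspidal}, Proposition 6.20, applied to the pair $(\gamma,\delta)$, once the hypotheses of that result have been matched to the data at hand. The verification required is formal and has essentially been assembled in the paragraph preceding the statement: $\delta$ is an involution of $G$ --- indeed, since $\tau(x)=u^{-1}\tau_{0}(x)u$ and $\tau(g)g^{-1}=u^{-1}\gamma$, one computes $\delta(x)=\gamma^{-1}\tau_{0}(x)\gamma$, and $\tau_{0}(\gamma)\gamma=1$ by (\ref{eqtau0gammagamma=1}), so $\delta^{2}=\mathrm{id}$ and $\gamma\delta(\gamma)=1$; moreover $\delta(H^{1})=H^{1\gamma}$ with $\theta\circ\delta=(\theta^{-1})^{\gamma}$, while $J^{1}\cap G^{\delta}=J^{1\gamma}\cap G^{\delta}$ and $H^{1}\cap G^{\delta}=H^{1\gamma}\cap G^{\delta}$; finally $\gamma$ lies in the intertwining set of $\eta$ and $\mrhom_{J^{1}\cap J^{1\gamma}}(\eta^{\gamma},\eta)$ is one-dimensional, so the intertwiner $\varphi$ exists and is unique up to a scalar. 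These are exactly the inputs needed by \emph{loc. cit.}

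Granting this, I would first check by hand that $f_{\varphi}$ is well defined and linear. For $\lambda\in\mrhom_{J^{1}\cap G^{\delta}}(\eta,1)$ and $h\in J^{1\gamma}\cap G^{\delta}=J^{1}\cap G^{\delta}$, we have $h\in J^{1}\cap J^{1\gamma}$, hence $\varphi\circ\eta^{\gamma}(h)=\eta(h)\circ\varphi$, so $(\lambda\circ\varphi)(\eta^{\gamma}(h)v)=\lambda(\eta(h)\varphi(v))=\lambda(\varphi(v))$ by $G^{\delta}$-invariance of $\lambda$; thus $\lambda\circ\varphi\in\mrhom_{J^{1\gamma}\cap G^{\delta}}(\eta^{\gamma},1)$, and $f_{\varphi}$ is clearly linear. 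The content then is that $f_{\varphi}$ is bijective. By the multiplicity-one statement for distinguished Heisenberg representations (Proposition \ref{propheisdist} and its proof, using $\gamma\in JB^{\times}J$) both source and target of $f_{\varphi}$ are one-dimensional, so it is enough to show $f_{\varphi}\neq 0$, i.e. $\lambda\circ\varphi\neq 0$ for $\lambda\neq 0$.

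That non-vanishing is precisely what \cite{zou2019supercuspidal}, Proposition 6.20 establishes: fixing an explicit Heisenberg model, one decomposes $\eta|_{H^{1}\cap G^{\delta}}$ into lines on which $H^{1}\cap G^{\delta}$ acts through $\theta$, observes that a nonzero $\lambda$ is supported on exactly one of them, and checks that $\varphi$ carries the analogous line for $\eta^{\gamma}$ isomorphically onto it. With $\sigma$ and $^{*}$ of \emph{loc. cit.} replaced respectively by the trivial action and the transpose $\,^{t}$, and with the compatibilities recorded above playing the role of the hypotheses there, that argument applies verbatim to our $(\gamma,\delta)$.

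I expect the only genuine difficulty to lie in this last step, the non-vanishing $\lambda\circ\varphi\neq 0$, which is the heart of \cite{zou2019supercuspidal}, Proposition 6.20 and depends on the fine structure of Heisenberg representations rather than on anything specific to the orthogonal case; everything else --- the well-definedness of $f_{\varphi}$, the dimension count, and the bookkeeping showing $\delta$ and $\gamma$ satisfy the same formal identities as in \emph{loc. cit.} --- is routine given the surrounding discussion.
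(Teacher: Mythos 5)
Your proposal is correct and follows essentially the same route as the paper, which likewise establishes the formal compatibilities for $(\gamma,\delta)$ (namely $\gamma\in I_{G}(\eta)$, the one-dimensionality of $\mathrm{Hom}_{J^{1}\cap J^{1\gamma}}(\eta^{\gamma},\eta)$, and the equalities $J^{1}\cap G^{\delta}=J^{1\gamma}\cap G^{\delta}$, $H^{1}\cap G^{\delta}=H^{1\gamma}\cap G^{\delta}$) and then invokes \cite{zou2019supercuspidal}, Proposition 6.20 verbatim. Your added remarks on well-definedness and the dimension count via Proposition \ref{propheisdist} are harmless refinements of the same argument.
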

		
		Now we use Proposition \ref{propvarphibijec} to finish the proof of Proposition \ref{propchichi'}. Using Proposition \ref{propheisdist} for $g$ and $u\tau(g)$ respectively, we have $$\mathrm{dim}_{\mbc}\mathrm{Hom}_{J^{1g}\cap G^{\tau}}(\eta^{g},1)=\mathrm{dim}_{\mbc}\mathrm{Hom}_{J^{1u\tau(g)}\cap G^{\tau}}(\eta^{u\tau(g)},1)=1.$$ By Proposition \ref{propvarphibijec}, for $0\neq\varphi\in\mathrm{Hom}_{J^{1}\cap J^{1\gamma}}(\eta^{\gamma},\eta)=\mathrm{Hom}_{J^{1g}\cap J^{1u\tau(g)}}(\eta^{g},\eta^{u\tau(g)})$,
		\begin{align*}
			f_{\varphi}:\mathrm{Hom}_{J^{1g}\cap G^{\tau}}(\eta^{g},1)&\rightarrow\mathrm{Hom}_{J^{1u\tau(g)}\cap G^{\tau}}(\eta^{u\tau(g)},1),\\
			\lambda\quad&\mapsto\quad \lambda\circ\varphi,
		\end{align*}
		is bijective. 
		If we choose
		$$0\neq \lambda\in\mathrm{Hom}_{J^{1g}\cap G^{\tau}}(\eta^{g},1)\quad \text{and}\quad 0\neq \lambda':=f_{\varphi}(\lambda)=\lambda\circ\varphi\in\mathrm{Hom}_{J^{1u\tau(g)}\cap G^{\tau}}(\eta^{u\tau(g)},1),$$
		then for any $v$ in the representation space of $\eta$ and any $x\in J^{g}\cap G^{\tau}=J^{u\tau(g)}\cap G^{\tau}$, we have
		\begin{align*}
			\chi'(x)^{-1}\lambda'(v)&=\lambda'(\kappa^{0u\tau(g)}(x)v)\qquad(\text{by Proposition \ref{propkappa}.(1)})\\
			&=\lambda(\varphi(\kappa^{0u\tau(g)}(x)v))\qquad(\text{by definition of}\ \lambda')\\
			&=\lambda(\kappa^{0g}(x)\varphi(v))\qquad(\text{since}\ \varphi\in\mathrm{Hom}_{J^{g}\cap J^{u\tau(g)}}(\kappa^{0u\tau(g)},\kappa^{0g}))\\
			&=\chi(x)^{-1}\lambda(\varphi(v))\qquad(\text{by Proposition \ref{propkappa}.(1)})\\
			&=\chi(x)^{-1}\lambda'(v)\qquad(\text{by definition of}\ \lambda').
		\end{align*}
		Since $v$ and $x\in J^{g}\cap G^{\tau}=J^{u\tau(g)}\cap G^{\tau}$ are arbitrary, we have $\chi'|_{J^{u\tau(g)}\cap G^{\tau}}=\chi|_{J^{g}\cap G^{\tau}}$, which finishes the proof with the aid of (\ref{eqjtau=j0tau}).
		
		\subsection{Existence of a $\tau$-selfdual extension of $\eta$}
		
		\begin{proposition}\label{propkappaselfdual}
			
			There is $\boldsymbol{\kappa}$ as an extension of $\eta$ such that $\boldsymbol{\kappa}^{\tau_{0}\vee}\simeq\boldsymbol{\kappa}$.
			
		\end{proposition}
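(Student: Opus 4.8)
The plan is to produce a $\tau_0$-selfdual extension $\boldsymbol{\kappa}$ of $\eta$ by a twisting argument, following the familiar pattern from \cite{secherre2019supercuspidal}, \S 6, and \cite{zou2019supercuspidal}, \S 6. First I would fix \emph{any} extension $\boldsymbol{\kappa}_0$ of $\eta$ to $\boldsymbol{J}$; such an extension exists by the simple type theory. Since $\tau_0(\boldsymbol{J})=\boldsymbol{J}$, $\tau_0(J^1)=J^1$ and $\theta\circ\tau_0=\theta^{-1}$ (conditions (1)--(3) at the start of \S \ref{sectiondistypethm}), the representation $\boldsymbol{\kappa}_0^{\tau_0\vee}$ is again an extension of $\eta^{\tau_0\vee}\simeq\eta$ to $\boldsymbol{J}$: indeed $\eta$ is the Heisenberg representation attached to $\theta$, and $\eta^{\tau_0\vee}$ is attached to $(\theta\circ\tau_0)^{-1}=\theta$, so $\eta^{\tau_0\vee}\simeq\eta$. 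Hence $\boldsymbol{\kappa}_0^{\tau_0\vee}$ and $\boldsymbol{\kappa}_0$ differ by a character of $\boldsymbol{J}/J^1$; write $\boldsymbol{\kappa}_0^{\tau_0\vee}\simeq\boldsymbol{\kappa}_0\otimes\mu$ for a character $\mu$ of $\boldsymbol{J}$ trivial on $J^1$.

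Next I would check that $\mu$ satisfies the cocycle-type relation that lets one extract a "square root". Applying the operation $(-)^{\tau_0\vee}$ once more to $\boldsymbol{\kappa}_0^{\tau_0\vee}\simeq\boldsymbol{\kappa}_0\otimes\mu$, and using that it is an involution (because $\tau_0$ is an involution and $\boldsymbol{\kappa}_0$ is finite-dimensional), gives $\boldsymbol{\kappa}_0\simeq(\boldsymbol{\kappa}_0\otimes\mu)^{\tau_0\vee}\simeq\boldsymbol{\kappa}_0\otimes\mu\otimes(\mu\circ\tau_0)^{-1}$, whence $\mu\cdot(\mu\circ\tau_0)^{-1}=1$, i.e. $\mu\circ\tau_0=\mu$. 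Now I want a character $\nu$ of $\boldsymbol{J}/J^1$ with $\nu\cdot(\nu\circ\tau_0)=\mu^{-1}$ (or $\mu$, depending on normalization); then $\boldsymbol{\kappa}:=\boldsymbol{\kappa}_0\otimes\nu$ satisfies $\boldsymbol{\kappa}^{\tau_0\vee}\simeq\boldsymbol{\kappa}_0^{\tau_0\vee}\otimes(\nu\circ\tau_0)^{-1}\simeq\boldsymbol{\kappa}_0\otimes\mu\otimes(\nu\circ\tau_0)^{-1}\simeq\boldsymbol{\kappa}_0\otimes\nu=\boldsymbol{\kappa}$, as desired. The group $\boldsymbol{J}/J^1$ is, up to the central $F^\times$ part, a finite group of order prime to $p$ extended by $\mathrm{GL}_m(\boldsymbol{l})$-type data; the relevant character group is a finite abelian group on which the involution $x\mapsto x\cdot(x\circ\tau_0)$ (written additively, $1+\tau_0^*$) has image the $\tau_0$-fixed part up to $2$-torsion, and since $p\neq 2$ and $\mu$ is $\tau_0$-fixed, $\mu$ lies in the image. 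This is exactly the argument of \cite{secherre2019supercuspidal}, Lemma 6.15, or \cite{zou2019supercuspidal}, Proposition 6.17, with the Galois/unitary involution there replaced by $\tau_0$; I would simply cite that with the replacement indicated and verify that the only input used is $\tau_0(\boldsymbol{J})=\boldsymbol{J}$, $\theta\circ\tau_0=\theta^{-1}$, and $p\neq 2$.

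The main obstacle — really the only point requiring care — is the solvability of $\nu\cdot(\nu\circ\tau_0)=\mu^{-1}$, i.e. that $\mu^{-1}$ lies in the image of the norm-type map $\nu\mapsto\nu\cdot(\nu\circ\tau_0)$ on $\widehat{\boldsymbol{J}/J^1}$. One must know that $\tau_0$ acts on $\boldsymbol{J}/J^1$ with enough structure (it stabilizes $J/J^1\simeq\mathrm{GL}_m(\boldsymbol{l})$ and acts there as an orthogonal-type involution, and the quotient $\boldsymbol{J}/J F^\times$ is cyclic of order dividing $m$), so that on the Pontryagin dual the relevant cohomology $\widehat H^1$ of the order-two group $\langle\tau_0\rangle$ vanishes, using that the group is abelian and $p\neq 2$ handles the $p$-part while the prime-to-$p$ part is handled by the explicit structure. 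I would treat this exactly as in the cited references, noting that the restriction of $\tau_0$ to $B^\times$ is the orthogonal involution $\tau_{\varepsilon_{0E}}$ (by \eqref{eqtauxtau'x}), so all the finite-group computations are the same as in Hakim's work \cite{hakim2013tame} and in \cite{zou2019supercuspidal}.
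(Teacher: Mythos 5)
There is a genuine gap, and it sits exactly at the step you identify as the crux. For an orthogonal involution the norm-type map $\nu\mapsto\nu\cdot(\nu\circ\tau_{0})$ on characters of $\bs{J}/J^{1}$ is \emph{identically trivial}, so the equation $\nu\cdot(\nu\circ\tau_{0})=\mu$ is solvable only when $\mu=1$. Indeed, a character $\nu$ of $\bs{J}=E^{\times}J$ trivial on $J^{1}$ is determined by its restriction to $J/J^{1}\simeq\mrgl_{m}(\boldsymbol{l})$, which necessarily has the form $\chi\circ\mathrm{det}$, together with its value at $\varpi_{E}$. Since $\tau_{0}$ restricted to $B^{\times}$ is the orthogonal involution $\tau_{\varepsilon_{0E}}$, it satisfies $\mathrm{det}(\tau_{0}(x))=\mathrm{det}(x)^{-1}$ on $J/J^{1}$, and $\tau_{0}(\varpi_{E})=\varpi_{E}^{-1}$ because $E$ is $\tau_{0}$-split. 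Hence $\nu\circ\tau_{0}=\nu^{-1}$ for \emph{every} such $\nu$: the "norm" map has trivial image, while the $\tau_{0}$-fixed characters are exactly the $2$-torsion ones. The Tate cohomology group you would need to vanish is therefore the whole (nontrivial) $2$-torsion subgroup, and the assertion "since $p\neq 2$ and $\mu$ is $\tau_{0}$-fixed, $\mu$ lies in the image" is false. This is a genuine structural difference from the Galois and unitary settings you are pattern-matching to, where the involution does not invert determinants and the twisting argument does have content.

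The consequence is that the proposition cannot be obtained by twisting at all: since $\nu\cdot(\nu\circ\tau_{0})=1$ for all $\nu$, either every extension of $\eta$ is $\tau_{0}$-selfdual or none is, and the entire content of the statement is that the quadratic obstruction character $\mu$ defined by $\bs{\kappa}_{0}^{\tau_{0}\vee}\simeq\bs{\kappa}_{0}\otimes\mu$ is itself trivial. Your argument correctly shows $\mu^{2}=1$ but never addresses $\mu=1$. The paper's proof is precisely an invocation of the argument of \cite{zou2019supercuspidal}, \S 6.5 (Proposition 6.24, in the case where $E/E_{0}$ is ramified), with $\sigma$ replaced by the trivial action and $\tau$ by $\tau_{0}$, together with the observation that $\tau_{0}$ induces the orthogonal involution attached to $\overline{\varepsilon_{0E}}$ on $\mrgl_{m}(\boldsymbol{l})$; that argument establishes the selfduality of $\bs{\kappa}$ directly rather than by solving a norm equation, and this is the step missing from your proposal.
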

		
		\begin{proof}
			
			We refer to \cite{zou2019supercuspidal}, \S 6.5, especially Proposition 6.24 for a proof. Noting that the restriction of $\tau_{0}$ to $\mrgl_{m}(\boldsymbol{l})$ becomes an orthogonal involution with respect to the symmetric matrix $\overline{\varepsilon_{0E}}\in \mrgl_{m}(\boldsymbol{l})$, where $\overline{\varepsilon_{0E}}$ represents the image of $\varepsilon_{0E}$ in $\mrgl_{m}(\boldsymbol{l})\simeq\mrgl_{m}(\mfo_{E})/(1+\mrm_{m}(\mfp_{E}))$, thus if we replace $\sigma$ and $\tau$ in the \emph{loc. cit.} by the trivial action and $\tau_{0}$, then the same proof in the case where $E/E_{0}$ is ramified in \emph{loc. cit.}
			works for our proposition.
			
		\end{proof}
		
		From now on until the end of this section we fix $\bs{\kappa}$ as in Proposition \ref{propkappaselfdual}. We have the following corollary:
		
		\begin{corollary}\label{corchiquad}
			
			The character $\chi$ defined by Lemma \ref{propkappa}.(1) is quadratic, that is, $\chi^{2}=1$.
			
		\end{corollary}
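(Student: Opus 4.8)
The plan is to exploit the $\tau_0$-selfduality of $\boldsymbol{\kappa}$ together with the uniqueness clause in Proposition \ref{propkappa}.(1). Concretely, take $g = 1$ (so that $\gamma = u\tau_0(1)1^{-1} = u = \varepsilon_0^{-1}\varepsilon_0 = 1 \in JB^\times J$ when $\tau = \tau_0$, and more generally one reduces to an appropriate $g$), and recall that $\chi$ is by definition the unique character of $\boldsymbol{J}^g \cap G^{\tau}$, trivial on $J^{1g}\cap G^{\tau}$, with $\mathrm{Hom}_{J^{1g}\cap G^{\tau}}(\eta^g,1)=\mathrm{Hom}_{\boldsymbol{J}^g\cap G^{\tau}}(\boldsymbol{\kappa}^g,\chi^{-1})$. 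The key observation is that applying the contragredient and then $\tau$ (or $\tau_0$) to this identity, and using $\boldsymbol{\kappa}^{\tau_0\vee}\simeq\boldsymbol{\kappa}$ together with $\tau(\boldsymbol{J})=\boldsymbol{J}^u$ and the analogous statements for $J^1$, $H^1$ from (\ref{eqtauJtauLambda}) and (\ref{eqtauHtautheta}), yields a second character playing the role of $\chi$ — namely $\chi'$ from Proposition \ref{propchichi'} — but with $\chi$ replaced by $\chi^{-1}$ up to the identifications already in place. Since Proposition \ref{propchichi'} tells us $\chi = \chi'$, comparing the two we get $\chi = \chi^{-1}$, i.e. $\chi^2 = 1$.

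More carefully, the steps I would carry out are as follows. First, note that the hypothesis $\gamma\in JB^\times J$ is exactly what is needed for both Proposition \ref{propkappa} and Proposition \ref{propchichi'} to apply, so $\chi$ and $\chi'$ are well-defined characters of the common group $\boldsymbol{J}^g\cap G^\tau = \boldsymbol{J}^{u\tau(g)}\cap G^\tau$ (by (\ref{eqjtau=j0tau})), and Proposition \ref{propchichi'} gives $\chi=\chi'$. Second, I would trace through the defining relation for $\chi'$: it is the unique character, trivial on $J^{1u\tau(g)}\cap G^\tau$, with $\mathrm{Hom}_{J^{1u\tau(g)}\cap G^\tau}(\eta^{u\tau(g)},1)\simeq\mathrm{Hom}_{\boldsymbol{J}^{u\tau(g)}\cap G^\tau}(\boldsymbol{\kappa}^{u\tau(g)},\chi'^{-1})$. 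Third, I would dualize the defining relation for $\chi$: taking $\mathbb{C}$-linear duals of both sides of $\mathrm{Hom}_{J^{1g}\cap G^\tau}(\eta^g,1)=\mathrm{Hom}_{\boldsymbol{J}^g\cap G^\tau}(\boldsymbol{\kappa}^g,\chi^{-1})$ and using that $\eta$ is (up to the Heisenberg normalization) selfdual restricted to $H^1$ and that $\boldsymbol{\kappa}^{\tau_0}\simeq\boldsymbol{\kappa}^\vee$, one rewrites $\mathrm{Hom}_{\boldsymbol{J}^g\cap G^\tau}(\boldsymbol{\kappa}^g,\chi^{-1})$ as $\mathrm{Hom}_{\boldsymbol{J}^{u\tau(g)}\cap G^\tau}(\boldsymbol{\kappa}^{u\tau(g)},\chi)$ after applying $\tau$ and the contragredient; this identifies $\chi$ (in the role of $\chi'$, but inverted) with $\chi'^{-1}$, hence $\chi' = \chi^{-1}$ by the uniqueness in Proposition \ref{propkappa}.(1). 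Combining $\chi=\chi'$ and $\chi'=\chi^{-1}$ gives $\chi^2=1$.

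The main obstacle is step three: making the dualization argument precise requires being careful about how $\tau$ (resp. $\tau_0$) interacts with the contragredient on $\boldsymbol{J}$ and its subgroups, and about the Heisenberg-representation bookkeeping (the factor $(J^1:H^1)^{1/2}$ and the fact that restriction of $\eta$ to $H^1$ is a multiple of $\theta$ with $\theta\circ\tau_0=\theta^{-1}$). One must check that the natural pairing $\mathrm{Hom}_{\boldsymbol{J}^g\cap G^\tau}(\boldsymbol{\kappa}^g,\chi^{-1})^\vee \simeq \mathrm{Hom}_{\boldsymbol{J}^g\cap G^\tau}(\chi^{-1},(\boldsymbol{\kappa}^g)^\vee) \simeq \mathrm{Hom}_{\boldsymbol{J}^g\cap G^\tau}(\boldsymbol{\kappa}^{g\tau_0}\!\cdot\text{(twist)},\chi)$ transports correctly under the automorphism induced by $\tau$, and that this automorphism carries $\boldsymbol{J}^g\cap G^\tau$ to $\boldsymbol{J}^{u\tau(g)}\cap G^\tau$, which is where (\ref{eqjtau=j0tau}) and the relations in (\ref{eqtauHtautheta})–(\ref{eqtauJtauLambda}) are used. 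Once these identifications are set up, the uniqueness statements do all the remaining work and no genuine computation is needed. I would also remark that this is precisely the analogue of the corresponding corollaries in \cite{secherre2019supercuspidal} and \cite{zou2019supercuspidal}, so one may alternatively cite those proofs nearly verbatim, substituting $\tau_0$ for the Galois (resp. unitary) involution used there.
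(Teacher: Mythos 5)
Your proposal is correct and follows essentially the same route as the paper: dualize the defining relation $\mathrm{Hom}_{J^{1g}\cap G^{\tau}}(\eta^{g},1)\simeq\mathrm{Hom}_{\bs{J}^{g}\cap G^{\tau}}(\bs{\kappa}^{g},\chi^{-1})$, apply $\tau$ and the $\tau_{0}$-selfduality of $\bs{\kappa}$ to land on $\mathrm{Hom}_{\bs{J}^{u\tau(g)}\cap G^{\tau}}(\bs{\kappa}^{u\tau(g)},\chi\circ\tau)$, invoke the uniqueness in Proposition \ref{propkappa}.(1) to get $\chi\circ\tau=\chi'^{-1}$, and conclude via $\chi=\chi'$ from Proposition \ref{propchichi'} (the identification $\chi\circ\tau=\chi$ you leave implicit is immediate since $\bs{J}^{g}\cap G^{\tau}\subset G^{\tau}$ is fixed pointwise by $\tau$). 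No substantive difference from the paper's argument.
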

		
		\begin{proof}
			
			We have the following isomorphisms
			\begin{align*}
				\mathrm{Hom}_{J^{1u\tau(g)}\cap G^{\tau}}(\eta^{u\tau(g)},1)&\simeq\mathrm{Hom}_{J^{1g}\cap G^{\tau}}(\eta^{g},1)\\
				&\simeq\mathrm{Hom}_{\boldsymbol{J}^{g}\cap  G^{\tau}}(\boldsymbol{\kappa}^{g},\chi^{-1})\\
				&\simeq\mathrm{Hom}_{\boldsymbol{J}^{g}\cap G^{\tau}}(\chi,\boldsymbol{\kappa}^{g\vee})\quad (\text{by the duality of contragredient})\\
				&\simeq\mathrm{Hom}_{\boldsymbol{J}^{g}\cap G^{\tau}}(\boldsymbol{\kappa}^{g\vee},\chi)\\
				&\simeq\mathrm{Hom}_{\boldsymbol{J}^{g}\cap G^{\tau}}(\boldsymbol{\kappa}^{g\vee}\circ\tau,\chi\circ\tau)\\
				&\simeq\mathrm{Hom}_{\boldsymbol{J}^{g}\cap G^{\tau}}((\boldsymbol{\kappa}^{\tau_{0}\vee})^{u\tau(g)},\chi\circ\tau)\\
				&\simeq\mathrm{Hom}_{\boldsymbol{J}^{u\tau(g)}\cap G^{\tau}}(\boldsymbol{\kappa}^{u\tau(g)},\chi\circ\tau)\quad(\text{since}\ \boldsymbol{\kappa}\ \text{is}\  \tau_{0}\text{-selfdual}).
			\end{align*}
			Using Proposition \ref{propchichi'} and the uniqueness of $\chi'$, we have $\chi\circ\tau=\chi^{-1}$. Since $\chi$ is defined on $\boldsymbol{J}^{g}\cap G^{\tau}=J^{g}\cap G^{\tau}$ which is $\tau$-invariant, we have $\chi\circ\tau=\chi$, thus $\chi^{2}=\chi(\chi\circ\tau)=1$.
			
		\end{proof}
		
		\subsection{Proof of Theorem \ref{thmtaugginJ}}\label{subsectionpfdisttype}
		
		In this subsection, we finish the proof of Theorem \ref{thmtaugginJ}. For $g\in G$ given as in \emph{loc. cit.}, by Lemma \ref{lemmadoublecoset} and the Cartan decomposition, we may replace $g$ by another representative in the same $J$-$G^{\tau}$ double coset, such that
		\begin{equation}\label{eqgammabxcond}
			\gamma:=u\tau(g)g^{-1}\in\varpi_{E}^{a_{1}}\mathrm{GL}_{m_{1}}(\mathfrak{o}_{E})\times...\times\varpi_{E}^{a_{r}}\mathrm{GL}_{m_{r}}(\mathfrak{o}_{E}),
		\end{equation}
		where $a_{i}$, $m_{i}$ are defined as in Lemma \ref{lemmataub=b}. Thus
		there exists a unique standard hereditary order $\mathfrak{b}_{m}\subseteq\mathfrak{b}$ such that
		$$U^{1}(\mathfrak{b}_{m})=(U\cap\delta(U^{1}))U^{1}=(U\cap U^{1\gamma})U^{1},$$
		where we define $U=U(\mfb)$, $U^{1}=U^{1}(\mfb)$ and  $\delta(x)=(\tau(g)g^{-1})^{-1}\tau(x)\tau(g)g^{-1}$ for any $x\in G$ as an involution on $G$.
		First we have the following lemma whose proof is the same as that in \cite{secherre2019supercuspidal}, Lemma 6.22, inspired by \cite{hakim2008distinguished}, Proposition 5.20:
		
		\begin{lemma}\label{lemU1bm}
			
			We have $U^{1}(\mathfrak{b}_{m})=(U^{1}(\mathfrak{b}_{m})\cap G^{\delta})U^{1}$.
			
		\end{lemma}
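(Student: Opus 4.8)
The plan is to reduce the statement to the analogous assertion in \cite{secherre2019supercuspidal}, Lemma 6.22, by checking that all the hypotheses used there are available in our setting. Recall that $\gamma = u\tau(g)g^{-1} = \tau_{0}(g)ug^{-1} \in B^{\times}$ after the reductions of Lemma \ref{lemmadoublecoset}, and that by (\ref{eqgammabxcond}) we may take $\gamma$ in block-diagonal form $\varpi_{E}^{a_{1}}\mathrm{GL}_{m_{1}}(\mfo_{E})\times\cdots\times\varpi_{E}^{a_{r}}\mathrm{GL}_{m_{r}}(\mfo_{E})$, so that $\gamma$ normalizes the Levi $M$ and hence normalizes $\mfb$; consequently $U\cap U^{1\gamma}$ and the order $\mfb_{m}$ are well-defined, and $\mfb_{m}$ is the hereditary order in $B$ attached to the partition refining both $\mfb$ and $\mfb^{\gamma}$ along the standard flag. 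The key structural input is the Iwahori-type decomposition of $U^{1}(\mfb_{m})$ with respect to the parabolic $P$ with Levi $M$: one has $U^{1}(\mfb_{m}) = (U^{1}(\mfb_{m})\cap N^{-})\cdot(U^{1}(\mfb_{m})\cap M)\cdot(U^{1}(\mfb_{m})\cap N)$, and similarly for $U^{1}=U^{1}(\mfb)$, where $U^{1}(\mfb_{m})\cap M = U^{1}(\mfb)\cap M$ since $\gamma$ acts trivially on the diagonal blocks' radicals.

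Next I would record that $\delta$ restricts to an involution of $U^{1}(\mfb_{m})$ preserving this decomposition. Indeed, from (\ref{eqtauHtautheta})–(\ref{eqtauJtauLambda}) and the fact that $\tau_{0}=\tau_{\varepsilon_{0}}$ restricts on $B^{\times}$ to the orthogonal involution $\tau_{\varepsilon_{0E}}$ with $\varepsilon_{0E}$ either $I_{m}$ or $\mathrm{diag}(1,\dots,1,\epsilon_{0})$, the map $\delta$ on $B^{\times}$ is of the form $x\mapsto \varepsilon_{0E}^{-1}\gamma^{-1}\,^{t_{E}}x^{-1}\gamma\,\varepsilon_{0E}$ up to the identification $\gamma\delta(\gamma)=1$ coming from (\ref{eqgammadeltagamma=1}); it therefore swaps $N$ and $N^{-}$ and preserves $M$. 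This is exactly the configuration in which the cohomological argument of \cite{secherre2019supercuspidal}, Lemma 6.22 (itself modelled on \cite{hakim2008distinguished}, Proposition 5.20) operates: since $U^{1}(\mfb_{m})$ is a pro-$p$ group, $p$ is odd, and $\delta$ is a $\delta$-stable involution, the first $\delta$-cohomology of the unipotent pieces vanishes, and one uses the defining equality $U^{1}(\mfb_{m}) = (U\cap U^{1\gamma})U^{1}$ to push a given $\delta$-fixed-point decomposition down from $U^{1}(\mfb_{m})$ to $G^{\delta}$, absorbing the correction terms into $U^{1}$.

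Concretely, given $v\in U^{1}(\mfb_{m})$ I would factor $v = v^{-}v^{0}v^{+}$ along $N^{-}MN$; the $N^{-}$ and $N$ components lie in $U\cap U^{1\gamma}$ by the description of $\mfb_{m}$, so after multiplying by a suitable element of $U^{1}$ on each side one arranges (using $v\,\delta(v)\in$ the relevant group and the triviality of $H^{1}(\delta)$ on the pro-$p$ pieces $U^{1}(\mfb_{m})\cap N^{\pm}$) that the adjusted element is $\delta$-fixed, with the discrepancy landing in $U^{1}$; the middle component $v^{0}\in U^{1}(\mfb)\cap M$ is handled by the same cohomology vanishing on the pro-$p$ group $U^{1}(\mfb)\cap M$ together with $\delta(v^{0})v^{0}\in U^{1}(\mfb)\cap M$. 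Assembling the pieces gives $v\in (U^{1}(\mfb_{m})\cap G^{\delta})U^{1}$, and the reverse inclusion is immediate since $U^{1}\subseteq U^{1}(\mfb_{m})$ and $U^{1}(\mfb_{m})\cap G^{\delta}\subseteq U^{1}(\mfb_{m})$.

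The main obstacle I anticipate is purely bookkeeping rather than conceptual: one must verify that in the current orthogonal setting the involution $\delta$ genuinely preserves the Iwahori decomposition of $U^{1}(\mfb_{m})$ attached to $P$ — i.e.\ that $\gamma$ really does normalize $M$ in the precise block form (\ref{eqgammabxcond}) and that the extra factor $\varepsilon_{0E}$ (present when the orthogonal group is quasisplit non-split, so $\varepsilon_{0E}=\mathrm{diag}(1,\dots,1,\epsilon_{0})$) does not disturb the interchange of $N$ and $N^{-}$. Since $\varepsilon_{0E}$ is diagonal it centralizes $M$ and normalizes both $N$ and $N^{-}$, so this causes no trouble, and with that remark in place the proof of \cite{secherre2019supercuspidal}, Lemma 6.22 transcribes verbatim with $\sigma$ replaced by the trivial action and $*$ replaced by the transpose.
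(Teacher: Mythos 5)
Your proposal is correct and follows exactly the route the paper takes: the paper's entire proof of this lemma is the observation that the argument of \cite{secherre2019supercuspidal}, Lemma 6.22 (after \cite{hakim2008distinguished}, Proposition 5.20) transfers verbatim once one checks that $\delta$ interacts correctly with the Iwahori decomposition attached to $P$, which is precisely the verification you carry out. The extra detail you supply (that $\gamma$ in the block form (\ref{eqgammabxcond}) normalizes $M$, that $\delta$ swaps $N$ and $N^{-}$, and that the diagonal factor $\varepsilon_{0E}$ is harmless) is exactly the bookkeeping needed to justify the citation.
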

		
		To finish the proof, it is enough to show that $r=1$ in (\ref{eqgammabxcond}). If not, we know that $\mathfrak{b}_{m}$ by definition is a proper suborder of $\mathfrak{b}$. Furthermore, $\overline{U^{1}(\mathfrak{b}_{m})}:=U^{1}(\mathfrak{b}_{m})/U^{1}$ is a non-trivial unipotent subgroup of $U/U^{1}\simeq\mathrm{GL}_{m}(\boldsymbol{l})$. Using Proposition \ref{propkappa}.(2), we have
		$$\mathrm{Hom}_{\boldsymbol{J}\cap G^{\delta}}(\boldsymbol{\rho},\chi^{g^{-1}})\simeq\mathrm{Hom}_{\boldsymbol{J}^{g}\cap G^{\tau}}(\boldsymbol{\rho}^{g},\chi)\neq 0.$$
		Restricting to $U^{1}(\mathfrak{b}_{m})\cap G^{\delta}$, we have
		\begin{equation}\label{eqrho}
			\mathrm{Hom}_{U^{1}(\mathfrak{b}_{m})\cap G^{\delta}}(\boldsymbol{\rho},\chi^{g^{-1}})\neq 0.
		\end{equation}
		Using Lemma \ref{lemU1bm}, we have the isomorphism
		$$(U^{1}(\mathfrak{b}_{m})\cap G^{\delta})U^{1}/U^{1}\simeq U^{1}(\mathfrak{b}_{m})/U^{1}.$$
		We denote by $\overline{\rho}$ the cuspidal representation of $U^{0}/U^{1}\simeq\mrgl_{m}(\bs{l})$ whose inflation is $\boldsymbol{\rho}|_{U^{0}}$, and $\overline{\chi^{g^{-1}}}$ the character of $\overline{U^{1}(\mfb_{m})}$ whose inflation is $\chi^{g^{-1}}$.
		We consider the equation (\ref{eqrho}) modulo $U^{1}$ and we have
		$$\mathrm{Hom}_{\overline{U^{1}(\mathfrak{b}_{m})}}(\overline{\rho},\overline{\chi^{g^{-1}}})\neq 0.$$
		Since $\chi^{g^{-1}}|_{J\cap G^{\delta}}$ is quadratic and $\overline{U^{1}(\mathfrak{b}_{m})}$ is a $p$-group with $p\neq 2$, we get $\overline{\chi^{g^{-1}}}|_{\overline{U^{1}(\mfb_{m})}}=1$, thus
		$$\mathrm{Hom}_{\overline{U^{1}(\mathfrak{b}_{m})}}(\overline{\rho},1)\neq 0$$
		which contradicts to the fact that $\overline{\rho}$ is supercuspidal. So we finish the proof.
		
	\end{proof}
	
	\subsection{Double cosets contributing to the distinction of $\pi$}\label{subsecdoublecosetdisc}
	
	In this subsection, we assume $\varepsilon=\varepsilon_{0}$ and $\tau=\tau_{0}$. We want to study all the possible $\bs{J}$-$G^{\tau}$ double cosets contributing to the distinction of $\pi$. Precisely, we want to study those $g\in G$ such that
	$$\mrhom_{\bs{J}^{g}\cap G^{\tau}}(\Lambda^{g},1)\neq 0.$$
	By Lemma \ref{lemmadoublecoset}, we may change $g$ with another representative in $JgG^{\tau}$ to assume that $\gamma=\tau(g)g^{-1}\in B^{\times}$. Moreover, by Theorem \ref{thmtaugginJ} we get $\gamma\in\bs{J}$. As a result, we have
	\begin{equation}\label{eqgammainEb}
		\gamma\in \bs{J}\cap B^{\times}=E^{\times}\mfb^{\times}.
	\end{equation}
	First by changing $g$ up to multiplying an element in $E^{\times}$ on the left, which does not change the double coset $\bs{J}gG^{\tau}$, we may assume $\gamma\in\mfb^{\times}$ or $\varpi_{E}\mfb^{\times}$. 
	Since $J\cap B^{\times}=\mfb^{\times}=\mrgl_{m}(\mfo_{E})$, using Proposition \ref{propSGLoEorbit} we may change $g$ up to multiplying an element in $\mfb^{\times}$ on the left, which does not change the double coset $\bs{J}gG^{\tau}$, such that
	\begin{equation}\label{eqgamma4value}
		\gamma= I_{m}\ \text{or}\ \mathrm{diag}(1,...,1,\epsilon_{0})\ \text{or}\ \mathrm{diag}(\varpi_{E},...,\varpi_{E},\varpi_{E})\ \text{or}\ \mathrm{diag}(\varpi_{E},...,\varpi_{E},\varpi_{E}\epsilon_{0}).
	\end{equation}
	By definition, we have
	\begin{equation}\label{eqnorndetgamma}
		\mrn_{E/F}(\mrdet_{B}(\gamma))=\mrdet(\gamma)\in F^{\times2},
	\end{equation}
	where $\mrdet_{B}$ denotes the determinant on $B^{\times}=\mrgl_{m}(E)$. By studying different cases separately, we will give out all the possible double cosets of $g$ satisfying the condition (\ref{eqgamma4value}).
	
	\textbf{Case (\romannumeral1)} If $\mrn_{T_{m}/F}(T_{m}^{\times})F^{\times2}/F^{\times2}=F^{\times}/F^{\times2}$, then
	$$\mrn_{E/F}:E^{\times}/E^{\times2}\longrightarrow F^{\times}/F^{\times 2}$$
	is bijective. Thus (\ref{eqnorndetgamma}) shows that $\mrdet_{B}(\gamma)\equiv 1$ (mod $E^{\times2}$). Thus from (\ref{eqgamma4value}) and the fact that $m$ is odd, we get $\gamma=1$, which means that $g\in G^{\tau}$. Thus in this case there is only one double coset $\bs{J}G^{\tau}$.
	
	\textbf{Case (\romannumeral2)} If $\mrn_{T_{m}/F}(T_{m}^{\times})F^{\times2}/F^{\times2}$ is a subgroup of $F^{\times}/F^{\times2}$ of order two, we consider the following two cases:
	
	\textbf{(\romannumeral2.a)} If
	$2|m$, then from the same argument in \S \ref{subsectiongeneral} we have $\mrn_{E_{m}/E}(E_{m}^{\times})E^{\times2}/E^{\times2}=\{1,\epsilon_{0}\}$, where $\epsilon_{0}\in\mfo_{E}^{\times}\backslash\mfo_{E}^{\times2}$ as above. And moreover the ramification index of $E/F$ is odd and $\mrn_{E/F}(\epsilon_{0})\notin F^{\times2}$. Using (\ref{eqgamma4value}) and (\ref{eqnorndetgamma}), $\gamma$ equals $I_{m}$ or $\mathrm{diag}(\varpi_{E},...,\varpi_{E})$.
	
	\textbf{(\romannumeral2.a.1)} We assume one of the three cases is true:
	\begin{itemize}
		\item$2|d$;
		\item$2\nmid d$ and $4|m$;
		\item$2\nmid d$, $4\nmid m$ and $-1\in F^{\times2}$.
	\end{itemize}
	If $\varepsilon=J_{d,m}$ and $\varepsilon_{0E}=I_{m}$, then in the case where $\gamma=\tau(g)g^{-1}=I_{m}$, we have $g\in G^{\tau}$. In the case where $\gamma=\mathrm{diag}(\varpi_{E},...,\varpi_{E})$, using Proposition \ref{propSGLEorbit} and the fact that $$\mathrm{det}_{B}(\mathrm{diag}(\varpi_{E},...,\varpi_{E}))=\varpi_{E}^{m}\in E^{\times2}\quad \text{and}\quad \mathrm{Hasse}_{E}(\mathrm{diag}(\varpi_{E},...,\varpi_{E}))=1,$$ there exists $g_{1}\in B^{\times}$ such that $\tau(g_{1})g_{1}^{-1}=\mathrm{diag}(\varpi_{E},...,\varpi_{E})$, where we denote by $\mathrm{Hasse}_{E}$ the Hasse invariant for the symmetric matrices in $B^{\times}=\mrgl_{m}(E)$ and we use Lemma \ref{lemmahilbertresidue} to calculate the Hasse invariant. Thus we have $g\in \bs{J}g_{1}G^{\tau}$. So there are two possible double cosets $\bs{J}G^{\tau}$ and $\bs{J}g_{1}G^{\tau}$.
	
	If $\varepsilon=\mathrm{diag}(J_{d},...,J_{d},J_{d}\epsilon_{0})$ and $\varepsilon_{0E}=\mathrm{diag}(1,...,1,\epsilon_{0})$ with $\epsilon_{0}\in\mfo_{E}^{\times}\backslash\mfo_{E}^{\times2}$, then in the case where $\gamma=I_{m}$, we have $g\in G^{\tau}$. In the case where $\gamma=\mathrm{diag}(\varpi_{E},...,\varpi_{E})$, by direct calculation we get
	$$\,^{t}g^{-1}\mathrm{diag}(J_{d},...,J_{d},J_{d}\epsilon_{0})g^{-1}=\mathrm{diag}(J_{d}\varpi_{E},...,J_{d}\varpi_{E},J_{d}\varpi_{E}\epsilon_{0}).$$
	Using Lemma \ref{lemmaGLnOFsymhasse} we obtain $\mathrm{Hasse}(\,^{t}g^{-1}\mathrm{diag}(J_{d},...,J_{d},J_{d}\epsilon_{0})g^{-1})=1$. However by Lemma \ref{lemmahassecase2a1} and Corollary \ref{corcalhasse}, we have $\mathrm{Hasse}(\mathrm{diag}(J_{d}\varpi_{E},...,J_{d}\varpi_{E},J_{d}\varpi_{E}\epsilon_{0}))=-1$, thus there does not exist any $g\in G$ such that $\gamma=\mathrm{diag}(\varpi_{E},...,\varpi_{E},\varpi_{E})$, so there is only one possible double coset $\bs{J}G^{\tau}$.
	
	\textbf{(\romannumeral2.a.2)} If $2\nmid d$, $4\nmid m$ and $-1\notin F^{\times2}$, then we may choose $\epsilon_{0}=-1\in\mfo_{E}^{\times}\backslash\mfo_{E}^{\times2}$.
	
	If $\varepsilon=\mathrm{diag}(J_{d},...,J_{d},-J_{d})$ and $\varepsilon_{0E}=\mathrm{diag}(1,...,1,-1)$, then in the case where $\gamma=\tau(g)g^{-1}=I_{m}$, we have $g\in G^{\tau}$. In the case where $\gamma=\mathrm{diag}(\varpi_{E},...,\varpi_{E})$, using Proposition \ref{propSGLEorbit} and the fact that (by Lemma \ref{lemmahilbertresidue} for example) $$\mathrm{det}_{B}(\mathrm{diag}(\varpi_{E},...,-\varpi_{E}))=-\varpi_{E}^{m}\in \epsilon_{0}E^{\times2}\quad \text{and}\quad \mathrm{Hasse}_{E}(\mathrm{diag}(\varpi_{E},...,-\varpi_{E}))=1,$$ there exists $g_{1}\in B^{\times}$ such that $\,^{t}g_{1}^{-1}\varepsilon_{0E}g_{1}^{-1}=\mathrm{diag}(\varpi_{E},...,-\varpi_{E})$, or equivalently $\tau(g_{1})g_{1}^{-1}=\mathrm{diag}(\varpi_{E},...,\varpi_{E},\varpi_{E})$. Thus we have $g\in \bs{J}g_{1}G^{\tau}$. So there are two possible double cosets $\bs{J}G^{\tau}$ and $\bs{J}g_{1}G^{\tau}$.
	
	If $\varepsilon=J_{d,m}$ and $\varepsilon_{0E}=I_{m}$, then in the case where $\gamma=I_{m}$, we have $g\in G^{\tau}$. In the case where $\gamma=\mathrm{diag}(\varpi_{E},...,\varpi_{E})$, by direct calculation we get
	$$\,^{t}g^{-1}\mathrm{diag}(J_{d},...,J_{d},J_{d})g^{-1}=\mathrm{diag}(J_{d}\varpi_{E},...,J_{d}\varpi_{E},J_{d}\varpi_{E}).$$
	Using Lemma \ref{lemmaGLnOFsymhasse} we get $\mathrm{Hasse}(\,^{t}g^{-1}\mathrm{diag}(J_{d},...,J_{d},J_{d})g^{-1})=1$. Using Lemma \ref{lemmahassecase2a2} and Corollary \ref{corcalhasse} we have $\mathrm{Hasse}(\mathrm{diag}(J_{d}\varpi_{E},...,J_{d}\varpi_{E},J_{d}\varpi_{E}))=-1$, thus there does not exist any $g$ as above such that $\gamma=\mathrm{diag}(\varpi_{E},...,\varpi_{E},\varpi_{E})$, so there is only one possible double coset $\bs{J}G^{\tau}$.
	
	\textbf{(\romannumeral2.b)} If $2\nmid m$, then $\varepsilon$ equals $J_{d,m}$ or $\mathrm{diag}(J_{d}\epsilon,...,J_{d}\epsilon)$, where $\epsilon\in E^{\times}$. In this case we have $\mrn_{E_{m}/F}(E_{m}^{\times})F^{\times2}/F^{\times2}=\mrn_{E/F}(E^{\times})F^{\times2}/F^{\times2}$
	and $2|d$. Furthermore by Proposition \ref{propE/Fbasic}, either the ramification index or the inertia degree of $E/F$ is odd. We further consider the following two cases:
	
	\textbf{(\romannumeral2.b.1)} If the ramification index of $E/F$ is odd, then $\epsilon=\epsilon_{0}\in\mfo_{E}^{\times}\backslash\mfo_{E}^{\times2}$ such that $\mrn_{E/F}(\epsilon_{0})\notin F^{\times2}$. By (\ref{eqgamma4value}) and (\ref{eqnorndetgamma}), we deduce that $\gamma$ equals $I_{m}$ or $\mathrm{diag}(\varpi_{E},...,\varpi_{E},\varpi_{E}')$, where $\varpi_{E}'$ equals $\varpi_{E}$ or $\varpi_{E}\epsilon_{0}$ such that $\mrn_{E/F}(\varpi_{E}')\in F^{\times 2}$.
	
	If $\varepsilon=J_{d,m}$, we have $g\in G^{\tau}$ when $\gamma=I_{m}$. When $\gamma=\mathrm{diag}(\varpi_{E},...,\varpi_{E},\varpi_{E}')$, using Lemma \ref{lemmaGLnOFsymhasse}, Lemma \ref{lemmahassecase2b1} and Corollary \ref{corcalhasse}, we have
	$$\mathrm{det}(J_{d,m}\mathrm{diag}(\varpi_{E},...,\varpi_{E},\varpi_{E}'))\in \mathrm{det}(J_{d,m})F^{\times2}\quad \text{and}\quad \mathrm{Hasse}(J_{d,m}\mathrm{diag}(\varpi_{E},...,\varpi_{E},\varpi_{E}'))=1,$$
	thus by Proposition \ref{propSGLEorbit}, there exists $g_{1}\in G$ such that $$\,^{t}g_{1}^{-1}J_{d,m}g_{1}^{-1}=J_{d,m}\mathrm{diag}(\varpi_{E},...,\varpi_{E},\varpi_{E}')=J_{d,m}\gamma,$$ or in other words $\tau(g_{1})g_{1}^{-1}=\gamma$. Thus $g\in g_{1}G^{\tau}$. So we get two double cosets $\bs{J}G^{\tau}$ and $\bs{J}g_{1}G^{\tau}$.
	
	\begin{remark}
		
		Since $\mathrm{det}_{B}(\mathrm{diag}(\varpi_{E},...,\varpi_{E},\varpi_{E}'))=\varpi_{E}^{m-1}\varpi_{E}'\notin E^{\times2}$, it is impossible to choose $g_{1}\in B^{\times}$ such that $\tau(g_{1})g_{1}=\gamma$. Thus $\bs{J}g_{1}G^{\tau}$ is disjoint with $\bs{J}B^{\times}G^{\tau}$. Similar phenomena also occur in \textbf{(\romannumeral2.b.2)} and \textbf{(\romannumeral3)} below.
		
	\end{remark}
	
	If $\varepsilon=\mathrm{diag}(J_{d}\epsilon_{0},...,J_{d}\epsilon_{0})$, we get $g\in G^{\tau}$ in the case where $\gamma=I_{m}$. In the case where $\gamma=\mathrm{diag}(\varpi_{E},...,\varpi_{E},\varpi_{E}')$, by direct calculation we have
	\begin{equation}\label{eq2b1cond}
		\,^{t}g^{-1}\mathrm{diag}(J_{d}\epsilon_{0},...J_{d}\epsilon_{0},J_{d}\epsilon_{0})g^{-1}=\mathrm{diag}(J_{d}\varpi_{E}\epsilon_{0},...J_{d}\varpi_{E}\epsilon_{0},J_{d}\varpi_{E}'\epsilon_{0}).
	\end{equation}
	
	By Lemma \ref{lemmaGLnOFsymhasse}, we get
	$$\mathrm{Hasse}(\,^{t}g^{-1}\mathrm{diag}(J_{d}\epsilon_{0},...J_{d}\epsilon_{0},J_{d}\epsilon_{0})g^{-1})=1.$$
	And by Lemma \ref{lemmahassecase2b1} and Corollary \ref{corcalhasse}, we obtain $\mathrm{Hasse}(\mathrm{diag}(J_{d}\varpi_{E}\epsilon_{0},...J_{d}\varpi_{E}\epsilon_{0},J_{d}\varpi_{E}'\epsilon_{0}))=-1$, thus the condition (\ref{eq2b1cond}) is never satisfied. Thus there is only one possible double coset $\bs{J}G^{\tau}$.
	
	\textbf{(\romannumeral2.b.2)} If the inertia degree of $E/F$ is odd, then $\epsilon=\varpi_{E}$ as a uniformizer of $E$ such that $\mrn_{E/F}(\varpi_{E})\notin F^{\times2}$. By (\ref{eqgamma4value}) and (\ref{eqnorndetgamma}), we get $\mrn_{E/F}(\mrdet_{B}(\gamma))\in F^{\times2}$, thus $\mrdet_{B}(\gamma)$ equals $1$ or $\epsilon_{0}$, which means that $\gamma$ equals $I_{m}$ or $\mathrm{diag}(1,...,1,\epsilon_{0})$.
	
	If $\varepsilon=J_{d,m}$, we have $g\in G^{\tau}$ in the case where $\gamma=I_{m}$. In the case where $\gamma=\mathrm{diag}(1,...,1,\epsilon_{0})$, using Lemma \ref{lemmaGLnOFsymhasse} we have
	$$\mathrm{det}(J_{d,m}\mathrm{diag}(1,...,1,\epsilon_{0}))\in \mathrm{det}(J_{d,m})F^{\times2}\quad \text{and}\quad \mathrm{Hasse}(J_{d,m}\mathrm{diag}(1,...,1,\epsilon_{0}))=\mathrm{Hasse}(J_{d,m})=1,$$
	thus by Proposition \ref{propSGLEorbit}, there exists $g_{1}\in G$ such that $$\,^{t}g_{1}^{-1}J_{d,m}g_{1}^{-1}=J_{d,m}\mathrm{diag}(1,...,1,\epsilon_{0}),$$ or equivalently $\tau(g_{1})g_{1}^{-1}=\gamma$. Thus $g\in g_{1}G^{\tau}$. So we get two double cosets $\bs{J}G^{\tau}$ and $\bs{J}g_{1}G^{\tau}$.
	
	If $\varepsilon=\mathrm{diag}(J_{d}\varpi_{E},...,J_{d}\varpi_{E})$, we get $g\in G^{\tau}$ in the case where $\gamma=I_{m}$. In the case where $\gamma=\mathrm{diag}(1,...,1,\epsilon_{0})$, by direct calculation we have
	\begin{equation}\label{eq2b2cond}
		\,^{t}g^{-1}\mathrm{diag}(J_{d}\varpi_{E},...J_{d}\varpi_{E},J_{d}\varpi_{E})g^{-1}=\mathrm{diag}(J_{d}\varpi_{E},...J_{d}\varpi_{E},J_{d}\varpi_{E}\epsilon_{0}),
	\end{equation}
	However by Corollary $\ref{corhassecase3b}$ and Corollary \ref{corcalhasse}, this condition is never satisfied. Thus there is only one possible double coset $\bs{J}G^{\tau}$.
	
	\textbf{Case (\romannumeral3)} If $\mrn_{T_{m}/F}(T_{m}^{\times})F^{\times2}/F^{\times2}=\{1\}$, we consider the following two cases:
	
	\textbf{(\romannumeral3.a)} If $\mrn_{E/F}(E^{\times})F^{\times2}/F^{\times2}=\{1\}$, then $\varepsilon$ equals $J_{d,m}$ or $\mathrm{diag}(J_{d}\varpi_{E},...,J_{d}\varpi_{E}\epsilon_{0})$, where $\epsilon_{0}\in\mfo_{E}^{\times}\backslash\mfo_{E}^{\times2}$ and $\varpi_{E}$ is a uniformizer of $E$ satisfying Lemma \ref{lemmahassecase3a} with $E'=E$.
	
	If $\varepsilon=J_{d,m}$, by (\ref{eqgamma4value}) we have
	\begin{align}\label{eq3acond1}
		\,^{t}g^{-1}J_{d,m}g^{-1}=J_{d,m}\ \text{or}\ \mathrm{diag}(J_{d},...,J_{d},J_{d}\epsilon_{0})\ &\text{or}\ \mathrm{diag}(J_{d}\varpi_{E},...,J_{d}\varpi_{E},J_{d}\varpi_{E}) \nonumber \\  &\text{or}\ \mathrm{diag}(J_{d}\varpi_{E},...,J_{d}\varpi_{E},J_{d}\varpi_{E}\epsilon_{0})
	\end{align}
	
	Since the determinants of both sides of (\ref{eq3acond1}) are in $F^{\times2}$, and by Lemma \ref{lemmaGLnOFsymhasse}, Lemma \ref{lemmahassecase3a} and Corollary \ref{corcalhasse}, we have
	$$\mathrm{Hasse}(J_{d,m})=\mathrm{Hasse}(\mathrm{diag}(J_{d},...,J_{d},J_{d}\epsilon_{0}))=\mathrm{Hasse}(\mathrm{diag}(J_{d}\varpi_{E},...,J_{d}\varpi_{E},J_{d}\varpi_{E}))=1,$$
	and
	$$\mathrm{Hasse}(\mathrm{diag}(J_{d}\varpi_{E},...,J_{d}\varpi_{E},J_{d}\varpi_{E}\epsilon_{0}))=-1,$$
	then by Proposition \ref{propSGLEorbit} there exist $g_{0}=1$, $g_{1}$ and $g_{2}$ which satisfy equation (\ref{eq3acond1}) with the first three terms on the right separately. Furthermore, equation (\ref{eq3acond1}) with the last term on the right is never satisfied. Thus there are exactly three double cosets $\bs{J}G^{\tau}$, $\bs{J}g_{1}G^{\tau}$ and $\bs{J}g_{2}G^{\tau}$.
	
	If $\varepsilon=\mathrm{diag}(J_{d}\varpi_{E},...,J_{d}\varpi_{E}\epsilon_{0})$, then by (\ref{eqgamma4value}) we have
	\begin{align}\label{eq3acond2}
		\,^{t}g^{-1}\varepsilon g^{-1}=\varepsilon I_{m} \ \text{or}\ \varepsilon\mathrm{diag}(1,...,1,\epsilon_{0})\ &\text{or}\ \varepsilon\mathrm{diag}(\varpi_{E},...,\varpi_{E},\varpi_{E}) \nonumber \\  &\text{or}\ \varepsilon\mathrm{diag}(\varpi_{E},...,\varpi_{E},\varpi_{E}\epsilon_{0})
	\end{align}
	
	Since the determinants of both sides of (\ref{eq3acond2}) are in $F^{\times2}$, and by Lemma \ref{lemmaGLnOFsymhasse}, Lemma \ref{lemmahassecase3a} and Corollary \ref{corcalhasse}, we have
	\begin{align*}\mathrm{Hasse}(\varepsilon\mathrm{diag}(1,...,1,\epsilon_{0}))&=\mathrm{Hasse}(\varepsilon\mathrm{diag}(\varpi_{E},...,\varpi_{E},\varpi_{E}))\\
		&=\mathrm{Hasse}(\varepsilon\mathrm{diag}(\varpi_{E},...,\varpi_{E},\varpi_{E}\epsilon_{0}))=1,
	\end{align*}
	and
	$$\mathrm{Hasse}(\varepsilon)=-1.$$
	Then equation (\ref{eq3acond2}) is never satisfied with the last three terms on the right, and $g_{0}=1$ satisfies (\ref{eq3acond2}) with the first term on the right. Thus there is only one double coset $\bs{J}G^{\tau}$.
	
	\textbf{(\romannumeral3.b)} If $\mrn_{E/F}(E^{\times})F^{\times2}/F^{\times2}$ is not trivial, then $\varepsilon$ equals $J_{d,m}$ or $\mathrm{diag}(J_{d}\varpi_{E},...,J_{d}\varpi_{E}\epsilon_{0})$, where $\epsilon_{0}\in\mfo_{E}^{\times}$ and $\varpi_{E}$ is a uniformizer of $E$. Using the similar proof as \textbf{(\rn3.a)}, with Lemma \ref{lemmahassecase3a} replaced by Corollary $\ref{corhassecase3b}$, we can show that if $\varepsilon=J_{d,m}$, there are three double cosets $\bs{J}g_{0}G^{\tau}$, $\bs{J}g_{1}G^{\tau}$ and $\bs{J}g_{2}G^{\tau}$, where $g_{0}=1$, $g_{1}$ and $g_{2}$ are defined such that $\tau(g_{i})g_{i}^{-1}$ equal three of the four terms on the right side of equation (\ref{eqgamma4value}). If $\varepsilon=\mathrm{diag}(J_{d}\varpi_{E},...,J_{d}\varpi_{E}\epsilon_{0})$, then there is only one double coset $\bs{J}G^{\tau}$.
	
	We sum up the main result of this subsection as the following proposition:
	
	\begin{proposition}\label{propdiscorbit}
		
		\textbf{Case (\rn1)} When $\mrn_{T_{m}/F}(T_{m}^{\times})F^{\times2}/F^{\times2}=F^{\times}/F^{\times2}$, the only double coset contributing to the distinction  is $\bs{J}g_{0}G^{\tau_{0}}$, where we write $g_{0}=1$ here and after to normalize the notation;
		
		\textbf{Case (\rn2)} When $\mrn_{T_{m}/F}(T_{m}^{\times})F^{\times2}/F^{\times2}$ is a subgroup of $F^{\times}/F^{\times2}$ of order 2, if $G^{\tau_{0}}$ is quasisplit but not split, then the only double coset contributing to the distinction is $\bs{J}g_{0}G^{\tau_{0}}$; If $G^{\tau_{0}}$ is split, then there are two different double cosets $\bs{J}g_{0}G^{\tau_{0}}$ and $\bs{J}g_{1}G^{\tau_{0}}$ contributing to the distinction, where $\tau_{0}(g_{1})g_{1}^{-1}\in B^{\times}$;
		
		\textbf{Case (\rn3)} When $\mrn_{T_{m}/F}(T_{m}^{\times})F^{\times2}/F^{\times2}=\{1\}$, if $G^{\tau_{0}}$ is not quasisplit, then the only double coset contributing to the distinction is $\bs{J}g_{0}G^{\tau_{0}}$; If $G^{\tau_{0}}$ is split, then there are three different double cosets $\bs{J}g_{0}G^{\tau_{0}}$, $\bs{J}g_{1}G^{\tau_{0}}$ and $\bs{J}g_{2}G^{\tau_{0}}$ contributing to the distinction, where $\tau_{0}(g_{1})g_{1}^{-1},\tau_{0}(g_{2})g_{2}^{-1}\in B^{\times}$.
		
	\end{proposition}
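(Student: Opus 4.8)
The plan is to assemble Proposition~\ref{propdiscorbit} from the three reduction steps already in place. First I would take any $g\in G$ with $\mrhom_{\bs{J}^{g}\cap G^{\tau_{0}}}(\Lambda^{g},1)\neq 0$ and use Lemma~\ref{lemmadoublecoset} to replace it by a representative of the same $\bs{J}$-$G^{\tau_{0}}$ double coset for which $\gamma=\tau_{0}(g)g^{-1}\in B^{\times}$; then Theorem~\ref{thmtaugginJ} gives $\gamma\in\bs{J}$, so that $\gamma\in\bs{J}\cap B^{\times}=E^{\times}\mfb^{\times}$ as recorded in~\eqref{eqgammainEb}. Multiplying $g$ on the left first by a suitable element of $E^{\times}$ and then by a suitable element of $\mfb^{\times}=\mrgl_{m}(\mfo_{E})$ — neither operation changing the double coset — and invoking Proposition~\ref{propSGLoEorbit} on the $\mrgl_{m}(\mfo_{E})$-orbits of symmetric matrices in $\mrgl_{m}(E)$, I would bring $\gamma$ into one of the four explicit diagonal forms of~\eqref{eqgamma4value}.

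The second ingredient is the determinant constraint~\eqref{eqnorndetgamma}: since $\gamma=\tau_{0}(g)g^{-1}$ forces $\mrdet(\gamma)=\mrn_{E/F}(\mrdet_{B}(\gamma))\in F^{\times2}$, every candidate value of $\gamma$ failing this test is discarded, and the surviving list depends on whether $\mrn_{T_{m}/F}(T_{m}^{\times})F^{\times2}/F^{\times2}$ is all of $F^{\times}/F^{\times2}$, of order $2$, or trivial, and in the borderline subcases on the parities of $d$, $m$ and the ramification index of $E/F$ and on whether $-1\in F^{\times2}$. In Case~(\rn1) the norm on $E^{\times}/E^{\times2}$ is bijective and $m$ is odd, so only $\gamma=1$ survives and we get the single coset $\bs{J}g_{0}G^{\tau_{0}}$; in the other cases one or two extra candidates remain after this step.

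For each surviving $\gamma$ one must then decide whether there actually exists $g\in G$ with $\tau_{0}(g)g^{-1}=\gamma$, equivalently whether $\,^{t}g^{-1}\varepsilon_{0}g^{-1}=\varepsilon_{0}\gamma$ is solvable; by Proposition~\ref{propSGLEorbit} this reduces to comparing the discriminant and the Hasse invariant of $\varepsilon_{0}$ and of $\varepsilon_{0}\gamma$. The discriminants already agree modulo squares by~\eqref{eqnorndetgamma}, so everything hinges on a Hasse-invariant computation, which I would carry out with Lemma~\ref{lemmaGLnOFsymhasse}, the Hilbert-symbol formulas of Lemma~\ref{lemmahilbertresidue}, the block identities of Lemma~\ref{lemmahasseblock} and Corollary~\ref{corhassekmatrix}, and — crucially, since $E/F$ need not be tamely ramified — Corollary~\ref{corcalhasse}, which transports the explicit values of $\mathrm{Hasse}(J_{d}\varpi_{E})$, $\mathrm{Hasse}(J_{d}\varpi_{E}\epsilon_{0})$ and the various block-diagonal Hasse invariants from Lemmas~\ref{lemmahassecase3a}, \ref{lemmahassecase2b2}, \ref{lemmahassecase2b1}, \ref{lemmahassecase2a1}, \ref{lemmahassecase2a2} and Corollary~\ref{corhassecase3b}. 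Running the subcases, when $G^{\tau_{0}}$ is split the extra candidates are all realized, giving $\bs{J}g_{1}G^{\tau_{0}}$ (and $\bs{J}g_{2}G^{\tau_{0}}$ in Case~(\rn3)) with $\tau_{0}(g_{i})g_{i}^{-1}\in B^{\times}$, whereas when $G^{\tau_{0}}$ is quasisplit-but-not-split or not quasisplit the Hasse invariant obstructs every extra candidate and only $\bs{J}g_{0}G^{\tau_{0}}$ remains. I expect the main obstacle to be precisely this bookkeeping: keeping the many Hasse-invariant sign computations straight across all the subcases and matching each value of $\gamma$ against the correct representative $\varepsilon_{0}$ fixed in \S\ref{sectiondistypethm}; the conceptual structure is otherwise a direct consequence of the machinery already developed.
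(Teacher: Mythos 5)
Your proposal follows the paper's proof essentially verbatim: reduce via Lemma~\ref{lemmadoublecoset} and Theorem~\ref{thmtaugginJ} to $\gamma\in E^{\times}\mathfrak{b}^{\times}$, normalize to the four candidates of~(\ref{eqgamma4value}) using Proposition~\ref{propSGLoEorbit}, cut down by the determinant constraint~(\ref{eqnorndetgamma}), and settle realizability of each surviving $\gamma$ by comparing discriminants and Hasse invariants of $\varepsilon_{0}$ and $\varepsilon_{0}\gamma$ via Proposition~\ref{propSGLEorbit} and Corollary~\ref{corcalhasse}. The only slip is the phrase that in the split case ``the extra candidates are all realized'': in Case~(\rn3) one of the three extra candidates is still killed by the Hasse invariant, which is exactly why only three (not four) double cosets survive there; since you state the correct final count, this is a wording issue rather than a gap.
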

	
	\begin{remark}
		
		The above proposition does not guarantee that every double coset as above corresponds to a distinguished space, and it says nothing about the dimension. However in the next section we will find out that each double coset indeed contributes to the distinction and the corresponding dimension is one respectively.
		
	\end{remark}
	
	\begin{remark}
		
		We may also give out all the maximal simple characters contained in $\pi$ that are $\tau_{0}$-selfdual. Let $\theta$ be a fixed maximal simple character such that $\theta\circ\tau_{0}=\theta^{-1}$. Any other maximal simple characters contained in $\pi$ can be written as $\theta^{g}$ with $g\in G$. Thus $\theta^{g}$ is $\tau_{0}$-selfdual if and only if $\gamma=\tau_{0}(g)g^{-1}$ normalizes $\theta$, that is, $\gamma\in\bs{J}$. Thus from the above argument, $g$ is in the same $\bs{J}$-$G^{\tau_{0}}$ double coset as one of the $g_{i}$ in Proposition \ref{propdiscorbit}. Thus one has a one-to-one correspondence between $\bs{J}$-$G^{\tau_{0}}$ double cosets in \emph{loc. cit.} and $G^{\tau_{0}}$-orbits of $\tau_{0}$-selfdual maximal characters contained in $\pi$.
		
	\end{remark}

	\section{Proofs of the main theorems}
	
	In this section, we finish the proof of our main theorems. Let $\pi$ be a given supercuspidal representation of $G$ and let $\tau$ be a given orthogonal involution on $G$. First of all, if $\pi$ is distinguished by $G^{\tau}$, then we restrict $\pi$ to $F^{\times}\cap G^{\tau}=\{1,-1\}$ which is contained in the centre of $G$ and we get $\omega_{\pi}(-1)=1$. So $\omega_{\pi}(-1)=1$ is indeed a necessary condition for $\pi$ to be distinguished by $G^{\tau}$. So from now on we assume further that $\pi$
	satisfies this condition.

	\subsection{Orthogonal groups contributing to the distinction of $\pi$}\label{subsectionorthgroupdisc}
	
	In this subsection, we first assume that $H=G^{\tau}$ satisfies the condition of Theorem \ref{thmdistcri}. From the proof of Theorem \ref{thmendotauselfdualstra}, $G^{\tau}$ is conjugate to $G^{\tau_{0}}$ with $\tau_{0}=\tau_{\varepsilon_{0}}$ defined as in the beginning of section \ref{sectiondistypethm}. Since the property of distinction does not depend on the choice of the representative of a $G$-conjugacy class, we may suppose $\tau=\tau_{0}$.
	
	We choose a $\tau$-selfdual simple type $(\bs{J},\Lambda)$ of $\pi$ as in section 5, then using the Mackey formula and Frobenius reciprocity we get
	$$\mrhom_{G^{\tau}}(\pi,1)\simeq\prod_{g\in\bs{J}\backslash G/G^{\tau}}\mrhom_{\bs{J}^{g}\cap G^{\tau}}(\Lambda^{g},1).$$
	In \S \ref{subsecdoublecosetdisc}, we studied all the possible double cosets that contribute to the distinction. By Proposition \ref{propdiscorbit}, we have
	$$\mrhom_{G^{\tau}}(\pi,1)\simeq\bigoplus_{g_{i}}\mrhom_{\bs{J}^{g_{i}}\cap G^{\tau}}(\Lambda^{g_{i}},1),$$
	where $g_{i}$ runs over a finite set of representatives, depending on \textbf{Case (\rn1), Case (\rn2)} or \textbf{Case (\rn3)} of \emph{loc. cit.}
	
	Moreover, we may write
	$$\Lambda\simeq\bs{\kappa}\otimes\bs{\rho},$$
	where by Proposition \ref{propkappaselfdual} we assume $\bs{\kappa}^{\tau\vee}\simeq\bs{\kappa}$, thus we also have $\bs{\rho}^{\tau\vee}\simeq\bs{\rho}$. By Proposition \ref{propkappa}, we get
	\begin{equation}\label{eqkappadisc}
		\mrdim_{\mbc}\mrhom_{\bs{J}^{g_{i}}\cap G^{\tau}}(\bs{\kappa}^{g_{i}},\chi_{i}^{-1})=1
	\end{equation}
	and
	$$\mrhom_{\bs{J}^{g_{i}}\cap G^{\tau}}(\Lambda^{g_{i}},1)\simeq\mrhom_{\bs{J}^{g_{i}}\cap G^{\tau}}(\bs{\kappa}^{g_{i}},\chi_{i}^{-1})\otimes\mrhom_{\bs{J}^{g_{i}}\cap G^{\tau}}(\bs{\rho}^{g_{i}},\chi_{i}),$$
	where $\chi_{i}$ is a quadratic character of $\bs{J}^{g_{i}}\cap G^{\tau}$.
	Thus to finish the proof for $\tau=\tau_{0}$, we only need to calculate
	$$\mrdim_{\mbc}\mrhom_{\bs{J}^{g_{i}}\cap G^{\tau}}(\bs{\rho}^{g_{i}},\chi_{i}).$$
	We define $\delta_{i}(x)=\gamma_{i}^{-1}\tau(x)\gamma_{i}$ for any $x\in G$ with $\gamma_{i}=\tau(g_{i})g_{i}^{-1}$, then by the exact definition of $\tau$ and $\delta_{i}$, the restriction of $\delta_{i}$ to $\mrgl_{m}(\bs{l})\simeq J/J^{1}$ is an orthogonal involution, and we denote by
	$\mrgl_{m}(\bs{l})^{\delta_{i}}$ the corresponding orthogonal group. So we have
	$$\mrhom_{\bs{J}^{g_{i}}\cap G^{\tau}}(\bs{\rho}^{g_{i}},\chi_{i})\simeq\mrhom_{J\cap G^{\delta_{i}}}(\rho,\chi_{i}^{g_{i}^{-1}})\simeq\mrhom_{\mrgl_{m}(\bs{l})^{\delta_{i}}}(\overline{\rho},\overline{\chi_{i}^{g_{i}^{-1}}}),$$
	where $\overline{\rho}$ and $\overline{\chi_{i}^{g_{i}^{-1}}}$ denote the representations of $J/J^{1}$ and $J\cap G^{\delta_{i}}/J^{1}\cap G^{\delta_{i}}$ whose inflations equal $\rho:=\bs{\rho}|_{J}$ and $\chi_{i}^{g_{i}^{-1}}$ respectively. Using (\ref{eqkappadisc}) we get $\omega_{\bs{\kappa}}(-1)=\chi_{i}^{g_{i}^{-1}}(-1)^{-1}$, where $\omega_{\bs{\kappa}}$ denotes the central character of $\bs{\kappa}$. By \cite{hakim2012distinguished}, Proposition 6.7,  $\mrhom_{\mrgl_{m}(\bs{l})^{\delta_{i}}}(\overline{\rho},\overline{\chi_{i}^{g_{i}^{-1}}})$ is non-zero and of dimension 1 if and only if $\omega_{\overline{\rho}}(-1)=\overline{\chi_{i}^{g_{i}^{-1}}}(-1)$, or equivalently
	\begin{equation}\label{eqomegachi}
		\omega_{\bs{\rho}}(-1)=\chi_{i}^{g_{i}^{-1}}(-1),
	\end{equation}
	where $\omega_{\overline{\rho}}$ and $\omega_{\bs{\rho}}$ denote the central character of $\overline{\rho}$ and $\bs{\rho}$ respectively. If we denote by $\omega_{\Lambda}$ and $\omega_{\pi}$ the central character of $\Lambda$ and $\pi$ respectively, then we get
	\begin{equation}\label{eqomegapi}
		\omega_{\pi}(-1)=\omega_{\Lambda}(-1)=\omega_{\bs{\kappa}}(-1)\omega_{\bs{\rho}}(-1)=\chi_{i}^{g_{i}^{-1}}(-1)^{-1}\omega_{\bs{\rho}}(-1),
	\end{equation}
	Combining (\ref{eqomegachi}) with (\ref{eqomegapi}), $\mrhom_{\mrgl_{m}(\bs{l})^{\delta_{i}}}(\overline{\rho},\overline{\chi_{i}^{g_{i}^{-1}}})$ is non-zero and of dimension 1 if and only if $\omega_{\pi}(-1)=1$. Thus we proved the
	``if" part of Theorem \ref{thmdistcri} and Theorem \ref{thmdistdim}.
	
	\subsection{Other orthogonal groups}\label{subsectionotherortho}
	
	In this subsection, we finish the proof of Theorem \ref{thmdistcri}, by showing that if $\pi$ is distinguished, then the corresponding orthogonal group must satisfy the condition of \emph{loc. cit.}
	
	Let $\tau(x)=\varepsilon^{-1}\,^{t}x^{-1}\varepsilon$ for $x\in G$ as an orthogonal involution and let $G^{\tau}$ be the corresponding orthogonal group. We assume $\varepsilon_{0}=J_{d,m}$ and we write $\tau_{0}=\tau_{\varepsilon_{0}}$. We choose $[\mfa,\beta]$, $\theta$ and $(\bs{J},\Lambda)$ as in section \ref{sectiondistypethm}.
	
	If $\pi$ is distinguished by $G^{\tau}$, then by 
	Theorem \ref{thmtaugginJ} and Lemma \ref{lemmadoublecoset}, there exists $g\in G$ such that $\gamma=u\tau(g)g^{-1}\in E^{\times}\mfb^{\times}$. 
	We define $$\delta(x)=(\tau(g)g^{-1})^{-1}\tau(x)\tau(g)g^{-1}=\gamma^{-1}\varepsilon_{0}^{-1}\,^{t}x^{-1}\varepsilon_{0}\gamma\quad\text{for any}\ x\in G$$ as an orthogonal involution of $G$, then we have
	$$J=\delta(J),\ J^{1}=\delta(J^{1}),\ J^{g}\cap G^{\tau}=(J\cap G^{\delta})^{g}\ \text{and}\ J^{1g}\cap G^{\tau}=(J^{1}\cap G^{\delta})^{g}.$$ By definition for $x\in B^{\times}$, we have $\delta(x)=\gamma^{-1}\,^{t_{E}}x^{-1}\gamma$, where $\,^{t_{E}}$ denotes the transpose with respect to $B\simeq\mrm_{m}(E)$.
	Since $\gamma\in E^{\times}\mfb^{\times}$, the restriction of $\delta$ induces an orthogonal involution on $\mrgl_{m}(\bs{l})\simeq J/J^{1}$. 
	
	\begin{lemma}
	
	For $\bs{l}_{m}/\bs{l}$ an extension of finite fields of odd characteristic of degree $m$ and $\delta$ an orthogonal involution on $\mrgl_{m}(\bs{l})$, there exists a $\delta$-split embedding $\bs{l}_{m}^{\times}\hookrightarrow\mrgl_{m}(\bs{l})$.
	
	\end{lemma}
	
	\begin{proof}
	
	Write $\delta(x)=\overline{\varepsilon}^{-1}\,^{t}x^{-1}\overline{\varepsilon}$ for any $x\in \mrgl_{m}(\bs{l})$ with $\overline{\varepsilon}$ symmetric.
	When $\overline{\varepsilon}=J_{m}$, it is exactly \cite{hakim2013tame}, Lemma 4.7. In general we fix a $\tau_{J_{m}}$-split embedding $\iota_{0}:\bs{l}_{m}^{\times}\hookrightarrow\mrgl_{m}(\bs{l})$. By Proposition \ref{propsymembedepi0} (or more precisely its proof, since right now we consider finite fields instead of non-archimedean local fields), we have that $\bs{l}_{m}^{\times}$ is $\delta$-split for  $\overline{\varepsilon}\in J_{m}\iota_{0}(\bs{l}_{m}^{\times})$. Calculating the determinant and using Lemma \ref{lemmaklerdman}, such $\delta$ ranges over both $\mrgl_{m}(\bs{l})$-classes of orthogonal involutions, which finishes the proof.
	
	\end{proof}
	
	Using the above lemma, there exists an embedding $\bs{l}_{m}^{\times}\hookrightarrow\mrgl_{m}(\bs{l})$ such that $\bs{l}_{m}^{\times}$ is $\delta$-split. Thus $\bs{l}_{m}^{\times}$ can be regarded as a $\delta$-split subgroup of $J$ via such an embedding. We denote by $E_{m}=E[\bs{l}_{m}^{\times}]$ the maximal unramified extension of degree $m$ over $E$ which is $\delta$-split, thus $E_{m}^{g}$ is $\tau$-split which is $F$-isomorphic to $E_{m}$. In other words, there exists an $F$-embedding $\iota:E_{m}\hookrightarrow\mrm_{n}(F)$ which is $\tau$-split. We have proved Proposition \ref{propintermid}.
	
	Using the results in \S \ref{subsectionorthgroupdisc} we know that $\pi$ is distinguished by $G^{\tau_{J_{n}}}$, thus we may in particular consider the argument above for $\varepsilon=J_{n}$ and we deduce that $E_{m}$ is $\tau_{J_{n}}$-split, that is, the condition of the following lemma is satisfied.
	
	\begin{lemma}[\cite{hakim2013tame}, Lemma 6.4]\label{lemmaTorbit}
		
		Assume that there exists a $J$-symmetric embedding $E_{m}\hookrightarrow\mrm_{n}(F)$. Then for $Y_{E_{m}/F}=E_{m}^{\times}/(E_{m}^{\times2}F^{\times})$ and $\mathcal{O}^{E_{m}}$ the set of $E_{m}^{\times}$-orbits of orthogonal involutions $\tau$ such that $E_{m}$ is $\tau$-split, the map
		$$\mu_{E_{m}/F}:Y_{E_{m}/F}\rightarrow\mathcal{O}^{E_{m}}$$
		which sends the coset of $x\in E_{m}^{\times}$ to the orbit of $\tau_{J_{n}x}$ is a bijection.
		
	\end{lemma}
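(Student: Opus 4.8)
The plan is to fix once and for all a $J$-symmetric embedding $\iota_{J}:E_{m}\hookrightarrow\mrm_{n}(F)$ (provided by the hypothesis), to identify $E_{m}$ with its image $L:=\iota_{J}(E_{m})$, and to reduce everything to two short matrix identities. First I would record the elementary facts that will be used repeatedly: since $[E_{m}:F]=n$, the subfield $L$ is a maximal commutative $F$-subalgebra of $\mrm_{n}(F)$, hence $Z_{\mrm_{n}(F)}(L)=L$; and, as noted in \S3, $\tau_{\varepsilon_{1}}=\tau_{\varepsilon_{2}}$ if and only if $\varepsilon_{1}F^{\times}=\varepsilon_{2}F^{\times}$, so $\tau_{\varepsilon}$ depends only on the class of $\varepsilon$ in $\mcs/F^{\times}$. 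Using the defining relation $J_{n}^{-1}\,^{t}\iota_{J}(x)J_{n}=\iota_{J}(x)$ of a $J$-symmetric embedding one sees immediately that $\,^{t}\iota_{J}(x)=J_{n}\iota_{J}(x)J_{n}^{-1}$, so $J_{n}\iota_{J}(x)$ is symmetric for every $x\in E_{m}^{\times}$, and a one-line computation (plug into $\tau_{\varepsilon}(y)=\varepsilon^{-1}\,^{t}y^{-1}\varepsilon$ and use commutativity of $L$) gives $\tau_{J_{n}x}(\iota_{J}(z))=\iota_{J}(z)^{-1}$ for all $z$; thus each $\tau_{J_{n}x}:=\tau_{J_{n}\iota_{J}(x)}$ lies in $\mathcal{O}^{E_{m}}$, so $\mu_{E_{m}/F}$ indeed takes values where claimed.

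Next I would classify the orthogonal involutions $\tau=\tau_{\varepsilon}$ for which $L$ is $\tau$-split. Unwinding $\tau_{\varepsilon}(\iota_{J}(z))=\iota_{J}(z)^{-1}$ gives $\varepsilon\iota_{J}(z)=\,^{t}\iota_{J}(z)\varepsilon=J_{n}\iota_{J}(z)J_{n}^{-1}\varepsilon$, i.e. $J_{n}^{-1}\varepsilon$ centralizes $L$; by the centralizer fact, $J_{n}^{-1}\varepsilon\in L^{\times}=\iota_{J}(E_{m}^{\times})$, so $\varepsilon\in J_{n}\iota_{J}(E_{m}^{\times})$. Hence $x\mapsto\tau_{J_{n}x}$ induces a bijection between $J_{n}\iota_{J}(E_{m}^{\times})/F^{\times}$, which modulo scalars is $E_{m}^{\times}/F^{\times}$, and the set of orthogonal involutions making $L$ $\tau$-split. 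For the version of the statement in which "$E_{m}$ is $\tau$-split" is read with an \emph{a priori} different embedding, I would invoke Proposition \ref{propsymembedepi0} with $\varepsilon_{0}=J_{n}$ and $\iota_{0}=\iota_{J}$: any admissible $\varepsilon$ is then similar to an element of $J_{n}\iota_{J}(E_{m}^{\times})$, and after checking (as above) that $L$ itself is split for the resulting involution, one is back in the situation just described.

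Finally I would quotient by the $E_{m}^{\times}$-action through $\iota_{J}$. The key identity is that for $y\in E_{m}^{\times}$, conjugating $\tau_{J_{n}x}$ by $\iota_{J}(y)$ produces the involution attached to $\,^{t}\iota_{J}(y)\,J_{n}\iota_{J}(x)\,\iota_{J}(y)=J_{n}\iota_{J}(y)^{2}\iota_{J}(x)=J_{n}\iota_{J}(xy^{2})$, where I used $\,^{t}\iota_{J}(y)J_{n}=J_{n}\iota_{J}(y)$ and commutativity; that is, $\iota_{J}(y)\cdot\tau_{J_{n}x}=\tau_{J_{n}xy^{2}}$. Combining this with the bijection of the previous paragraph, passage to $E_{m}^{\times}$-orbits turns $E_{m}^{\times}/F^{\times}$ into $E_{m}^{\times}/(E_{m}^{\times2}F^{\times})=Y_{E_{m}/F}$, and the induced map is exactly $\mu_{E_{m}/F}$. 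This yields well-definedness, injectivity and surjectivity simultaneously.

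I expect the only genuine subtlety — and the thing to be careful about — is the bookkeeping of the acting group: one must work throughout with the single fixed copy $\iota_{J}(E_{m}^{\times})\subset G$ and not conflate $E_{m}^{\times}$-conjugacy with $G$-conjugacy, the latter being what Proposition \ref{propsymembedepi0} governs. Everything else reduces to the two matrix identities above, both of which are immediate consequences of $J_{n}^{-1}\,^{t}\iota_{J}(x)J_{n}=\iota_{J}(x)$ together with $L$ being its own centralizer.
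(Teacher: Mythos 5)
Your argument is correct and is essentially the proof of the cited result (\cite{hakim2013tame}, Lemma 6.4), which this paper does not reprove: the two identities $\,^{t}\iota_{J}(x)J_{n}=J_{n}\iota_{J}(x)$ and $Z_{\mrm_{n}(F)}(\iota_{J}(E_{m}))=\iota_{J}(E_{m})$ are exactly what drives Hakim's argument, and your computation of the $E_{m}^{\times}$-action correctly produces the quotient by $E_{m}^{\times2}F^{\times}$. Your closing caveat is the right one to insist on: the map is a bijection only when ``$E_{m}$ is $\tau$-split'' is read relative to the fixed copy $\iota_{J}(E_{m}^{\times})$, since with the existential reading Proposition \ref{propsymembedepi0} only yields $G$-conjugacy to some $\tau_{J_{n}x}$ rather than membership in an $E_{m}^{\times}$-orbit --- which is in fact all that the application in \S 6.3 requires.
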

	
	In particular we have $\tau_{J_{n}},\tau\in\mathcal{O}^{E_{m}}$. Since $E_{m}/T_{m}$ is totally wildly ramified, as in Lemma \ref{lemmaE/Lextension} it is easy to see that $E_{m}^{\times}/E_{m}^{\times2}F^{\times}\simeq T_{m}^{\times}/T_{m}^{\times2}F^{\times}$, and we denote by $y_{T_{m}/F}$ the corresponding cardinality. Thus by \cite{hakim2013tame}, Lemma 6.2, $y_{T_{m}/F}-1$ equals the number of quadratic extensions of $F$ contained in $T_{m}$. Furthermore by \cite{hakim2013tame}, Lemma 3.8 we have
	$$y_{T_{m}/F}=\begin{cases}
		1 & \quad \text{\textbf{Case (\rn1)}},\\
		2 & \quad \text{\textbf{Case (\rn2)}},\\
		4 & \quad \text{\textbf{Case (\rn3)}}.
	\end{cases}$$
	Thus in \textbf{Case (\rn1)}, we have $|\mathcal{O}^{E_{m}}|=1$, which means that $\mathcal{O}^{E_{m}}$ consists of the  $E_{m}^{\times}$-orbit represented by the split involution $\tau_{J_{n}}$, thus $G^{\tau}$ is split.
	In \textbf{Case (\rn2)}, we have $|\mathcal{O}^{E_{m}}|=2$. And by direct calculation,  $$\mathrm{det}(J_{n}E_{m}^{\times})F^{\times2}/F^{\times2}=(-1)^{n(n-1)/2}\mathrm{N}_{E_{m}/F}(E_{m}^{\times})F^{\times2}/F^{\times2}=(-1)^{n(n-1)/2}\mathrm{N}_{T_{m}/F}(T_{m}^{\times})F^{\times2}/F^{\times2},$$ which is of order 2. Thus $\mathcal{O}^{E_{m}}$ consists of two  $E_{m}^{\times}$-orbits, one of which is split, and the other is quasisplit but not split with the determinants of its corresponding symmetric matrices contained in  $(-1)^{n(n-1)/2}\mrn_{T_{m}/F}(T_{m}^{\times})F^{\times2}\backslash(-1)^{n(n-1)/2}F^{\times2}$. Thus $G^{\tau}$ is either split or quasisplit that satisfies the condition of Theorem \ref{thmdistcri}. In \textbf{Case (\rn3)}, $$\mathrm{det}(J_{n}E_{m}^{\times})F^{\times2}/F^{\times2}=(-1)^{n(n-1)/2}\mathrm{N}_{E_{m}/F}(E_{m}^{\times})F^{\times2}/F^{\times2}=\{1\}.$$
	Thus by Proposition \ref{propGconjortho}, $G^{\tau}$ is either split or non-quasisplit. Combining these three cases together, we have shown that $G^{\tau}$ must satisfy the condition of Theorem \ref{thmdistcri}, which finishes the ``only if" part of Theorem \ref{thmdistcri}.
	
\begin{remark}

Indeed the argument in this subsection is based on the existence of a certain $\gamma\in\bs{J}$ instead of the stronger condition that $\pi$ is $G^{\tau}$-distinguished. As a corollary if $G^{\tau}$ does not satisfy the condition of Theorem \ref{thmdistcri}, then any $\gamma=u\tau(g)g^{-1}$ is not contained in $\bs{J}$.

\end{remark}

\subsection{A variant of the main theorems}

Finally, the following variant of Theorem \ref{thmdistcri} and Theorem \ref{thmdistdim} is true.

\begin{theorem}\label{thmmaintwist}

For $\pi$ and $T_{m}$ as in Theorem \ref{thmdistcri}, $G^{\tau_{\varepsilon}}$ an orthogonal subgroup of $G$ and $\mu$ a character of $G^{\tau_{\varepsilon}}$ whose order is relatively prime to $p$, the distinguished space $\mrhom_{G^{\tau_{\varepsilon}}}(\pi,\mu)\neq 0$ if and only if the following two conditions are satisfied:

(1) $\omega_{\pi}(-1)=\mu(-1)$;

(2) Precisely one of the following conditions holds:

\begin{enumerate}
	
	\item $\mrn_{T_{m}/F}(T_{m}^{\times})F^{\times2}/F^{\times2}=F^{\times}/F^{\times2}$ and $G^{\tau_{\varepsilon}}$ is split;
	\item $\mrn_{T_{m}/F}(T_{m}^{\times})F^{\times2}/F^{\times2}$ is a subgroup of $F^{\times}/F^{\times2}$ of order $2$ and $G^{\tau_{\varepsilon}}$ is either split or quasisplit but not split such that $(-1)^{n(n-1)/2}\mrdet(\varepsilon)\in\mrn_{T_{m}/F}(T_{m}^{\times})-F^{\times2}$;
	\item $\mrn_{T_{m}/F}(T_{m}^{\times})F^{\times2}/F^{\times2}=\{1\}$ and $G^{\tau_{\varepsilon}}$ is either split or not quasisplit.
	
\end{enumerate}

Moreover, the dimension $\mrdim_{\mbc}\mrhom_{G^{\tau_{\varepsilon}}}(\pi,\mu)$ in the three cases is
\begin{enumerate}
	
	\item $1$;
	
	\item $1$ if $G^{\tau_{\varepsilon}}$ is not split and $2$ if $G^{\tau_{\varepsilon}}$ is split;
	
	\item $1$ if $G^{\tau_{\varepsilon}}$ is not split and $3$ if $G^{\tau_{\varepsilon}}$ is split.
	
\end{enumerate}

\end{theorem}	

\begin{proof}

We explain how the previous proofs can be used here. We let $\tau_{0}=\tau_{\varepsilon_{0}}$, $[\mfa,\beta]$, $\theta$, $\eta$, $(\bs{J},\Lambda)$, and $E=F[\beta]$ be defined exactly as in the beginning of section \ref{sectiondistypethm}. For $\tau=\tau_{\varepsilon}$, we write $u=\varepsilon_{0}^{-1}\varepsilon$ and $\gamma=u\tau(g)g^{-1}$. Using the Mackey formula and Frobenius reciprocity, we have
$$\mrhom_{G^{\tau}}(\pi,\mu)\simeq\prod_{g\in\bs{J}\backslash G/G^{\tau}}\mrhom_{\bs{J}^{g}\cap G^{\tau}}(\Lambda^{g},\mu).$$

If $\mrhom_{\bs{J}^{g}\cap G^{\tau}}(\Lambda^{g},\mu)\neq 0$, then restricting to $H^{1g}\cap G^{\tau}$ and noting that $\mu$ is trivial on $H^{1g}\cap G^{\tau}$, we have $\gamma\in JB^{\times}J$ by Proposition \ref{propthetadisc}. If we choose $\bs{\kappa}$ to be a $\tau_{0}$-selfdual extension of $\eta$ by Proposition \ref{propkappaselfdual}, then using Proposition \ref{propheisdist}, Proposition \ref{propkappa} and Proposition \ref{propchichi'}, there exists a quadratic character $\chi$ of $\bs{J}^{g}\cap G^{\tau}$, such that $$\mrhom_{\bs{J}^{g}\cap G^{\tau}}(\bs{\kappa},\chi^{-1})\simeq\mbc$$ 
and
$$\mathrm{Hom}_{\boldsymbol{J}^{g}\cap G^{\tau}}(\Lambda^{g},\mu)\simeq\mathrm{Hom}_{\bs{J}^{g}\cap G^{\tau}}(\bs{\kappa}^{g},\chi^{-1})\otimes\mathrm{Hom}_{\boldsymbol{J}^{g}\cap G^{\tau}}(\boldsymbol{\rho}^{g},\chi\mu).$$
Since the order of $\mu$ is relatively prime to $p$, using a similar argument to that in \S \ref{subsectionpfdisttype}, the space $\mathrm{Hom}_{\boldsymbol{J}^{g}\cap G^{\tau}}(\boldsymbol{\rho}^{g},\chi\mu)\neq 0$ implies that $\gamma\in \bs{J}$. Changing $g$ by another representative in the same $\bs{J}$-$G^{\tau}$ double coset, we may assume $\gamma\in E^{\times}\mfb^{\times}$ (\emph{cf.} Lemma \ref{lemmadoublecoset}).

When $\varepsilon_{0}=\varepsilon$ and $\tau=\tau_{0}$, using the classification result in \S \ref{subsecdoublecosetdisc} we have
$$\mrhom_{G^{\tau}}(\pi,\mu)\simeq\bigoplus_{g_{i}}\mrhom_{\bs{J}^{g_{i}}\cap G^{\tau}}(\bs{\kappa}^{g_{i}},\chi_{i}^{-1})\otimes\mrhom_{\bs{J}^{g_{i}}\cap G^{\tau}}(\bs{\rho}^{g_{i}},\chi_{i}\mu),$$
where $g_{i}$ runs over a finite set of representatives depending on \textbf{Case (\rn1), Case (\rn2)} or \textbf{Case (\rn3)} in Proposition \ref{propdiscorbit}, and $\chi_{i}$ is the quadratic character of $\bs{J}^{g_{i}}\cap G^{\tau}$ such that $\mrhom_{\bs{J}^{g_{i}}\cap G^{\tau}}(\bs{\kappa}^{g_{i}},\chi_{i}^{-1})\simeq\mbc$. Using a similar proof $\mrhom_{\bs{J}^{g_{i}}\cap G^{\tau}}(\bs{\rho}^{g_{i}},\chi_{i}\mu)$ is non-zero and of dimension 1 if and only if $\omega_{\bs{\rho}}(-1)=\chi(-1)\mu(-1)$, or equivalently $\omega_{\pi}(-1)=\mu(-1)$, which proves the ``if" part and verifies the corresponding dimension. The ``only if'' part follows exactly from \S \ref{subsectionotherortho}.

\end{proof}

\newcommand{\etalchar}[1]{$^{#1}$}

\end{document}